\documentclass[11pt]{amsart}

\usepackage{amsmath}
\usepackage{amsthm}
\usepackage{amsfonts}
\usepackage{amssymb}
\usepackage{mathtools}
\usepackage{mathrsfs}
\usepackage{tikz}
\usepackage{tikz-cd}
\usepackage{enumerate}
\usepackage[T1]{fontenc}
\usepackage[utf8]{inputenc}
\usepackage{hyperref}

\theoremstyle{plain}
\newtheorem{theorem}{Theorem}
\newtheorem{proposition}[theorem]{Proposition}
\newtheorem{lemma}[theorem]{Lemma}
\newtheorem{corollary}[theorem]{Corollary}
\newtheorem{question}[theorem]{Question}
\newtheorem{maintheorem}{Theorem}

\theoremstyle{definition}
\newtheorem{definition}[theorem]{Definition}

\theoremstyle{remark}
\newtheorem{remark}[theorem]{Remark}
\newtheorem*{remark*}{Remark}
\newtheorem{introremark}{Remark}[maintheorem]

\newcommand{\A}{\mathbb{A}}

\newcommand{\F}{\mathbb{F}}
\newcommand{\G}{\mathbb{G}}

\renewcommand{\P}{\mathbb{P}}
\newcommand{\Q}{\mathbb{Q}}
\newcommand{\R}{\mathbb{R}}

\newcommand{\Z}{\mathbb{Z}}

\newcommand{\sco}{\mathscr{O}}

\newcommand{\sct}{\mathscr{T}}

\newcommand{\calc}{\mathcal{C}}

\newcommand{\calK}{\mathcal{K}}
\newcommand{\calj}{\mathcal{J}}
\newcommand{\calo}{\mathcal{O}}
\newcommand{\calp}{\mathcal{P}}

\newcommand{\calr}{\mathcal{R}}

\newcommand{\calt}{\mathcal{T}}

\newcommand{\calu}{\mathcal{U}}

\newcommand{\mfa}{\mathfrak{a}}

\newcommand{\mfp}{\mathfrak{p}}

\newcommand{\mfm}{\mathfrak{m}}

\newcommand{\Aut}{{\mathrm{Aut}}}

\newcommand{\Ker}{{\mathrm{Ker}}}

\newcommand{\Div}{{\mathrm{Div}}}

\newcommand{\Per}{\mathrm{Per}}
\newcommand{\PrePer}{\mathrm{PrePer}}
\newcommand{\rank}{\operatorname{rank}}
\newcommand{\divisor}{\operatorname{div}}
\newcommand{\Cl}{\operatorname{Cl}}
\newcommand{\rk}{\mathrm{rk}}
\newcommand{\Ram}{\mathrm{Ram}}

\newcommand{\wt}{\widetilde}
\newcommand{\ol}{\overline}

\newcommand{\lra}{{\, \longrightarrow \,}}

\newcommand{\paren}[1]{%
	\mathopen{}\left(#1\right)\mathclose{}}
\newcommand{\set}[1]{%
	\mathopen{}\left\{#1\right\}\mathclose{}}
\newcommand{\sbrac}[1]{%
	\mathopen{}\left[#1\right]\mathclose{}}
\newcommand{\abrac}[1]{%
	\mathopen{}\left\langle#1\right\rangle\mathclose{}}
\newcommand{\verts}[1]{%
	\mathopen{}\left\lvert#1\right\rvert\mathclose{}}

\newcommand\restr[2]{{%
	\left.\kern-\nulldelimiterspace
	#1
	\right|_{#2}
}}

\newcommand{\Mid}{\, \middle|\,}

\newcommand{\sm}{-}

\makeatletter
\AtBeginDocument
	{
	\def\@thm#1#2#3{%
		\ifhmode
		\unskip\unskip\par
		\fi
		\normalfont
		\trivlist
		\let\thmheadnl\relax
		\let\thm@swap\@gobble
		\let\thm@indent\indent 
		\thm@headfont{\scshape}
		\thm@notefont{\fontseries\mddefault\upshape}%
		\thm@headpunct{.}
		\thm@headsep 5\p@ plus\p@ minus\p@\relax
		\thm@space@setup
		#1
		\@topsep \thm@preskip
		\@topsepadd \thm@postskip
		\def\dth@counter{#2}%
		\ifx\@empty\dth@counter
		\def\@tempa{%
			\@oparg{\@begintheorem{#3}{}}[]%
		}%
		\else
		\H@refstepcounter{#2}%
		\hyper@makecurrent{#2}%
		\let\Hy@dth@currentHref\@currentHref
		\AddToHookNext{para/begin}{%
			\MakeLinkTarget*{\Hy@dth@currentHref}}%
		\def\@tempa{%
			\@oparg{%
			\@begintheorem{#3}{%
				\csname the#2\endcsname}}[]%
		}%
		\fi
		\@tempa
	}%
	\dth@everypar={%
	\@minipagefalse \global\@newlistfalse
	\@noparitemfalse
	\if@inlabel
		\global\@inlabelfalse
		\begingroup \setbox\z@\lastbox
		\ifvoid\z@ \kern-\itemindent \fi
		\endgroup
		\unhbox\@labels
	\fi
	\if@nobreak \@nobreakfalse \clubpenalty\@M
	\else \clubpenalty\@clubpenalty \everypar{}%
	\fi
	}%
}

\title[Dynamical uniform boundedness and large rank]
	{Dynamical uniform boundedness for unicritical polynomials
		and elliptic curves of large rank}

\author{Robin Zhang}
\address{Institut des Hautes \'Etudes Scientifiques}
\email{rzhang@ihes.fr}

\date{September 10, 2026}

\begin{document}

\begin{abstract}
	For every number field $K$,
	we prove the dynamical uniform boundedness conjecture
	for the unicritical family of polynomials $z^d + c$
	when $d \geq 4$, and when $d = 3$
	if $\mathbb{Q}(\sqrt{-3}) \not\subset K$.
	Furthermore, the unconditional bounds that we construct
	for periodic points of unicritical polynomials
	are effectively computable.
	In all of the remaining cases,
	we show that the 
	dynamical uniform boundedness conjecture
	is implied by the boundedness of Mordell--Weil ranks
	over $K$ for a specific family of elliptic curves
	when $d = 3$
	and for Jacobians of dynatomic
	curves when $d = 2$.
\end{abstract}

\maketitle

\setcounter{tocdepth}{1}
\tableofcontents


\vspace*{-1em}

\section{Introduction}
\label{sec:intro}

The uniform boundedness conjectures
in arithmetic dynamics
are inspired by
the torsion conjecture for elliptic curves,
which dates back to a classification problem posed
by Levi in 1908 for the finite abelian group
$E(\Q)_{\mathrm{tors}}$ of $\Q$-rational torsion points
on elliptic curves.
The torsion conjecture then evolved into
the uniform boundedness statement that,
for every number field $K$,
there is a constant $B(K)$ such that
$\#E(K)_{\mathrm{tors}} \leq B(K)$
for every elliptic curve $E/K$;
it also has a strong form in which
the bound $B$ depends only
on the degree $[K : \Q]$.
In a landmark paper, Mazur \cite{mazur-torsion}
proved Levi's conjecture over $\Q$.
After Kamienny \cite{kamienny-torsion},
Kamienny--Mazur \cite{kamienny-mazur},
and Abramovich \cite{abramovich}
established strong uniform boundedness
of torsion points on elliptic curves
for $[K : \Q] \leq 14$,
Merel \cite{merel-torsion}
proved the strong uniform boundedness
conjecture in full generality,
resolving a major question
that spanned nearly the entire 20th century.

Motivated by this history,
Morton--Silverman \cite{morton-silverman-ubc}
formulated the dynamical
uniform boundedness conjectures.
They observed that
uniform boundedness for
preperiodic points of
endomorphisms on $\P^1$
would imply the uniform boundedness of
rational torsion points
on elliptic curves over number fields
(i.e. Merel's theorem),
and Fakhruddin \cite[Corollary~2.4]{fakhruddin-selfmaps}
later showed that
the uniform boundedness conjecture
for preperiodic points
of endomorphisms of $\P^N$
would imply uniform boundedness
for torsion points on abelian varieties.

This article is focused on uniform boundedness
over number fields,
but we briefly note that there
has been substantial recent progress on
uniform boundedness over function fields.
Over function fields of characteristic zero,
Looper--Yap
\cite[Theorem~1.1]{looper-yap-function-fields}
give a bound the order of every torsion point on an
abelian variety with trivial trace in terms of the
dimension and the gonality of the base curve;
Gao--Gu \cite[Theorem~1.3]{gao-gu}
give another proof with an explicit such bound.
For non-isotrivial
one-parameter families of rational maps on $\P^1$
over complex function fields
and endomorphisms of $\P^N$ with additional hypotheses,
Ji--Xie
\cite[Theorems~1.14 and~1.15]{ji-xie-gonality}
prove uniform boundedness for preperiodic points
whose fields of definition have bounded gonality.

Throughout the paper,
let $K$ be a number field
and let $d \geq 2$.
The unicritical polynomials
$f_{d, c}(z) := z^d + c$
for $c \in K$
are so named because they have
exactly one finite critical point---they form
the most elementary nontrivial family
of endomorphisms on $\P^1$
and a primary testing ground
for the dynamical uniform boundedness conjecture.
We recall some of the existing results
towards the dynamical uniform boundedness conjectures:
\begin{itemize}
	\item There are several unconditional bounds
		that depend on a finite set $S$ containing the
		places of bad reduction in
		\cite{narkiewicz,morton-silverman-periodic,
		benedetto-global,canci-paladino,troncoso,
		canci-vishkautsan,doyle-hindes-integral,
		yap-small-points}.
		Among these, Benedetto
		\cite[Theorem~7.1]{benedetto-global}
		and Yap
		\cite[Theorem~1.1]{yap-small-points}
		obtain bounds of order
		$\#S \log\paren{\#S}$ for preperiodic points
		of polynomials and
		rational maps on $\P^1$, respectively,
		and Doyle--Hindes
		\cite[Corollary~1.2]{doyle-hindes-integral}
		gives a bound for preperiodic points
		of unicritical polynomials
		that depends only on $[K : \Q]$ and the
		largest residue characteristic of a finite
		place in $S$.
	\item There is an unconditional explicit bound
		$d^{[K : \Q]}$ on periodic points
		of unicritical polynomials
		with good reduction 
		at some place above a
		prime dividing $d$ in
		previous work of the
		author with Rajagopal
		\cite[Theorem~3]{rajagopal-zhang}.
	\item Assuming the $abc$ conjecture
		and its higher-dimensional generalizations,
		there are uniform bounds that are independent
		of reduction and integrality hypotheses in
		\cite{looper-2021,panraksa,looper-2025,doyle-hindes}.
		Among these,
		Looper \cite{looper-2021,looper-2025}
		establishes conditional uniform boundedness first for
		unicritical polynomials and then for arbitrary
		polynomials, and
		Doyle--Hindes \cite{doyle-hindes}
		improves the conditional bound
		for unicritical polynomials
		to be independent of $d$.
\end{itemize}

\subsection*{Main results}

In this paper, we unconditionally prove the
uniform boundedness conjecture
for preperiodic points of
unicritical polynomials
over number fields when $d \geq 4$
and over number fields
not containing $\Q(\sqrt{-3})$
when $d = 3$.

\begin{corollary}
\label{cor:ubc-preperiodic-unconditional}
	Let $K$ be a number field and let $d \geq 3$.
	If $d = 3$, assume that
	$\Q(\sqrt{-3}) \not\subseteq K$.
	There is a constant $B_{\PrePer}(K, d)$ such that
	$z^d + c$ has at most $B_{\PrePer}(K, d)$
	many $K$-rational preperiodic points
	for every $c \in K$.
\end{corollary}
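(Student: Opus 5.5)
Since this is labeled a corollary, I would deduce it from a uniform bound on \emph{periodic} points (the main theorem the paper presumably proves next), together with a uniform bound on preperiod length. The plan has three steps.

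First, assume there is a constant $B_{\Per}(K,d)$ bounding the number of $K$-rational periodic points of $f_{d,c}$ for every $c\in K$ (under the stated hypotheses on $d$ and $K$). Then bound the number of strictly preperiodic points by analysing the backward orbit structure. If $z$ is preperiodic but not periodic, some iterate $f^m(z)$ is periodic while $f^{m-1}(z)$ is not. Since $f_{d,c}(z)=z^d+c$, the preimages of a point $w$ are the $d$-th roots of $w-c$. Two $K$-rational preimages differ by a $d$-th root of unity in $K$, so each $w$ has at most $\#\mu_d(K)\le d$ rational preimages. This gives a bound of the shape $\#\PrePer\le B_{\Per}\cdot\sum_{m\le M} d^m$, provided the preperiod length $M$ is uniformly bounded.

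Second, bound the tail length $M$ uniformly. Here I would use the standard local (height-theoretic) argument: all preperiodic points lie in the filled Julia set at each place. I would then show that long rational tails force rational points on curves of high genus. The relevant curves are the generalized dynatomic or preperiodic curves $\{(c,z): f^{m}(z)=f^{m+n}(z)\}$ with exact portrait, whose genus grows with $m$ for $d\ge 3$. For fixed large $m$ and each bounded $n$, Faltings gives only finitely many $c$, and those $c$ contribute finitely many points. The cleaner route is to use the root-of-unity structure directly. A point $z$ with $f(z)=f(z')$, $z\ne z'$, satisfies $z'=\zeta z$, so a rational strict preimage of a periodic point already yields such pairs. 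For $d\ge 3$ the curves $\{y^d = w - c\}$ with $w$ ranging over points of bounded period give high-genus constraints. I would cite whatever preperiodic-tail lemma the paper proves, or else Doyle--Hindes-type results, which bound preperiod in terms of period data.

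Third, combine: $B_{\PrePer}(K,d)$ is at most $B_{\Per}(K,d)$ times the maximal number of rational points in a tail tree of bounded depth with branching at most $\#\mu_d(K)$.

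The main obstacle is of course the periodic bound itself, which is the paper's main theorem and which I take as given. Within the corollary, the delicate step is the uniform bound on tail length independent of $c$. I expect it to follow from the same mechanism the paper uses for periods, namely showing that a long rational tail produces many points on a curve or abelian variety of bounded rank. Failing that, I would use the exponential growth of the rational-preimage tree together with a height lower bound to conclude.
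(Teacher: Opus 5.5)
Your overall shape is right: bound the periodic points, bound the preperiod uniformly, then count the backward tree. Steps~1 and~3 are fine. The gap is Step~2, which is never carried out, and the uniform bound on the preperiod is the entire content of the corollary beyond Theorem~\ref{thm:main}.

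Of the routes you list, only the first can work. It depends on your unsupported claim that the preperiodic curves have genus growing with the preperiod $m$. What is actually needed is stronger: for each fixed eventual period $n$, \emph{every} geometric irreducible component of the curve of pairs $(c,z)$ with $z$ of preperiod $m$ and eventual period $n$ must have genus at least $2$ once $m$ is large. That is a deep theorem, not a routine genus count. Your fallbacks do not supply it:
\begin{itemize}
\item The Doyle--Hindes bounds are either conditional on $abc$ or depend on the places of bad reduction, so they are not uniform in $c$.
\item The paper proves no tail lemma, and its Kummer descent relies on the product identity of Lemma~\ref{lem:telescoping}, which is special to periodic points.
\item The condition $y^d = w - c$ imposes nothing at the first level, since a first tail point is just $\zeta P$ with $P$ periodic and $\zeta \in \mu_d(K) \setminus \{1\}$. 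At deeper levels it simply reproduces the preperiodic curves.
\item The rational backward tree has branching at most $\#\mu_d(K)$, so it does not grow exponentially. Height counting cannot give uniformity either, because preperiodic points have height at most about $h(c)/d$, so counting by height yields bounds that grow with $h(c)$.
\end{itemize}

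The missing input is Doyle--Poonen's theorem that the irreducible components of the preperiodic curves for $z^d + c$ have gonality, hence genus, tending to infinity. The paper's proof is just Theorem~\ref{thm:main} combined with \cite[Theorem~1.8]{doyle-poonen}, which packages exactly this passage from periodic to preperiodic uniformity. With it, your first route closes as follows.
\begin{itemize}
\item Every $K$-rational cycle has length at most $B_{\Per}(K,d)$.
\item Choose $m_0$ so that for every $n \le B_{\Per}(K,d)$, all geometric components of the curve for preperiod $m_0$ and eventual period $n$ have genus at least $2$.
\item Faltings then leaves only finitely many $c \in K$ admitting a $K$-rational point of preperiod $m_0$. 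By iterating, these are also the only $c$ admitting a point of any preperiod at least $m_0$.
\item Each of those finitely many maps has finitely many $K$-rational preperiodic points by Northcott.
\item For every other $c$, each $K$-rational preperiodic point is a root of some $f_{d,c}^{m+n}(z) - f_{d,c}^{m}(z)$ with $m < m_0$ and $n \le B_{\Per}(K,d)$. This gives a uniform count.
\end{itemize}
The exceptional $c$ are not determined, so the result is non-effective. That is why the corollary, unlike Theorem~\ref{thm:main}, carries no effectivity claim.

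Incidentally, your own preimage observation already settles the case $d = 3$. Let $w \in K$ be a first tail point, and let $P \in K$ be the periodic point in the cycle with $f_{3,c}(P) = f_{3,c}(w)$. Then $w^3 = P^3$ and $P \neq 0$ (otherwise $w = 0 = P$). Hence $w/P \in \mu_3(K) \setminus \{1\} = \emptyset$, a contradiction. So $\PrePer_K(f_{3,c}) = \Per_K(f_{3,c})$ whenever $\Q(\sqrt{-3}) \not\subseteq K$. The deep input is needed only when $\mu_d(K) \neq \{1\}$, for instance for every even $d$.
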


The bound in Corollary~\ref{cor:ubc-preperiodic-unconditional}
is non-effective.
Its arithmetic input is the \textit{effective} uniform bound
for periodic points stated below as Theorem~\ref{thm:main}.
Corollary~\ref{cor:ubc-preperiodic-unconditional}
then follows from the non-effective passage
from periodic to preperiodic
uniformity due to Doyle--Poonen
\cite[Theorem~1.8]{doyle-poonen}.

The arithmetic input behind
Corollary~\ref{cor:ubc-preperiodic-unconditional}
is the following uniform bound for periodic points,
which does not depend on the number of
places of bad reduction or
the $abc$ conjecture
and is furthermore effective.

\begin{maintheorem}
\label{thm:main}
	Let $K$ be a number field, and let $d \geq 3$.
	If $d = 3$, assume that
	$\Q(\sqrt{-3}) \not\subseteq K$.
	There is an effectively
	computable constant $B_{\Per}(K, d)$ such that
	$z^d + c$ has at most $B_{\Per}(K, d)$
	many $K$-rational periodic points
	for every $c \in K$.
\end{maintheorem}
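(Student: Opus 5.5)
The bound on the number of periodic points splits into two uniform bounds: one on the possible exact periods $n$ of $K$-rational periodic points of $f_{d,c}$, and one on the number of $K$-rational points of each fixed period. The second is elementary. A point of exact period $n$ is a root of $f_{d,c}^n(z) - z$, which has degree $d^n$. So once every period is at most some effective $N(K,d)$, the total count is at most $\sum_{n \le N} d^n$. All the work therefore goes into bounding the period $n$ uniformly in $c \in K$.

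\textbf{Step 1: reduce to the dynatomic curves.} For each $n$, the pairs $(z,c)$ with $z$ of exact period $n$ for $f_{d,c}$ lie on the dynatomic curve $Y_1(n)$, cut out by the $n$-th dynatomic polynomial $\Phi_n(z,c)$. Let $X_1(n)$ be its smooth projective model. It suffices to show that for $n$ larger than an effective $N(K,d)$, the set $Y_1(n)(K)$ is empty, or consists only of finitely many explicitly controlled exceptional points.

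\textbf{Step 2: map to a low-genus curve with known points.} I would not treat $X_1(n)$ directly. Instead I would use the automorphisms and quotient maps available in the unicritical family. The substitution $z \mapsto \zeta z$ with $\zeta^{d-1}=1$, together with the cyclic shift of the cycle, gives correspondences and quotients. Composing, I would map $X_1(n)$ for large $n$ onto a fixed auxiliary curve $C_d$ of small genus, chosen so that $C_d(K)$ is finite and effectively computable:
\begin{itemize}
\item For $d \geq 4$, take $C_d$ to be a curve such as a superelliptic curve $y^{d-1} = \ldots$ attached to the critical orbit. I expect its genus to be at least $2$, with effectively determinable rational points via a Chabauty-type or Runge-type argument.
\item For $d = 3$, the natural target is an elliptic curve with $j = 0$, that is, a CM curve by $\Z[\zeta_3]$. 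Its Mordell--Weil rank over $K$ can be controlled, for instance through a descent that collapses, exactly when $\Q(\sqrt{-3}) \not\subseteq K$.
\end{itemize}
This case split matches the hypotheses of Theorem~\ref{thm:main} and the abstract's remark that the remaining cases reduce to rank boundedness.

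\textbf{Step 3: height and reduction arguments.} The map from $Y_1(n)$ to $C_d$ should have the property that a $K$-point of large period yields a $K$-point on $C_d$ whose height grows with $n$. A possible source is the fact that some reduction of $f_{d,c}$ at a prime above $p \mid d$ must be bad. Here I would use the earlier result that good reduction at a place above a prime dividing $d$ bounds the number of periodic points by $d^{[K:\Q]}$. The points that are not covered by that result are forced to have $p$-adic valuation of $c$ negative. This should make their images avoid the finitely many known points of $C_d(K)$, which gives a contradiction for large $n$.

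\textbf{Main obstacle.} The hardest step is constructing the map in Step 2 with finitely many effectively computable target points. For $d=3$ this is the question of showing rank zero, or rank bounded with an effective height bound, on the $j=0$ twists over $K$. I would first try a $\sqrt{-3}$-isogeny descent, whose Selmer group should be uniformly small when $\zeta_3 \notin K$. I expect the details to differ from this sketch.
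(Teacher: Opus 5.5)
There is a genuine gap. Your first reduction is harmless: a uniform bound $N$ on periods gives at most $\sum_{j\le N}d^j$ points. But Step~2 carries the entire content, and it is not supplied. You never construct a morphism from $X_1(n)$ to a curve $C_d$ independent of $n$. The automorphisms you name, $(c,z)\mapsto(\eta^{-1}c,\eta^{-1}z)$ for $\eta\in\mu_{d-1}$ and $(c,z)\mapsto(c,f_{d,c}(z))$, only give quotients of $X_1(n)$ that still depend on $n$.

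Even granting such a map, finiteness of $C_d(K)$ would only confine $Y_1(n)(K)$ to finitely many fibres. You would still have to show those fibres contain no $K$-point of exact period $n$, and Step~3's hope that heights grow with $n$ has no mechanism behind it. Asking for ``effectively determinable'' rational points on a genus $\geq 2$ curve is effective Mordell, which is not available; the theorem only needs an effective bound on the \emph{number} of points. At $d=4$, the fixed curves that actually carry the periodic points are genus-$1$ twisted Fermat cubics, which can have infinitely many points, so counting points on a fixed curve cannot close that case.

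Your $d=3$ step is false. A curve with $j=0$ over a field without $\sqrt{-3}$ can have positive rank: on $y^2=x^3+2$ over $\Q$, the point $(-1,1)$ is non-torsion since $x(2P)=17/4$. There is no uniform smallness of $\sqrt{-3}$-Selmer groups. A uniform rank bound over a family of twists is exactly the kind of hypothesis the paper has to \emph{assume} in the excluded case $\Q(\sqrt{-3})\subseteq K$.

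The missing idea is a Kummer descent, independent of both $c$ and $n$, that comes from the tame places $w\nmid d$ rather than the places above $d$. Since $f_{d,c}(\eta z)=f_{d,c}(z)$ for $\eta\in\mu_d$, each $\eta Q$ has bounded orbit. For distinct $P,Q\in\Per_K(f_{d,c})$ and distinct $\zeta,\xi\in\mu_d\setminus\{1\}$, one gets $w\bigl((P-\zeta Q)/(P-\xi Q)\bigr)\equiv 0 \pmod{d-1}$ at every finite $w\nmid d$ of $K(\mu_d)$, by two cases. If $w(c)\ge 0$, the telescoping identity $\prod_{j<n}G_d\bigl(f_{d,c}^j(P),f_{d,c}^j(Q)\bigr)=1$, with $G_d(X,Y)=\prod_{\eta\ne1}(X-\eta Y)$, makes each $P-\eta Q$ a unit. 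If $w(c)=-ds<0$, all bounded points have valuation $-s$, and $w(X-Y)=-s+(d-1)s\,\ell_w(X,Y)$, where $\ell_w$ is the first iterate at which the normalized reductions differ.

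Hence the class lies in a finite group $\calc_{K,d}$ built from $S$-units and $(d-1)$-torsion in the $S$-class group. Two such relations, using three distinct nontrivial $\zeta_i$ (so $d\ge4$), put $P/Q$ on one of finitely many twisted Fermat curves $a_2V^{d-1}=\lambda a_1U^{d-1}+(1-\lambda)W^{d-1}$. The three cases then go as follows.

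For $d\ge5$ these curves have genus $(d-2)(d-3)/2>1$. Since $P\mapsto P/Q$ is injective, an effective point count (Yu--Yuan--Zhou) bounds $\#\Per_K(f_{d,c})$ directly, with no period bound needed.

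For $d=4$ they are $j=0$ elliptic curves; this is where $j=0$ really enters. Instead of bounding ranks, the paper uses that \emph{every} quotient $P_1/P_2$ of distinct periodic points lies in the same fixed value set. It then combines Mordell--Lang on $E\times E\times E'$ with Ramsey's theorem.

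For $d=3$ with $\sqrt{-3}\notin K$, the argument is purely local at a place $v\nmid 3$ not split in $K(\mu_3)/K$. If $v(c)\ge0$, Pezda's local bound $C_v$ bounds all periods; this is where a period-first framework does appear. If $v(c)<0$, every normalized bounded point $y$ satisfies $\bar y^{\,3}=1$, hence $\bar y=1$, because the residue field has no nontrivial cube roots of unity. The one-step distance recurrence then forbids two distinct periodic points, giving the bound $(3^{C_v+1}-3)/2$.
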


\begin{introremark}
	Theorem~\ref{thm:main} is independent of the number of
	places of bad reduction, but its bound may depend on the
	field $K$ itself.
	Since $\Q(\sqrt{-3})$ is imaginary quadratic,
	the $d = 3$ case includes every totally real field
	and every field of odd degree.
	Section~\ref{sec:quantitative}
	gives explicit versions of the bound
	and explains why the argument is not degree-uniform.
	\hyperref[app:example-effective-constants]{Appendix 1}
	numerically evaluates the bound
	in several examples over $\Q$ and quadratic fields.
	Section~\ref{sec:related-families}
	explains how to mildly extend
	Theorem~\ref{thm:main} to certain families of
	non-unicritical polynomials.
\end{introremark}

Separately, we use another approach
to relate periodic points of $f_{d, c}$
to elliptic curves of large rank.
In particular, it gives a proof of
uniform boundedness of periodic points of $f_{3, c}$
that is conditional on
the boundedness of Mordell--Weil ranks
for a particular family of elliptic curves,
a hypothesis that is
consistent with a more general folklore conjecture
that dates back to Honda \cite[p.~98]{honda}
(see also Pasten \cite{pasten-bounded-ranks}).
The heuristics of Park--Poonen--Voight--Wood
\cite{park-poonen-voight-wood} support
the boundedness of
ranks of elliptic curves over $\Q$
and discuss analogous models over other global fields.

Let $\mu_d$ denote the $d$-th roots of unity
in an algebraic closure $\overline{K}$ of $K$.
For distinct
$\zeta, \xi \in \mu_d \sm \set{1}$,
an element $u \in K(\mu_d)^\times$, and
distinct nonzero periodic points $Q, R$
of $f_{d, c}$,
let $E_{Q, R, \zeta, \xi}^{(u)}$ be the smooth projective
curve with affine model
\[
	E_{Q, R, \zeta, \xi}^{(u)}:\quad
	uY^2
		= (X - \zeta Q)(X - \xi Q)
		(X - \zeta R)(X - \xi R).
\]
When $d = 3$, the roots $\zeta$ and $\xi$ are
the two nontrivial cube roots of unity, and
the family $E_{Q, R, \zeta, \xi}^{(u)}$ has affine model
\[
	E_{Q, R}^{(u)}:\quad
	uY^2
		= (X^2 + QX + Q^2)(X^2 + RX + R^2).
\]
If $\mu_3 \subset K$
(equivalently, if $\Q(\sqrt{-3}) \subseteq K$),
then this family is defined over $K$.

\begin{maintheorem}
\label{thm:elliptic-curves-large-rank}
	Let $K$ be a number field, let $d \geq 3$ be odd, and
	let $c \in K$.
	There are constants $a(K, d) > 0$ and
	$b(K, d) \geq 0$ depending only on $K$ and $d$
	such that:
	if $f_{d, c}$ has a $K$-rational cycle
	of length $n \geq 5$,
	then there are at least
	$\lceil(n - 2)/12\rceil$
	geometric isomorphism classes
	of elliptic curves over $K(\mu_d)$
	with full $K(\mu_d)$-rational $2$-torsion
	such that each class contains an
	elliptic curve $E/K(\mu_d)$ in the family
	$E_{Q, R, \zeta, \xi}^{(u)}$ with
	\[
		\rank E\paren{K(\mu_d)}
		\geq a(K, d)\log n - b(K, d).
	\]

	Moreover, suppose that $d = 3$ and
	$\Q(\sqrt{-3}) \subseteq K$.
	If the ranks of $E_{Q, R}^{(u)}$
	over $K$ are uniformly bounded
	as $u, Q, R \in K^\times$ vary
	such that $Q/R \notin \mu_3$,
	then there is a constant $B_{\Per}(K, 3)$
	such that
	$z^3 + c$ has at most $B_{\Per}(K, 3)$
	many $K$-rational periodic points
	for every $c \in K$.
\end{maintheorem}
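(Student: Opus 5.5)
Here is the plan. First I produce many points of the form "square times a fixed $u$" on the curves $E^{(u)}_{Q,R,\zeta,\xi}$ from a single cycle. Then I convert this abundance into large rank.

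\emph{Source of the points.} Let $x_0,\dots,x_{n-1}$ be the $K$-rational cycle, with $x_{i+1}=x_i^d+c$. For all $i,j$ we have
\[
x_{i+1}-x_{j+1}=x_i^d-x_j^d=\prod_{\eta\in\mu_d}(x_i-\eta x_j).
\]
Taking the product over the cycle of the consecutive differences, the factors $\eta=1$ telescope. So $\prod_i\prod_{\eta\neq1}(x_i-\eta x_{i+k})=1$ for every shift $k$.

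The useful consequence is this. For a fixed periodic point $P=x_m$ and nonzero periodic points $Q,R$, the values $\prod_{\eta\in\{\zeta,\xi\}}(P-\eta Q)(P-\eta R)$ are highly constrained: they are $S$-units up to bounded factors. Here $S$ has to be controlled uniformly. For that I would use the dynatomic-style fact that $x_i-\eta x_j$ is, up to units supported at primes above $d$, coprime to most of the cycle.

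\emph{Building the points.} Instead of the unit approach, which is not uniform, I would do the following.
\begin{itemize}
\item Fix $Q=x_a$ and $R=x_b$, with $Q/R\notin\mu_d$. Such a pair can be chosen because at most $d$ cycle elements share a $d$-th power.
\item For each other cycle element $P=x_m$, set $X=P$. The right-hand side is $F(P)=(P-\zeta Q)(P-\xi Q)(P-\zeta R)(P-\xi R)$.
\item Because $d$ is odd, the nontrivial roots come in conjugate pairs $\{\zeta,\zeta^{-1}\}$. Take $\xi=\zeta^{-1}$. Then $F(P)$ is a ratio of norms of iterated differences.
\item Partition the $n$ cycle points into about $12$ residue classes. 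This is the source of the $\lceil (n-2)/12\rceil$.
\item Group the $P$ whose values $F(P)$ lie in the same class of $K(\mu_d)^\times/K(\mu_d)^{\times2}$ modulo a fixed finite subgroup. The subgroup is generated by units and primes above $2d$, and its size depends only on $K$ and $d$.
\item By pigeonhole, some twist $u$ receives at least $n/C(K,d)$ points $(P,\sqrt{F(P)/u})$ on $E^{(u)}$.
\end{itemize}
The geometric isomorphism classes are counted through the $j$-invariant, that is, the cross-ratio of $\zeta Q,\xi Q,\zeta R,\xi R$. This depends only on $Q/R$. Different ratios give at most boundedly many ($\le 12$ via the $\lambda$-orbit of size 6 and the $Q\leftrightarrow R$ symmetry) coincidences, which yields the stated count of classes. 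Full $2$-torsion follows after sending one rational root to infinity, since all four roots lie in $K(\mu_d)$.

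\emph{Rank from many points.} The points $P_m$ have $x$-coordinates of bounded height relative to each other; indeed $\hat h_{f}(P)=0$ for periodic points, so $h(P)\le h(c)/(d-1)+O(1)$. Hence the points on $E^{(u)}$ have canonical height at most $\kappa\,(h(c)+h(u)+1)$, while the height of $E$ itself is comparable to $h(c)$ or larger.

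I would then run a standard packing argument. A lower bound of Hindry--Silverman/Lang type, $\hat h(P)\ge \epsilon(K)\,h(E)$ for non-torsion $P$, combined with the fact that the points are distinct with differences also non-torsion (torsion is bounded by Merel), puts $N\approx n/C$ points in a lattice of rank $r$ in a ball of radius $\sqrt{\kappa h}$ with pairwise separation $\sqrt{\epsilon h}$. This gives $N\le (1+2\sqrt{\kappa/\epsilon})^{r}\cdot\#E_{\rm tors}$, hence $r\ge a\log n-b$. The main obstacle is making $\epsilon/\kappa$ uniform, meaning the height comparison between $h(P)$, $h(u)$, and $h(E)$. I would handle it by choosing $u$ from the bounded set of square-class representatives, so that $h(u)=O(h(c)+1)$, and by invoking Hindry--Silverman under full $2$-torsion, where it is unconditional (via Szpiro for Frey-type curves, as in Petsche's work).

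\emph{The second statement.} For $d=3$ with $\mu_3\subset K$ the curves are $E^{(u)}_{Q,R}$. Uniformly bounded ranks together with the first statement force $a\log n-b\le r_{\max}$, which bounds the cycle length $n$. Bounded cycle lengths bound the number of periodic points, since each cycle of length $n$ consists of roots of the $n$-th dynatomic polynomial, which has degree $\le 3^n$. This gives $B_{\Per}(K,3)$.
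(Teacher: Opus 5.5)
Your proposal has two genuine gaps, and together they bypass both of the paper's key inputs. The first gap is the finiteness of the set of twists. Your pigeonhole needs the square classes $[F_{Q,R}(P)]$ to lie in one finite set that depends only on $K$ and $d$. Grouping them ``modulo a fixed finite subgroup'' of the infinite group $K(\mu_d)^\times/(K(\mu_d)^\times)^2$ does not produce finitely many groups. You assert the finiteness without proof, and the mechanism you sketch for it is false: $P-\eta Q$ is not a unit away from $d$. At a place $w\nmid d$ with $w(c)=-ds_w<0$, Proposition~\ref{prop:tame-distance} gives $w(P-\eta Q)=-s_w+(d-1)s_w\ell_w$, and the separation time $\ell_w$ is unbounded. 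What does hold at such places is only the congruence $w(P-\eta Q)\equiv -s_w \pmod{d-1}$, which comes from Lemmas~\ref{lem:shell} and~\ref{lem:tame-normalization} and the one-step recurrence. Where $w(c)\ge 0$, you instead have the unit statement of Lemma~\ref{lem:telescoping}. Because $d-1$ is even, these two facts make $w(F_{Q,R}(P))$ even at every finite $w\nmid d$. Hence $[F_{Q,R}(P)]$ lies in the image of the finite group $\calc_{K,d}$. This is Theorem~\ref{thm:power-classes} and Lemma~\ref{lem:odd-degree-lift}, and it is the step your proposal is missing.

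The second gap is the passage from many points to large rank. Your packing argument needs a lower bound $\hat h(A)\ge \epsilon\,h(E)$ for nontorsion points, with $\epsilon$ uniform. That is Lang's height conjecture, which is not known for curves with full $2$-torsion. The Hindry--Silverman and Petsche bounds depend on the Szpiro ratio, and bounding that ratio uniformly for Frey-type curves is essentially the $abc$ conjecture, which is exactly the input the theorem is meant to avoid. Even granting Lang, your claim that $h(E)$ is comparable to $h(c)$ is unsupported. The substitution $X=QX'$ gives $E^{(u)}_{Q,R}\cong E^{(u)}_{1,R/Q}$ over $K(\mu_d)$, so $h(E)$ depends only on $R/Q$ and $u$. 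The lifted points, however, have $x$-coordinates $P/Q$, and you have not bounded their heights in terms of $h(E)$.

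The paper uses no heights at all. It lifts $P$ to a point $A$ on the twist $E^{(u(P))}_{Q,R}$ and lifts $f_{d,c}(P)$ to a point $B$ on $E^{(u(f_{d,c}(P)))}_{Q,R}$. The pair $(A,B)$ then lies on the curve $x(B)=f_{d,c}(x(A))$ in the product of the two twists. Since $d\ge 3$ exceeds the pole multiplicities of the degree-$2$ map $x$, Lemma~\ref{lem:no-elliptic-coset} excludes elliptic translates. Uniform Mordell--Lang of Gao--Ge--K\"uhne then bounds the number of such pairs by $4(d+1)\kappa_d^{1+r_1+r_2}$, where $r_1,r_2$ are the ranks of the two twists. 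This bound depends only on the degree $4(d+1)$ of the curve. Summing over the $(\#\calu_{K,d})^2$ pairs of twists and taking logarithms gives the rank bound.

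Your count of isomorphism classes via the degree-$12$ map $R/Q\mapsto j$ is correct; note that the $12$ comes from this map, not from partitioning the cycle. Your deduction of the cubic statement from the first part is also fine.
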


\begin{introremark}
	\label{rem:elliptic-curves-large-rank-precise}
	The constants $a(K, d)$ and $b(K, d)$
	in Theorem~\ref{thm:elliptic-curves-large-rank}
	are specified in the more precise estimate
	\eqref{eq:odd-rank-lower};
	the precise bound $B_{\Per}(K, 3)$ is stated
	in Corollary~\ref{cor:conditional-d3}.
	We also prove an analogous statement
	for $(d - 2)$-dimensional
	principally polarized Jacobians
	of large Mordell--Weil rank
	in Theorem~\ref{thm:jacobians-large-rank}
\end{introremark}

\begin{introremark}
	\label{rem:elliptic-curves-large-rank-complementary}
	Theorem~\ref{thm:elliptic-curves-large-rank} keeps the field
	$K(\mu_d)$ fixed and varies the elliptic curve.
	In Section~\ref{sec:fixed-elliptic-fields},
	we prove a complementary statement:
	Theorem~\ref{thm:fixed-elliptic-curve-rank}
	says that the Mordell--Weil rank
	of a fixed elliptic curve grows
	over field extensions generated by periodic points
	of $f_{d, c}$ and finitely many square roots.
	Proposition~\ref{prop:iterated-splitting-rank}
	gives an explicit example of rank growth
	for the fixed non-CM elliptic curve
	$E : Y^2 = (X^2 + 1)(X + 1 - i)$ over $\Q(i)$;
	the extensions $K_m := \Q(i)(f_{2, i}^{-m}(0))$
	are unramified outside $(1 + i)$,
	and $E$ has the following rank growth
	over $K_m$ for every $m \geq 3$:
	\[
		\rank E(K_m)
		\geq \frac{m\log 2 - \log(256\cdot 10^{13})}
		{2\log\paren{1 + \frac{5}{4\sqrt{2}}}}.
	\]
\end{introremark}

The elliptic curve construction
of Theorem~\ref{thm:elliptic-curves-large-rank}
assumes that $d$ is odd,
and therefore does not apply when $d = 2$.
So we make another separate construction
that covers even degrees $d$,
using the Jacobians of the dynatomic curves
for the unicritical family.
For $d \geq 2$ and $n \geq 1$,
the affine dynatomic curve $Y_{1, d}(n)$
for the family $f_{d, c}(z) = z^d + c$
is the closure of the locus
of pairs $(c, P) \in \A^2$
for which $P$ has exact
period $n$ under $f_{d, c}$;
at special parameters, the closure can also
contain points of smaller exact period
(see \cite[Section~4.1]{silverman-dynamics}
for a standard reference).
Let $X_{1, d}(n)$ be the smooth projective model
of $Y_{1, d}(n)$,
let $J_{1, d}(n)$ be the Jacobian of
$X_{1, d}(n)$, and write $\varphi$
for Euler's totient function.

\begin{maintheorem}
\label{thm:unicritical-dynatomic-rank}
	Let $K$ be a number field, and let $d \geq 2$.
	There is an integer $N_{K, d} \geq 1$
	such that:
	if $c \in K$ and $f_{d, c}$ has a $K$-rational
	point of exact period $n > N_{K, d}$, then
	\[
		\rank J_{1, d}(n)\paren{K(\mu_d)} \geq
		\max_{\ell^a \parallel n}
		\varphi\paren{\ell^a},
	\]
	where the maximum is over the exact prime-power
	divisors of $n$.

	Furthermore, if the ranks
	$\rank J_{1, d}(n)\paren{K(\mu_d)}$ are uniformly
	bounded as $n$ varies,
	then there is a constant
	$B_{\Per}(K, d)$ such that
	$z^d + c$ has at most $B_{\Per}(K, d)$
	many $K$-rational periodic points
	for every $c \in K$.
\end{maintheorem}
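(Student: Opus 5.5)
We need to plan a proof of Theorem~\ref{thm:unicritical-dynatomic-rank}. The idea is to produce many independent $K(\mu_d)$-rational points on $J_{1,d}(n)$ out of a single $K$-rational point of exact period $n$. The plan has three steps: build divisor classes from symmetries of the dynatomic curve, show that their span has large rank, and then deduce the conditional uniform bound.

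\emph{Step 1: divisor classes from a rational cycle.} Suppose $P \in K$ has exact period $n$ under $f_{d,c}$, so $(c,P) \in Y_{1,d}(n)(K)$. Two families of automorphisms of $X_{1,d}(n)$ are defined over $K(\mu_d)$:
\begin{itemize}
\item the dynamical automorphism $\sigma:(c,z)\mapsto(c,f_{d,c}(z))$, of order $n$;
\item the twisting automorphisms $(c,z)\mapsto(\zeta c,\zeta z)$ for $\zeta\in\mu_{d-1}$, coming from the conjugation symmetry of the family. These commute with $\sigma$ up to the obvious twist.
\end{itemize}
Set $x_i=\sigma^i(c,P)$. Since $x_{i+1}$ has first coordinate $f^{i+1}_{d,c}(P)$, the points $x_0,\dots,x_{n-1}$ are $n$ distinct $K$-points. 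Fix a base point $\infty_0$ among the cusps; the cusps are defined over $K(\mu_d)$ after possibly enlarging by a fixed amount, and I would check this via the Puiseux description of the cusps. Take the classes $D_i=[x_i-\infty_0]\in J_{1,d}(n)(K(\mu_d))$. Then $\sigma$ acts on the subgroup $\Lambda=\langle D_i\rangle$ through $\Z[\Z/n]$, modulo cuspidal corrections. The cuspidal subgroup is finitely generated and, I expect, of bounded rank, or at least of controlled torsion, which can be absorbed.

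\emph{Step 2: lower bound on the rank of $\Lambda$.} Decompose $\Lambda\otimes\Q$ under $\Q[\Z/n]=\prod_{m\mid n}\Q(\mu_m)$. If the $\Q(\mu_m)$-isotypic component of the class of $D_0$ is nonzero, it contributes rank at least $\varphi(m)$, because $\Q(\mu_m)$ is a field of degree $\varphi(m)$ acting on it. It therefore suffices to show that for some prime power $\ell^a\parallel n$ the $\ell^a$-component is nonzero.

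Suppose instead that every such component vanishes. Then there is an explicit element of the augmentation-type ideal, for example the product of the norms $\sum_{j}\sigma^{jn/\ell}$ over the primes $\ell\mid n$, that kills $D_0$ up to torsion. This yields a nonconstant function on $X_{1,d}(n)$ whose divisor is supported on a short combination of the $x_i$ and the cusps. Its degree is bounded in terms of the number of distinct primes of $n$, and the points involved are $K$-rational. The gonality of $X_{1,d}(n)$ grows with $n$; lower bounds of the form $\gg n$ or $\gg$ genus$/\ldots$ are known, for example via Doyle–Poonen or Castelnuovo–Severi style arguments. A function of bounded degree is then impossible once $n>N_{K,d}$.

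A cleaner route I would try first is a height argument. Apply the N\'eron–Tate height to $\sum a_iD_i$. If a relation holds, pulling back along the map to the $c$-line gives a function $g$ with $g(x_i)$ controlled. The $x_i$ lie on the same cycle of $f_{d,c}$, whose canonical height is $0$, while the relation forces a bounded-height identity among the $f^i(P)$, contradicting the Northcott-type finiteness established earlier. This is the main obstacle: making the degenerate relation produce a contradiction uniformly, and pinning down exactly where $N_{K,d}$ and $K(\mu_d)$ enter. I expect to need the analysis of the cusps, including their rationality over $K(\mu_d)$ and the fact that they do not cancel the $\sigma$-orbit contribution.

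\emph{Step 3: conditional uniformity.} Assume $\rank J_{1,d}(n)(K(\mu_d))\le r$ for all $n$. Step 2 gives $\varphi(\ell^a)\le r$ for every $\ell^a\parallel n$ whenever $n>N_{K,d}$ admits a $K$-rational point. Hence $n$ is a product of prime powers from a finite set, so $n$ is bounded by some $N'$. Periodic points of period at most $N'$ are roots of $f^m_{d,c}(z)-z$ for $m\le N'$, giving at most $\sum_{m\le N'}d^m$ of them. This defines $B_{\Per}(K,d)$.
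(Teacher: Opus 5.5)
There is a genuine gap in Step 2: you never control torsion. Every relation you extract from a vanishing cyclotomic component lives in $\Lambda\otimes\Q$, so it holds only up to torsion. For instance, suppose all components $e_{n,m}$ with $\ell^a\mid m$ vanish. Then $(\sigma_n^{n/\ell}-1)D_0$ is torsion, say of order $t$. (This, not your product of norms, is the relevant annihilator: $\sum_j\sigma_n^{jn/\ell}$ acts as multiplication by $\ell$ on the surviving components.) Converting this into a linear equivalence produces a function of degree about $2t$. Nothing in your argument bounds $t$. The available bounds for torsion in $J_{1,d}(n)(K(\mu_d))$, e.g.\ via reduction at a good prime, are exponential in the genus, while the gonality is at most about half the genus, so no contradiction with gonality growth follows.

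Indeed, without torsion control you cannot even exclude that $D_0$ is itself torsion, in which case $\Lambda$ has rank $0$. A gonality argument can only show that $\sigma_n$ has exact order $n$ on a group such as $\Z[\sigma_n]\delta$, not on its tensor product with $\Q$. Your fixed base cusp $\infty_0$ makes this worse: $\langle[x_i-\infty_0]\rangle$ is not $\sigma_n$-stable, and you give no reason for the correction classes $[\infty_0-\sigma_n^j\infty_0]$ to be torsion or of bounded rank. The height route has no mechanism connecting the vanishing canonical height of $P$ for $f_{d,c}$ to N\'eron--Tate heights on $J_{1,d}(n)$. Finally, a nonzero component for \emph{some} $\ell^a\parallel n$ does not give the maximum in the statement, and your Step 3 actually needs one for \emph{every} $\ell^a\parallel n$.

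The missing idea is to choose the cusp depending on $P$ so that the difference class lands in a torsion-free group.
\begin{enumerate}
\item Fix a prime $p\ge5$ with $p\nmid d$ that splits completely in $K(\mu_d)$, and a place $v\mid p$, so $K(\mu_d)_v=\Q_p$. Pezda's local period bound $C_v$ forces $v(c)<0$ once $n>C_v$. Together with an effective $N_0(d)$ beyond which $X_{1,d}(n)$ has gonality greater than $2$, this is where $N_{K,d}=\max\{C_v,N_0(d)\}$ comes from.
\item With $v(c)<0$, take the residues $\epsilon_j\in\mu_d$ of the normalized orbit $f_{d,c}^j(P)/\beta$, where $\beta^d=-c$; they form a sequence of exact period $n$. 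Morton's Laurent series attached to $(\epsilon_j)$ gives a cusp $s_P\in X_{1,d}(n)(K(\mu_d))$, so no enlargement of the field is needed. Using the mixed-characteristic model of Doyle--Krieger--Obus--Pries--Rubinstein-Salzedo--West, $\sigma_n^jx_P$ and $\sigma_n^js_P$ have the same smooth reduction modulo $v$ for every $j$.
\item Set $\delta_P:=[x_P-s_P]$. Every $\sigma_n^j\delta_P$ lies in the kernel of reduction of the N\'eron model over $\Z_p$, which is torsion-free. Hence $\Lambda_P:=\Z[\sigma_n]\delta_P$ is torsion-free and $\sigma_n$-stable.
\item If $\sigma_n^h\delta_P=\delta_P$ with $h\mid n$, the degree-$2$ divisors $\sigma_n^hx_P+s_P$ and $x_P+\sigma_n^hs_P$ are linearly equivalent. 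Gonality greater than $2$ forces them to coincide, and comparing the points above $c$ gives $h=n$.
\item Torsion-freeness transfers this exact order to $\Lambda_P\otimes\Q$. So the $m$ with $e_{n,m}(\Lambda_P\otimes\Q)\neq0$ have least common multiple $n$. Thus every $\ell^a\parallel n$ divides some such $m$, giving rank at least $\varphi(m)\ge\varphi(\ell^a)$.
\end{enumerate}
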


\begin{introremark}
	Proposition~\ref{prop:unicritical-cyclotomic-rank}
	gives a sharper, but more notation-heavy, form of
	Theorem~\ref{thm:unicritical-dynatomic-rank}:
	it is enough to bound the ranks of certain
	cyclotomic factors of $J_{1, d}(n)$ rather than
	the rank of the whole Jacobian.
	Furthermore, if the uniform bound on
	the Mordell--Weil rank is effectively computable,
	then the constant $B_{\Per}(K, d)$
	is also effectively computable.
\end{introremark}

The same non-effective passage from periodic to
preperiodic uniformity due to
Doyle--Poonen~\cite[Theorem~1.8]{doyle-poonen}
applies to the conditional periodic point bounds
in Theorems~\ref{thm:elliptic-curves-large-rank} and
\ref{thm:unicritical-dynatomic-rank}.
It gives following bounds
in the two cases
of the uniform boundedness conjecture
for preperiodic points of unicritical polynomials
not covered by
Corollary~\ref{cor:ubc-preperiodic-unconditional}.

\begin{corollary}
\label{cor:preperiodic}
	Let $K$ be a number field and suppose one of the
	following holds:
	\begin{itemize}
		\item $d = 2$ and the ranks of dynatomic Jacobians
			$\rank J_{1, 2}(n)\paren{K}$ are uniformly bounded as
			$n$ varies;
		\item $d = 3$, $\Q(\sqrt{-3}) \subseteq K$,
			and the ranks of elliptic curves
			in the family $E_{Q, R}^{(u)}$
			over $K$ are uniformly bounded
			as $u, Q, R \in K^\times$ vary subject to
			$Q/R \notin \mu_3$.
	\end{itemize}
	There is a constant $B_{\PrePer}(K, d)$ such that
	$z^d + c$ has at most $B_{\PrePer}(K, d)$
	preperiodic points for every $c \in K$.
\end{corollary}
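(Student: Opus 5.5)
The plan is to derive Corollary~\ref{cor:preperiodic} from the conditional periodic bounds, exactly as Corollary~\ref{cor:ubc-preperiodic-unconditional} is derived from Theorem~\ref{thm:main}. The key input is Doyle--Poonen \cite[Theorem~1.8]{doyle-poonen}: for the unicritical family $z^d + c$ over a number field $K$, a uniform bound on the number of $K$-rational periodic points (equivalently, on the cycle lengths) yields a uniform bound on the number of $K$-rational preperiodic points. This passage is non-effective, which is why $B_{\PrePer}(K, d)$ carries no effectivity claim. So it suffices to produce, in each of the two cases, a constant $B_{\Per}(K, d)$ bounding the $K$-rational periodic points of $f_{d, c}$ uniformly in $c \in K$.

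\emph{Case $d = 3$, $\Q(\sqrt{-3}) \subseteq K$.} Here the hypothesis is word for word the hypothesis of the second part of Theorem~\ref{thm:elliptic-curves-large-rank}. Since $\mu_3 \subset K$, the family $E_{Q, R}^{(u)}$ is defined over $K$, with $u, Q, R \in K^\times$ and $Q/R \notin \mu_3$. That theorem then gives $B_{\Per}(K, 3)$ directly.

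\emph{Case $d = 2$.} Here $\mu_2 \subset K$, so $K(\mu_2) = K$ and $J_{1, 2}(n)(K(\mu_2)) = J_{1, 2}(n)(K)$. The hypothesis that $\rank J_{1,2}(n)(K)$ is uniformly bounded in $n$ is therefore exactly the hypothesis of the ``furthermore'' part of Theorem~\ref{thm:unicritical-dynatomic-rank} with $d = 2$. That part yields $B_{\Per}(K, 2)$.

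In both cases we then apply Doyle--Poonen to obtain $B_{\PrePer}(K, d)$. The main point to check is that the Doyle--Poonen theorem is stated in a form accepting a bound on periodic points for this family over a fixed number field, rather than requiring a bound on periods over all fields of bounded degree. I expect this to hold, because the paper already uses it this way in Corollary~\ref{cor:ubc-preperiodic-unconditional}. Beyond that check, nothing else is needed.
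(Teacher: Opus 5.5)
Your proposal is correct and follows the paper's own route. For $d=3$ you invoke the conditional part of Theorem~\ref{thm:elliptic-curves-large-rank}; for $d=2$ you invoke the second part of Theorem~\ref{thm:unicritical-dynatomic-rank} using $K(\mu_2)=K$; then Doyle--Poonen \cite[Theorem~1.8]{doyle-poonen} passes non-effectively from periodic to preperiodic bounds. Your worry about the fixed-field form is harmless: a bound on the number of $K$-rational periodic points bounds the cycle lengths over $K$, and the paper feeds exactly this input to Doyle--Poonen, as it does for Corollary~\ref{cor:ubc-preperiodic-unconditional}.
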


\subsection*{Proof strategy and organization}
First, we note the logical relationships between
our main results:
\begin{itemize}
	\item Theorems~\ref{thm:main},
		\ref{thm:elliptic-curves-large-rank},
		and \ref{thm:unicritical-dynatomic-rank}
		are each proved using different methods
		and are logically independent,
		although the proofs of
		Theorems~\ref{thm:main}
		and \ref{thm:elliptic-curves-large-rank}
		have a common starting point.
	\item Corollary~\ref{cor:ubc-preperiodic-unconditional}
		follows from Theorem~\ref{thm:main}
		by the non-effective passage of
		Doyle--Poonen~\cite[Theorem~1.8]{doyle-poonen}
		from periodic to preperiodic uniformity.
	\item Corollary~\ref{cor:preperiodic} follows
		from Theorems~\ref{thm:elliptic-curves-large-rank}
		and \ref{thm:unicritical-dynatomic-rank}
		by \cite[Theorem~1.8]{doyle-poonen}
		as well.
\end{itemize}
We describe the proofs for
Theorems~\ref{thm:main},
\ref{thm:elliptic-curves-large-rank},
and \ref{thm:unicritical-dynatomic-rank}
in section order.

The proofs of Theorems~\ref{thm:main} and
\ref{thm:elliptic-curves-large-rank}
begin with the same local descent.
Section~\ref{sec:local} first compares
periodic points at places of $K$
where $c$ is integral and nonintegral.
At a nonintegral tame place,
all points in a bounded orbit
have the same valuation.
Consequently, we show that quotients
$\frac{P - \zeta Q}{P - \xi Q}$
of periodic points $P, Q$ of $f_{d, c}$
and roots of unity $\zeta, \xi \in \mu_d$
must lie in a fixed finite subgroup $\calc_{K, d}$
of $K(\mu_d)^\times / (K(\mu_d)^\times)^{(d - 1)}$
that is independent of $c$.

Section~\ref{sec:proof-main} uses the classes
in $\calc_{K, d}$ to prove Theorem~\ref{thm:main}.
First, a pair of classes $(\alpha_1, \alpha_2)
\in \calc_{K, d} \times \calc_{K, d}$
determines a fixed twisted Fermat curve
$C_{\alpha_1, \alpha_2}$ of genus $(d - 2)(d - 3)/2$,
whose $K(\mu_d)$-rational points
parametrize all possible quotients
$P/Q$ of periodic points of $f_{d, c}$.
For $d \geq 5$,
Faltings's theorem~\cite{faltings-mordell}
gives the required finiteness,
while Yu--Yuan--Zhou~\cite{yu-yuan-zhou}
give the quantitative bound used later.
For $d = 4$, the twisted Fermat curves have genus $1$,
so they may have infinitely many rational points.
To prove finiteness of the quotients $P/Q$,
the $d = 4$ case of Theorem~\ref{thm:main}
rests on a new Diophantine finiteness theorem
(Theorem~\ref{thm:elliptic-quotients})
that is independent of the dynamics of $f_{d, c}$:
given elliptic curves $E_1, \ldots, E_\ell$ over any
number field $F$
and nonconstant $\phi_i \in F(E_i)$,
there is an effectively computable constant $B$
such that $\#X \leq B$
for every finite set $X \subseteq F^\times$
satisfying
$\frac{x}{y} \in \bigcup_{i = 1}^{\ell}
\paren{\phi_i\paren{E_i(F)} \cap F^\times}$
for all distinct $x, y \in X$.
This general finiteness theorem uses
a rigidity statement for
the closure of $\{\frac{\phi_1(P)}{\phi_1(Q)} = \phi_2(R)\}$
in a product of three elliptic curves;
an application of Mordell--Lang
\cite{faltings-mordell-lang,
faltings-lang-general,remond-count,
david-philippon-ii}
and basic Ramsey theory
then converts this
rigidity into the bound $B$.
For $d = 3$ with $\Q(\sqrt{-3}) \not\subset K$,
local analysis at a nonsplit tame place using
Pezda's local period bound \cite{pezda-local}
replaces the twisted Fermat curves
from the $d \geq 4$ cases.
The section concludes by transferring the
result to several related polynomial families.

Section~\ref{sec:rank} proves
Theorem~\ref{thm:elliptic-curves-large-rank}.
It reuses the local descent in a different way
to show that two periodic points determine
an elliptic curve equipped
with a degree-$2$ map to $\P^1$.
Furthermore, every adjacent pair
$P, f_{d, c}(P)$ in the cycle lifts to one
of finitely many pairs of quadratic twists.
Uniform Mordell--Lang \cite{gao-ge-kuhne}
bounds the cycle in terms of
the two Mordell--Weil ranks.
Rearranging that estimate proves
Theorem~\ref{thm:elliptic-curves-large-rank};
applying it under a rank bound proves
Corollary~\ref{cor:conditional-d3}.
Section~\ref{sec:fixed-elliptic-fields}
keeps the elliptic curve fixed
and adjoins the square roots
needed to lift all points
of a given period.

Section~\ref{sec:higher-jacobians} first proves
Theorem~\ref{thm:unicritical-dynatomic-rank}.
A rational cycle of length $n$ produces divisor
classes on $J_{1, d}(n)$ that retain the full
cyclic symmetry of the orbit.
This symmetry forces cyclotomic contributions
to the Mordell--Weil rank of $J_{1, d}(n)$
and gives the lower bound.
Conversely, a uniform rank bound restricts the
prime powers that can occur in a rational period $n$.
Hence the possible periods are uniformly bounded,
and the degrees of $f_{d, c}^j(z) - z$ bound the
total number of periodic points.
When $d = 2$, one has $K(\mu_2) = K$, so this
deduction uses only the ranks of $J_{1, 2}(n)(K)$.
The remainder of the section proves
Theorem~\ref{thm:jacobians-large-rank}
by constructing higher-genus Galois covers of
$\P^1$ with cyclic deck group.

Finally, Section~\ref{sec:quantitative}
makes the fixed-field bounds explicit
and isolates the obstacles to strong
uniformity.
\hyperref[app:example-effective-constants]{Appendix 1}
works out numerical examples
of the bounds.

\subsection*{AI usage}
Some numerical calculations in
\hyperref[app:example-effective-constants]{Appendix 1}
were checked with assistance from GPT 5.6 Pro.

\subsection*{Acknowledgments}
I am grateful to Joseph Silverman and Jit Wu Yap
for helpful comments on this paper.

I would also like to express my gratitude
to Niccol\`{o} Ronchetti,
who introduced me to the dynamical uniform boundedness
conjectures 12 years ago during the 2014 SURIM summer research
program at Stanford University
and inspired my continuing interest in these questions.


\numberwithin{equation}{section}
\numberwithin{theorem}{section}


\section{Local distances and Kummer theory}
\label{sec:local}

Throughout this section, let $K$ be a number field,
let $d \geq 2$, and set $f_{d, c}(z) := z^d + c$.
Let $\Per_K(f_{d, c})$ denote the set of $K$-rational
periodic points of $f_{d, c}$.
The purpose of this section is to show
in Theorem~\ref{thm:power-classes} that quotients
\[
	\frac{P - \zeta Q}{P - \xi Q},
\]
formed from periodic points $P, Q \in \Per_K(f_{d, c})$
and roots of unity $\zeta, \xi \in \mu_d$,
lie in a fixed finite subgroup of
$K(\mu_d)^\times / (K(\mu_d)^\times)^{(d - 1)}$.
At each finite place $w \nmid d$,
the proof distinguishes the cases $w(c) < 0$ and
$w(c) \geq 0$.
Proposition~\ref{prop:tame-distance}
gives a distance formula in the first case, while
Lemma~\ref{lem:telescoping} gives a unit statement
in the second. We then combine
these two placewise conclusions
in Section~\ref{sec:kummer}.

For a nonarchimedean local field $k$,
write $v$ for its normalized additive valuation and
\[
	\calK_k(f)
	:= \set{x \in k \Mid
	\set{f^j(x) \Mid j \geq 0} \text{ is bounded}}
\]
for the $k$-rational filled Julia set of
a polynomial $f$.
Thus every periodic point belongs to $\calK_k(f)$;
for example, $\calK_k(z^d) = \calo_k$.

\subsection{Nonintegral parameters}

For $f_{d, c}(z) = z^d + c$ over a number field,
Hutz~\cite[Lemma~7]{hutz} proves that a periodic
point $x$ satisfies $v(c) = d v(x)$ at every
nonarchimedean place with $v(c) < 0$;
Panraksa~\cite[Lemma~1]{panraksa} observes that the
same argument applies to preperiodic points.
The following lemma generalizes preperiodicity
to boundedness of the forward orbit and allows an
arbitrary nonarchimedean local field.

\begin{lemma}
\label{lem:shell}
	Let $d \geq 2$, let $k$ be a nonarchimedean local
	field, let $c \in k$ satisfy $v(c) < 0$, and set
	$f_{d, c}(z) := z^d + c$.
	If $\calK_k(f_{d, c})$ is nonempty, then there is an
	integer $s \geq 1$ such that $v(c) = -ds$.
	Furthermore, $v(f_{d, c}^j(x)) = -s$ for every
	$x \in \calK_k(f_{d, c})$ and every $j \geq 0$.
\end{lemma}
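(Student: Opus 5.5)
The plan is a standard ultrametric escape argument. Fix $x \in \calK_k(f_{d, c})$ and write $x_j := f_{d, c}^j(x)$. By the definition of $\calK_k$, the valuations $v(x_j)$ are bounded below, say $v(x_j) \geq -M$ for all $j$.

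First I would show that no orbit point can have valuation strictly less than $v(c)/d$. Suppose $v(x_j) < v(c)/d$. Then $v(x_j^d) = d\,v(x_j) < v(c)$, so the ultrametric equality gives $v(x_{j+1}) = d\,v(x_j)$. This value is again $< v(c) \leq v(c)/d$, because $v(c)<0$ implies $v(c) < v(c)/d$. By induction,
\[
v(x_{j+m}) = d^m v(x_j) \to -\infty,
\]
contradicting boundedness. Hence $v(x_j) \geq v(c)/d$ for all $j \geq 0$.

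Next I would rule out strict inequality. If $v(x_j) > v(c)/d$, then $v(x_j^d) > v(c)$, so $v(x_{j+1}) = v(c)$. But $v(c) < v(c)/d$ since $v(c) < 0$, which contradicts the first step applied to $x_{j+1}$. Therefore $v(x_j) = v(c)/d$ exactly, for every $j \geq 0$.

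Since $\calK_k(f_{d,c})$ is nonempty, there is some $x$ with $v(x) = v(c)/d$. Valuations are integers, so $d \mid v(c)$. Setting $s := -v(c)/d$, we get $s$ a positive integer because $v(c) < 0$. Then $v(c) = -ds$ and $v(f_{d, c}^j(x)) = -s$ for every $x \in \calK_k(f_{d,c})$ and every $j$.

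I do not expect a real obstacle. The only point needing care is the direction of inequalities with negative valuations, namely $v(c) < v(c)/d < 0$, which makes both ultrametric cases land in the escaping regime.
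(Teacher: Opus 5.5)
Your proposal is correct and follows essentially the same argument as the paper. Both proofs split on whether $d\,v(x)$ is less than or greater than $v(c)$, show that either strict inequality makes the orbit escape, and then use that the value group is $\Z$ to get $d \mid v(c)$. The only cosmetic difference is in the case $d\,v(x_j) > v(c)$: you feed $x_{j+1}$ back into the first case, whereas the paper computes $v(f_{d,c}^{j+1}(x)) = d^j v(c)$ directly.
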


\begin{proof}
	Set $r := v(c) < 0$. If $d v(x) < r$, then
	$v(f_{d, c}^j(x)) = d^j v(x)$ for every
	$j \geq 1$, so these valuations tend to $-\infty$.
	If $d v(x) > r$, then $v(f_{d, c}(x)) = r$ and
	$v(f_{d, c}^{j + 1}(x)) = d^j r$
	for every $j \geq 1$. These valuations also tend
	to $-\infty$. Thus boundedness forces
	$d v(x) = r$.

	Every iterate of a bounded point again has bounded
	orbit, so the same equality gives
	$v(f_{d, c}^j(x)) = r/d$ for every
	$j \geq 0$. Since the value group is $\Z$, one has
	$r = -ds$ for some integer $s \geq 1$.
\end{proof}

The next lemma shows that, when the residue
characteristic does not divide $d$, the valuation
shell in Lemma~\ref{lem:shell} can be scaled to the
unit group.

\begin{lemma}
\label{lem:tame-normalization}
	Let $d \geq 2$, let $k$ be a nonarchimedean local
	field whose residue characteristic does not divide
	$d$, let $c \in k$ satisfy $v(c) < 0$, and let
	$f_{d, c}(z) := z^d + c$.
	If $\calK_k(f_{d, c})$ is nonempty, then
	$-c \in (k^\times)^d$. Furthermore, for every
	$\beta \in k$ satisfying $\beta^d = -c$, the change
	of variables $z = \beta y$ conjugates $f_{d, c}$ to
	\[
		T_\beta(y) := \beta^{d - 1}(y^d - 1),
	\]
	and $f_{d, c}^j(x)/\beta \in \calo_k^\times$ for every
	$x \in \calK_k(f_{d, c})$ and every $j \geq 0$.
\end{lemma}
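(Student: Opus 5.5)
We will use Lemma~\ref{lem:shell} for the valuation information, and use the tameness hypothesis only to extract a $d$-th root via Hensel's lemma. Pick $x \in \calK_k(f_{d, c})$. By Lemma~\ref{lem:shell}, $v(c) = -ds$ and $v(x) = v(f_{d,c}(x)) = -s$ for some $s \geq 1$. The key observation is that the $d$ terms in $f_{d,c}(x) = x^d + c$ have valuation $-ds$, while the result has valuation only $-s > -ds$. So $x^d + c$ has strictly larger valuation than $x^d$, and therefore $-c/x^d \equiv 1 \pmod{\mfm_k}$. Concretely, set $u := -c/x^d \in \calo_k^\times$. Then
\[
	u - 1 = -\frac{x^d + c}{x^d} = -\frac{f_{d,c}(x)}{x^d},
\]
which has valuation $-s + ds = (d-1)s \geq 1$.

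Since the residue characteristic does not divide $d$, the polynomial $T^d - u$ has the simple root $T = 1$ modulo $\mfm_k$. Hensel's lemma then gives $t \in \calo_k^\times$ with $t^d = u$. Setting $\beta_0 := xt$, we get $\beta_0^d = x^d u = -c$, so $-c \in (k^\times)^d$.

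The conjugation is a direct computation. For any $\beta$ with $\beta^d = -c$, substituting $z = \beta y$ gives
\[
	\beta^{-1} f_{d,c}(\beta y) = \beta^{-1}\bigl(\beta^d y^d - \beta^d\bigr) = \beta^{d-1}(y^d - 1) = T_\beta(y).
\]

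For the last claim, any such $\beta$ satisfies $d\,v(\beta) = v(-c) = -ds$, so $v(\beta) = -s$. Lemma~\ref{lem:shell} gives $v(f_{d,c}^j(x)) = -s$ for every $x \in \calK_k(f_{d,c})$ and every $j \geq 0$. Hence $f_{d,c}^j(x)/\beta$ has valuation $0$, that is, it lies in $\calo_k^\times$.

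The only nontrivial step is the Hensel step. It needs $u \equiv 1$ modulo $\mfm_k$, which is exactly the valuation gap $(d-1)s \geq 1$ supplied by Lemma~\ref{lem:shell}, together with $d \in \calo_k^\times$, which is the tameness hypothesis. Note that the argument does not depend on which $\beta$ is chosen: the valuation conclusion holds for every $d$-th root of $-c$, so no compatibility between different choices of $\beta$ is needed.
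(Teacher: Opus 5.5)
Your proof is correct and follows essentially the same approach as the paper. The paper also uses Lemma~\ref{lem:shell} to get $-c/x^d = 1 - f_{d,c}(x)/x^d \in 1 + \mfm_k$. It then cites Serre for the fact that the $d$-th power map is an automorphism of $1 + \mfm_k$ when the residue characteristic does not divide $d$, which is exactly your Hensel step; the conjugation and valuation computations are identical.
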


\begin{proof}
	Choose $x \in \calK_k(f_{d, c})$ and write
	$v(c) = -ds$. By Lemma~\ref{lem:shell},
	\[
		-\frac{c}{x^d}
		= 1 - \frac{f_{d, c}(x)}{x^d}
		\in 1 + \mfm_k.
	\]
	Since the residue characteristic does not divide
	$d$, the $d$-th-power map is an automorphism of
	$1 + \mfm_k$; see Serre
	\cite[Section~II.5]{serre-local-fields}.
	Thus $-c \in (k^\times)^d$.

	Since $\beta^d = -c$, substitution gives
	\[
		\beta^{-1}f_{d, c}(\beta y)
		= \beta^{d - 1}(y^d - 1)
		= T_\beta(y).
	\]
	Moreover, $v(\beta) = -s$, while
	Lemma~\ref{lem:shell} gives
	$v\paren{f_{d, c}^j(x)} = -s$ for every
	$x \in \calK_k(f_{d, c})$ and every $j \geq 0$.
	Therefore
	$f_{d, c}^j(x)/\beta \in \calo_k^\times$.
\end{proof}

Lemma~\ref{lem:tame-normalization} places every
normalized iterate in the unit group. To compare two
bounded orbits, we record the first iterate at which
their normalized reductions differ.

Under the hypotheses of
Lemma~\ref{lem:tame-normalization}, choose
$\beta \in k$ with $\beta^d = -c$. For
$X, Y \in \calK_k(f_{d, c})$, define
\[
	\ell_v(X, Y)
	:= \min\set{
	j \geq 0 \Mid
	\ol{f_{d, c}^j(X)/\beta}
	\neq
	\ol{f_{d, c}^j(Y)/\beta}},
\]
with $\ell_v(X, Y) := \infty$ if the set is empty.
Replacing $\beta$ by another $d$-th root of $-c$
multiplies both normalized orbits by the same root of
unity. Therefore $\ell_v(X, Y)$ is independent of the
choice of $\beta$. For example,
$\ell_v(X, X) = \infty$, whereas
$\ell_v(X, Y) = 0$ exactly when the two normalized
points have distinct reductions.

The normalized separation time determines
$v(X - Y)$ as follows.

\begin{proposition}
\label{prop:tame-distance}
	Let $d \geq 2$, let $k$ be a nonarchimedean local
	field whose residue characteristic does not divide
	$d$, and let $c \in k$ satisfy $v(c) < 0$. Set
	$f_{d, c}(z) := z^d + c$, suppose that
	$\calK_k(f_{d, c})$ is nonempty, and let $s \geq 1$
	satisfy $v(c) = -ds$. If
	$X, Y \in \calK_k(f_{d, c})$ are distinct, then
	$\ell_v(X, Y)$ is finite and
	\[
		v(X - Y) = -s + (d - 1) s \ell_v(X, Y).
	\]
\end{proposition}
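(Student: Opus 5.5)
We work in the normalized coordinate from Lemma~\ref{lem:tame-normalization}. Fix $\beta$ with $\beta^d = -c$, so $v(\beta) = -s$, and set $x_j := f_{d, c}^j(X)/\beta$ and $y_j := f_{d, c}^j(Y)/\beta$. These are units, and they satisfy $x_{j+1} = T_\beta(x_j) = \beta^{d-1}(x_j^d - 1)$. Since $v(X - Y) = -s + v(x_0 - y_0)$, the claim is equivalent to
\[
	v(x_0 - y_0) = (d - 1) s\, \ell_v(X, Y).
\]
The key identity is
\[
	x_{j+1} - y_{j+1} = \beta^{d-1}(x_j^d - y_j^d) = \beta^{d-1}(x_j - y_j)\prod_{\zeta \in \mu_d \setminus \set{1}} (x_j - \zeta y_j),
\]
where we factor over a finite extension containing $\mu_d$ and extend $v$ to it. From this we get
\[
	v(x_{j+1} - y_{j+1}) = -(d - 1)s + v(x_j - y_j) + \sum_{\zeta \neq 1} v(x_j - \zeta y_j).
\]

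\textbf{Step 1: the cross terms vanish when the reductions agree.} Suppose $\ol{x_j} = \ol{y_j}$. For $\zeta \neq 1$ we have $x_j - \zeta y_j \equiv (1 - \zeta) y_j \pmod{\mfm}$. Here $y_j$ is a unit, and $1 - \zeta$ is a unit because the residue characteristic does not divide $d$: the polynomial $z^d - 1$ is separable mod $\mfm$, so the $d$-th roots of unity have distinct reductions. Hence each cross term has valuation $0$, and
\[
	v(x_{j+1} - y_{j+1}) = v(x_j - y_j) - (d - 1)s \quad \text{whenever } \ol{x_j} = \ol{y_j}.
\]

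\textbf{Step 2: finiteness of $\ell := \ell_v(X, Y)$.} Since $X \neq Y$, we have $x_0 \neq y_0$, so $v(x_0 - y_0) < \infty$. Suppose the reductions agree for $j = 0, \ldots, m - 1$. Applying Step 1 $m$ times gives
\[
	v(x_m - y_m) = v(x_0 - y_0) - m(d - 1)s.
\]
For all these indices the reductions agree, so $x_j - y_j \in \mfm$ and in particular $v(x_m - y_m) \geq 1$ would be forced if $\ol{x_m} = \ol{y_m}$ as well. Since $s \geq 1$, the right-hand side eventually becomes $\leq 0$ as $m$ grows, so the reductions cannot agree forever. Thus $\ell < \infty$.

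\textbf{Step 3: the distance formula.} At $j = \ell$ the reductions differ, so $x_\ell - y_\ell$ is a unit and $v(x_\ell - y_\ell) = 0$. Iterating Step 1 for $j = 0, \ldots, \ell - 1$ gives
\[
	v(x_0 - y_0) = v(x_\ell - y_\ell) + \ell(d - 1)s = (d - 1)s\,\ell,
\]
which yields the formula. This argument also covers $\ell = 0$.

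The only delicate point is Step 1, specifically the unit claim for $1 - \zeta$ that uses tameness. The rest is bookkeeping.
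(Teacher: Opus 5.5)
Your proof is correct and follows the paper's argument: you prove the one-step recurrence $v(x_j - y_j) = (d-1)s + v(x_{j+1} - y_{j+1})$ when the reductions agree, deduce that $\ell_v(X,Y)$ is finite because the unit differences $x_m - y_m$ cannot have negative valuation, and then iterate the recurrence $\ell_v(X,Y)$ times. The only cosmetic difference is how you check the unit property: you split $x_j^d - y_j^d$ into factors $x_j - \zeta y_j$ over $k(\mu_d)$, whereas the paper observes directly that $\sum_{i=0}^{d-1} x_j^{d-1-i}y_j^{i}$ reduces to $d\,\overline{x_j}^{\,d-1}$, which avoids passing to an extension.
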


\begin{proof}
	We prove a one-step distance recurrence, use it to
	show that distinct points separate after finitely
	many iterates, and finally iterate the recurrence.
	Choose $\beta$ as in
	Lemma~\ref{lem:tame-normalization}. Since
	$v(\beta^{d - 1}) = -(d - 1)s$, the following
	calculation gives the required recurrence.

	Let $y_1,y_2$ be normalized bounded points. If their
	reductions are distinct, then
	$v(y_1 - y_2) = 0$. If their reductions agree, then
	\[
		T_\beta(y_1) - T_\beta(y_2)
		= \beta^{d - 1}(y_1 - y_2)
		\sum_{i = 0}^{d - 1}
		y_1^{d - 1 - i}y_2^i.
	\]
	The sum reduces to
	$d\overline{y_1}^{\,d - 1}$ and is a unit.
	Consequently,
	\begin{equation}
		v(y_1 - y_2)
		= (d - 1)s
		+
		v\paren{T_\beta(y_1) - T_\beta(y_2)}.
		\label{eq:one-step-distance}
	\end{equation}

	If the normalized iterates of $X$ and $Y$ had the
	same reduction for every $j \geq 0$, then repeated
	use of \eqref{eq:one-step-distance} would give
	$v\paren{X/\beta - Y/\beta} \geq N(d - 1)s$
	for every $N \geq 1$. Hence $X = Y$, contrary to
	the hypothesis. Therefore $\ell_v(X, Y)$ is finite.

	At the first separating iterate, the difference of
	the normalized points has valuation zero. Applying
	\eqref{eq:one-step-distance}
	$\ell_v(X,Y)$ times gives
	$v\paren{X/\beta - Y/\beta}
	= (d - 1)s\ell_v(X,Y)$. Since $v(\beta) = -s$,
	the desired equality follows.
\end{proof}

\subsection{A telescoping identity}

To prove Theorem~\ref{thm:power-classes},
we will need to control $P - \zeta Q$
at every finite place $w \nmid d$.
Proposition~\ref{prop:tame-distance}
treats places satisfying $w(c) < 0$. At a place
satisfying $w(c) \geq 0$, the product identity below
shows that $P - \zeta Q$ is a unit.

Throughout this subsection,
let $d \geq 2$, $k$ be a field of characteristic zero,
$c \in k$, and $f_{d, c}(z) := z^d + c$.
Define $G_d(X, Y) := (X^d - Y^d)/(X - Y)$.
Benedetto
\cite[Theorems~2 and~3]{benedetto-product-identity}
proves a general product identity
and dynamical unit consequence for monic polynomials;
we state its specialization to $f_{d, c}$
and include a short proof.

\begin{lemma}
\label{lem:telescoping}
	Let $P, Q \in k$ be distinct periodic points of
	$f_{d, c}$. If $n$ is a common multiple of their
	periods, then
	\[
		\prod_{j = 0}^{n - 1}
		G_d\paren{f_{d, c}^j(P), f_{d, c}^j(Q)}
		= 1
	\]
	and $P - \zeta Q \neq 0$ for every
	$\zeta \in \mu_d$.
	Furthermore, if $k$ is
	nonarchimedean, $\mu_d \subset k$, and
	$c \in \calo_k$, then
	$P - \zeta Q \in \calo_k^\times$ for every
	$\zeta \in \mu_d \sm \set{1}$.
\end{lemma}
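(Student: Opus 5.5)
The plan is to obtain the product identity by telescoping, and then read off both the non-vanishing claim and the unit claim from it.

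\emph{The identity.} Write $P_j := f_{d, c}^j(P)$ and $Q_j := f_{d, c}^j(Q)$. Since $f_{d, c}(X) - f_{d, c}(Y) = X^d - Y^d$, we have
\[
	P_{j + 1} - Q_{j + 1} = G_d(P_j, Q_j)\,(P_j - Q_j)
\]
for every $j \geq 0$. All the differences $P_j - Q_j$ are nonzero: if $P_j = Q_j$, then $P = P_n = f_{d, c}^{n - j}(P_j) = f_{d, c}^{n - j}(Q_j) = Q_n = Q$, which is a contradiction. Multiplying the relation over $j = 0, \ldots, n - 1$ gives
\[
	(P_n - Q_n) = \paren{\prod_{j = 0}^{n - 1} G_d(P_j, Q_j)} (P_0 - Q_0).
\]
Since $P_n = P$ and $Q_n = Q$, we can cancel the nonzero factor $P - Q$, and the product equals $1$.

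\emph{Non-vanishing.} Because $G_d(X, Y) = \prod_{\zeta \in \mu_d \sm \set{1}} (X - \zeta Y)$, the $j = 0$ factor of the identity contains $\prod_{\zeta \neq 1} (P - \zeta Q)$ over $\overline{k}$. Since the whole product equals $1$, every $P - \zeta Q$ with $\zeta \neq 1$ is nonzero. For $\zeta = 1$ the claim is just $P \neq Q$, which holds by hypothesis.

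\emph{Unit statement.} Now assume $k$ is nonarchimedean, $\mu_d \subset k$ and $c \in \calo_k$. The first step is to show that the periodic points are integral. If $v(x) < 0$ and $v(c) \geq 0$, then $v(f_{d, c}(x)) = d\,v(x)$, so the valuations along the orbit tend to $-\infty$; this is impossible for a periodic point. Hence every $P_j$ and every $Q_j$ lies in $\calo_k$, and therefore each factor $P_j - \zeta Q_j$ lies in $\calo_k$. The identity then expresses $1$ as a product of the elements $P_j - \zeta Q_j$ of $\calo_k$, taken over $0 \leq j < n$ and $\zeta \in \mu_d \sm \set{1}$. A product of integral elements equal to $1$ forces each factor to have valuation $0$, so in particular $P - \zeta Q \in \calo_k^\times$.

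The only real care needed is to ensure that $P - Q$ is nonzero before cancelling, and to prove integrality of the periodic points. Both steps are short.
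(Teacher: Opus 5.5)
Your proof is correct and follows the paper's argument. You telescope $f_{d,c}(X) - f_{d,c}(Y) = (X - Y)G_d(X,Y)$ after checking that no intermediate difference vanishes, then use integrality of periodic points when $c \in \calo_k$ to force every integral factor of a product equal to $1$ to be a unit. The only differences are cosmetic: you read off $P \neq \zeta Q$ from $G_d(P,Q) \neq 0$ via the identity, whereas the paper notes directly that $P = \zeta Q$ would give $f_{d,c}(P) = f_{d,c}(Q)$. You also split all the $G_d$ factors into linear factors at once, instead of first showing that each $G_d(f_{d,c}^j(P), f_{d,c}^j(Q))$ is a unit.
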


\begin{proof}
	We first show that the identity follows by telescoping,
	and then show that integrality
	forces each cyclotomic factor to be a unit.
	
	If $f_{d, c}^j(P) = f_{d, c}^j(Q)$ for some
	$0 \leq j < n$, applying $f_{d, c}^{n - j}$ gives
	$P = Q$. Thus all intermediate differences are
	nonzero. Iterating
	$f_{d, c}(X) - f_{d, c}(Y)
	= (X - Y)G_d(X, Y)$ and using
	$f_{d, c}^n(P) = P$ and $f_{d, c}^n(Q) = Q$
	proves the desired product identity.
	If $P = \zeta Q$, then $f_{d, c}(P) = f_{d, c}(Q)$;
	applying $f_{d, c}^{n - 1}$ gives $P = Q$,
	contrary to the hypothesis.

	Finally, suppose that $k$ is nonarchimedean,
	$\mu_d \subset k$, and $c \in \calo_k$.
	Every periodic point is integral, because a point of
	negative valuation has unbounded orbit.
	Hence all the factors
	$G_d(f_{d, c}^j(P), f_{d, c}^j(Q))$
	in the product identity are integral,
	and their product being one makes each one
	a unit. In particular,
	\[
		G_d(P, Q)
		= \prod_{\zeta \in \mu_d \sm \set{1}}
		(P - \zeta Q)
	\]
	is a unit.
	Each factor on the right is integral, so
	each is a unit.
\end{proof}

\subsection{Kummer classes}
\label{sec:kummer}

Let $S$ consist of the archimedean places of
$K(\mu_d)$ and the finite places above $d$.
The following group records the Kummer classes whose
valuations outside $S$ are divisible by $d - 1$.

\begin{definition}
\label{def:classes-modulo-powers}
	Define
	\[
		\calc_{K, d}
			:= \Ker\paren{
			\frac{K(\mu_d)^\times}
			{(K(\mu_d)^\times)^{(d - 1)}}
			\xrightarrow{\divisor}
			\frac{\Div\paren{\calo_{K(\mu_d), S}}}
			{(d - 1)
			\Div\paren{\calo_{K(\mu_d), S}}}}.
		\]
	For each $\alpha \in \calc_{K, d}$,
	fix once and for all a representative
	$\wt{\alpha} \in K(\mu_d)^\times$.
\end{definition}

By the definition of the divisor map $\divisor$,
a class $[a] \in K(\mu_d)^\times / (K(\mu_d)^\times)^{(d - 1)}$
is in $\calc_{K, d}$ exactly when $w(a)$ is
divisible by $d - 1$ at every finite place
$w \nmid d$.
Since an $S$-unit has valuation zero at
every such place, every $S$-unit represents a class
in $\calc_{K, d}$.
The group $\calc_{K, d}$ depends only on $K$ and $d$.

The following exact sequence identifies the two
sources of its elements: classes of $S$-units and
$(d - 1)$-torsion in the $S$-ideal class group.

\begin{lemma}
\label{lem:finite-power-classes}
	There is a short exact sequence
	\[
		1
		\lra
		\frac{\calo_{K(\mu_d),S}^\times}
		{(\calo_{K(\mu_d),S}^\times)^{(d - 1)}}
		\lra
		\calc_{K, d}
		\lra
		\Cl\paren{\calo_{K(\mu_d),S}}[d - 1]
		\lra
		1.
	\]
	In particular, $\calc_{K, d}$ is finite
	and can be computed from the
	$S$-unit group and the $S$-ideal class group.
\end{lemma}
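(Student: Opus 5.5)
Set $L := K(\mu_d)$, write $m := d - 1$, $U := \calo_{L,S}^\times$, and $I := \Div(\calo_{L,S})$ for the free abelian group of fractional $S$-ideals. The plan is to build both maps explicitly and then check exactness at each of the three spots.

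\textbf{The two maps.} The first map sends $[u] \in U/U^m$ to $[u] \in L^\times/(L^\times)^m$. It lands in $\calc_{K,d}$ because $S$-units have divisor zero in $I$. For the second map, let $[a] \in \calc_{K,d}$. By definition $\divisor(a) = m\mathfrak{a}$ for a unique $\mathfrak{a} \in I$, since $I$ is torsion-free. Send $[a]$ to the class of $\mathfrak{a}$ in $\Cl(\calo_{L,S})$. This class is $m$-torsion, because $m\mathfrak{a} = \divisor(a)$ is principal. Well-definedness is the point to check: replacing $a$ by $a b^m$ replaces $\mathfrak{a}$ by $\mathfrak{a} + \divisor(b)$, which has the same class. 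The map is a homomorphism since $\divisor$ is additive.

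\textbf{Exactness.}
\begin{enumerate}
\item \emph{Injectivity on the left.} Suppose $u \in U$ satisfies $u = b^m$ with $b \in L^\times$. Then $m\,\divisor(b) = 0$ in the torsion-free group $I$, so $\divisor(b) = 0$ and $b \in U$. Hence $u \in U^m$.
\item \emph{Exactness in the middle.} The composite is trivial, since $u \mapsto \mathfrak{a} = 0$. Conversely, suppose $\mathfrak{a} = \divisor(b)$ is principal. Then $\divisor(a b^{-m}) = 0$, so $a b^{-m} \in U$ and $[a]$ lies in the image of $U/U^m$.
\item \emph{Surjectivity on the right.} Suppose $[\mathfrak{a}]$ is $m$-torsion, so $m\mathfrak{a} = \divisor(a)$ for some $a \in L^\times$. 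Then $[a] \in \calc_{K,d}$ and it maps to $[\mathfrak{a}]$.
\end{enumerate}

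\textbf{Finiteness and computability.} By Dirichlet's $S$-unit theorem, $U$ is finitely generated, so $U/U^m$ is finite. The $S$-class group is a quotient of the finite ideal class group, so it is finite. A group that is an extension of one finite group by another is finite. Both $U$ and $\Cl(\calo_{L,S})$ are effectively computable, for instance by standard algorithms (Buchmann-type). Explicit lifts of generators of the $m$-torsion, via the construction in the surjectivity step, then give representatives of $\calc_{K,d}$.

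\textbf{Main obstacle.} There is no real obstacle here. The only places requiring care are the well-definedness of the second map and the left injectivity, and both reduce to the torsion-freeness of $I$.
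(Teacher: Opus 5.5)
Your proposal is correct and follows the paper's proof. Both send $[a]$ to the class of the $S$-ideal $\mfa$ with $\divisor(a) = (d-1)\mfa$, identify the kernel with the classes of $S$-units, lift $(d-1)$-torsion classes for surjectivity, and get finiteness from the $S$-unit theorem and the finiteness of the class group. You additionally check well-definedness and injectivity of $U/U^{d-1} \to L^\times/(L^\times)^{d-1}$ via torsion-freeness of $\Div(\calo_{L,S})$, which the paper leaves implicit.
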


\begin{proof}
	If $a \in K(\mu_d)^\times$ represents a class in
	$\calc_{K, d}$, then
	$a\calo_{K(\mu_d),S} = \mfa^{d - 1}$ for a
	fractional ideal $\mfa$ of
	$\calo_{K(\mu_d),S}$. Sending $[a]$ to $[\mfa]$
	defines a map to
	$\Cl\paren{\calo_{K(\mu_d),S}}[d - 1]$. Its kernel consists
	exactly of the classes represented by units of
	$\calo_{K(\mu_d),S}$.

	Conversely, if
	$[\mfa] \in \Cl\paren{\calo_{K(\mu_d),S}}[d - 1]$, choose
	$a \in K(\mu_d)^\times$ with
	$\mfa^{d - 1} = a\calo_{K(\mu_d),S}$. Then $[a]$ belongs to
	$\calc_{K, d}$ and maps to $[\mfa]$. The unit theorem
	and the finiteness of the ideal class group give the
	last assertion; see
	Neukirch \cite[Chapters~I and V]{neukirch-ant}.
\end{proof}

In the proofs of Theorems~\ref{thm:main} and
\ref{thm:elliptic-curves-large-rank},
a key object is the quotient
$\frac{P - \zeta Q}{P - \xi Q} \in K(\mu_d)^\times$
for $K$-rational periodic points $P, Q$ of $f_{d, c}$
and $d$-th roots of unity $\zeta, \xi \in \mu_d$.
This quotient may be viewed as a cyclotomic twist
of the dynamical units studied by
Narkiewicz \cite{narkiewicz} and
Morton--Silverman \cite{morton-silverman-periodic},
which concern quotients
$(P_1 - P_2)/(P_3 - P_4)$ of differences of
periodic points $P_i$
that are units outside a finite set of places
containing the places of bad reduction.
In our setting,
$\zeta Q$ and $\xi Q$ are preimages of the same
periodic point, since
$f_{d, c}(\zeta Q) = f_{d, c}(\xi Q) = f_{d, c}(Q)$;
the cyclotomic symmetry yields the
stronger Kummer-theoretic conclusion below.

\begin{theorem}
\label{thm:power-classes}
	Let $c \in K$.
	For distinct $P, Q \in \Per_K(f_{d, c})$
	and distinct $\zeta, \xi \in \mu_d \sm \set{1}$,
	the class
	\[
		\sbrac{\frac{P - \zeta Q}{P - \xi Q}}
		\in K(\mu_d)^\times/(K(\mu_d)^\times)^{(d - 1)}
	\]
	belongs to $\calc_{K, d}$.
	Furthermore, if $\wt{\alpha} \in K(\mu_d)^\times$
	is the representative of this class fixed in
	Definition~\ref{def:classes-modulo-powers},
	then there exists
	$Y \in K(\mu_d)^\times$ such that
	\[
		\frac{P - \zeta Q}{P - \xi Q}
		= \wt{\alpha}Y^{d - 1}.
	\]
\end{theorem}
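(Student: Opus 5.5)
We must show that $w\paren{\frac{P - \zeta Q}{P - \xi Q}} \equiv 0 \pmod{d - 1}$ at every finite place $w$ of $K(\mu_d)$ with $w \nmid d$. The second assertion is then formal. The class lies in $\calc_{K, d}$ and so equals $\alpha = [\wt{\alpha}]$, which means the quotient is $\wt{\alpha}$ times a $(d - 1)$-th power $Y^{d-1}$ with $Y \in K(\mu_d)^\times$. Note that the quotient is a well-defined element of $K(\mu_d)^\times$, since Lemma~\ref{lem:telescoping} gives $P - \zeta Q \neq 0$ and $P - \xi Q \neq 0$. Fix such a $w$ and let $k$ be the completion of $K(\mu_d)$ at $w$. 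Then $k \supset \mu_d$, and its residue characteristic does not divide $d$. The set $\Per_K(f_{d,c})$ lies in $\calK_k(f_{d,c})$. We split into two cases according to the sign of $w(c)$, as indicated at the start of the section.

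\emph{Case $w(c) \geq 0$.} Lemma~\ref{lem:telescoping}, applied over $k$ with $c \in \calo_k$, shows that $P - \zeta Q$ and $P - \xi Q$ are both units, because $\zeta, \xi \neq 1$. So the quotient has valuation $0$, which is divisible by $d - 1$.

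\emph{Case $w(c) < 0$.} Write $w(c) = -ds$, as allowed by Lemma~\ref{lem:shell}. Observe that $\zeta Q$ and $\xi Q$ also lie in $\calK_k(f_{d,c})$, since $f_{d,c}(\zeta Q) = f_{d,c}(Q)$ is periodic. They are distinct from $P$ by Lemma~\ref{lem:telescoping}. So Proposition~\ref{prop:tame-distance} applies to the pairs $(P, \zeta Q)$ and $(P, \xi Q)$ and gives
\[
	w(P - \zeta Q) - w(P - \xi Q) = (d - 1)s\paren{\ell_w(P, \zeta Q) - \ell_w(P, \xi Q)}.
\]
This is divisible by $d - 1$, because both separation times are finite by the proposition. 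No further input is needed here: the $-s$ terms cancel exactly, and this cancellation is why we pass to the quotient rather than look at $P - \zeta Q$ alone.

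The one step to watch is the applicability of Proposition~\ref{prop:tame-distance}. It requires the points to be distinct elements of the $k$-rational filled Julia set. The points $\zeta Q$, $\xi Q$ are $k$-rational because $\mu_d \subset k$, and they have bounded orbit because their images are periodic. The proposition needs no hypothesis that the points themselves be periodic, only that they lie in $\calK_k$, so this is fine. Combining the two cases, the divisor of the quotient on $\Spec \calo_{K(\mu_d),S}$ lies in $(d-1)\Div$. Hence the class lies in $\calc_{K,d}$, which completes the argument.
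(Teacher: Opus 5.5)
Your proposal is correct and follows the paper's proof essentially step for step. It makes the same reduction to valuations modulo $d-1$ at finite places $w \nmid d$, uses Lemma~\ref{lem:telescoping} for the unit case $w(c) \geq 0$, and applies Proposition~\ref{prop:tame-distance} to the bounded points $\zeta Q, \xi Q$ when $w(c) < 0$; your explicit difference $(d-1)s\paren{\ell_w(P,\zeta Q) - \ell_w(P,\xi Q)}$ is just the paper's congruence $w(P - \eta Q) \equiv -s_w \pmod{d-1}$ written out.
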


\begin{proof}
	Lemma~\ref{lem:telescoping} shows that the numerator
	and denominator are nonzero.
	By the valuation description following
	Definition~\ref{def:classes-modulo-powers},
	it is enough to prove that
	\[
		w\paren{\frac{P - \zeta Q}{P - \xi Q}}
		\equiv 0 \pmod{d - 1}
	\]
	for every finite place $w \nmid d$ of $K(\mu_d)$.
	Fix such a place $w$. We distinguish the cases
	$w(c) \geq 0$ and $w(c) < 0$.

	Suppose that $w(c) \geq 0$.
	Lemma~\ref{lem:telescoping} gives
	$w(P - \zeta Q) = w(P - \xi Q) = 0$.
	Therefore
	$w\paren{\frac{P - \zeta Q}{P - \xi Q}} = 0$,
	so the required congruence holds at $w$.

	Suppose that $w(c) < 0$, and write
	$w(c) = -ds_w$ as in Lemma~\ref{lem:shell}.
	For $\eta \in \set{\zeta, \xi}$, one has
	$f_{d, c}(\eta Q) = f_{d, c}(Q)$, so $\eta Q$ has
	bounded orbit over $K(\mu_d)_w$. Moreover,
	$P \neq \eta Q$ by Lemma~\ref{lem:telescoping}.
	Since $w \nmid d$,
	Proposition~\ref{prop:tame-distance} gives
	$w(P - \eta Q) \equiv -s_w \pmod{d - 1}$.
	Consequently,
	\[
		w\paren{\frac{P - \zeta Q}{P - \xi Q}}
		= w(P - \zeta Q) - w(P - \xi Q)
		\equiv 0 \pmod{d - 1}.
	\]
	Since $w$ was arbitrary, the defining congruence
	for $\calc_{K, d}$ holds at every finite place away
	from $d$. Therefore the class of
	$(P - \zeta Q)/(P - \xi Q)$ belongs to
	$\calc_{K, d}$.

	Finally, because $\wt{\alpha}$ represents the class
	of $(P - \zeta Q)/(P - \xi Q)$, the element
	$(P - \zeta Q) / \paren{\wt{\alpha}(P - \xi Q)}$
	is a $(d - 1)$-st power in $K(\mu_d)^\times$.
	Therefore the element $Y$ in the final assertion
	exists.
\end{proof}

Theorem~\ref{thm:power-classes} is the local input
shared by Theorems~\ref{thm:main} and
\ref{thm:elliptic-curves-large-rank}.
Section~\ref{sec:proof-main} uses two such classes to
place ratios of periodic points on a fixed finite
family of twisted Fermat curves.
Section~\ref{sec:rank} instead passes from the same
classes to square classes and quadratic twists of
elliptic curves.


\section{Uniform boundedness of periodic points}
\label{sec:proof-main}

Let $K$ be a number field and let $d \geq 3$.
In this section, we prove Theorem~\ref{thm:main}.
The argument has one general construction and three
case-specific parts. Two power relations place every
quotient of distinct periodic points on a curve in a fixed
finite family of twisted Fermat curves. For $d \geq 5$, these
curves have genus greater than $1$ so
we apply Faltings's theorem.
For $d = 4$, they are
elliptic curves and require an
additional argument using Mordell--Lang.
Finally, when $d = 3$ and
$\Q(\sqrt{-3}) \not\subset K$,
a nonsplit tame place
gives a separate local argument.

\subsection{A family of twisted Fermat curves}
\label{sec:twisted-fermat}

We begin with a general family of twisted Fermat curves
(also called diagonal curves). Let $k$ be a field of
characteristic zero and let $m \geq 3$.

\begin{definition}
\label{def:twisted-fermat-curves}
	For $a_1, a_2 \in k^\times$ and
	$\lambda \in k\sm\set{0,1}$, define
	$C_{a_1, a_2, \lambda} \subset \P^2_k$ by
	\begin{equation}
		a_2V^m - \lambda a_1U^m
		- (1 - \lambda)W^m = 0.
		\label{eq:twisted-fermat-curve}
	\end{equation}
\end{definition}

The following lemma
records the basic geometry of
$C_{a_1, a_2, \lambda}$.

\begin{lemma}
	\label{lem:twisted-fermat-genus}
	Let $a_1, a_2 \in k^\times$ and
	$\lambda \in k \sm \set{0, 1}$. Then
	$C_{a_1, a_2, \lambda}$ is smooth of genus
	$(m - 1)(m - 2)/2$.
\end{lemma}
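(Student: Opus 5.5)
The plan is to compute the partial derivatives of $F := a_2V^m - \lambda a_1U^m - (1 - \lambda)W^m$ and show they have no common zero on $\P^2_{\ol{k}}$. The genus will then follow from the plane curve genus formula for a smooth curve of degree $m$.

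The partials are
\[
	\partial_U F = -m\lambda a_1 U^{m - 1},\quad
	\partial_V F = m a_2 V^{m - 1},\quad
	\partial_W F = -m(1 - \lambda)W^{m - 1}.
\]
The field $k$ has characteristic zero, so $m \neq 0$ in $k$. The hypotheses $a_1, a_2 \in k^\times$ and $\lambda \neq 0, 1$ make each coefficient $m\lambda a_1$, $ma_2$, $m(1 - \lambda)$ nonzero. A common zero of all three partials would therefore force $U = V = W = 0$, which is not a point of $\P^2$. By the projective Jacobian criterion, $C_{a_1, a_2, \lambda}$ is smooth over $\ol{k}$, hence over $k$. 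The Euler relation $mF = U\partial_U F + V\partial_V F + W\partial_W F$ confirms that the singular locus is cut out by the partials alone, because $m$ is invertible.

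A smooth plane curve is geometrically irreducible, since two components would meet by Bézout and their intersection points would be singular. Its geometric genus therefore equals its arithmetic genus, and the degree–genus formula for smooth plane curves of degree $m$ gives $g = (m - 1)(m - 2)/2$. Alternatively, the change of variables $U \mapsto (\lambda a_1)^{-1/m}U$ over $\ol{k}$, and similarly for $V$ and $W$, identifies the curve with the Fermat curve $U^m + V^m = W^m$ up to signs absorbed by $m$-th roots, whose genus is classical.

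There is no real obstacle here. The only point needing care is that every coefficient is nonzero and that $\operatorname{char} k = 0$ (or at least $\operatorname{char} k \nmid m$), and the hypotheses $a_i \neq 0$ and $\lambda \notin \set{0, 1}$ provide exactly this.
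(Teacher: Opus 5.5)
Your proposal is correct and is essentially the paper's proof. The paper likewise notes that all three coefficients of the diagonal equation are nonzero, so the partial derivatives vanish simultaneously only at $U = V = W = 0$, and then applies the genus--degree formula. Your additional remarks (Euler's relation, geometric irreducibility via B\'ezout, and the reduction to the Fermat curve over $\overline{k}$) are valid but go beyond what the paper spells out.
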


\begin{proof}
	All three coefficients in
	\eqref{eq:twisted-fermat-curve} are nonzero,
	hence its
	partial derivatives can vanish simultaneously only at
	$U = V = W = 0$. The curve is therefore smooth.
	The genus-degree formula gives
	the genus $g(C_{a_1, a_2, \lambda})$
	as claimed.
\end{proof}

We next relate these curves to two power relations. Fix
pairwise distinct elements $b_1,b_2,b_3 \in k^\times$
and set $\lambda_{b_1, b_2, b_3}
:= (b_3 - b_2)/(b_3 - b_1)
\in k \sm \set{0,1}$. For
$a_1, a_2 \in k^\times$, write $C_{a_1, a_2}
:= C_{a_1, a_2, \lambda_{b_1, b_2, b_3}}$ and define
\[
	\begin{tikzcd}[row sep = tiny]
		C_{a_1, a_2}
			\arrow[r, "\phi_{a_1, a_2}"]
			& \P^1, \\
		\sbrac{U : V : W}
			\arrow[r, mapsto]
			& \sbrac{
			a_1b_3U^m - b_1W^m
			:
			a_1U^m - W^m}.
	\end{tikzcd}
\]

Let
$\calu := \A^1_k\sm\set{b_1,b_2,b_3}$. For
$i = 1,2$, let $D_i \subset \calu \times \A^1_k$
be the curve with coordinates $(X_0, X_i)$ defined by
\[
	D_i: \quad X_0 - b_i = a_iX_i^m(X_0 - b_3).
\]
Since $X_0 - b_i$ and $X_0 - b_3$ are nonzero on
$\calu$, the defining equation for $D_i$
implies that $X_i$ is invertible on $D_i$ for each $i$.
Thus $D_i$ is actually contained
in $\calu \times \G_m$ for each $i$.
The fiber product $D_1 \times_{\calu} D_2$
is their simultaneous solution set.

\begin{lemma}
\label{lem:twisted-fermat-recovery}
	The map $\phi_{a_1, a_2} : C_{a_1, a_2} \lra \P^1$ is a
	nonconstant morphism. Moreover, the formula
	\[
		\begin{tikzcd}[row sep = tiny]
			D_1 \times_{\calu} D_2
				\arrow[r, "\iota_{a_1, a_2}"]
				& C_{a_1, a_2}, \\
			(X_0, X_1, X_2)
				\arrow[r, mapsto]
				& \sbrac{X_1 : X_2 : 1}
		\end{tikzcd}
	\]
	defines a morphism, and
	$\phi_{a_1, a_2}\circ\iota_{a_1, a_2}$ is the map
	$(X_0, X_1, X_2) \mapsto [X_0 : 1]$.
\end{lemma}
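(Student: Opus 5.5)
We prove the three assertions of the lemma in turn, each by direct computation from the defining equations.

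\emph{Step 1: $\phi_{a_1,a_2}$ is a nonconstant morphism.} The two coordinate forms $a_1b_3U^m - b_1W^m$ and $a_1U^m - W^m$ are homogeneous of the same degree $m$. It suffices to show they have no common zero on $C_{a_1,a_2}$.

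If both vanish at a point, subtracting $b_1$ times the second form from the first gives $a_1(b_3 - b_1)U^m = 0$. Since $b_3 \neq b_1$, this forces $U = 0$, and then the second form gives $W = 0$. Substituting into \eqref{eq:twisted-fermat-curve} leaves $a_2V^m = 0$, so $V = 0$, which is impossible in $\P^2$. Hence $\phi_{a_1,a_2}$ is defined everywhere.

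For nonconstancy, it is enough to exhibit two points of the curve with different images.
\begin{itemize}
	\item At a point with $U = 0$ (and $W \neq 0$, which exists because $a_2V^m = (1-\lambda)W^m$ has solutions), the image is $[-b_1W^m : -W^m] = [b_1 : 1]$.
	\item At a point with $W = 0$, the image is $[a_1b_3U^m : a_1U^m] = [b_3 : 1]$.
\end{itemize}
Since $b_1 \neq b_3$, the map is nonconstant.

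\emph{Step 2: $\iota_{a_1,a_2}$ is a morphism into $C_{a_1,a_2}$.} Take $(X_0,X_1,X_2) \in D_1 \times_\calu D_2$ and put $t := X_0 - b_3$, which is nonzero on $\calu$. The defining equations of $D_1$ and $D_2$ give
\[
	a_iX_i^m = \frac{X_0 - b_i}{t}, \qquad i = 1, 2.
\]
Substituting into \eqref{eq:twisted-fermat-curve} with $(U,V,W) = (X_1,X_2,1)$ reduces the claim to the identity
\[
	(X_0 - b_2) - \lambda(X_0 - b_1) - (1-\lambda)(X_0 - b_3) = 0 .
\]
The coefficient of $X_0$ is $1 - \lambda - (1 - \lambda) = 0$. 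The constant term is $-b_2 + \lambda b_1 + (1-\lambda)b_3 = (b_3 - b_2) - \lambda(b_3 - b_1)$, which vanishes by the choice $\lambda = (b_3 - b_2)/(b_3 - b_1)$.

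The formula $(X_0,X_1,X_2) \mapsto [X_1 : X_2 : 1]$ has last coordinate $1$, so it is a regular map into $\P^2$. By the identity above, its image lies in $C_{a_1,a_2}$, so $\iota_{a_1,a_2}$ is a morphism.

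\emph{Step 3: the composite $\phi_{a_1,a_2} \circ \iota_{a_1,a_2}$.} Applying $\phi_{a_1,a_2}$ to $[X_1 : X_2 : 1]$ and using $a_1X_1^m = (X_0 - b_1)/t$, then clearing the factor $t$, gives
\[
	\bigl[\, b_3(X_0 - b_1) - b_1t \;:\; (X_0 - b_1) - t \,\bigr].
\]
The second coordinate is $b_3 - b_1$. The first is $b_3X_0 - b_1X_0 = (b_3 - b_1)X_0$. Both are divisible by the nonzero constant $b_3 - b_1$, so the point is $[X_0 : 1]$, as claimed.

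The only step needing care is the choice of $\lambda$ in Step 2. Everything else is routine verification, so no serious obstacle is expected.
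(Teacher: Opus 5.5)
Your proof is correct and follows the paper's argument. Both proofs show that the two coordinate forms have no common zero on the curve, use the identity defining $\lambda_{b_1,b_2,b_3}$ to place $[X_1:X_2:1]$ on $C_{a_1,a_2}$, and compute $\phi_{a_1,a_2}\circ\iota_{a_1,a_2}$ by substituting $a_1X_1^m = (X_0-b_1)/(X_0-b_3)$. The one cosmetic difference is nonconstancy: you exhibit geometric points over $U=0$ and $W=0$ with distinct images $[b_1:1]$ and $[b_3:1]$, whereas the paper observes that $\phi_{a_1,a_2}$ is a nonconstant M\"obius function of $(U/W)^m$.
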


\begin{proof}
	The two coordinates defining $\phi_{a_1, a_2}$ do not
	vanish simultaneously. Indeed, if
	$a_1U^m = W^m$, then the first coordinate is
	$(b_3 - b_1)W^m$. Moreover, $W = 0$ would force
	$U = V = 0$ on the curve. Thus the formula defines a
	morphism. It is nonconstant because it is a
	nonconstant M\"obius function of $(U/W)^m$, while
	$U/W$ is nonconstant on the curve.

	The identity
	\[
		\frac{X_0 - b_2}{X_0 - b_3}
		= \lambda_{b_1, b_2, b_3}
		\frac{X_0 - b_1}{X_0 - b_3}
		+ \paren{1 - \lambda_{b_1, b_2, b_3}}
	\]
	and the defining equations of $D_1$ and $D_2$ give
	\eqref{eq:twisted-fermat-curve} with $W = 1$. Thus
	$\iota_{a_1, a_2}$ is well defined. Solving the first
	equation for $X_0$ gives
	\[
		X_0 = \frac{a_1b_3X_1^m - b_1}
		{a_1X_1^m - 1},
	\]
	which is the value of $\phi_{a_1, a_2}$ at
	$[X_1 : X_2 : 1]$. The denominator is nonzero, since
	otherwise the equation for $D_1$ would give $b_1 = b_3$.
\end{proof}

\subsection{The twisted Fermat curves attached to periodic points}
\label{sec:twisted-fermat-family}

We specialize the construction in
Section~\ref{sec:twisted-fermat}
to $k = K(\mu_d)$ and $m = d - 1$.
Choose once
and for all distinct nontrivial roots
$\zeta_1, \zeta_2, \zeta_3 \in \mu_d$, and take
$b_i := \zeta_i$ for $i = 1,2,3$. Thus
\[
	\lambda_{\zeta_1, \zeta_2, \zeta_3}
	= \frac{\zeta_3 - \zeta_2}
	{\zeta_3 - \zeta_1}.
\]
For $a_1, a_2 \in K(\mu_d)^\times$, retain the notation
$C_{a_1, a_2}$ and $\phi_{a_1, a_2}$ from
Section~\ref{sec:twisted-fermat}.

We now use the Kummer theory
from Section~\ref{sec:kummer}.
Recall the finite subgroup
$\calc_{K, d} \leq
K(\mu_d)^\times / (K(\mu_d)^\times)^{(d - 1)}$
and the chosen representative
$\wt{\alpha} \in K(\mu_d)^\times$
for each $\alpha \in \calc_{K, d}$
from Definition~\ref{def:classes-modulo-powers}.

\begin{definition}
\label{def:twisted-fermat-family}
	For $(\alpha_1, \alpha_2) \in \calc_{K, d}^2$,
	define its twisted Fermat curve
	to be the twisted Fermat curve
	associated to its representative
	$(\wt{\alpha_1}, \wt{\alpha_2})
	\in K(\mu_d)^\times \times K(\mu_d)^\times$:
	\begin{align*}
		C_{\alpha_1, \alpha_2}
			&:= C_{\wt{\alpha_1}, \wt{\alpha_2}}, \\
		\phi_{\alpha_1, \alpha_2}
			&:= \phi_{\wt{\alpha_1}, \wt{\alpha_2}}.
	\end{align*}
\end{definition}

We now state our key observation
that relates quotients of periodic points of $f_{d, c}$
to rational points on a finite family of
twisted Fermat curves that is independent of $c$.

\begin{proposition}
\label{prop:quotient-curves}
	Let $c \in K$, and let $P, Q \in \Per_K(f_{d, c})$ be
	distinct with $Q \neq 0$. Then there are
	$\alpha_1, \alpha_2 \in \calc_{K, d}$ such that
	\[
		\frac{P}{Q}
		\in \phi_{\alpha_1, \alpha_2}\paren{
		C_{\alpha_1, \alpha_2}\paren{K(\mu_d)}}.
	\]
\end{proposition}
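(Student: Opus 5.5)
We will take $X_0 := P/Q$ and apply Theorem~\ref{thm:power-classes} twice, then feed the result into Lemma~\ref{lem:twisted-fermat-recovery}. With the fixed nontrivial roots $\zeta_1, \zeta_2, \zeta_3 \in \mu_d$ of Section~\ref{sec:twisted-fermat-family}, apply Theorem~\ref{thm:power-classes} to the pairs $(\zeta, \xi) = (\zeta_1, \zeta_3)$ and $(\zeta_2, \zeta_3)$. This gives classes $\alpha_1, \alpha_2 \in \calc_{K, d}$ and elements $Y_1, Y_2 \in K(\mu_d)^\times$ with
\[
	\frac{P - \zeta_i Q}{P - \zeta_3 Q} = \wt{\alpha_i}\, Y_i^{d - 1}
	\qquad (i = 1, 2).
\]
Since $Q \neq 0$, we can divide numerator and denominator by $Q$ to rewrite this as
\[
	X_0 - \zeta_i = \wt{\alpha_i}\, Y_i^{d-1} (X_0 - \zeta_3).
\]
This is exactly the defining equation of $D_i$ with $m = d - 1$, $b_i = \zeta_i$, $a_i = \wt{\alpha_i}$, and $X_i = Y_i$.

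It remains to check that $X_0 \in \calu$, that is, $X_0 \neq \zeta_1, \zeta_2, \zeta_3$. This follows from Lemma~\ref{lem:telescoping}: since $P, Q$ are distinct periodic points, $P \neq \zeta Q$ for every $\zeta \in \mu_d$. The same lemma guarantees that the numerators and denominators above are nonzero, so Theorem~\ref{thm:power-classes} applies. Consequently $(X_0, Y_1, Y_2)$ is a $K(\mu_d)$-rational point of $D_1 \times_{\calu} D_2$.

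Lemma~\ref{lem:twisted-fermat-recovery} then shows that $\iota_{\alpha_1,\alpha_2}(X_0, Y_1, Y_2) = [Y_1 : Y_2 : 1]$ lies in $C_{\alpha_1, \alpha_2}(K(\mu_d))$. Its image under $\phi_{\alpha_1, \alpha_2}$ is $[X_0 : 1] = P/Q$, which is the claim.

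The only step needing care is the bookkeeping that identifies the curve built from $(\zeta_1, \zeta_2, \zeta_3)$ and the representatives $\wt{\alpha_i}$ with $C_{\alpha_1, \alpha_2}$ as given in Definition~\ref{def:twisted-fermat-family}. For this, the representatives in Theorem~\ref{thm:power-classes} must be exactly the fixed $\wt{\alpha}$, which that theorem provides. We also need $d \geq 4$ so that three distinct nontrivial roots exist. For $d = 3$ only two nontrivial roots exist, so the construction of Section~\ref{sec:twisted-fermat-family} requires $d \ge 4$ implicitly, and we would note this.
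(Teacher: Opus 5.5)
Your proposal is correct and matches the paper's proof essentially step for step. Both arguments apply Theorem~\ref{thm:power-classes} twice with the pairs $(\zeta_i,\zeta_3)$, use Lemma~\ref{lem:telescoping} to place $P/Q$ in $\calu$, and then invoke Lemma~\ref{lem:twisted-fermat-recovery} to obtain $[X_1:X_2:1] \in C_{\alpha_1,\alpha_2}(K(\mu_d))$ mapping to $P/Q$. Your side remark is also accurate: the construction implicitly requires $d \geq 4$, since Section~\ref{sec:twisted-fermat} assumes $m = d-1 \geq 3$ and needs three distinct nontrivial roots of unity.
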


\begin{proof}
	For $i = 1,2$, Theorem~\ref{thm:power-classes} gives
	$\alpha_i \in \calc_{K, d}$ and
	$X_i \in K(\mu_d)^\times$ such that
	\[
		\frac{P/Q - \zeta_i}{P/Q - \zeta_3}
		= \wt{\alpha_i}X_i^{d - 1}.
	\]
	Lemma~\ref{lem:telescoping} shows that
	$P/Q \notin \set{\zeta_1, \zeta_2, \zeta_3}$.
	Apply Lemma~\ref{lem:twisted-fermat-recovery} with
	$x = P/Q$, $b_i = \zeta_i$, and
	$a_i = \wt{\alpha_i}$. It gives
	$[X_1:X_2:1] \in
	C_{\alpha_1, \alpha_2}\paren{K(\mu_d)}$
	and
	$P/Q = \phi_{\alpha_1, \alpha_2}([X_1:X_2:1])$.
\end{proof}

Thus $\calc_{K, d}^2$ indexes a fixed family of
$\paren{\#\calc_{K, d}}^2$ twists of the Fermat curve of degree
$d - 1$, together with morphisms to $\P^1$. Every curve has genus
$g_d := (d - 2)(d - 3)/2$. The family depends only on $K$ and $d$.

\begin{definition}
	\label{def:quotient-set}
	For a field extension $F/K(\mu_d)$, define
	\[
		\calr_{K, d}\paren{F}
		:= \bigcup_{(\alpha_1, \alpha_2)
		\in \calc_{K, d} \times \calc_{K, d}}
		\paren{
		\phi_{\alpha_1, \alpha_2}
		\paren{C_{\alpha_1, \alpha_2}(F)}
		\cap F^\times}.
	\]
\end{definition}

Proposition~\ref{prop:quotient-curves}
immediately places the quotients of nontrivial
periodic points in $\calr_{K, d}(F)$.
We now record this
consequence for later use.

\begin{corollary}
	\label{cor:quotients-in-R}
	Let $c \in K$ be nonintegral at some place
	above a prime divisor of $d$ and let
	$F/K(\mu_d)$ be a field extension.
	If $P, Q \in \Per_K(f_{d, c})$ are distinct,
	then $P \neq 0$, $Q \neq 0$, and
	$P/Q \in \calr_{K, d}(F)$.
\end{corollary}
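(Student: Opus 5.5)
The plan is to reduce everything to Proposition~\ref{prop:quotient-curves}. The only new input is the nonvanishing of $P$ and $Q$, which comes from the nonintegrality hypothesis through Lemma~\ref{lem:shell}. That lemma needs no tameness assumption, so it applies at a wild place.

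First step: show $P \neq 0$ and $Q \neq 0$.
\begin{itemize}
	\item By hypothesis there is a place $w$ of $K$ lying above a prime $p \mid d$ with $w(c) < 0$. Let $k := K_w$, which is a nonarchimedean local field.
	\item Since $P$ is periodic, its forward orbit under $f_{d, c}$ is finite, hence bounded in $k$. So $P \in \calK_k(f_{d, c})$, and in particular $\calK_k(f_{d, c})$ is nonempty.
	\item Lemma~\ref{lem:shell} was proved for an arbitrary nonarchimedean local field; its proof only compares $d\,v(x)$ with $v(c)$ and never uses the residue characteristic. It gives an integer $s \geq 1$ with $w(c) = -ds$ and $w(P) = -s$.
	\item Hence $w(P) < \infty$, so $P \neq 0$. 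The same argument applied to $Q$ gives $Q \neq 0$.
\end{itemize}

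Second step: with $Q \neq 0$ established, apply Proposition~\ref{prop:quotient-curves} to the distinct points $P, Q \in \Per_K(f_{d, c})$. This produces a pair $(\alpha_1, \alpha_2) \in \calc_{K, d}^2$ with
\[
	\frac{P}{Q} \in \phi_{\alpha_1, \alpha_2}\paren{C_{\alpha_1, \alpha_2}\paren{K(\mu_d)}}.
\]
Since $F \supseteq K(\mu_d)$, we have $C_{\alpha_1, \alpha_2}(K(\mu_d)) \subseteq C_{\alpha_1, \alpha_2}(F)$, so $P/Q$ lies in $\phi_{\alpha_1, \alpha_2}\paren{C_{\alpha_1, \alpha_2}(F)}$. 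The first step gives $P \neq 0$, so $P/Q \in F^\times$. By Definition~\ref{def:quotient-set}, this means $P/Q \in \calr_{K, d}(F)$.

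There is no serious obstacle here; the argument is short. The one point to check carefully is that Lemma~\ref{lem:shell} is used in its general form, valid at a place above $p \mid d$. Lemma~\ref{lem:tame-normalization} and Proposition~\ref{prop:tame-distance} require tameness and would not apply at such a place, but they are not needed. The standing choice of three distinct nontrivial roots $\zeta_1, \zeta_2, \zeta_3 \in \mu_d$ is inherited from the setup in which Proposition~\ref{prop:quotient-curves} is stated.
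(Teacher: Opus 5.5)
Your proposal is correct and follows the paper's proof essentially verbatim: apply Lemma~\ref{lem:shell} at the place above a prime divisor of $d$ where $c$ is nonintegral to get $P, Q \neq 0$, then invoke Proposition~\ref{prop:quotient-curves} and base change from $K(\mu_d)$ to $F$, noting that $P/Q \in F^\times$. Your remark that Lemma~\ref{lem:shell} needs no tameness hypothesis at that place is exactly the point the paper relies on as well.
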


\begin{proof}
	Let $\mfp$ be a place of $K$ above a prime divisor
	of $d$ at which $c$ is nonintegral. Every
	$K$-rational periodic point of $f_{d, c}$ has finite
	forward orbit and therefore lies in
	$\calK_{K_\mfp}(f_{d, c})$, so Lemma~\ref{lem:shell}
	gives $v_\mfp(P) = v_\mfp(Q) = -s < 0$. In
	particular $P$ and $Q$ are nonzero.
	Proposition~\ref{prop:quotient-curves} places $P/Q$ in
	$\phi_{\alpha_1, \alpha_2}\paren{
	C_{\alpha_1, \alpha_2}\paren{K(\mu_d)}}$
	for some
	$(\alpha_1, \alpha_2) \in \calc_{K, d}^2$. The
	quotient is nonzero, and base change from $K(\mu_d)$
	to $F$ gives the result.
\end{proof}

\subsection{The case \texorpdfstring{$d \geq 5$}
	{d >= 5}}
\label{sec:dge5}

Let $K$ be a number field and let $d \geq 5$.
We now prove the $d \geq 5$ case of
Theorem~\ref{thm:main}.
When $d \geq 5$, the twisted Fermat curve
$C_{\alpha_1, \alpha_2}$ has genus $g_d > 1$
for each
$(\alpha_1, \alpha_2) \in \calc_{K, d} \times \calc_{K, d}$.
The proof fixes one periodic point $Q$,
sends each remaining point to
a quotient $P/Q$, and
then uses the finite family of
twisted Fermat curves
$C_{\alpha_1, \alpha_2}$
to bound the possible quotients $P/Q$.

\begin{proposition}
\label{prop:dge5}
	Let $K$ be a number field and let $d \geq 5$. There
	is an effectively computable constant
	$B_{\Per}(K, d)$ such that
	\[
		\#\Per_K(z^d + c)
		\leq B_{\Per}(K, d)
	\]
	for every $c \in K$.
\end{proposition}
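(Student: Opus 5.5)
The plan is to fix one nonzero periodic point $Q$ and inject the remaining periodic points into the finite set $\calr_{K, d}(K(\mu_d))$ of quotients. Finiteness comes from Faltings's theorem, and the effective count from the uniform bound of Yu--Yuan--Zhou.

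\emph{Reduction to quotients.} Let $c \in K$. If $\Per_K(f_{d, c}) \subseteq \set{0}$ there is nothing to prove, so assume it contains some $Q \neq 0$. For every $P \in \Per_K(f_{d, c})$ with $P \neq Q$, Proposition~\ref{prop:quotient-curves} gives
\[
	\frac{P}{Q} \in \phi_{\alpha_1, \alpha_2}\paren{C_{\alpha_1, \alpha_2}\paren{K(\mu_d)}}
\]
for some $(\alpha_1, \alpha_2) \in \calc_{K, d}^2$. The map $P \mapsto P/Q$ is injective. Hence
\[
	\#\Per_K(f_{d, c}) \leq 1 + \sum_{(\alpha_1, \alpha_2) \in \calc_{K, d}^2} \#C_{\alpha_1, \alpha_2}\paren{K(\mu_d)},
\]
where the $1$ counts $Q$ itself. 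The point $P = 0$ needs no separate term: it is covered by the sum whenever it is periodic, since only $Q \neq 0$ is required. The right-hand side does not depend on $c$, because the family $\set{C_{\alpha_1, \alpha_2}}$ depends only on $K$ and $d$.

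\emph{Finiteness.} By Lemma~\ref{lem:finite-power-classes}, $\calc_{K, d}$ is finite. It is also computable from the $S$-unit group and the $S$-class group of $K(\mu_d)$, so the indexing set and the representatives $\wt{\alpha}$ can be written down explicitly. For $d \geq 5$, Lemma~\ref{lem:twisted-fermat-genus} with $m = d - 1 \geq 4$ shows that each $C_{\alpha_1, \alpha_2}$ is smooth of genus $g_d = (d - 2)(d - 3)/2 \geq 3$. Faltings's theorem then gives that each $C_{\alpha_1, \alpha_2}(K(\mu_d))$ is finite. This already yields a (possibly ineffective) constant $B_{\Per}(K, d)$.

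\emph{Effectivity.} Effectivity is the main obstacle, since Faltings's theorem does not bound heights. I would count points rather than locate them, using the uniform bound of Yu--Yuan--Zhou. For a curve of genus $g \geq 2$ over a number field $F$, it gives
\[
	\#C(F) \leq c(g, [F : \Q])^{1 + \rank \mathrm{Jac}(C)(F)},
\]
with an explicit constant $c(g, [F : \Q])$. So it remains to bound effectively
\[
	\rank \mathrm{Jac}\paren{C_{\alpha_1, \alpha_2}}\paren{K(\mu_d)}.
\]
For this I would use a Selmer-group bound. The Jacobian has good reduction outside the primes dividing $d$ and the primes dividing the finitely many explicit coefficients $\wt{\alpha_1}$, $\wt{\alpha_2}$, $\zeta_i - \zeta_j$. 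The $2$-Selmer group (or the $(1 - \zeta)$-Selmer group, using the Fermat-curve multiplication) is therefore bounded by an explicit quantity in terms of $[K(\mu_d) : \Q]$, $g_d$, the number of bad places, and the relevant class group $2$-rank. All of these are computable from $K$ and $d$. Substituting this rank bound into the Yu--Yuan--Zhou estimate and summing over the $(\#\calc_{K, d})^2$ curves gives the effectively computable constant $B_{\Per}(K, d)$.
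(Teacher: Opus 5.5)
Your argument is the paper's. You fix a nonzero $Q$, inject $P \mapsto P/Q$ into $\bigcup \phi_{\alpha_1,\alpha_2}(C_{\alpha_1,\alpha_2}(K(\mu_d)))$ via Proposition~\ref{prop:quotient-curves}, sum over the finite index set $\calc_{K,d}^2$, and use $g_d \geq 3$ with Faltings for finiteness.

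The only divergence is the effectivity step. The paper cites the rank-free count \cite[Theorem~1.6]{yu-yuan-zhou}, which bounds $\#C_{\alpha_1,\alpha_2}(K(\mu_d))$ explicitly in terms of $g_d$, $[K(\mu_d):\Q]$, the norm product $N_{\alpha_1,\alpha_2}$ of the bad places, and $\verts{\Delta_{K(\mu_d)}}$; this gives \eqref{eq:fully-explicit-bound}. That theorem already packages the Selmer-type rank estimate you propose to combine by hand with the rank-dependent bound \cite[Theorem~1.3]{yu-yuan-zhou}. Your route therefore works, but it redoes work the citation provides.

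One claim in your route is wrong as stated. The Jacobian does not have good reduction away from $d$ and the coefficients $\wt{\alpha_1}, \wt{\alpha_2}, \zeta_i - \zeta_j$: the places above $d-1$ must also be included. The degree-$(d-1)$ plane model is singular there, and the Jacobian can genuinely be bad. For example, for $K = \Q$ and $d = 7$, each $C_{\alpha_1,\alpha_2}$ maps onto a twisted Fermat cubic whose Jacobian has $j = 0$ and bad reduction above $3$. This is why the paper's value of $N$ for $d = 7$ contains the factor $3^6$.

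Enlarge $S$ accordingly, and take the class-group $2$-rank over the $2$-division field of the Jacobian rather than over $K(\mu_d)$. With these changes your Selmer bound is effective, and the conclusion stands.
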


\begin{proof}
	If $\Per_K(f_{d, c})$ has at most one element, then there
	is nothing to prove. Otherwise choose a nonzero point
	$Q \in \Per_K(f_{d, c})$. For every
	$P \in \Per_K(f_{d, c}) \sm \set{Q}$,
	Proposition~\ref{prop:quotient-curves} gives
	$(\alpha_1, \alpha_2) \in \calc_{K, d}^2$ such that
	\[
		\frac{P}{Q}
		\in \phi_{\alpha_1, \alpha_2}\paren{
		C_{\alpha_1, \alpha_2}\paren{K(\mu_d)}}.
	\]
	The map $P \mapsto P/Q$ is injective. Hence
	\[
		\#\Per_K(f_{d, c})
		\leq 1 + \sum_{(\alpha_1, \alpha_2) \in
		\calc_{K, d}^2}
		\#C_{\alpha_1, \alpha_2}\paren{K(\mu_d)}.
	\]
	The index set $\calc_{K, d}^2$ is finite by
	Lemma~\ref{lem:finite-power-classes}, and each
	$C_{\alpha_1, \alpha_2}$ has genus $g_d > 1$ by
	Lemma~\ref{lem:twisted-fermat-genus}. Thus each
	$C_{\alpha_1, \alpha_2}\paren{K(\mu_d)}$ is finite by
	Faltings's theorem~\cite{faltings-mordell}, with an
	explicit upper bound due to Yu--Yuan--Zhou
	\cite[Theorem~1.6]{yu-yuan-zhou}.
\end{proof}

\begin{remark}
	Section~\ref{sec:quantitative} writes the resulting
	constant from the proof of Proposition~\ref{prop:dge5}
	explicitly in terms of the finite Kummer
	classes and the quantitative point bounds
	of Yu--Yuan--Zhou~\cite{yu-yuan-zhou}.
\end{remark}

\subsection{The case \texorpdfstring{$d = 4$}{d = 4}}
\label{sec:d4}

Let $K$ be a number field and let $d = 4$.
We now prove the $d = 4$ case of
Theorem~\ref{thm:main}.

The twisted Fermat curves of
Section~\ref{sec:twisted-fermat-family} have degree
$m = d - 1 = 3$ and hence genus $g_4 = 1$. They may
therefore have infinitely many rational points, and
$\calr_{K, 4}\paren{K(\mu_4)}$ need not be finite.
The degree-$d \geq 5$ argument therefore cannot be
repeated by counting individual quotient values.

We recover the bound by using more of the structure of
the set of quotients. Let $c \in K$ and suppose that
$\Per_K(f_{d, c})$ contains a nonzero point.
Fix one such point $Q$ and set
$X := \set{P/Q \Mid P \in \Per_K(f_{d, c}), \ P \neq 0}$.
Proposition~\ref{prop:quotient-curves} applies to every
pair of distinct nonzero periodic points, so
\begin{equation}
	\frac{P_1/Q}{P_2/Q}
	= \frac{P_1}{P_2}
	\in \calr_{K, 4}\paren{K(\mu_4)}
	\label{eq:pairwise-quotient-observation}
\end{equation}
for all distinct nonzero
$P_1, P_2 \in \Per_K(f_{d, c})$. Thus every quotient of two
distinct elements of $X$ lies in one fixed set. The same
holds over any extension of $K(\mu_4)$, and we will work
over the finite extension $L$ of
Definition~\ref{def:quartic-base-field}.

We show in Theorem~\ref{thm:elliptic-quotients} that a
set with this property is bounded in size by a
constant depending only on the finitely many elliptic
curves and rational functions that define
$\calr_{K, 4}(L)$. This replaces the high-genus
finiteness used when $d \geq 5$.

The proof of Theorem~\ref{thm:elliptic-quotients} has
three ingredients. A relation $x/y = z$ between three
values of such rational functions is a rational point
on a proper closed subvariety of an abelian threefold,
namely a product of three curves from our family;
Mordell--Lang confines those rational points to
finitely many cosets. Ramsey's theorem then forces all
pairs from a large subset of $X$ into a single coset.
Finally, we show that the identity component of the
Zariski closure of the subgroup defining the coset is
an abelian surface that projects isogenously to the
first two factors.
Proposition~\ref{prop:no-isogeny-translate} then excludes
the translate through one of the selected rational points.
Section~\ref{sec:quartic-curves} puts the family of
curves into the form these ingredients require.
Sections~\ref{sec:quotient-locus}
through~\ref{sec:pairwise-quotients} prove
Theorem~\ref{thm:elliptic-quotients}, and
Section~\ref{sec:quartic-application} deduces the
quartic case of Theorem~\ref{thm:main}.

\subsubsection{The quartic twisted Fermat curves as
	elliptic curves}
\label{sec:quartic-curves}

We keep the roots $\zeta_1, \zeta_2, \zeta_3$ and the
parameter $\lambda_{\zeta_1, \zeta_2, \zeta_3}$ fixed
in Section~\ref{sec:twisted-fermat-family}. The curves
$C_{\alpha_1, \alpha_2}$ of
Definition~\ref{def:twisted-fermat-family} are now
plane cubics, and they are smooth by
Lemma~\ref{lem:twisted-fermat-genus}. A smooth plane
cubic is an elliptic curve only after a rational point
has been chosen, and the family need not have one over
$K(\mu_4)$. Substituting $[1 : u : 0]$ into
\eqref{eq:twisted-fermat-curve} shows that this point
lies on $C_{a_1, a_2, \lambda}$ exactly when
$a_2u^m = \lambda a_1$, so we adjoin the corresponding
cube roots once and for all.

\begin{definition}
	\label{def:quartic-base-field}
	For each
	$(\alpha_1, \alpha_2) \in \calc_{K, 4}^2$, fix
	$u_{\alpha_1, \alpha_2}$ in a fixed algebraic
	closure of $K(\mu_4)$ satisfying
	$u_{\alpha_1, \alpha_2}^3
	= \lambda_{\zeta_1, \zeta_2, \zeta_3}
	\paren{\wt{\alpha_1}/\wt{\alpha_2}}$,
	and set
	\[
		L := K\paren{\mu_4,
		\set{u_{\alpha_1, \alpha_2} \Mid
		(\alpha_1, \alpha_2) \in \calc_{K, 4}^2}}.
	\]
\end{definition}

The group $\calc_{K, 4}$ is finite by
Lemma~\ref{lem:finite-power-classes}, so $L$ is a
number field, with
$[L : K(\mu_4)] \leq 3^{\paren{\#\calc_{K, 4}}^2}$,
and it depends only on $K$.

\begin{lemma}
\label{lem:quartic-twisted-fermat-geometry}
	If $(\alpha_1, \alpha_2) \in \calc_{K, 4}^2$, then
	$[1 : u_{\alpha_1, \alpha_2} : 0]$ lies in
	$C_{\alpha_1, \alpha_2}(L)$, and taking this point
	as the origin makes $C_{\alpha_1, \alpha_2}$ an
	elliptic curve over $L$ with $j$-invariant zero.
	Moreover, the pole divisor of
	$\phi_{\alpha_1, \alpha_2}$ is reduced of degree
	$9$.
\end{lemma}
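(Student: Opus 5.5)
The argument has three steps. First we check the point lies on the curve, then we identify the $j$-invariant, and finally we compute the pole divisor. Throughout, write $a_i := \wt{\alpha_i}$, $\lambda := \lambda_{\zeta_1,\zeta_2,\zeta_3}$, $u := u_{\alpha_1,\alpha_2}$, and $m = 3$.

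\emph{The rational point.} Substituting $[1:u:0]$ into \eqref{eq:twisted-fermat-curve} gives $a_2u^3 - \lambda a_1$. This vanishes because $u^3 = \lambda a_1/a_2$ by Definition~\ref{def:quartic-base-field}. Since $u \in L$, the point is $L$-rational. By Lemma~\ref{lem:twisted-fermat-genus}, $C_{\alpha_1,\alpha_2}$ is a smooth plane cubic, hence of genus $1$. A smooth genus-one curve with a chosen $L$-rational point is an elliptic curve over $L$, so we take $[1:u:0]$ as the origin.

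\emph{The $j$-invariant.} The $j$-invariant is a geometric invariant, so we may compute it over $\ol{L}$. There we rescale $U$, $V$, $W$ by cube roots of the three nonzero coefficients, which carries the curve isomorphically to the Fermat cubic $U^3 + V^3 + W^3 = 0$. The Fermat cubic has the automorphism $U \mapsto \omega U$ of order $3$, where $\omega$ is a primitive cube root of unity, and this automorphism fixes the point $[0:1:-1]$. An elliptic curve with an automorphism of order $3$ fixing the origin has $j = 0$. Alternatively, the standard transformation of $x^3+y^3=1$ to $Y^2 = X^3 - 432$ gives $j = 0$ directly.

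\emph{The pole divisor.} The map $\phi := \phi_{\alpha_1,\alpha_2}$ is given by $[a_1b_3U^3 - b_1W^3 : a_1U^3 - W^3]$, and these two coordinates have no common zero on the curve by Lemma~\ref{lem:twisted-fermat-recovery}. The pole divisor of $\phi$ is therefore the divisor cut out on the cubic by the cubic form $a_1U^3 - W^3$. By Bézout this divisor has degree $9$, so $\deg\phi = 9$. It remains to show the divisor is reduced, i.e.\ that the nine intersection points are distinct. The form factors over $\ol{L}$ as
\[
	a_1U^3 - W^3 = \prod_{\eta \in \mu_3} \paren{\eta\,a_1^{1/3}U - W}.
\]
Each factor defines a line $W = \eta\,a_1^{1/3}U$. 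Substituting into \eqref{eq:twisted-fermat-curve} gives
\[
	a_2V^3 - \lambda a_1U^3 - (1-\lambda)a_1U^3 = a_2V^3 - a_1U^3.
\]
The equation $a_2V^3 = a_1U^3$ has three distinct solutions $[U:V]$. Indeed, $U = 0$ would force $V = 0$ and then $W = 0$, which is impossible. So each line meets the curve in three distinct points, each with multiplicity one. The three lines are distinct and meet only at $[0:1:0]$. That point is not on the curve, because $a_2 \neq 0$. Hence the nine points are pairwise distinct and the pole divisor is reduced.

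The only step needing care is this last reducedness check. Everything above follows directly from the coefficient computation $\lambda a_1 + (1-\lambda)a_1 = a_1$, which is what collapses the restricted equation to $a_2V^3 = a_1U^3$.
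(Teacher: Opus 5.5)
Your proof is correct and follows essentially the same route as the paper: substitute $[1:u:0]$ to check the origin, rescale over $\ol{L}$ to the Fermat cubic to get $j = 0$, and factor the denominator $a_1U^3 - W^3$ into three lines, each meeting the curve in three distinct points. You also justify $j = 0$ through the order-$3$ automorphism, and you check that the three lines meet only at $[0:1:0]$, which is off the curve; the paper leaves this cross-line disjointness implicit.
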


\begin{proof}
	Write $a_i := \wt{\alpha_i}$ and
	$\lambda := \lambda_{\zeta_1, \zeta_2, \zeta_3}$,
	so that
	$C_{\alpha_1, \alpha_2} = C_{a_1, a_2, \lambda}$ in
	the notation of
	Definition~\ref{def:twisted-fermat-curves}. The
	substitution recorded above gives
	$a_2u_{\alpha_1, \alpha_2}^3 = \lambda a_1$, which
	holds by Definition~\ref{def:quartic-base-field},
	and the curve is smooth by
	Lemma~\ref{lem:twisted-fermat-genus}. Over an
	algebraic closure, scaling the three coordinates
	identifies every smooth twisted Fermat cubic with
	the classical Fermat cubic, whose $j$-invariant is
	zero.

	For the pole divisor, recall that the denominator
	of $\phi_{\alpha_1, \alpha_2}$ is $a_1U^3 - W^3$.
	It has no common zero with the numerator on the
	curve: at such a zero the numerator equals
	$(\zeta_3 - \zeta_1)W^3$, and $W = 0$ forces
	$U = V = 0$. Over an algebraic closure the
	denominator is a product of three distinct lines,
	and on each of them
	\eqref{eq:twisted-fermat-curve} becomes
	$a_2V^3 = W^3$, which has three distinct solutions.
	Thus the nine poles are simple.
\end{proof}

\begin{definition}
\label{def:quartic-twisted-fermat-elliptic-curves}
	For $(\alpha_1, \alpha_2) \in \calc_{K, 4}^2$, let
	$E_{\alpha_1, \alpha_2}/L$ be the elliptic curve
	obtained from $C_{\alpha_1, \alpha_2}$ by taking
	$[1 : u_{\alpha_1, \alpha_2} : 0]$ as its origin,
	and retain the notation $\phi_{\alpha_1, \alpha_2}$
	for the corresponding rational function on it.
\end{definition}

Only two features of this family are used below: each
$C_{\alpha_1, \alpha_2}$ becomes an elliptic curve
over $L$, and each $\phi_{\alpha_1, \alpha_2}$ is
nonconstant. The remaining assertions of
Lemma~\ref{lem:quartic-twisted-fermat-geometry} record
the geometry of the family.

Listing the pairs
$(\alpha_1, \alpha_2) \in \calc_{K, 4}^2$ as
$1, \ldots, r$ with $r := \paren{\#\calc_{K, 4}}^2$,
and writing $E_i/L$ and $\phi_i \in L(E_i)$ for the
corresponding curve and function, we may restate
Definition~\ref{def:quotient-set} as
\begin{equation}
	\calr_{K, 4}(L)
	= \bigcup_{i = 1}^{r}
	\paren{\phi_i\paren{E_i(L)} \cap L^\times}.
	\label{eq:quartic-quotient-set}
\end{equation}
Each $\phi_i$ is nonconstant by
Lemma~\ref{lem:twisted-fermat-recovery}, and the
integer $r$, the field $L$, the curves $E_i$ and the
functions $\phi_i$ all depend only on $K$.

\subsubsection{The quotient locus in an abelian
	threefold}
\label{sec:quotient-locus}

The next three subsections make no reference to
dynamics. The reader may keep in mind the case in
which the curves are the $E_{\alpha_1, \alpha_2}$ of
Definition~\ref{def:quartic-twisted-fermat-elliptic-curves}
and the functions are the
$\phi_{\alpha_1, \alpha_2}$.

Throughout this subsection and the next, $F$ is a
field of characteristic zero, $E/F$ and $E'/F$ are
elliptic curves, and $\phi \in F(E)$ and
$\psi \in F(E')$ are nonconstant; the two lemmas of
Section~\ref{sec:rigidity} restate their own
hypotheses and are independent of this convention. The
following subvariety of $E \times E \times E'$ records
a single multiplicative relation between values of
$\phi$ and $\psi$: the open set $\Omega_{\phi, \psi}$
is where the relation makes sense, the closed
subvariety $\Sigma_{\phi, \psi}$ is where it holds,
and the closure $Z_{\phi, \psi}$ of
$\Sigma_{\phi, \psi}$ is the subvariety to which
Mordell--Lang will be applied.

\begin{definition}
\label{def:quotient-locus}
	Let $\Omega_\phi \subseteq E$ and
	$\Omega_\psi \subseteq E'$ be the open subsets on
	which $\phi$ and $\psi$ respectively are defined
	and nonzero, and set
	$\Omega_{\phi, \psi}
	:= \Omega_\phi \times \Omega_\phi \times
	\Omega_\psi$. Define the
	closed subvariety of $\Omega_{\phi, \psi}$
	\[
		\Sigma_{\phi, \psi}
		:= \set{(P, Q, R) \in \Omega_{\phi, \psi} \Mid
		\phi(P)/\phi(Q) = \psi(R)},
	\]
	and let $Z_{\phi, \psi}$ be its Zariski closure in
	$E \times E \times E'$. We call $Z_{\phi, \psi}$
	the \emph{quotient locus} of the pair
	$(\phi, \psi)$.
\end{definition}

In Definition~\ref{def:quotient-locus},
the curves $E$ and $E'$ are
determined by $\phi$ and $\psi$
so they can be suppressed
from the notation.
We now record the
properties of $Z_{\phi, \psi}$ needed for the
application of Mordell--Lang.

\begin{lemma}
\label{lem:quotient-locus}
	With the notation above:
	\begin{enumerate}[(i)]
		\item $Z_{\phi, \psi} \cap \Omega_{\phi, \psi}
			= \Sigma_{\phi, \psi}$;
		\item $Z_{\phi, \psi}$ is a proper closed
			subvariety of $E \times E \times E'$, so
			every irreducible component has dimension
			at most $2$;
		\item if $F$ is a number field, then
			$Z_{\phi, \psi}(F)$ is a finite union of
			cosets $\mathbf z_\nu + \Lambda_\nu$ of
			subgroups $\Lambda_\nu \leq
			E(F) \times E(F) \times E'(F)$ whose
			Zariski closures are contained in
			$Z_{\phi, \psi}$.
	\end{enumerate}
\end{lemma}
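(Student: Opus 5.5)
For (i), I will use that $\Sigma_{\phi, \psi}$ is closed in $\Omega_{\phi, \psi}$. On $\Omega_{\phi, \psi}$ the functions $\phi(P)$, $\phi(Q)$ and $\psi(R)$ are regular and nonvanishing, so $\Sigma_{\phi, \psi}$ is the zero locus of the regular function $\phi(P) - \phi(Q)\psi(R)$ on $\Omega_{\phi, \psi}$. The closure of a closed subset of an open subset $U$ meets $U$ exactly in that subset, since $\overline{\Sigma} \cap U$ is the closure of $\Sigma$ relative to $U$. This gives $Z_{\phi, \psi} \cap \Omega_{\phi, \psi} = \Sigma_{\phi, \psi}$.

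For (ii), it is enough to exhibit a point of $\Omega_{\phi, \psi}$ outside $\Sigma_{\phi, \psi}$. Then $\Sigma_{\phi, \psi}$ is a proper closed subset of the irreducible open set $\Omega_{\phi, \psi}$, and by (i) $Z_{\phi, \psi}$ is not all of $E \times E \times E'$. Fix $Q_0 \in \Omega_\phi$ and $R_0 \in \Omega_\psi$. Because $\phi$ is nonconstant, $P \mapsto \phi(P)$ takes infinitely many values on $\Omega_\phi$ over $\overline{F}$, so some $P_0$ satisfies $\phi(P_0) \neq \phi(Q_0)\psi(R_0)$. Since $Z_{\phi, \psi}$ is a proper closed subset of an irreducible threefold, each of its components has dimension at most $2$. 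I could also note that $\Sigma_{\phi, \psi}$ is a hypersurface, so its components have dimension exactly $2$, but the statement does not need this.

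For (iii), I will apply the Mordell--Lang theorem (Faltings \cite{faltings-mordell-lang}) to the closed subvariety $Z_{\phi, \psi}$ of the abelian threefold $A := E \times E \times E'$ over the number field $F$, using the finitely generated group $\Gamma := A(F)$ (Mordell--Weil). The theorem says that $Z_{\phi, \psi}(F) = Z_{\phi, \psi} \cap \Gamma$ is a finite union of sets $\mathbf z_\nu + (B_\nu \cap \Gamma)$, where each $B_\nu$ is an abelian subvariety with $\mathbf z_\nu + B_\nu \subseteq Z_{\phi, \psi}$ and $\mathbf z_\nu \in \Gamma$. Set $\Lambda_\nu := B_\nu(F) = B_\nu \cap \Gamma$, which is a subgroup of $E(F) \times E(F) \times E'(F)$. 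Its Zariski closure lies in $B_\nu$, so the closure of $\mathbf z_\nu + \Lambda_\nu$ lies in $\mathbf z_\nu + B_\nu \subseteq Z_{\phi, \psi}$. If $Z_{\phi, \psi}$ is reducible, I will apply the theorem to each irreducible component and take the union, which is still finite.

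I do not expect a serious obstacle here. The one point that needs care is the formulation of Mordell--Lang: the cosets must be cosets of subgroups of the rational points whose closures stay inside $Z_{\phi, \psi}$, and the translates $\mathbf z_\nu$ must be rational. Translates of the form $\mathbf z + B$ with $\mathbf z$ not rational contribute no $F$-points unless they contain one, in which case we can recenter at that point. The closure statement then follows directly from the standard form of the theorem. If an effective count of cosets is wanted later, the citations \cite{remond-count,david-philippon-ii} bound their number, but (iii) itself only asserts finiteness.
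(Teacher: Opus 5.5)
Your proposal is correct and follows the paper's proof essentially step for step: (i) comes from $\Sigma_{\phi,\psi}$ being closed in $\Omega_{\phi,\psi}$; (ii) comes from fixing $Q$ and $R$ and varying $P$ inside the irreducible dense open set $\Omega_{\phi,\psi}$; and (iii) is Mordell--Weil combined with Faltings's Mordell--Lang theorem. Your extra care about taking the translates $\mathbf z_\nu$ rational and setting $\Lambda_\nu = B_\nu \cap \Gamma$ only spells out what the paper leaves implicit.
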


\begin{proof}
	Part~(i) holds because $\Sigma_{\phi, \psi}$ is
	closed in $\Omega_{\phi, \psi}$.

	For (ii), note that $\Omega_{\phi, \psi}$ is a
	product of nonempty open subsets of irreducible
	varieties, hence is irreducible and dense in
	$E \times E \times E'$. Fix $Q \in \Omega_\phi$ and
	$R \in \Omega_\psi$. As $P$ varies over
	$\Omega_\phi$, the value $\phi(P)$ takes infinitely
	many values because $\phi$ is nonconstant, so the
	relation $\phi(P)/\phi(Q) = \psi(R)$ fails for some
	$P$. Thus $\Sigma_{\phi, \psi}$ is a proper closed
	subset of the irreducible variety
	$\Omega_{\phi, \psi}$, and its closure
	$Z_{\phi, \psi}$ is therefore a proper closed
	subset of $E \times E \times E'$.

	For (iii), the group
	$E(F) \times E(F) \times E'(F)$ is finitely
	generated by the Mordell--Weil theorem; see
	Silverman \cite[Chapter~VIII]{silverman-aec}.
	The theorem of
	Faltings \cite{faltings-mordell-lang,faltings-lang-general}
	on Mordell--Lang therefore applies to the closed
	subvariety $Z_{\phi, \psi}$ and exhibits
	$Z_{\phi, \psi}(F)$ as a finite union of such
	cosets, each contained in $Z_{\phi, \psi}(F)$. As
	$Z_{\phi, \psi}$ is closed, the Zariski closure of
	each coset is contained in $Z_{\phi, \psi}$.
\end{proof}

\subsubsection{Rigidity of the quotient locus}
\label{sec:rigidity}

We now show that if an abelian surface
$A \subseteq E \times E \times E'$ projects isogenously
to the first two factors, then no translate of $A$
contained in $Z_{\phi, \psi}$ intersects $\Omega_{\phi, \psi}$.
This exclusion will provide the contradiction in the
combinatorial argument of
Section~\ref{sec:pairwise-quotients}.
We name the shape of the divisors
involved for convenience.

\begin{definition}
	\label{def:fibral}
	Let $A$ be an abelian surface over an algebraically
	closed field of characteristic zero and let
	$H \subset A$ be an elliptic curve. A divisor
	$\Delta$ on $A$ is \emph{$H$-fibral} if every
	irreducible component of its support is a translate
	of $H$.
\end{definition}

We now describe divisors on abelian surfaces obtained
by pullback from elliptic curves.
For a homomorphism $\pi$ of abelian varieties,
write $\Ker\paren{\pi}^0$
for the connected component of $\Ker\paren{\pi}$
containing the identity.

\begin{lemma}
	\label{lem:fibral-pullback}
	Let $A$ be an abelian surface and let $E$ be an
	elliptic curve, both over an algebraically closed
	field of characteristic zero.
	If $\pi : A \lra E$ is a nonzero homomorphism,
	then $\pi$ is surjective,
	$\Ker\paren{\pi}^0$ is an
	elliptic curve in $A$,
	and $\pi^*\Delta$ is a
	nonzero $\Ker\paren{\pi}^0$-fibral divisor for
	every nonzero divisor $\Delta$ on $E$.
\end{lemma}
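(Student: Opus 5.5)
The proof splits into three parts: surjectivity, the structure of the kernel, and the description of the pullback divisor.

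\emph{Surjectivity.} The image $\pi(A)$ is a connected closed subgroup (an abelian subvariety) of the elliptic curve $E$. Since $\pi \neq 0$, the image has positive dimension, so $\pi(A) = E$.

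\emph{The kernel.} Because $\pi$ is surjective, $\Ker(\pi)$ is a closed subgroup scheme of dimension $\dim A - \dim E = 1$. In characteristic zero it is reduced. Hence its identity component $H := \Ker(\pi)^0$ is a connected smooth commutative algebraic group of dimension $1$ contained in the proper variety $A$. It is therefore proper, so $H$ is an abelian variety of dimension one, that is, an elliptic curve in $A$.

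\emph{Fibres of $\pi$.} For each $e \in E$, the fibre $\pi^{-1}(e)$ is a coset $a + \Ker(\pi)$, where $\pi(a) = e$. Since $\Ker(\pi)$ is a finite disjoint union of translates of $H$, indexed by the finite group $\Ker(\pi)/H$, the set-theoretic support of $\pi^{-1}(e)$ is a finite union of translates of $H$.

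\emph{Pullback of a single point.} Since $\pi$ is a surjective morphism onto a smooth curve, the pullback $\pi^*$ of Cartier divisors is well defined. For a point $e$, the divisor $\pi^*[e]$ is effective. Its support is exactly $\pi^{-1}(e)$, which is nonempty by surjectivity. So $\pi^*[e]$ is a nonzero effective combination $\sum m_j H_j$, where the $H_j$ are translates of $H$ and each $m_j \geq 1$.

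\emph{General divisors.} Write $\Delta = \sum n_e [e]$ with finitely many $n_e \neq 0$. By linearity, $\pi^*\Delta = \sum n_e \pi^*[e]$.
\begin{itemize}
\item Fibres over distinct points are disjoint, so the supports of the $\pi^*[e]$ are pairwise disjoint.
\item Consequently no cancellation can occur between the terms.
\item Every component of $\pi^*\Delta$ is then a translate of $H$, appearing with multiplicity $n_e m_j \neq 0$.
\end{itemize}
Since $\Delta \neq 0$, some $n_e \neq 0$, and so $\pi^*\Delta$ is nonzero and $H$-fibral.

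\emph{Main obstacle.} The only delicate point is justifying that $\Ker(\pi)^0$ is an elliptic curve, that is, reduced and proper. This follows from Cartier's theorem, which says group schemes in characteristic zero are reduced, together with the fact that closed subgroups of abelian varieties are proper.
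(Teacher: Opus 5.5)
Your proposal is correct and follows essentially the same route as the paper. Surjectivity holds because the image is a nonzero connected closed subgroup of $E$; $\Ker(\pi)^0$ is an abelian variety by characteristic-zero reducedness; and $\pi^*\Delta$ is a nonzero combination of fibres whose components are translates of $\Ker(\pi)^0$. The only difference is that you spell out the no-cancellation step (point pullbacks are effective and fibres over distinct points are disjoint), which the paper leaves implicit in the phrase ``a nonzero sum of such fibers.''
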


\begin{proof}
	The image of $\pi$ is a closed connected subgroup
	of $E$: it is closed because $A$ is proper, and
	connected because $A$ is connected. Since $\pi$ is
	nonzero, its image is all of $E$. Hence
	$\Ker\paren{\pi}$ has dimension one. In
	characteristic zero, its identity component is an
	abelian variety; see Milne
	\cite[Section~8]{milne-abelian}. Thus $\Ker\paren{\pi}^0$
	is an elliptic curve.

	Every fiber of $\pi$ is a translate of
	$\Ker\paren{\pi}$, and its connected components are
	translates of $\Ker\paren{\pi}^0$. Since a nonzero
	divisor on $E$ is a nonzero finite sum of points,
	$\pi^*\Delta$ is a nonzero sum of such fibers.
\end{proof}

Suppose that two homomorphisms from an abelian surface
to $E$ together define an isogeny to $E^2$.
The two kernels have distinct connected components
containing the identity.
Lemma~\ref{lem:fibral-pullback} therefore shows that
the pullbacks of nonzero divisors under the two
homomorphisms have no common irreducible component.
The following lemma uses this absence of common components
to exclude an identity of rational functions obtained
by pulling back $\phi(P)/\phi(Q) = \psi(R)$.

\begin{lemma}
	\label{lem:divisor-directions}
	Let $A/F$ be an abelian surface and let $E/F$ and
	$E'/F$ be elliptic curves. Suppose that
	$(\pi_1, \pi_2) : A \lra E^2$ is an isogeny and
	$\pi_3 : A \lra E'$ is a homomorphism.
	For nonconstant $\phi \in \overline{F}(E)$ and
	$\psi \in \overline{F}(E')$, there are no points
	$P_0, Q_0 \in E(\overline{F})$ and
	$R_0 \in E'(\overline{F})$ for which
	\[
		\frac{\phi\paren{P_0 + \pi_1(w)}}
		{\phi\paren{Q_0 + \pi_2(w)}}
		= \psi\paren{R_0 + \pi_3(w)}
	\]
	holds in $\overline{F}(A)$.
\end{lemma}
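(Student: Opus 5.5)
We argue by contradiction: suppose such $P_0, Q_0, R_0$ exist. We compare the divisors of the two sides as rational functions on $A_{\overline{F}}$; both sides are nonconstant rational functions there.

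Define homomorphisms translated by points. Let $\tau_1(w) := P_0 + \pi_1(w)$, $\tau_2(w) := Q_0 + \pi_2(w)$ and $\tau_3(w) := R_0 + \pi_3(w)$. Each is a translate of a homomorphism. The pullbacks have divisors
\[
	\divisor(\phi\circ\tau_1) = \pi_1^*\paren{t_{-P_0}^*\divisor\phi}, \qquad
	\divisor(\phi\circ\tau_2) = \pi_2^*\paren{t_{-Q_0}^*\divisor\phi},
\]
and similarly $\divisor(\psi\circ\tau_3) = \pi_3^*\paren{t_{-R_0}^*\divisor\psi}$. Here $t$ denotes translation on $E$ or $E'$, and the left side of the displayed relation has divisor $\divisor(\phi\circ\tau_1) - \divisor(\phi\circ\tau_2)$.

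Set $H_i := \Ker(\pi_i)^0$ for $i = 1, 2$. Since $\phi$ is nonconstant, $\divisor\phi$ is a nonzero divisor on $E$. Both $\pi_1$ and $\pi_2$ are nonzero, because $(\pi_1, \pi_2)$ is an isogeny. By Lemma~\ref{lem:fibral-pullback}, $\divisor(\phi\circ\tau_i)$ is a nonzero $H_i$-fibral divisor.

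Next, $H_1 \neq H_2$. Indeed, if $H_1 = H_2$, then this curve would lie in $\Ker(\pi_1, \pi_2)$, contradicting finiteness of the kernel of an isogeny. So $H_1 \cap H_2$ is finite. A translate of $H_1$ therefore cannot equal a translate of $H_2$, since equal translates would force equal identity components after translating back. Hence the two fibral divisors share no component, and the divisor of the left side, $D_1 - D_2$ with $D_i$ nonzero $H_i$-fibral, has components of both directions with nonzero coefficients.

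Now treat the right side. If $\pi_3 = 0$, then $\psi\circ\tau_3$ is constant, so its divisor is $0$. But the left side has nonzero divisor, a contradiction. If $\pi_3 \neq 0$, Lemma~\ref{lem:fibral-pullback} shows that the divisor of $\psi\circ\tau_3$ is $H_3$-fibral with $H_3 := \Ker(\pi_3)^0$. In that case every component of $D_1 - D_2$ is a translate of $H_3$. Applying this to a component of $D_1$ gives $H_1 = H_3$, and applying it to a component of $D_2$ gives $H_2 = H_3$. This contradicts $H_1 \neq H_2$.

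The main point needing care is the no-cancellation claim. Components of $D_1$ are translates of $H_1$ and those of $D_2$ are translates of $H_2$, and two such sets are disjoint. So no component of $D_1$ can cancel against a component of $D_2$, and $D_1 - D_2$ genuinely contains components of both directions. We can also work over $\overline{F}$ throughout, since divisors and kernels behave well under base change in characteristic zero.
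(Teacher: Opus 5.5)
Your proposal is correct and follows the paper's proof essentially step for step. You pull back the divisors along $\pi_1,\pi_2,\pi_3$ and use Lemma~\ref{lem:fibral-pullback} to see that they are $H_1$-, $H_2$-, and (when $\pi_3\neq 0$) $H_3$-fibral; you use finiteness of $\Ker(\pi_1,\pi_2)$ to get $H_1\neq H_2$, hence no cancellation; and you split into the cases $\pi_3=0$ and $\pi_3\neq 0$. The one slip is cosmetic: with the usual convention the divisor of $x\mapsto\phi(P_0+x)$ is $t_{P_0}^*\divisor\phi$ rather than $t_{-P_0}^*\divisor\phi$, which does not matter because only the nonvanishing of the translated divisor is used.
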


\begin{proof}
	We compare the irreducible components of the divisors
	of the two sides of the asserted identity.
	Work over $\overline{F}$, and
	write $H_\nu := \Ker\paren{\pi_\nu}^0$ for
	$\nu = 1, 2$.

	Neither $\pi_1$ nor $\pi_2$ is zero, since
	otherwise $(\pi_1, \pi_2)$ would not be surjective.
	Lemma~\ref{lem:fibral-pullback} therefore makes
	$H_1$ and $H_2$ elliptic curves in $A$. They are
	distinct: if $H_1 = H_2$, this curve would lie in
	$\Ker\paren{\pi_1, \pi_2}$, which is finite.

	Let $\Delta_1$ and $\Delta_2$ be the divisors on
	$E$ of the functions $x \mapsto \phi(P_0 + x)$ and
	$x \mapsto \phi(Q_0 + x)$, and let $\Delta_3$ be
	the divisor on $E'$ of
	$x \mapsto \psi(R_0 + x)$. All three are nonzero
	because $\phi$ and $\psi$ are nonconstant. By
	Lemma~\ref{lem:fibral-pullback}, $\pi_1^*\Delta_1$
	is nonzero and $H_1$-fibral and $\pi_2^*\Delta_2$
	is nonzero and $H_2$-fibral. A translate of $H_1$
	coincides with a translate of $H_2$ only if
	$H_1 = H_2$, so the two pullbacks share no
	component and nothing cancels in the difference.
	Hence $\pi_1^*\Delta_1 - \pi_2^*\Delta_2$ has a
	component that is a translate of $H_1$ and another
	that is a translate of $H_2$.

	Assume now that the displayed identity held.
	Then taking divisors would give
	\begin{equation}
		\pi_1^*\Delta_1 - \pi_2^*\Delta_2
		= \pi_3^*\Delta_3.
		\label{eq:direction-divisors}
	\end{equation}
	If $\pi_3 = 0$, the right-hand side is zero,
	contradicting the previous paragraph. If
	$\pi_3 \neq 0$, set
	$H_3 := \Ker\paren{\pi_3}^0$. Then
	$\pi_3^*\Delta_3$ is $H_3$-fibral by
	Lemma~\ref{lem:fibral-pullback}.
	Hence
	\eqref{eq:direction-divisors} forces both $H_1$ and
	$H_2$ to equal $H_3$, contradicting
	$H_1 \neq H_2$.
\end{proof}

If a translate $\mathbf z + A$ of an abelian surface $A$
in $E \times E \times E'$
is contained in $Z_{\phi, \psi}$ and intersects
$\Omega_{\phi, \psi}$,
then $\phi(P)/\phi(Q) = \psi(R)$
holds as an identity of rational functions on
$\mathbf z + A$.
The following proposition applies
Lemma~\ref{lem:divisor-directions}
to the restrictions
of the coordinate projections to $A$.

\begin{proposition}
	\label{prop:no-isogeny-translate}
	Let $A \subseteq E \times E \times E'$ be an
	abelian subvariety of dimension $2$ such that
	$\restr{\operatorname{pr}_{12}}{A} :
	A \lra E \times E$ is an isogeny, and let
	$\mathbf z \in
	\paren{E \times E \times E'}\paren{\overline{F}}$.
	If
	$\paren{\mathbf z + A} \cap \Omega_{\phi, \psi}
	\neq \emptyset$, then $\mathbf z + A$ is not
	contained in $Z_{\phi, \psi}$.
\end{proposition}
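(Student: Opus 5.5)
We argue by contradiction: suppose $\mathbf z + A \subseteq Z_{\phi, \psi}$ while $(\mathbf z + A) \cap \Omega_{\phi, \psi} \neq \emptyset$. The plan is to turn this containment into an identity of rational functions on $A$ of exactly the shape excluded by Lemma~\ref{lem:divisor-directions}. We work over $\overline{F}$ throughout. This is harmless, since $Z_{\phi, \psi}$ and $\Omega_{\phi, \psi}$ are defined over $F$ and their formation commutes with base change.

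First we set up coordinates. Write $\mathbf z = (P_0, Q_0, R_0)$, and let $\pi_1, \pi_2, \pi_3$ be the restrictions to $A$ of the three coordinate projections. By hypothesis, $(\pi_1, \pi_2) = \restr{\operatorname{pr}_{12}}{A}$ is an isogeny $A \lra E^2$, and $\pi_3 : A \lra E'$ is a homomorphism. Define the translation morphism $\tau : A \lra E \times E \times E'$ by
\[
	\tau(w) := \paren{P_0 + \pi_1(w),\ Q_0 + \pi_2(w),\ R_0 + \pi_3(w)}.
\]
Its image is $\mathbf z + A$.

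Next we transfer the relation to $A$. The set $U := \tau^{-1}(\Omega_{\phi, \psi})$ is open in $A$, and it is nonempty by hypothesis. Since $A$ is irreducible, $U$ is dense. For $w \in U$ we have $\tau(w) \in Z_{\phi, \psi} \cap \Omega_{\phi, \psi}$, and Lemma~\ref{lem:quotient-locus}(i) identifies this intersection with $\Sigma_{\phi, \psi}$. Hence
\[
	\frac{\phi(P_0 + \pi_1(w))}{\phi(Q_0 + \pi_2(w))} = \psi(R_0 + \pi_3(w))
\]
holds pointwise on $U$. Both sides are regular functions on $U$: the denominator is nonzero there by the definition of $\Omega_\phi$. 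Two regular functions that agree on the dense open set $U$ of the reduced irreducible variety $A$ define the same element of $\overline{F}(A)$. So this is an identity in $\overline{F}(A)$.

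Finally, $\phi$ and $\psi$ are nonconstant, so Lemma~\ref{lem:divisor-directions} applies and says precisely that no such identity can hold. This contradiction shows $\mathbf z + A \not\subseteq Z_{\phi, \psi}$.

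There is no substantive obstacle, since the real work is in Lemma~\ref{lem:divisor-directions}. The only point needing care is the passage from a pointwise equality on $\tau^{-1}(\Omega_{\phi, \psi})$ to an identity in the function field. That passage needs the preimage to be nonempty, which is exactly the hypothesis $(\mathbf z + A) \cap \Omega_{\phi, \psi} \neq \emptyset$, and it needs both sides to be regular on the preimage.
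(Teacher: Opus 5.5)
Your proposal is correct and follows essentially the same route as the paper: you pull the relation back along $w \mapsto \mathbf z + w$ to the nonempty open set where the translate meets $\Omega_{\phi, \psi}$, promote the pointwise equality to an identity in $\overline{F}(A)$, and invoke Lemma~\ref{lem:divisor-directions}. The only cosmetic difference is in how the division is justified. The paper writes the identity multiplicatively and then divides by $\phi(Q_0 + \pi_2(w))$, using surjectivity of $\pi_2$ to see that this function is not identically zero. You instead note directly that this function is regular and nonvanishing on $\tau^{-1}(\Omega_{\phi, \psi})$.
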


\begin{proof}
	Suppose that
	$\mathbf z + A \subseteq Z_{\phi, \psi}$, and write
	$\mathbf z = (P_0, Q_0, R_0)$. Let
	$\pi_1, \pi_2, \pi_3$ be the restrictions to $A$ of
	the three projections, so that $(\pi_1, \pi_2)$ is
	an isogeny onto $E \times E$.

	The translate $\mathbf z + A$ is irreducible, so
	its nonempty open subset
	$\paren{\mathbf z + A} \cap \Omega_{\phi, \psi}$ is
	dense in it. By
	Lemma~\ref{lem:quotient-locus}(i) that subset is
	contained in $\Sigma_{\phi, \psi}$, so the identity
	$\phi(P)/\phi(Q) = \psi(R)$ holds at every one of
	its points. Pulling back along
	$w \mapsto \mathbf z + w$ gives an identity of
	rational functions on $A$,
	\[
		\phi\paren{P_0 + \pi_1(w)}
		= \phi\paren{Q_0 + \pi_2(w)}
		\psi\paren{R_0 + \pi_3(w)},
	\]
	valid on a dense open subset and hence in
	$\overline{F}(A)$. The function
	$w \mapsto \phi\paren{Q_0 + \pi_2(w)}$ is not
	identically zero, since $\pi_2$ is surjective and
	$\phi$ is nonconstant;
	dividing by this function would yield
	an identity that cannot hold
	by Lemma~\ref{lem:divisor-directions}.
\end{proof}

\subsubsection{Finiteness of pairwise quotients in a
	fixed family}
\label{sec:pairwise-quotients}

The following theorem is independent of dynamics.
It asserts that a fixed finite union of value sets of
rational functions on elliptic curves cannot contain all
pairwise quotients of an arbitrarily large set.

\begin{theorem}
\label{thm:elliptic-quotients}
	Let $F$ be a number field and let $r \geq 1$. For
	$1 \leq i \leq r$, let $E_i/F$ be an elliptic curve
	and let $\phi_i \in F(E_i)$ be nonconstant. Set
	\[
		\calr
		:= \bigcup_{i = 1}^{r}
		\paren{\phi_i\paren{E_i(F)} \cap F^\times}.
	\]
	There is an effectively computable constant $B > 0$,
	depending only on $F$
	and on the curves $E_i$ and the functions $\phi_i$,
	such that every finite set $X \subseteq F^\times$
	satisfying $x/y \in \calr$ for all distinct
	$x, y \in X$ has $\#X \leq B$.
\end{theorem}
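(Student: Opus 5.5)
The plan is to reduce to Mordell--Lang on the quotient loci $Z_{\phi_i, \phi_k}$ of Definition~\ref{def:quotient-locus}, use Ramsey's theorem to force many pairs of elements of $X$ into one coset, and then contradict Proposition~\ref{prop:no-isogeny-translate}.

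\emph{Normalization.} First fix $x_0 \in X$. For each $x \in X \sm \set{x_0}$, choose an index $i(x)$ and a point $P_x \in E_{i(x)}(F) \cap \Omega_{\phi_{i(x)}}$ with $x/x_0 = \phi_{i(x)}(P_x)$. By pigeonhole there is a subset $X_1 \subseteq X \sm \set{x_0}$ with $\#X_1 \geq (\#X - 1)/r$ on which $i(x) = i$ is constant. For distinct $x, y \in X_1$, the hypothesis gives an index $k$ and a point $R \in E_k(F) \cap \Omega_{\phi_k}$ with
\[
	\frac{\phi_i(P_x)}{\phi_i(P_y)} = \frac{x}{y} = \phi_k(R).
\]
Thus $(P_x, P_y, R) \in \Sigma_{\phi_i, \phi_k}(F)$. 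The points $P_x$ are pairwise distinct, since their $\phi_i$-values are distinct.

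\emph{Coloring.} By Lemma~\ref{lem:quotient-locus}(iii), each $Z_{\phi_i, \phi_k}(F)$ is a finite union of cosets $\mathbf z_\nu + \Lambda_\nu$, with at most $M$ cosets in total over all pairs $(i, k)$. Order $X_1$ and color each pair $x < y$ by the triple $(i, k, \nu)$ of some coset containing $(P_x, P_y, R)$; there are at most $r^2 M$ colors. Ramsey's theorem, for $3$-element subsets or simply for pairs, then yields a monochromatic subset $Y = \set{y_1 < \cdots < y_N}$ once $\#X_1$ exceeds an explicit Ramsey number $R(N; r^2 M)$. Let $T := \max_i \#E_i(F)_{\mathrm{tors}}$. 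Each coset of $E_i(F)_{\mathrm{tors}}$ contains at most $T$ of the points $P_y$. So, taking $N$ a bit larger than $2T + 4$, I can choose indices $a < b < c < d$ in $Y$ with $P_a - P_b$ and $P_c - P_d$ both nontorsion.

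\emph{Contradiction.} All triples $(P_s, P_t, R_{st})$ with $s < t$ in $Y$ lie in one coset $\mathbf z + \Lambda \subseteq Z := Z_{\phi_i, \phi_k}$. Taking differences gives
\[
	(P_a, P_c, R_{ac}) - (P_a, P_d, R_{ad}) = (0, P_c - P_d, *) \in \Lambda
\]
and
\[
	(P_a, P_d, R_{ad}) - (P_b, P_d, R_{bd}) = (P_a - P_b, 0, *) \in \Lambda.
\]
Let $B$ be the Zariski closure of $\Lambda$. The closures of the cyclic groups generated by these two nontorsion elements project onto $E_i \times 0$ and $0 \times E_i$. Hence $\operatorname{pr}_{12}(B^0) = E_i \times E_i$, and so $\dim B^0 \geq 2$. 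On the other hand, the coset closure $\mathbf z + B$ lies in $Z$, which has dimension at most $2$ by Lemma~\ref{lem:quotient-locus}(ii). So $A := B^0$ is an abelian surface and $\restr{\operatorname{pr}_{12}}{A}$ is an isogeny. Some translate $\mathbf z' + A \subseteq Z$ contains the point $(P_a, P_c, R_{ac}) \in \Omega_{\phi_i, \phi_k}$, which contradicts Proposition~\ref{prop:no-isogeny-translate}. Therefore $\#X \leq 1 + r \cdot R(2T + 5;\, r^2 M)$.

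\emph{Main obstacle.} The step I expect to be hardest is effectivity, namely bounding the number $M$ of cosets. Faltings's theorem alone is ineffective. I would instead invoke Rémond's quantitative Mordell--Lang (or David--Philippon), which bounds the number of cosets in terms of the degree of $Z$ and the ranks of the $E_i(F)$. The degree of $Z$ is controlled by the degrees of the $\phi_i$. For the ranks it suffices to use upper bounds, which are effectively computable, for example from $2$-Selmer groups. The torsion bound $T$ is effective as well.
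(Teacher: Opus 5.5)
Your proposal is correct and follows the paper's proof essentially step for step. Both arguments normalize by a fixed $x_0$, pigeonhole on the curve index, colour pairs by the Mordell--Lang cosets of the quotient loci and apply Ramsey's theorem, use a dimension count to force an abelian surface mapping isogenously onto $E \times E$ under $\operatorname{pr}_{12}$, contradict Proposition~\ref{prop:no-isogeny-translate}, and obtain effectivity from R\'emond and David--Philippon. The only difference is cosmetic: you take two nontorsion differences from disjoint halves of the monochromatic set, whereas the paper uses a single nontorsion $\delta = P_s - P_t$ in both coordinates, pairing it with the first and last indices, which needs only $\#E(F)_{\mathrm{tors}} + 3$ points.
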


\begin{proof}
	We use the following Ramsey numbers. For integers
	$q, M \geq 1$, let $\Ram_q(M)$ be an integer such that
	every colouring of the pairs of a
	$\Ram_q(M)$-element set with $q$ colours admits an
	$M$-element monochromatic subset
	(see \cite[Chapter~1]{graham-rothschild-spencer}
	for a standard reference).

	\emph{The constant.}
	For each ordered pair $(j, i)$ with
	$1 \leq j, i \leq r$, apply
	Lemma~\ref{lem:quotient-locus} to $\phi_j$ and
	$\phi_i$ and write
	$Z_{j, i} := Z_{\phi_j, \phi_i}$,
	$\Sigma_{j, i} := \Sigma_{\phi_j, \phi_i}$ and
	$\Omega_{j, i} := \Omega_{\phi_j, \phi_i}$ inside
	the abelian threefold
	$E_j \times E_j \times E_i$. The first two factors
	are taken equal because Step~2 below produces a
	single index $j$ for all the points under
	consideration, and Step~3 uses that the first two
	coordinates lie on the same curve.

	Fix once and for all a finite set $\calt_{j, i}$ of
	cosets as in
	Lemma~\ref{lem:quotient-locus}(iii), whose union is
	$Z_{j, i}(F)$ and whose Zariski closures lie in
	$Z_{j, i}$. Put
	$q_j := \max\set{1,
	\sum_{i = 1}^{r}\#\calt_{j, i}}$ and
	$M_j := \#E_j(F)_{\mathrm{tors}} + 3$, and set
	\begin{equation}
		B := 1 + \sum_{j = 1}^{r}\Ram_{q_j}(M_j).
		\label{eq:ramsey-bound}
	\end{equation}
	All of these quantities depend only on $F$ and on
	the $E_i$ and the $\phi_i$.

	Suppose now that $X$ satisfies the hypothesis and
	that $\#X > B$; we derive a contradiction.

	\emph{Step 1: normalization.}
	Fix $x_0 \in X$ and set
	$X' := \set{x/x_0 \Mid x \in X, \ x \neq x_0}$, so
	that $\#X' = \#X - 1 \geq B$. Every element of $X'$
	lies in $\calr$ by hypothesis, and $X'$ inherits
	the pairwise quotient condition because
	$(x/x_0)/(y/x_0) = x/y$. For each $x \in X'$ choose
	an index $j(x)$ and a point
	$P_x \in E_{j(x)}(F)$ with
	$\phi_{j(x)}(P_x) = x$.

	\emph{Step 2: Ramsey's theorem.}
	Since
	$\#X' \geq B > \sum_{j}\Ram_{q_j}(M_j)$, there is
	an index $j$ such that at least $\Ram_{q_j}(M_j)$
	elements $x \in X'$ satisfy $j(x) = j$. Discard the
	others and abbreviate $E := E_j$,
	$\phi := \phi_j$, $q := q_j$ and $M := M_j$. The
	points $P_x$ that remain are pairwise distinct,
	since their $\phi$-values are.

	Relabel the retained elements of $X'$ as
	$x_1, \ldots, x_N$ with $N \geq \Ram_q(M)$. For
	each pair $s < t$ the hypothesis gives an index
	$i(s, t)$ and a point
	$R_{s, t} \in E_{i(s, t)}(F)$ with
	$\phi_{i(s, t)}(R_{s, t}) = x_s/x_t$; set
	$\mathbf z_{s, t}
	:= \paren{P_{x_s}, P_{x_t}, R_{s, t}}$. By
	construction
	$\phi\paren{P_{x_s}}/\phi\paren{P_{x_t}}
	= x_s/x_t = \phi_{i(s, t)}\paren{R_{s, t}}$, and
	all three values lie in $F^\times$, so
	\begin{equation}
		\mathbf z_{s, t} \in \Sigma_{j, i(s, t)}(F)
		\subseteq Z_{j, i(s, t)}(F).
		\label{eq:triples-in-Sigma}
	\end{equation}
	Hence $\mathbf z_{s, t}$ lies in some coset
	belonging to $\calt_{j, i(s, t)}$; colour the pair
	$\set{s, t}$ by that coset. There are at most $q$
	colours, so Ramsey's theorem yields $M$ of the
	elements all of whose pairs receive the same
	colour. Discard the rest and relabel once more.

	To summarize, we now have points
	$P_1, \ldots, P_M \in E(F)$, a single index $i$,
	points $R_{s, t} \in E_i(F)$ for
	$1 \leq s < t \leq M$, and a single coset
	$\mathbf z_0 + \Lambda \in \calt_{j, i}$ with
	$\Lambda \leq E(F) \times E(F) \times E_i(F)$, such
	that every triple $\mathbf z_{s, t}
	:= \paren{P_s, P_t, R_{s, t}}$ lies in
	$\mathbf z_0 + \Lambda$.

	\emph{Step 3: an abelian surface.}
	The subgroup $E(F)_{\mathrm{tors}}$ has $M - 3$
	elements, whereas $P_2, \ldots, P_{M - 1}$ are
	$M - 2$ distinct points. They therefore cannot all
	differ by torsion, since otherwise they would lie
	in a single translate of $E(F)_{\mathrm{tors}}$.
	Choose $2 \leq s < t \leq M - 1$ with
	$\delta := P_s - P_t$ nontorsion. All four pairs
	$(1, s)$, $(1, t)$, $(s, M)$, $(t, M)$ are
	increasing, so all four triples lie in
	$\mathbf z_0 + \Lambda$ and the two differences
	$\mathbf z_{1, s} - \mathbf z_{1, t}
	= (0, \delta, \rho)$ and
	$\mathbf z_{s, M} - \mathbf z_{t, M}
	= (\delta, 0, \rho')$ lie in $\Lambda$, for points
	$\rho, \rho' \in E_i(F)$ whose values will not
	matter.

	Let $\overline\Lambda$ be the Zariski closure of
	$\Lambda$, an algebraic subgroup of
	$E \times E \times E_i$, and let $A$ be its
	identity component. Since $\overline\Lambda/A$ is
	finite, there is an integer $n \geq 1$ with
	$n(0, \delta, \rho) \in A$ and
	$n(\delta, 0, \rho') \in A$. The image of $A$ under
	$\operatorname{pr}_{12}$ is a closed subgroup of
	$E \times E$ by Milne \cite[Section~2]{milne-abelian},
	and it contains
	$\operatorname{pr}_{12}\paren{n(0, \delta, \rho)}
	= (0, n\delta)$. Being closed, it therefore
	contains the Zariski closure of the cyclic group
	generated by $(0, n\delta)$, which is
	$\set{0} \times E$ because $n\delta$ is nontorsion.
	The same argument applied to
	$n(\delta, 0, \rho')$ shows that it contains
	$E \times \set{0}$. Hence
	$\operatorname{pr}_{12}(A) = E \times E$ and
	$\dim A \geq 2$.

	On the other hand, the Zariski closure of
	$\mathbf z_0 + \Lambda$ is
	$\mathbf z_0 + \overline\Lambda$, of dimension
	$\dim A$, and it is contained in $Z_{j, i}$, which
	has dimension at most $2$ by
	Lemma~\ref{lem:quotient-locus}(ii). Therefore
	$\dim A = 2$ and
	$\restr{\operatorname{pr}_{12}}{A}$ is an isogeny
	onto $E \times E$.

	\emph{Step 4: contradiction.}
	Since $\mathbf z_{1, 2} \in \mathbf z_0 + \Lambda$,
	we have $\mathbf z_{1, 2} - \mathbf z_0 \in \Lambda
	\subseteq \overline\Lambda$ and hence
	$\mathbf z_{1, 2} + A
	\subseteq \mathbf z_{1, 2} + \overline\Lambda
	= \mathbf z_0 + \overline\Lambda
	\subseteq Z_{j, i}$.
	By \eqref{eq:triples-in-Sigma}, the translate
	$\mathbf z_{1, 2} + A$ intersects $\Omega_{j, i}$
	at $\mathbf z_{1, 2}$. This contradicts
	Proposition~\ref{prop:no-isogeny-translate}.

	The effectivity of $B$ follows
	from the quantitative forms of Mordell–Lang
	and Ramsey's theorem
	recalled in Section~\ref{sec:quantitative}.
\end{proof}

\begin{remark}
\label{rem:boundary-component}
	The verification in Step~4 that
	$\mathbf z_{1, 2} + A$ intersects $\Omega_{j, i}$ is
	essential and not automatic: a translate contained
	in $Z_{j, i}$ could in principle lie entirely in
	the boundary $Z_{j, i} \sm \Omega_{j, i}$, where
	one of the rational functions has a zero or a pole
	and where the defining identity of
	$\Sigma_{j, i}$ carries no information. It is
	exactly because the monochromatic triples come from
	elements of $F^\times$ that the relevant translate
	contains a point of $\Omega_{j, i}$.
\end{remark}

Specializing Theorem~\ref{thm:elliptic-quotients} to
the family of Section~\ref{sec:quartic-curves} gives
the form in which it will be applied.

\begin{corollary}
\label{cor:quartic-pairwise-quotients}
	Let $K$ be a number field and let $L$ be as in
	Definition~\ref{def:quartic-base-field}.
	There is an effectively computable $B_4(K) > 0$,
	depending only on $K$, such
	that every finite set $X \subseteq L^\times$
	satisfying $x/y \in \calr_{K, 4}(L)$ for all
	distinct $x, y \in X$ has $\#X \leq B_4(K)$.
\end{corollary}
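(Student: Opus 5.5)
This is a direct specialization of Theorem~\ref{thm:elliptic-quotients}. We take $F := L$, which is a number field depending only on $K$ by Definition~\ref{def:quartic-base-field} and the finiteness of $\calc_{K, 4}$ from Lemma~\ref{lem:finite-power-classes}. We take $r := \paren{\#\calc_{K, 4}}^2$, and for $E_1, \ldots, E_r$ and $\phi_1, \ldots, \phi_r$ we take the elliptic curves $E_{\alpha_1, \alpha_2}/L$ of Definition~\ref{def:quartic-twisted-fermat-elliptic-curves} with their functions $\phi_{\alpha_1, \alpha_2}$, listed as in \eqref{eq:quartic-quotient-set}.

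The hypotheses of Theorem~\ref{thm:elliptic-quotients} have to be checked, and each is already on record.
\begin{enumerate}[(a)]
	\item Each $C_{\alpha_1, \alpha_2}$ acquires the $L$-rational point $[1 : u_{\alpha_1, \alpha_2} : 0]$ and so becomes an elliptic curve over $L$, by Lemma~\ref{lem:quartic-twisted-fermat-geometry}.
	\item Each $\phi_{\alpha_1, \alpha_2}$ is a nonconstant element of $L(E_{\alpha_1, \alpha_2})$, by Lemma~\ref{lem:twisted-fermat-recovery}. Its coefficients $\wt{\alpha_i}$ and $\zeta_i$ lie in $K(\mu_4) \subseteq L$, so it is defined over $L$, and changing the origin of the cubic does not change its function field.
	\item The set $\calr$ of Theorem~\ref{thm:elliptic-quotients} built from these data is exactly $\calr_{K, 4}(L)$, by Definition~\ref{def:quotient-set} and \eqref{eq:quartic-quotient-set}.
\end{enumerate}

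Theorem~\ref{thm:elliptic-quotients} then gives an effectively computable $B$, depending only on $L$, the $E_i$ and the $\phi_i$, such that every finite $X \subseteq L^\times$ whose pairwise quotients lie in $\calr_{K, 4}(L)$ has $\#X \leq B$. Set $B_4(K) := B$.

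The only point needing care is that $B_4(K)$ depends only on $K$. The choices of $\zeta_1, \zeta_2, \zeta_3$, the representatives $\wt{\alpha}$ and the cube roots $u_{\alpha_1, \alpha_2}$ were all fixed once and for all in terms of $K$ (and $d = 4$). Hence $L$ and the family $(E_i, \phi_i)$ are determined by $K$, and so is $B$. Effectivity transfers because $\calc_{K, 4}$ is computable from $S$-units and $S$-class groups (Lemma~\ref{lem:finite-power-classes}), so $L$, the cubics and the functions $\phi_i$ can be written down explicitly, and then the effectivity clause of Theorem~\ref{thm:elliptic-quotients} applies. No step here is a real obstacle; all the substance lies in Theorem~\ref{thm:elliptic-quotients}.
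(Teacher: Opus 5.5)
Your proposal is correct and is essentially the paper's own proof. Both apply Theorem~\ref{thm:elliptic-quotients} with $F = L$ and the $r = (\#\calc_{K,4})^2$ pairs $(E_i, \phi_i)$ of \eqref{eq:quartic-quotient-set}, note that all of this data is fixed in terms of $K$ alone, and take effectivity from that theorem; the paper defers the details of effectivity to Section~\ref{sec:quantitative}.
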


\begin{proof}
	By \eqref{eq:quartic-quotient-set}, the set
	$\calr_{K, 4}(L)$ has the form required in
	Theorem~\ref{thm:elliptic-quotients} for the
	elliptic curves $E_i/L$ of
	Definition~\ref{def:quartic-twisted-fermat-elliptic-curves}
	and the nonconstant functions
	$\phi_i \in L(E_i)$, where
	$r = \paren{\#\calc_{K, 4}}^2$ is finite by
	Lemma~\ref{lem:finite-power-classes}. The resulting
	constant depends only on $L$ and on these curves
	and functions, hence only on $K$.

	Section~\ref{sec:quantitative} explains why the
	bound is effectively computable.
\end{proof}

\subsubsection{Proof of the quartic case}
\label{sec:quartic-application}

\begin{proposition}
\label{prop:d4}
	Let $K$ be a number field. There is an effectively
	computable constant $B_{\Per}(K, 4)$ such that
	\[
		\#\Per_K(z^4 + c)
		\leq B_{\Per}(K, 4)
	\]
	for every $c \in K$.
\end{proposition}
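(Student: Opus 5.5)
We apply Corollary~\ref{cor:quartic-pairwise-quotients} to the set of quotients of nonzero periodic points by a fixed nonzero periodic point; the only extra care needed is for the point $0$.

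Fix $c \in K$. If $\Per_K(f_{4, c})$ has at most two elements, there is nothing to prove. Otherwise it contains a nonzero point $Q$, and we set
\[
	X := \set{P/Q \Mid P \in \Per_K(f_{4, c}),\ P \neq 0} \subseteq K^\times \subseteq L^\times.
\]
The map $P \mapsto P/Q$ is injective, so $\#\Per_K(f_{4, c}) \leq \#X + 1$. The extra $1$ accounts for the possibility that $0$ is itself periodic.

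Next we verify the pairwise quotient hypothesis. For distinct $x = P_1/Q$ and $y = P_2/Q$ in $X$, we have $x/y = P_1/P_2$, where $P_1, P_2$ are distinct nonzero $K$-rational periodic points. Proposition~\ref{prop:quotient-curves}, applied with $P = P_1$ and with $P_2$ in the role of $Q$ (allowed since $P_2 \neq 0$), gives $(\alpha_1, \alpha_2) \in \calc_{K, 4}^2$ with
\[
	P_1/P_2 \in \phi_{\alpha_1, \alpha_2}\paren{C_{\alpha_1, \alpha_2}\paren{K(\mu_4)}}.
\]
Since $K(\mu_4) \subseteq L$ and $P_1/P_2 \neq 0$, base change yields $x/y \in \calr_{K, 4}(L)$, as recorded in \eqref{eq:pairwise-quotient-observation}. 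Corollary~\ref{cor:quartic-pairwise-quotients} then gives $\#X \leq B_4(K)$.

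We conclude with $B_{\Per}(K, 4) := \max\set{2, B_4(K) + 1}$. This constant depends only on $K$, since $L$, $r$, the curves $E_i$ and the functions $\phi_i$ all do.

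Effectivity is inherited from $B_4(K)$. The ingredients there are:
\begin{itemize}
	\item $\calc_{K, 4}$ is computable by Lemma~\ref{lem:finite-power-classes};
	\item $L$, the elliptic curves $E_i$ and the functions $\phi_i$ are then explicit;
	\item the cosets in Lemma~\ref{lem:quotient-locus}(iii) are controlled via the quantitative Mordell--Lang results cited in the introduction;
	\item Ramsey numbers are explicitly bounded.
\end{itemize}

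The only real obstacle is this effectivity claim. The counting argument itself is formal once Theorem~\ref{thm:elliptic-quotients} is available. The point to check is that the number of cosets $\#\calt_{j, i}$ admits an explicit upper bound, even though the cosets themselves need not be explicitly computable. Rémond-type counting and David--Philippon bound the number of cosets in terms of heights, degrees of $Z_{j, i}$ and the ranks of $E_i(L)$. The $q_j$ in \eqref{eq:ramsey-bound} may be replaced by such upper bounds, and $\#E_j(L)_{\mathrm{tors}}$ is computable. This justification is deferred to Section~\ref{sec:quantitative}.
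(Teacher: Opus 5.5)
Your proposal is correct and follows the paper's proof essentially line for line. You fix a nonzero periodic point $Q$, use Proposition~\ref{prop:quotient-curves} on pairs of distinct nonzero periodic points to get the pairwise-quotient condition in $\calr_{K, 4}(L)$, apply Corollary~\ref{cor:quartic-pairwise-quotients} to obtain $\#\Per_K(z^4 + c) \leq B_4(K) + 1$, and defer effectivity to Section~\ref{sec:quantitative}, exactly as the paper does; your $\max\{2, B_4(K) + 1\}$ is harmless but unnecessary, since the paper simply notes $\#\Per_K(z^4 + c) \leq 1$ when there is no nonzero periodic point.
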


\begin{proof}
	If $\Per_K(f_{d, c})$ has no nonzero point, then
	$\#\Per_K(f_{d, c}) \leq 1$.
	Otherwise, fix a nonzero point
	$Q \in \Per_K(f_{d, c})$.
	Let $X_Q$ be the set of quotients $P/Q$ as $P$
	ranges over the nonzero points of
	$\Per_K(f_{d, c})$.
	Then $\#\Per_K(f_{d, c}) \leq \#X_Q + 1$.
	If $x = P_1/Q$ and $y = P_2/Q$ are distinct elements
	of $X_Q$, then $P_1$ and $P_2$ are distinct nonzero
	periodic points.
	For $\calr_{K, d}$ as
	defined in Definition~\ref{def:quotient-set},
	Proposition~\ref{prop:quotient-curves}
	gives $x/y = P_1/P_2 \in \calr_{K, 4}(L)$. Hence
	$\#X_Q \leq B_4(K)$ by
	Corollary~\ref{cor:quartic-pairwise-quotients}, and
	$B_{\Per}(K, 4) := B_4(K) + 1$ works.

	Section~\ref{sec:quantitative} explains why the
	bound is effectively computable.
\end{proof}

\begin{remark}
	In the application of
	Theorem~\ref{thm:elliptic-quotients}
	to Proposition~\ref{prop:d4}
	(and therefore Theorem~\ref{thm:main}),
	the dynamical input is
	confined to
	\eqref{eq:pairwise-quotient-observation}.
	The symmetry $f_{d, c}(\zeta z) = f_{d, c}(z)$
	makes the family of curves finite,
	and the fact that the constraint
	applies to \emph{every} pair of
	distinct nonzero periodic points,
	rather than to a single normalized quotient,
	is what compensates for the drop in genus from
	$d \geq 5$ to $d = 4$.
\end{remark}

\subsection{The case
	\texorpdfstring{$d = 3$}{d = 3} and
	\texorpdfstring{$\Q(\sqrt{-3}) \not\subset K$}
		{Q(sqrt(-3)) notsubset K}}
\label{sec:d3}

Let $K$ be a number field not containing $\Q(\sqrt{-3})$
and let $d = 3$.
We now prove this case of Theorem~\ref{thm:main}.

The missing
cube roots of unity provide a simpler local argument
than the twisted Fermat curve construction. We choose a tame
place at which the cyclotomic quadratic extension does
not split. In the integral branch, a local period bound
controls every cycle; in the nonintegral branch, the
absence of nontrivial cube roots forces all normalized
orbits into one residue class and therefore permits at
most one periodic point.

More precisely, $K(\mu_3)/K$ is a nontrivial quadratic
extension. Chebotarev's density theorem gives a finite
place $v \nmid 3$ that does not split in this
extension. Write $v$ also for the normalized additive
valuation on $K_v$, and write $\calo_v$ for its
valuation ring. Equivalently, $K_v$ contains no
nontrivial cube root of unity. Since $v \nmid 3$,
Hensel's lemma also shows that the residue field $k_v$
contains no nontrivial cube root of unity.

Pezda's local cycle-length theorem
\cite[Theorem~1]{pezda-local}
gives an effectively computable
integer $C_v \geq 1$ such that every periodic point in
$\calo_v$ of every polynomial $g \in \calo_v[z]$ has
period at most $C_v$.

\begin{proposition}
\label{prop:d3}
	Let $K$ be a number field satisfying
	$\Q(\sqrt{-3}) \not\subset K$, let $v \nmid 3$ be
	a finite place that does not split in
	$K(\mu_3)/K$, and let $C_v$ be as above. Then, for
	every $c \in K$,
	\[
		\#\Per_K(z^3 + c)
		\leq \frac{3^{C_v + 1} - 3}{2}.
	\]
\end{proposition}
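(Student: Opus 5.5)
We split according to whether $v(c) \geq 0$ or $v(c) < 0$, as in the paragraph before the statement.

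\emph{Integral branch.} Suppose $v(c) \geq 0$, so that $f_{3,c} \in \calo_v[z]$. Every $K$-rational periodic point has bounded orbit in $K_v$. A point of negative valuation has unbounded orbit when $c$ is integral, so every periodic point lies in $\calo_v$. Pezda's theorem then says that every such point has exact period at most $C_v$. Hence every $K$-rational periodic point is a root of $f_{3,c}^j(z) - z$ for some $1 \leq j \leq C_v$. This polynomial has degree $3^j$ and is nonzero. So
\[
	\#\Per_K(f_{3,c}) \leq \sum_{j=1}^{C_v} 3^j = \frac{3^{C_v+1} - 3}{2}.
\]

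\emph{Nonintegral branch.} Suppose $v(c) < 0$. Since $v \nmid 3$, Lemma~\ref{lem:tame-normalization} applies over $k = K_v$. It gives $\beta \in K_v$ with $\beta^3 = -c$, and for every periodic point $x$ it places the normalized iterates $f^j(x)/\beta$ in $\calo_v^\times$. The conjugate map is $T_\beta(y) = \beta^2(y^3 - 1)$.

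The key observation concerns $y \in \calo_v^\times$ with $T_\beta(y) \in \calo_v^\times$. Because $v(\beta^2) = -2s < 0$, we need $v(y^3 - 1) = 2s > 0$, so $\overline{y}^3 = 1$ in $k_v$. The residue field $k_v$ has no nontrivial cube roots of unity, which forces $\overline{y} = 1$. Every normalized iterate of a periodic point is itself the image of the previous iterate under $T_\beta$, and both lie in $\calo_v^\times$. Therefore every normalized iterate of every periodic point reduces to $1$.

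Now take two periodic points $X, Y$. All their normalized iterates share the same reduction, so $\ell_v(X,Y) = \infty$. Proposition~\ref{prop:tame-distance} says that distinct points have finite $\ell_v$, so $X = Y$. Thus there is at most one periodic point, which is within the bound.

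The only delicate step is checking that the nonintegral branch really uses only the residue-field fact, namely that $\overline{y}^3 = 1$ forces $\overline{y} = 1$. This is where the hypothesis $\Q(\sqrt{-3}) \not\subseteq K$ enters, through the choice of a nonsplit place $v$. The integral branch is routine once Pezda's bound is available.
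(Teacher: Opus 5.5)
Your proof is correct and follows the paper's argument essentially step for step: Pezda's bound plus the degree count $\sum_{j \le C_v} 3^j$ in the integral branch, and in the nonintegral branch the observation that $\overline{y}^{\,3} = 1$ forces every normalized iterate to reduce to $1$, because $k_v$ has no nontrivial cube roots of unity. The only cosmetic difference is how you finish: you conclude via $\ell_v(X, Y) = \infty$ and the finiteness statement in Proposition~\ref{prop:tame-distance}, whereas the paper reruns the recurrence \eqref{eq:one-step-distance} around a common period $n$ to reach the contradiction $v(P/\beta - Q/\beta) = 2sn + v(P/\beta - Q/\beta)$; these are the same argument.
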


\begin{proof}
	We treat the integral and nonintegral branches
	separately. Pezda's theorem bounds the periods in the
	first branch, while the local distance recurrence
	excludes two distinct periodic points in the second.
	Suppose first that $v(c) \geq 0$. Every periodic
	point of the monic polynomial $f_{d, c}$ is integral at
	$v$. Pezda's theorem shows that its period is at most
	$C_v$. Since $f_{d, c}^j(z) - z$ has degree $3^j$, the
	number of periodic points is at most
	\[
		\sum_{j = 1}^{C_v}3^j
		= \frac{3^{C_v + 1} - 3}{2}.
	\]

	Now suppose that $v(c) < 0$ and that a periodic point
	exists. Lemma~\ref{lem:shell} gives
	$v(c) = -3s$ for an integer $s \geq 1$.
	Lemma~\ref{lem:tame-normalization} gives
	$\beta \in K_v$ with $\beta^3 = -c$ and the
	normalized map $T(y) = \beta^2(y^3 - 1)$, for which
	every normalized bounded point is a unit. Since
	$v(\beta^2) = -2s$ and $T(y)$ is a unit, one has
	$v(y^3 - 1) = 2s > 0$.
	Thus $\overline y^{\,3} = 1$. The residue field has
	no nontrivial cube root of unity. Consequently, every
	normalized bounded point and every normalized iterate reduces to
	$1$.

	Suppose that $P$ and $Q$ are distinct periodic
	points, and let $n$ be a common multiple of their
	periods. Since the two normalized iterates have the
	same reduction at every step,
	Equation~\eqref{eq:one-step-distance} applies
	repeatedly. It gives
	$v\paren{P/\beta - Q/\beta}
	= 2sn + v\paren{P/\beta - Q/\beta}$, which is
	impossible. Hence there is at most one periodic
	point in this case, and the displayed bound holds.
\end{proof}

The three degree ranges now prove the unconditional
part of Theorem~\ref{thm:main}.

\begin{proof}[Proof of Theorem~\ref{thm:main}]
	The proof is the combination of the three disjoint
	degree ranges established above. Apply
	Proposition~\ref{prop:dge5} when $d \geq 5$,
	Proposition~\ref{prop:d4} when $d = 4$, and
	Proposition~\ref{prop:d3} when $d = 3$.
\end{proof}

\begin{remark}
	The explicit bounds and the effectivity assertions
	in Theorem~\ref{thm:main}
	are discussed in
	Section~\ref{sec:quantitative}.
\end{remark}

\subsection{Extension to other polynomials}
\label{sec:related-families}

Looper \cite{looper-2025}
proved the uniform boundedness conjecture
for periodic points of polynomials over number fields
conditional on the $abc$ conjecture and its
higher-dimensional generalizations.
We mildly extend Theorem~\ref{thm:main}
to obtain unconditional results for
two families of polynomials
beyond $z^d + c$
using two standard dynamical observations.

The first observation is that linear conjugacy
preserves dynamical properties
(see \cite[Section~1]{silverman-dynamics} for instance).
A degree-$d$ polynomial over $K$ with one finite
critical point has the general form
$f(z) = a(z - \gamma)^d + b$, where
$a \in K^\times$ and $b, \gamma \in K$.
Letting $u \in \overline{K}$
be a $(d - 1)$-st root of $a$,
we have that $f$ is linearly conjugate
over $K(u)$ to $z^d + u(b - \gamma)$
via $z \mapsto u(z - \gamma)$.
Since Theorem~\ref{thm:main} over $K(u)$
is uniform in the constant term,
it gives the bound $B_{\Per}(K(u), d)$
for all such polynomials when $d \geq 4$,
or when $d = 3$ and
$\Q(\sqrt{-3}) \not\subset K(u)$.

Recall that a semiconjugacy
from $f$ to $h$ is a map
$\pi$ satisfying $\pi \circ f = h \circ \pi$
(see \cite[Chapter~I, Lemma~17]{block-coppel}
for instance).
Hence $\pi \circ f^n = h^n \circ \pi$
for every $n \geq 1$, so $\pi$ sends periodic points of
$f$ to periodic points of $h$.
We apply this observation to a family of polynomials
with several finite critical points.

\begin{corollary}
\label{cor:semiconjugacy-transfer}
	Let $K$ be a number field and
	let $r, s \geq 2$.
	For every $a \in K^\times$, there is a
	constant $B_{\Per}(K, r, s, a)$ such that
	\[
		\#\Per_K\paren{a(z^r - \gamma)^s}
		\leq B_{\Per}(K, r, s, a)
	\]
	for every $\gamma \in K$.
\end{corollary}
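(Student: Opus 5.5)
The plan is to use the semiconjugacy observation made just before the statement. This reduces the periodic points of $f := a(z^r - \gamma)^s$ to those of a unicritical polynomial of degree $rs \geq 4$. That polynomial falls under Theorem~\ref{thm:main} with no restriction on the field.

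\emph{Step 1: the semiconjugacy.} Set $\pi(z) := z^r - \gamma$ and $h(w) := a^r w^{rs} - \gamma$. A direct computation gives
\[
	\pi\circ f(z) = a^r (z^r - \gamma)^{rs} - \gamma = h\circ\pi(z).
\]
Hence $\pi \circ f^n = h^n \circ \pi$ for every $n \geq 1$. So $\pi$ sends $\Per_K(f)$ into $\Per_K(h)$. Each fiber of $\pi$ has at most $r$ points, which gives
\[
	\#\Per_K(f) \leq r\,\#\Per_K(h).
\]

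\emph{Step 2: conjugating $h$ to the unicritical form.} The map $h$ has the shape $a'(w - \gamma')^{d} + b$ with $d = rs$, $a' = a^r$, $\gamma' = 0$ and $b = -\gamma$. Fix once and for all $u \in \overline{K}$ with $u^{rs - 1} = a^r$, and put $K' := K(u)$. As recalled in Section~\ref{sec:related-families}, the map $w \mapsto uw$ conjugates $h$ over $K'$ to $z^{rs} - u\gamma$. This conjugacy sends $K'$-rational periodic points bijectively to $K'$-rational periodic points. Therefore
\[
	\#\Per_K(h) \leq \#\Per_{K'}(h) = \#\Per_{K'}\paren{z^{rs} - u\gamma}.
\]

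\emph{Step 3: apply Theorem~\ref{thm:main}.} Since $r, s \geq 2$, we have $rs \geq 4$, so the hypothesis on $\Q(\sqrt{-3})$ is not needed. Theorem~\ref{thm:main} over the number field $K'$ bounds the last quantity by $B_{\Per}(K', rs)$, uniformly in the constant term $-u\gamma \in K'$. We can therefore take
\[
	B_{\Per}(K, r, s, a) := r\, B_{\Per}\paren{K(u), rs}.
\]

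The only point needing care, and the mild obstacle, is that the auxiliary field $K(u)$ must not depend on $\gamma$. It does not, because $u$ is determined by $a$, $r$ and $s$ alone. Thus the bound is uniform in $\gamma$, though it may depend on $a$ through $K(u)$, exactly as the statement allows. It is also effective whenever $B_{\Per}(K(u), rs)$ is.
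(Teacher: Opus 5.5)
Your proposal is correct and follows the paper's route. You semiconjugate by a power map to a unicritical polynomial of degree $rs \geq 4$, conjugate linearly over the $\gamma$-independent field $K(u)$ with $u^{rs-1} = a^r$, and apply Theorem~\ref{thm:main}. The differences are cosmetic: you take $\pi(z) = z^r - \gamma$ and $h(w) = a^r w^{rs} - \gamma$, where the paper uses $\pi(z) = z^r$ and $h_\gamma(w) = a^r(w - \gamma)^{rs}$, and you state explicitly the factor $r$ coming from the fibers of $\pi$, which the paper leaves implicit.
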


\begin{proof}
	Set $\pi(z) := z^r$ and
	$h_\gamma(w) := a^r(w - \gamma)^{rs}$.
	Then $\pi \circ f_\gamma = h_\gamma \circ \pi$, where
	$f_\gamma(z) := a(z^r - \gamma)^s$.
	The preceding linear-conjugacy argument uniformly
	bounds $\#\Per_K(h_\gamma)$ as $\gamma$ varies,
	and the result follows from the semiconjugacy.
\end{proof}

When $\gamma \neq 0$, the polynomial
$f(z) = a(z^r - \gamma)^s$ in
Corollary~\ref{cor:semiconjugacy-transfer}
has a critical point at zero and additional
critical points at the $r$ roots of $z^r = \gamma$.
Thus Theorem~\ref{thm:main} also gives
an effective bound on the periodic points
for a family of non-unicritical polynomials.


\section{Elliptic curves of large rank over
	\texorpdfstring{$K(\mu_d)$}{K(mu\_d)}}
\label{sec:rank}

We now turn to proving
Theorem~\ref{thm:elliptic-curves-large-rank}
and Corollary~\ref{cor:conditional-d3}.
We return to the finite subgroup
$\calc_{K, d}$ of classes modulo
$(d - 1)$-st powers from Section~\ref{sec:local}.
When $d$ is odd, these classes have well-defined
images modulo squares.
The finitely many resulting classes determine
finitely many quadratic twists of an elliptic curve
on which the periodic points can be lifted.
A long cycle therefore gives many rational points
on certain curves in products of elliptic curves.
Uniform Mordell--Lang then forces at least one
of the relevant Mordell--Weil ranks to be large.

This section has four parts. We first define the
elliptic curves with degree-$2$ maps to $\P^1$
determined by two periodic points, and compute their
variation in moduli.
We next prove a uniform Mordell--Lang bound
for pairs $(A, B) \in E_1 \times E_2$
satisfying $\phi_2(B) = g(\phi_1(A))$, where
$\phi_i : E_i \lra \P^1$ has degree $2$ for
$i = 1, 2$ and $g : \P^1 \lra \P^1$ is a
polynomial map.
We then combine this bound with the Kummer descent to
prove Theorem~\ref{thm:elliptic-curves-large-rank}.
Finally, we specialize the construction to the
remaining cubic case and prove
Corollary~\ref{cor:conditional-d3}.

\subsection{Curves attached to two periodic points}

When $d$ is odd, $d - 1$ is even, so the
power-class relations obtained from $Q$ and $R$
may be reduced modulo squares.
Multiplying the two resulting relations leads to
a curve with a degree-$2$ map to $\P^1$ branched
at the four cyclotomic translates of $Q$ and $R$.

\begin{definition}
\label{def:four-branch-elliptic-curves}
	Let $L$ be a field of characteristic zero. Choose
	$\zeta, \xi \in L^\times$ with $\zeta \neq \xi$, and
	let $Q, R,u \in L^\times$. Suppose that
	$\zeta Q$, $\xi Q$, $\zeta R$, and $\xi R$ are
	pairwise distinct. Let $E_{Q, R}^{(u)}$ be the smooth
	projective curve with affine model
	\[
		E_{Q, R}^{(u)}:\quad uY^2
			= (X - \zeta Q)(X - \xi Q)
				(X - \zeta R)(X - \xi R).
	\]
	When $u = 1$, write $E_{Q, R}$. Let
	$x_{Q, R}^{(u)}:E_{Q, R}^{(u)} \lra \P^1$ be the
	morphism induced by the $X$-coordinate.
\end{definition}

When $d = 3$ and $\mu_3 \subset K$,
Section~\ref{sec:d3-with-roots} specializes this
definition to the family
\eqref{eq:cubic-double-cover-family}.
The twist parameter $u$ changes the arithmetic model
but not the four branch points.
When $u = 1$, its affine equation is
$Y^2 = F_{Q, R}(X)$, where
$F_{Q, R}(X) := (X - \zeta Q)(X - \xi Q)
(X - \zeta R)(X - \xi R)$.

The defining equation
of $E_{Q, R}^{(u)}$
imposes the two relations
obtained from $Q$ and $R$ simultaneously.
Its basic geometry is as follows.

\begin{lemma}
\label{lem:four-branch-elliptic-geometry}
	Let $L$ be a field of characteristic zero, choose
	$\zeta, \xi \in L^\times$ with $\zeta \neq \xi$, and
	let $Q, R,u \in L^\times$.
	If $\zeta Q$, $\xi Q$, $\zeta R$, and $\xi R$ are
	pairwise distinct,
	then $E_{Q, R}^{(u)}$ has genus $1$,
	and choosing any branch point as the origin
	makes it an elliptic curve over $L$.
	Furthermore, the map $x_{Q, R}^{(u)}$ has degree $2$,
	and the four branch points give its full
	$L$-rational $2$-torsion.
\end{lemma}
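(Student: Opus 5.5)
The plan is to treat $E_{Q, R}^{(u)}$ as the standard double cover of $\P^1$ branched at the four roots of the squarefree quartic $F_{Q, R}(X)$, and to read off each assertion from the shape of that quartic.

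First, $F_{Q, R}$ has four pairwise distinct roots $\zeta Q, \xi Q, \zeta R, \xi R \in L$ by hypothesis, so it is separable of degree $4$. The affine curve $uY^2 = F_{Q, R}(X)$ is therefore smooth. Its smooth projective model, obtained by gluing in the chart $X = 1/X'$, $Y = Y'/X'^2$, has two points at infinity, since the leading coefficient of $F_{Q, R}$ is $1$ and $u$ may fail to be a square. The $X$-coordinate extends to a morphism $x_{Q, R}^{(u)} : E_{Q, R}^{(u)} \to \P^1$. This morphism has degree $2$ because $L(E) = L(X)(\sqrt{F_{Q, R}(X)/u})$ and $F_{Q, R}/u$ is not a square in $L(X)$, being squarefree of positive degree.

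Next, the ramification of this double cover is determined by the parity of the order of $F_{Q, R}$. It ramifies exactly at the four affine roots of $F_{Q, R}$, where the order is $1$. It is unramified over $\infty$, since $\deg F_{Q, R} = 4$ is even. Riemann--Hurwitz then gives $2g - 2 = 2(-2) + 4$, so $g = 1$.

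The four ramification points $T_i = (b_i, 0)$, with $b_i \in \{\zeta Q, \xi Q, \zeta R, \xi R\}$, are $L$-rational, so fixing $O := T_1$ makes the curve an elliptic curve over $L$. Let $\iota : (X, Y) \mapsto (X, -Y)$ be the hyperelliptic involution. Since $2T_i \sim x^*(b_i) \sim x^*(b_1) \sim 2O$, we get $2(T_i - O) = 0$ in the group law. So $\iota$ acts as $P \mapsto -P$, the fixed points of $\iota$ are exactly the points with $2P = O$, and the four branch points are these fixed points. The $2$-torsion of an elliptic curve in characteristic zero has exactly four points, so $\{T_1, \dots, T_4\} = E[2]$, all of them $L$-rational.

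The only point needing care is confirming that $\iota$ really is negation with respect to $O$. I would derive this from the linear equivalence $P + \iota P \sim x^*(x(P)) \sim 2O$, which gives $P + \iota P = O$ in the group law.
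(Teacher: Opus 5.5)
Your proposal is correct and follows the same route as the paper: Riemann--Hurwitz applied to the double cover branched at the four distinct $L$-rational points gives genus one, and taking one branch point as the origin identifies the four ramification points with $E[2]$. You fill in details that the paper leaves implicit, namely that the cover is unramified over $\infty$ because $\deg F_{Q, R} = 4$ is even, and that the involution $\iota$ is negation because $P + \iota P \sim 2O$.
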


\begin{proof}
	We apply Riemann--Hurwitz and next identify the
	branch points with the $2$-torsion. A double cover
	of $\P^1$ branched at four distinct
	points is smooth of genus one by Riemann--Hurwitz.
	Each branch point is $L$-rational and becomes a
	rational point on the smooth projective model. After
	one is chosen as the origin, the other three are the
	nonzero points of order two.
\end{proof}

For distinct nonzero periodic points, the four branch
points are automatically distinct.

\begin{lemma}
\label{lem:four-branch-points}
	Let $d \geq 3$, let $L$ be a field containing
	$\mu_d$, and let $c \in L$. Set
	$f_{d, c}(z) := z^d + c$, and let
	$Q, R \in \Per_L(f_{d, c})$ be distinct and nonzero.
	Choose distinct roots
	$\zeta, \xi \in \mu_d\sm\set{1}$. Then
	$\zeta Q$, $\xi Q$, $\zeta R$, and $\xi R$ are
	pairwise distinct.
\end{lemma}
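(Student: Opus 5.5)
We must check the six pairwise distinctions among $\zeta Q$, $\xi Q$, $\zeta R$, $\xi R$, and we will sort them into two types. The first type is $\zeta Q \neq \xi Q$ and $\zeta R \neq \xi R$. These follow at once because $\zeta \neq \xi$ and $Q, R \neq 0$.

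The second type compares a translate of $Q$ with a translate of $R$, i.e.\ we must rule out $\eta Q = \eta' R$ for $\eta, \eta' \in \set{\zeta, \xi}$. If such an equality held, then $R = (\eta/\eta') Q$ with $\eta/\eta' \in \mu_d$. We will invoke Lemma~\ref{lem:telescoping} over the field $L$, which has characteristic zero as a field containing $\mu_d$ in our setting. Applied to the distinct periodic points $R$ and $Q$, it gives $R - \omega Q \neq 0$ for every $\omega \in \mu_d$, and this contradicts $R = (\eta/\eta')Q$.

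Alternatively, we can argue directly. Since $f_{d, c}(\omega z) = f_{d, c}(z)$ for $\omega \in \mu_d$, the equality $R = \omega Q$ gives $f_{d, c}(R) = f_{d, c}(Q)$. Applying $f_{d, c}^{n - 1}$, with $n$ a common period, then yields $R = Q$, again a contradiction. The cases $\eta = \eta'$ (so $\omega = 1$) and $\eta \neq \eta'$ are handled uniformly by this argument.

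There is no real obstacle here. The only point to be careful about is that Lemma~\ref{lem:telescoping} was stated over a field $k$ of characteristic zero. If $L$ is not assumed to have characteristic zero, the direct argument above works in any characteristic and should be used instead.
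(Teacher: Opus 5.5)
Your proof is correct and matches the paper's: within-pair distinctness comes from $\zeta \neq \xi$ and $Q, R \neq 0$, and a cross-pair collision forces $Q^d = R^d$, hence $f_{d,c}(Q) = f_{d,c}(R)$, contradicting the injectivity of $f_{d,c}$ on periodic points from Lemma~\ref{lem:telescoping}; this is exactly your direct iteration argument. Your aside that a field containing $\mu_d$ has characteristic zero is not true in general, but your direct argument works in any characteristic, so nothing depends on it.
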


\begin{proof}
	We rule out collisions within each pair and next
	between the two pairs. The two points obtained from
	$Q$ are distinct, as are the two obtained from $R$,
	because $Q, R \neq 0$ and $\zeta \neq \xi$. A collision
	between the two pairs
	would give $Q^d = R^d$, and hence
	$f_{d, c}(Q) = f_{d, c}(R)$. The map $f_{d, c}$ is injective on its
	periodic set by Lemma~\ref{lem:telescoping}. Hence
	$Q = R$, a contradiction.
\end{proof}

The $j$-invariant depends on $R/Q$ through a rational
map of degree $12$.

\begin{lemma}
\label{lem:four-branch-j-map}
	Let $L$ be a field of characteristic zero, choose
	$\zeta, \xi \in L^\times$ with $\zeta \neq \xi$, and
	let $Q, R \in L^\times$. Suppose that
	$\zeta Q$, $\xi Q$, $\zeta R$, and $\xi R$ are
	pairwise distinct. Let $E_{Q, R}$ be the curve in
	Definition~\ref{def:four-branch-elliptic-curves}
	with $u = 1$. Then its $j$-invariant depends on
	$Q$ and $R$ only through $t := R/Q$, and the
	resulting map from $t$ to the $j$-line has degree
	$12$. In particular, at most $12$ values of $t$
	give one geometric isomorphism class.
\end{lemma}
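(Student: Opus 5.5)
The plan is to compute the $j$-invariant through the cross-ratio of the four branch points. A genus-one double cover of $\P^1$ branched at four points has $j$-invariant determined by the cross-ratio $\lambda$ of the branch points, via the Legendre formula
\[
	j = 256\frac{(\lambda^2 - \lambda + 1)^3}{\lambda^2(\lambda - 1)^2}.
\]
This formula is a rational function of degree $6$ in $\lambda$, and it is invariant under the $S_3$ action permuting the orderings. The twist $u$ is irrelevant because we take $u = 1$; in any case twisting does not change the geometric isomorphism class.

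\textbf{Step 1.} Compute the cross-ratio of $(\zeta Q, \xi Q, \zeta R, \xi R)$. Scaling $X \mapsto X/Q$ is a M\"obius transformation, so the branch points can be replaced by $\zeta, \xi, \zeta t, \xi t$ with $t = R/Q$. This already shows that $j$ depends only on $t$. With the ordering $\lambda = [\zeta, \xi; \zeta t, \xi t]$, one gets
\[
	\lambda(t) = \frac{(\zeta t - \zeta)(\xi t - \xi)}{(\zeta t - \xi)(\xi t - \zeta)} = \frac{\zeta\xi (t - 1)^2}{(\zeta t - \xi)(\xi t - \zeta)}.
\]
This is a rational function of degree $2$ in $t$. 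It is nonconstant because $t = 1$ gives $\lambda = 0$ while $t = 0$ gives $\lambda = 1$.

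\textbf{Step 2.} Compose the two maps. The map $t \mapsto \lambda(t) \mapsto j(\lambda)$ has degree $2 \cdot 6 = 12$, since degrees of nonconstant rational maps $\P^1 \to \P^1$ multiply. The only care needed is to make sure no cancellation occurs. This is automatic, because the degree of a composition of nonconstant morphisms of $\P^1$ is exactly the product of the degrees; it is not merely bounded above by it.

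\textbf{Step 3.} For a given $j_0$, the fiber of this degree-$12$ map contains at most $12$ points $t$. The admissible $t$ are those with $t \neq 0, \infty$ and $t \notin \{1, \zeta/\xi, \xi/\zeta\}$, which keeps the branch points distinct. Two curves are geometrically isomorphic exactly when their $j$-invariants agree, so each geometric class arises from at most $12$ values of $t$.

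The main obstacle is Step 1: the cross-ratio must be a nonconstant degree-$2$ function. This requires $\zeta \neq \xi$, and I would verify it by evaluating $\lambda(t)$ at $t = 1$ and $t = 0$.
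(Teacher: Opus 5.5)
Your proposal is correct and takes the same route as the paper. You normalize the branch points by scaling: the paper scales by $\zeta Q$ to get $1, r, t, rt$ with $r = \xi/\zeta$, and you scale by $Q$. You then take one cross-ratio, which is a degree-$2$ function of $t$, and compose it with the degree-$6$ Legendre map to the $j$-line. One detail should be stated: the evaluations at $t = 0, 1$ only show that $\lambda(t)$ is nonconstant. The exact degree $2$, needed for degree exactly $12$, comes from $(t-1)^2$ being coprime to $(\zeta t - \xi)(\xi t - \zeta)$, which holds because $\xi/\zeta \neq 1$.
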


\begin{proof}
	We normalize the four branch points by scaling,
	compute one cross-ratio, and compose with the
	classical degree-six map to the $j$-line. Set
	$r := \xi/\zeta$. Scaling the $X$-coordinate by
	$\zeta Q$ turns the branch points into $1,r,t,rt$.
	Hence the $j$-invariant depends only on $t$. One
	cross-ratio of these four points is
	$\lambda_r(t) := r(1 - t)^2/
	\paren{(1 - rt)(r - t)}$, which has degree $2$ in
	$t$. The classical map from the cross-ratio line to
	the $j$-line,
	\[
		\lambda
		\longmapsto
		256
		\frac{(1 - \lambda + \lambda^2)^3}
		{\lambda^2(1 - \lambda)^2},
	\]
	has degree six. Their composition therefore has
	degree $12$.
\end{proof}

\subsection{A uniform Mordell--Lang estimate}
\label{sec:elliptic-mordell-lang}

The quantitative input is
the uniform Mordell--Lang theorem of
Gao--Ge--K\"{u}hne~\cite[Theorem~1.1]{gao-ge-kuhne}.
In the form used here, it gives a
constant
$c_{\mathrm{UML}}\paren{
	\dim A,
	\deg Z}$ such that
$Z \cap \Gamma$ is covered by at most
$c_{\mathrm{UML}}\paren{
\dim A, \deg Z}^{
1 + \rank \Gamma}$ translates of abelian
subvarieties contained in $Z$.
Here $Z$ is an irreducible subvariety of a polarized
abelian variety $A$, and $\Gamma$ is a subgroup of
finite rank. If $Z$ contains no positive-dimensional
translate, this expression bounds $\#(Z \cap \Gamma)$.

For later use, we package all uniform Mordell--Lang
constants needed for curves of degree at most
$4(e + 1)$ into one number.

\begin{definition}
\label{def:uniform-mordell-lang-constant}
	For every integer $e \geq 2$, set
	\[
		\kappa_e
		:= \max\set{
		2,
		c_{\mathrm{UML}}(2,j) \Mid
		1 \leq j \leq 4(e + 1)}.
	\]
\end{definition}

By construction, $\kappa_e \geq 2$ and depends only
on $e$.

We will apply uniform Mordell--Lang to pairs of points
on elliptic curves whose images in $\P^1$ satisfy a
polynomial relation.
We first define the curve of
such pairs.

\begin{definition}
\label{def:elliptic-relation-curve}
	Let $E_1,E_2$ be elliptic curves over a number field
	$L$, let $\phi_i:E_i \to \P^1$ be nonconstant, and
	let $g \in L[X]$ have degree $e$. Define the curve
	\[
		Z_g
		:= \set{(A,B) \in E_1 \times E_2 \Mid
		\phi_2(B) = g\paren{\phi_1(A)}}.
	\]
\end{definition}

To bound the points of $Z_g$ in a finite-rank subgroup
using uniform Mordell--Lang, we must exclude translates
of elliptic curves contained in $Z_g$.
For a nonconstant rational function $\phi$, let
$m_\infty(\phi)$ denote the largest coefficient in its
pole divisor.
The following lemma gives a sufficient
condition for the exclusion by comparing pole
multiplicities.

\begin{lemma}
\label{lem:no-elliptic-coset}
	Let $L$ be a number field, let $E_1,E_2/L$ be
	elliptic curves, let $\phi_i:E_i \to \P^1$ be
	nonconstant, and let $g \in L[X]$ have degree $e$.
	Let $Z_g \subset E_1 \times E_2$ be the curve defined
	by $\phi_2(B) = g\paren{\phi_1(A)}$. If
	$e > m_\infty(\phi_2)$, then no irreducible
	component of $Z_g$ contains a translate of a
	positive-dimensional abelian subvariety of
	$E_1 \times E_2$.
\end{lemma}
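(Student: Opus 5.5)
Suppose, for contradiction, that some irreducible component $Z_0$ of $Z_g$ contains a translate $\mathbf z + H$ with $H \subseteq E_1 \times E_2$ a positive-dimensional abelian subvariety. Since $Z_g$ is a curve (it is the preimage of the graph of $g$, a curve in $\P^1 \times \P^1$, under the finite map $\phi_1 \times \phi_2$), we get $\dim H = 1$, so $Z_0 = \mathbf z + H$ is itself a translate of an elliptic curve. The plan is to rule out each possible shape of $H$ by comparing pole orders along $Z_0$, working over $\overline{L}$.

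First I treat the degenerate directions. If $H = \set{0} \times E_2$, then $Z_0 = \set{A_0} \times E_2$. On $Z_0$ the relation reads $\phi_2(B) = g(\phi_1(A_0))$, so the nonconstant function $\phi_2$ would be constant on $E_2$, which is absurd. If $H = E_1 \times \set{0}$, then $Z_0 = E_1 \times \set{B_0}$, and $g(\phi_1(A))$ would be constant. This is impossible because $g \circ \phi_1$ is nonconstant, as $e \geq 1$ (indeed $e > m_\infty(\phi_2) \geq 1$). In all remaining cases both restrictions $\pi_1 := \restr{\operatorname{pr}_1}{H}$ and $\pi_2 := \restr{\operatorname{pr}_2}{H}$ are nonzero. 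By Lemma~\ref{lem:fibral-pullback} (or simply because they are nonconstant homomorphisms of elliptic curves) they are isogenies.

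Next I write $\mathbf z = (A_0, B_0)$ and pull the relation back along $h \mapsto \mathbf z + h$. This gives an identity in $\overline{L}(H)$:
\[
	\phi_2\paren{B_0 + \pi_2(h)} = g\paren{\phi_1\paren{A_0 + \pi_1(h)}}.
\]
The identity is valid on a dense open set, hence as rational functions. Now pick a pole $p$ of $\phi_1$ and a point $h_0 \in H$ with $A_0 + \pi_1(h_0) = p$. Since $\pi_1$ is an isogeny in characteristic zero, it is étale. So the function $h \mapsto \phi_1(A_0 + \pi_1(h))$ has a pole at $h_0$ of the same order $m \geq 1$ as $\phi_1$ at $p$. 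Because $g$ is a polynomial of degree $e$, the right-hand side then has a pole of order exactly $em \geq e$ at $h_0$. On the left, $\pi_2$ is also étale, so the pole order at $h_0$ equals the pole order of $\phi_2$ at $B_0 + \pi_2(h_0)$. That order is at most $m_\infty(\phi_2) < e$, giving a contradiction.

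The only delicate point is the étaleness step: valuations must be preserved under pullback by the isogenies, which requires unramifiedness. This holds because isogenies in characteristic zero are separable and hence étale. I expect the main care to be in the initial dimension and irreducibility bookkeeping: that any translate inside a component of a curve is a whole component, and that $Z_g$ has no two-dimensional component. The latter follows because $Z_g$ is a proper closed subset, by the argument of Lemma~\ref{lem:quotient-locus}(ii).
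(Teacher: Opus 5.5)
Your proof is correct and follows the paper's argument: a vanishing coordinate homomorphism is ruled out because it would make the other coordinate function constant, and then, since isogenies are étale in characteristic zero, pole multiplicities are compared. The paper compares the whole pulled-back pole divisors ($\beta^*t_{Q_0}^*D_2 = e\,\alpha^*t_{P_0}^*D_1$), while you compare at a single point over a pole of $\phi_1$; also, Lemma~\ref{lem:fibral-pullback} concerns homomorphisms from abelian surfaces, so your parenthetical reason (a nonzero homomorphism of elliptic curves is an isogeny) is the one that actually applies.
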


\begin{proof}
	We assume that such a translate exists, show that
	both coordinate projections are isogenies, and next
	compare the coefficients of the pulled-back pole
	divisors. Suppose that one component contains a translate of
	an elliptic curve $B$. Its two projections
	have the form
	\[
		\begin{aligned}
			b
			&\longmapsto
			P_0 + \alpha(b), \\
			b
			&\longmapsto
			Q_0 + \beta(b).
		\end{aligned}
	\]
	where $\alpha:B \to E_1$ and $\beta:B \to E_2$ are
	homomorphisms. Neither can vanish, because the defining identity
	would force the other coordinate map to be constant.
	Thus both are isogenies.

	Let $D_i := (\phi_i)_\infty$. Pulling the defining
	identity back to $B$ and comparing pole divisors
	gives $\beta^*t_{Q_0}^*D_2
	= e\alpha^*t_{P_0}^*D_1$.
	In characteristic zero, isogenies are \'{e}tale.
	Consequently, pullback preserves the coefficients in the pole
	divisors. Every nonzero coefficient on the left is at
	most $m_\infty(\phi_2)$, whereas every nonzero
	coefficient on the right is at least $e$. This
	contradicts $e > m_\infty(\phi_2)$.
\end{proof}

We now specialize to degree-$2$ maps and record only
the first-coordinate values that satisfy the defining
relation.

\begin{definition}
\label{def:elliptic-value-set}
	Let $L$ be a number field. For $i = 1,2$, let
	$E_i/L$ be an elliptic curve, let
	$x_i:E_i \to \P^1$ have degree $2$, and let
	$\Gamma_i \subset E_i(\overline L)$ be a subgroup of
	finite rank. Let $g \in L[X]$ have degree $e \geq 2$.
	Define
	\[
		\calp_g(\Gamma_1, \Gamma_2)
		:= \set{a \in \overline L \Mid
		\begin{array}{l}
		a = x_1(A)\text{ for some }A \in \Gamma_1, \\
		g(a) = x_2(B)\text{ for some }B \in \Gamma_2.
		\end{array}}
	\]
\end{definition}

Thus $\calp_g(\Gamma_1, \Gamma_2)$ consists of
those values $a = x_1(A)$ with $A \in \Gamma_1$ for
which $g(a) = x_2(B)$ for some $B \in \Gamma_2$. The
groups may be the full Mordell--Weil groups or smaller
subgroups generated by selected lifts.

Let $\mathcal L_i := x_i^*\calo_{\P^1}(1)$, and give
$E_1 \times E_2$ the product polarization
$\mathcal M := p_1^*\mathcal L_1
\otimes p_2^*\mathcal L_2$.
If $H_i$ denotes the corresponding pullback class,
then $H_1^2 = H_2^2 = 0$ and $H_1H_2 = 4$. The curve
$Z_g$ has class $eH_1 + H_2$, and hence
\begin{equation}
	\deg_{\mathcal M}\paren{Z_g} = 4(e + 1).
	\label{eq:elliptic-relation-degree}
\end{equation}
Each geometric irreducible component of $Z_g$ therefore
has degree at most $4(e + 1)$. Hence $\kappa_e$ bounds
the uniform Mordell--Lang constant for each component,
even when $Z_g$ is reducible. The following proposition
uses this bound to estimate
$\#\calp_g(\Gamma_1, \Gamma_2)$.

\begin{proposition}
\label{prop:elliptic-mordell-lang-bound}
	Let $L$ be a number field. For $i = 1,2$, let
	$E_i/L$ be an elliptic curve, let
	$x_i:E_i \to \P^1$ have degree $2$, and let
	$\Gamma_i \subset E_i(\overline L)$ be a subgroup of
	finite rank. Let $g \in L[X]$ have degree $e \geq 3$.
	Let $\calp_g(\Gamma_1, \Gamma_2)$ be the set of
	$a \in \overline L$ for which there are
	$A \in \Gamma_1$ and $B \in \Gamma_2$ satisfying
	$a = x_1(A)$ and $g(a) = x_2(B)$. Then
	\[
		\#\calp_g(\Gamma_1, \Gamma_2)
		\leq 4(e + 1)
		\kappa_e^{1 + \rank \Gamma_1 + \rank \Gamma_2}.
	\]
\end{proposition}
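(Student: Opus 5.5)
The plan is to reduce the count of values $a$ to a count of points of $Z_g$ in the finite-rank group $\Gamma := \Gamma_1 \times \Gamma_2 \subseteq (E_1 \times E_2)(\overline L)$, and then invoke uniform Mordell--Lang componentwise. For each $a \in \calp_g(\Gamma_1, \Gamma_2)$ choose $A \in \Gamma_1$ with $x_1(A) = a$ and $B \in \Gamma_2$ with $x_2(B) = g(a)$. Then $(A, B) \in Z_g \cap \Gamma$, where $Z_g$ is the curve of Definition~\ref{def:elliptic-relation-curve} with $\phi_i = x_i$. The map $(A, B) \mapsto x_1(A)$ recovers $a$, so distinct values of $a$ give distinct points of $Z_g \cap \Gamma$. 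Therefore
\[
	\#\calp_g(\Gamma_1, \Gamma_2) \leq \#\paren{Z_g \cap \Gamma},
\]
and $\rank \Gamma = \rank \Gamma_1 + \rank \Gamma_2$. We should check that $a = \infty$ causes no trouble. We count $a \in \overline L$, and $g(a)$ is finite, so the chosen points lie over the affine parts, which is harmless.

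Next we exclude positive-dimensional translates. Since $x_2$ has degree $2$, its pole divisor has every coefficient at most $2$, so $m_\infty(x_2) \leq 2 < 3 \leq e$. Lemma~\ref{lem:no-elliptic-coset} then shows that no irreducible component of $Z_g$ contains a translate of a positive-dimensional abelian subvariety. This is exactly where the hypothesis $e \geq 3$ enters. We apply the lemma over $\overline L$, or note that its pole-comparison argument is geometric, so it applies to geometric components as well.

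Finally we count. By \eqref{eq:elliptic-relation-degree}, $Z_g$ has $\mathcal M$-degree $4(e+1)$. Hence it has at most $4(e+1)$ geometric irreducible components, each of degree $j \leq 4(e+1)$. For each component $Z'$, which is a curve with no positive-dimensional translate inside it, the Gao--Ge--K\"uhne theorem with $\dim = 2$ gives
\[
	\#(Z' \cap \Gamma) \leq c_{\mathrm{UML}}(2, j)^{1 + \rank\Gamma} \leq \kappa_e^{1+\rank\Gamma}.
\]
This uses $\kappa_e \geq c_{\mathrm{UML}}(2, j)$ by Definition~\ref{def:uniform-mordell-lang-constant}; we may assume $c_{\mathrm{UML}}$ is monotone, or else simply take the maximum as defined. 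Summing over components yields the claimed bound $4(e+1)\kappa_e^{1 + \rank\Gamma_1 + \rank\Gamma_2}$.

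The main subtlety I expect is the bookkeeping of degrees. We need the degree of each component to be bounded by $4(e+1)$ with respect to the polarization $\mathcal M$, and the number of components to be at most $4(e+1)$. Both follow from additivity of degree over components (each having positive degree, since $\mathcal M$ is ample). We also need $\Gamma$ to be of finite rank in $E_1 \times E_2$ over $\overline L$, as required by uniform Mordell--Lang; this holds because it is the product of two finite-rank groups.
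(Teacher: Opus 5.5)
Your proposal is correct and matches the paper's proof step for step. You lift each value $a$ to a point of $Z_g \cap (\Gamma_1 \times \Gamma_2)$ and exclude positive-dimensional translates by Lemma~\ref{lem:no-elliptic-coset}, using $m_\infty(x_2) \leq 2 < e$. You then bound the number and degrees of the geometric components via \eqref{eq:elliptic-relation-degree} and apply uniform Mordell--Lang to each component with constant $\kappa_e$. Your added bookkeeping (injectivity of $a \mapsto (A, B)$, and positivity of component degrees for the ample polarization) simply makes explicit what the paper leaves implicit.
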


\begin{proof}
	We exclude positive-dimensional translates, bound
	the number and degree of the components, and apply
	uniform Mordell--Lang to each component. A
	degree-$2$ map has pole multiplicities at most two.
	Hence Lemma~\ref{lem:no-elliptic-coset} applies. By
	\eqref{eq:elliptic-relation-degree}, the
	reduced support of $Z_g$ has at most $4(e + 1)$
	geometric irreducible components, each of degree at
	most $4(e + 1)$. Apply uniform Mordell--Lang to each
	component and to $\Gamma_1 \times \Gamma_2$.
	Every element of
	$\calp_g(\Gamma_1, \Gamma_2)$ is the first
	coordinate of at least one resulting point, which
	gives the claimed bound.
\end{proof}

The preceding proposition does not cover a quadratic
polynomial because its degree may equal the largest
pole multiplicity of the second map. For a Weierstrass
$X$-coordinate, however, its unique double pole gives a
sharper divisor comparison.

\begin{proposition}
\label{prop:quadratic-elliptic-mordell-lang-bound}
	Let $L$ be a number field, let $E/L$ be an elliptic
	curve with origin $O$, let $x: E \to \P^1$ be a
	degree-$2$ map satisfying $(x)_\infty = 2[O]$, and
	let $\Gamma \subset E(\overline L)$ be a subgroup of
	finite rank. Let $g \in L[X]$ have degree $2$, and
	let $\calp_g(\Gamma, \Gamma)$ be the set of
	$a \in \overline L$ for which there are
	$A, B \in \Gamma$ satisfying $a = x(A)$ and
	$g(a) = x(B)$. Then
	\[
		\#\calp_g(\Gamma, \Gamma)
		\leq 12\kappa_2^{1 + 2\rank \Gamma}.
	\]
\end{proposition}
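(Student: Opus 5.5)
We follow the proof of Proposition~\ref{prop:elliptic-mordell-lang-bound}, with $E_1 = E_2 = E$, $x_1 = x_2 = x$ and $e = 2$. Consider the curve $Z_g := \set{(A,B) \in E \times E \Mid x(B) = g\paren{x(A)}}$. By \eqref{eq:elliptic-relation-degree} with $e = 2$, it has $\mathcal M$-degree $4(e+1) = 12$. Hence its reduced support has at most $12$ geometric irreducible components, each of degree at most $12$, so $\kappa_2$ bounds the uniform Mordell--Lang constant of each component. Once we know that no component contains a translate of an elliptic curve, applying Gao--Ge--K\"uhne to each component and to $\Gamma \times \Gamma$, which has rank $2\rank\Gamma$, gives at most $12\kappa_2^{1 + 2\rank\Gamma}$ points in total. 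Every $a \in \calp_g(\Gamma,\Gamma)$ is the first coordinate $x(A)$ of such a point $(A,B)$, which gives the bound.

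The only new work is to exclude translates. Lemma~\ref{lem:no-elliptic-coset} does not apply, since $e = 2 = m_\infty(x)$. Suppose a component contains $\set{(P_0 + \alpha(b), Q_0 + \beta(b)) \Mid b \in B}$ for an elliptic curve $B$. As in that lemma, $\alpha$ and $\beta$ are nonzero, hence isogenies, and they are étale. Pull back the identity $x(Q_0 + \beta(b)) = g\paren{x(P_0 + \alpha(b))}$ and compare pole divisors. Since $(x)_\infty = 2[O]$ and $g$ has degree $2$, we get
\[
	2\,\beta^{-1}(-Q_0) = 4\,\alpha^{-1}(-P_0)
\]
as divisors on $B$, where $\beta^{-1}(-Q_0)$ denotes the reduced fiber. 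The left side has all coefficients equal to $2$, because $\beta$ is étale, while the right side has all coefficients equal to $4$. Both fibers are nonempty, so this is a contradiction.

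We expect this divisor comparison to be the main point: the hypothesis $(x)_\infty = 2[O]$, a single double pole, is exactly what makes the pole multiplicities $2$ and $4$ mismatch. For a general degree-$2$ map with two simple poles, the corresponding multiplicities could coincide. We also need to check that the component and degree count from \eqref{eq:elliptic-relation-degree} remains valid for $e = 2$. That computation only uses the class $eH_1 + H_2$, so it does, and $\kappa_2$ is defined in Definition~\ref{def:uniform-mordell-lang-constant}.
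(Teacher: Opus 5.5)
Your proof is correct and is essentially the paper's own argument. Like the paper, you first exclude elliptic translates in $Z_g$ by comparing pole multiplicities ($2$ against $4$, using that isogenies are \'{e}tale), and then apply uniform Mordell--Lang to the at most $12$ components of degree at most $12$, together with $\Gamma \times \Gamma$ of rank $2\rank\Gamma$.

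Your closing aside is inaccurate, though. Because the same map $x$ appears in both factors, two simple poles would give multiplicities $1$ against $2$, which still mismatch, so this exclusion step does not actually need $(x)_\infty = 2[O]$. A coincidence of multiplicities can only occur when the two maps differ, as in Proposition~\ref{prop:elliptic-mordell-lang-bound}.
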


\begin{proof}
	We first exclude elliptic translates by comparing
	pole multiplicities, and then apply uniform
	Mordell--Lang. Suppose that an irreducible component
	of $Z_g$ contains a translate of an elliptic curve
	$B$. As in the proof of
	Lemma~\ref{lem:no-elliptic-coset}, the two coordinate
	projections have the form
	$b \mapsto P_0 + \alpha(b)$ and
	$b \mapsto Q_0 + \beta(b)$, where
	$\alpha:B \to E$ and $\beta:B \to E$ are isogenies.
	The defining identity on $B$ is
	\[
		x\paren{Q_0 + \beta(b)}
		= g\paren{x\paren{P_0 + \alpha(b)}}.
	\]
	Since isogenies are \'{e}tale in characteristic zero,
	every nonzero coefficient in the pole divisor on the
	left is two. Since $g$ has degree $2$, every nonzero
	coefficient in the pole divisor on the right is four.
	This contradiction excludes every
	positive-dimensional translate.

	Equation~\eqref{eq:elliptic-relation-degree}, with
	$e = 2$, gives degree $12$.
	Thus the reduced support
	of $Z_g$ has at most $12$ geometric components, each
	of degree at most $12$. Applying uniform
	Mordell--Lang to each component and to
	$\Gamma \times \Gamma$ proves
	the claimed bound.
\end{proof}

The two propositions show that many such values force
a lower bound for the relevant Mordell--Weil ranks.
They are the quantitative input for the rest of the
paper.

\begin{remark}
	Garcia-Fritz--Pasten
	\cite{garcia-fritz-pasten} prove a more general
	result about images of finite-rank subgroups of
	elliptic curves in $\P^1$.
	Propositions~\ref{prop:elliptic-mordell-lang-bound}
	and~\ref{prop:quadratic-elliptic-mordell-lang-bound}
	cover the degree-$2$ maps needed here. Their proofs
	also give the relevant geometric degrees explicitly.
\end{remark}

\subsection{Odd degrees}
\label{sec:odd-degree-rank}

Throughout this subsection, let $K$ be a number field,
let $d \geq 3$ be odd, and choose distinct
$\zeta, \xi \in \mu_d \sm \set{1}$.
Recall the finite subgroup $\calc_{K, d}$ of
$K(\mu_d)^\times /
(K(\mu_d)^\times)^{(d - 1)}$
defined in
Definition~\ref{def:classes-modulo-powers}.

Two elements of $K(\mu_d)^\times$ determine the same
class modulo squares if their quotient belongs to
$(K(\mu_d)^\times)^2$.
The quotient
$K(\mu_d)^\times/(K(\mu_d)^\times)^2$
is the square-class group of $K(\mu_d)$,
and its elements are square classes.
Since $d - 1$ is even, there is a natural map
from $\calc_{K, d}$ to this square-class group.
Lemma~\ref{lem:finite-power-classes}
shows that its image is finite.

\begin{definition}
\label{def:odd-square-classes}
	Let $\calu_{K, d} \subset K(\mu_d)^\times$
	consist of one representative of each square class
	in the image of
	\[
		\calc_{K, d}
			\lra
		\frac{K(\mu_d)^\times}
			{(K(\mu_d)^\times)^2}.
	\]
\end{definition}

The finite set $\calu_{K, d}$
depends only on $K$ and $d$.
One representative may be chosen to be $1$.
Moreover, changing a representative by a square
produces an isomorphic quadratic twist over
$K(\mu_d)$.

The next lemma places every periodic point on one of
these elliptic curves.

\begin{lemma}
\label{lem:odd-degree-lift}
	Let $K$ be a number field, let $d \geq 3$ be odd,
	and choose distinct
	$\zeta, \xi \in \mu_d \sm \set{1}$.
	Let $c \in K$, set
	$f_{d, c}(z) := z^d + c$, and let
	$Q, R \in \Per_K(f_{d, c})$ be distinct and
	nonzero.
	For every $P \in \Per_K(f_{d, c})$, there are
	$u \in \calu_{K, d}$ and
	$A \in
	E_{Q, R}^{(u)}\paren{K(\mu_d)}$
	such that $x_{Q, R}^{(u)}(A) = P$.
\end{lemma}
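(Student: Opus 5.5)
We need $u \in \calu_{K,d}$ and $Y \in K(\mu_d)$ with
\[
	uY^2 = (P - \zeta Q)(P - \xi Q)(P - \zeta R)(P - \xi R).
\]
If $P \in \set{Q, R}$, then $P - \zeta Q$ or $P - \zeta R$ vanishes (for instance $P = Q$ gives $P - \zeta Q = Q(1 - \zeta)$; note that a factor $P - \zeta Q$ vanishes only if $P = \zeta Q$). More precisely, when $P = Q$ none of the factors $Q - \zeta Q$, $Q - \xi Q$ vanishes. So we cannot simply take the branch point. Instead, the plan is to handle $P = Q$ and $P = R$ by the same Kummer argument applied to the pairs $(P, R)$ and $(P, Q)$ separately, and to treat the remaining factors directly. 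The uniform approach below covers all cases.

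First, the right-hand side $F_{Q, R}(P)$ is nonzero: Lemma~\ref{lem:telescoping} gives $P \neq \eta Q$ whenever $P \neq Q$, and $Q \neq \eta Q$ for $\eta \neq 1$ since $Q \neq 0$; the same holds for $R$. Next, rewrite
\[
	F_{Q, R}(P) = \frac{P - \zeta Q}{P - \xi Q}\cdot\frac{P - \zeta R}{P - \xi R}\cdot\bigl((P - \xi Q)(P - \xi R)\bigr)^2 .
\]
When $P \neq Q$, Theorem~\ref{thm:power-classes} shows that $(P - \zeta Q)/(P - \xi Q)$ lies in $\calc_{K, d}$. When $P = Q$, this quotient equals $(1 - \zeta)/(1 - \xi)$, which is a unit away from the primes above $d$, i.e.\ an $S$-unit, and so also lies in $\calc_{K, d}$. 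The same holds for the $R$-factor. Since $d - 1$ is even, a $(d - 1)$-st power is a square, so the product of the two quotients is congruent modulo squares to an element whose class lies in the image of $\calc_{K, d}$ in the square-class group. (The image is a subgroup, being the image of a group homomorphism, so the product class lies in it.) Let $u \in \calu_{K, d}$ be its chosen representative. Then $F_{Q, R}(P) = uY^2$ for some $Y \in K(\mu_d)^\times$.

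Finally, Lemma~\ref{lem:four-branch-points} ensures that $E_{Q, R}^{(u)}$ is defined and smooth. The affine point $(P, Y)$ then gives a point $A \in E_{Q, R}^{(u)}(K(\mu_d))$ with $x_{Q, R}^{(u)}(A) = P$.

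The main point to check carefully is the compatibility of the two Kummer classes. We must confirm that their product lies in the image of $\calc_{K, d}$ — which holds because that image is a subgroup — and we must handle the boundary cases $P \in \set{Q, R}$ via the $S$-unit observation.
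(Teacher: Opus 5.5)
Your proof is correct and follows the paper's proof. You write $F_{Q,R}(P)$ as $r_Q(P)\,r_R(P)$ times the square $\bigl((P-\xi Q)(P-\xi R)\bigr)^2$, use Theorem~\ref{thm:power-classes} when $P \neq T$ and the $S$-unit $(1-\zeta)/(1-\xi)$ when $P = T$, and take $u \in \calu_{K,d}$ representing the square class of the product. The only blemish is your opening claim that a factor vanishes when $P \in \set{Q, R}$; it is false, you retract it yourself, and that paragraph should be deleted.
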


\begin{proof}
	We obtain one square-class relation from $Q$ and one
	from $R$, multiply them, and absorb the square
	denominator into the $Y$-coordinate.
	For $T \in \set{Q, R}$, set
	$r_T(P) := (P - \zeta T)/(P - \xi T)$.
	If $P \neq T$, Theorem~\ref{thm:power-classes}
	shows that the square class of $r_T(P)$ is represented
	by an element of $\calu_{K, d}$. If $P = T$, then
	$r_T(T) = (1 - \zeta)/(1 - \xi)$.
	This quotient is a unit away from the places above
	$d$. Therefore its class belongs to
	$\calc_{K, d}$. Its
	square class is therefore represented by an element
	of $\calu_{K, d}$.

	Choose $u \in \calu_{K, d}$ to represent the square
	class of $r_Q(P)r_R(P)$. Since
	$r_Q(P)r_R(P) = F_{Q, R}(P)/
	\paren{(P - \xi Q)^2(P - \xi R)^2}$,
	the value $F_{Q, R}(P)/u$
	is a square in $K(\mu_d)$.
	Thus $P$ is the $X$-coordinate
	of a $K(\mu_d)$-point on
	$E_{Q, R}^{(u)}$.
	Lemmas~\ref{lem:four-branch-points}
	and~\ref{lem:four-branch-elliptic-geometry} show
	that it is an elliptic curve with full rational
	$2$-torsion.
\end{proof}

We write $u(P)$ for a choice of such $u$ from
Lemma~\ref{lem:odd-degree-lift}.
We first use this construction to deduce conditional
uniform boundedness in every fixed odd degree.

\begin{corollary}
\label{cor:conditional-odd-ubc}
	Let $K$ be a number field and let $d \geq 3$ be odd.
	Suppose that there is an integer $r \geq 0$
	such that
	$\rank E_{Q, R}^{(u)}\paren{K(\mu_d)} \leq r$
	for every $c \in K$,
	every pair of distinct nonzero points
	$Q, R \in \Per_K(f_{d, c})$, and every
	$u \in \calu_{K, d}$. Then, for every $c \in K$,
	\[
		\#\Per_K(f_{d, c})
		\leq \max\set{d^{[K:\Q]},
			4(d + 1)
			\paren{\#\calu_{K, d}}^2
			\kappa_d^{1 + 2r}}.
	\]
\end{corollary}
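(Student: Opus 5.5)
The bound is a maximum of two terms, so I would split according to the reduction of $c$ at the places above $d$. If $c$ is integral at some place of $K$ above a prime dividing $d$, then $f_{d,c}$ has good reduction there. The cited result of Rajagopal--Zhang \cite[Theorem~3]{rajagopal-zhang} then gives $\#\Per_K(f_{d,c}) \leq d^{[K:\Q]}$ directly. Otherwise $c$ is nonintegral at every place above a prime divisor of $d$. Corollary~\ref{cor:quotients-in-R} then shows that every $K$-rational periodic point is nonzero. If $\#\Per_K(f_{d,c}) \leq 2$ there is nothing to prove, since the right-hand side is at least $2$. So I may fix two distinct nonzero periodic points $Q, R$ once and for all.

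With $Q,R$ fixed, I would attach to each $P \in \Per_K(f_{d,c})$ the pair $(u(P), u(f_{d,c}(P))) \in \calu_{K,d}^2$ given by Lemma~\ref{lem:odd-degree-lift}. Both $P$ and $f_{d,c}(P)$ are periodic, so the lemma applies to each. It gives $A \in E_{Q,R}^{(u(P))}(K(\mu_d))$ with $x(A) = P$, and $B \in E_{Q,R}^{(u(f(P)))}(K(\mu_d))$ with $x(B) = f_{d,c}(P) = g(P)$, where $g(X) := X^d + c \in K(\mu_d)[X]$ has degree $e = d \geq 3$. For a fixed pair $(u_1,u_2)$, set $E_i := E_{Q,R}^{(u_i)}$, made into elliptic curves by choosing a branch point as origin (Lemma~\ref{lem:four-branch-elliptic-geometry}, which applies by Lemma~\ref{lem:four-branch-points}). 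Let $x_i$ be the degree-$2$ maps $x_{Q,R}^{(u_i)}$, and let $\Gamma_i := E_i(K(\mu_d))$, of rank at most $r$ by hypothesis. Every $P$ with $(u(P),u(f(P))) = (u_1,u_2)$ then lies in $\calp_g(\Gamma_1,\Gamma_2)$.

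Proposition~\ref{prop:elliptic-mordell-lang-bound} applies with $e = d \geq 3$. It bounds each such set by $4(d+1)\kappa_d^{1+\rank\Gamma_1+\rank\Gamma_2} \leq 4(d+1)\kappa_d^{1+2r}$, using $\kappa_d \geq 2$. Summing over the at most $(\#\calu_{K,d})^2$ pairs gives the second term of the maximum.

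The main point requiring care is the hypotheses of that proposition. It needs $e > m_\infty(x_2) = 2$, which is why I pair $P$ with $f_{d,c}(P)$ through the full degree-$d$ polynomial rather than a quadratic relation. It also needs the curves to be genuine elliptic curves over $K(\mu_d)$ with finitely generated Mordell--Weil groups. Both hold here since $d$ is odd and at least $3$. The uniformity is only as good as the constant $\kappa_d$, and that constant depends solely on $d$ because the degree of $Z_g$ is $4(d+1)$ by \eqref{eq:elliptic-relation-degree}.
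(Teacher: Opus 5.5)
Your proof is correct and is essentially the paper's argument. In the integral case you apply Rajagopal--Zhang; otherwise you fix two nonzero periodic points $Q, R$, sort each $P$ by the pair $(u(P), u(f_{d,c}(P)))$, and apply Proposition~\ref{prop:elliptic-mordell-lang-bound} with $g = f_{d,c}$ to each of the at most $(\#\calu_{K,d})^2$ pairs of twists. One small citation point: take the nonvanishing of the periodic points directly from Lemma~\ref{lem:shell}, as the paper does, rather than from Corollary~\ref{cor:quotients-in-R}. That corollary belongs to the twisted Fermat setup, which needs three distinct nontrivial $d$-th roots of unity and so is unavailable when $d = 3$.
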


\begin{proof}
	We first dispose of the integral case. In the
	bad-reduction case, we choose two periodic points,
	group the remaining points according to the twists
	containing $P$ and $f_{d, c}(P)$, and apply
	Proposition~\ref{prop:elliptic-mordell-lang-bound} to
	each pair of twists.
	Choose a prime $p \mid d$ and a place
	$\mfp \mid p$ of $K$.
	If $c$ is integral at $\mfp$,
	then apply \cite[Theorem~3]{rajagopal-zhang},
	which says that
	\[
		\#\Per_K(f_{d, c})
		\leq \#k_\mfp
		\leq d^{[K:\Q]}.
	\]

	Suppose that $c$ is nonintegral at $\mfp$. If the
	periodic set has at most one point, the bound is
	automatic. Otherwise, choose distinct periodic points
	$Q$ and $R$. Lemma~\ref{lem:shell} shows that they
	are nonzero.

	For every periodic point $P$, apply
	Lemma~\ref{lem:odd-degree-lift} to $P$ and to
	$f_{d, c}(P)$. The pair $\paren{u(P),u(f_{d, c}(P))}$
	has at most $\paren{\#\calu_{K, d}}^2$
	possibilities. For each fixed pair, apply
	Proposition~\ref{prop:elliptic-mordell-lang-bound}
	with $g = f_{d, c}$ and with the full Mordell--Weil groups
	of the two corresponding twists. The first
	$X$-coordinate recovers $P$, and hence the assignment
	is injective. Each pair contributes at most
	$4(d + 1)\kappa_d^{1 + 2r}$ points. Summing over the
	pairs proves the claimed bound.
\end{proof}

Without a rank bound, the same estimate gives a lower
bound for rank.

\begin{proof}[Proof of
	Theorem~\ref{thm:elliptic-curves-large-rank}]
	We first apply
	Proposition~\ref{prop:elliptic-mordell-lang-bound} to
	every pair $Q, R$ to obtain a twist of large rank. We
	then fix $Q$ and use the degree-$12$ map to the
	$j$-line to count distinct geometric isomorphism
	classes. Let
	$c \in K$ and suppose that $f_{d, c}$ has a
	$K$-rational cycle $\sco \subset K$ of exact
	length $n \geq 5$.
	At most one point of $\sco$ is zero.

	Choose distinct nonzero points $Q, R \in \sco$.
	For $P \in \sco$, let $u(P)$ be the
	representative supplied by
	Lemma~\ref{lem:odd-degree-lift}.
	Summing
	Proposition~\ref{prop:elliptic-mordell-lang-bound}
	over the possible pairs
	$\paren{u(P), u(f_{d, c}(P))}$ gives
	\[
		n
			\leq 4(d + 1)
			\paren{\#\calu_{K, d}}^2
			\kappa_d^{
			1 + 2\max_{u \in \calu_{K, d}}
			\rank E_{Q, R}^{(u)}\paren{K(\mu_d)}}.
	\]
	Consequently,
	\begin{equation}
		\max_{u \in \calu_{K, d}}
		\rank E_{Q, R}^{(u)}\paren{K(\mu_d)}
			\geq \frac{\log n - \log\paren{
				4(d + 1)\paren{\#\calu_{K, d}}^2}}
				{2 \log \kappa_d} - \frac{1}{2}.
		\label{eq:odd-rank-lower}
	\end{equation}
	This estimate holds for every pair of distinct
	nonzero points in the cycle.

	Fix one nonzero $Q \in \sco$. Apart from $Q$
	and the possible zero point, there are at least
	$n - 2$ choices of $R$. Lemma~\ref{lem:four-branch-j-map}
	shows that at most $12$ values of $R/Q$ give the
	same $j$-invariant. Hence the choices of $R$ produce
	at least $\lceil(n - 2)/12\rceil$ geometric
	isomorphism classes. For every choice of
	$R$, Equation~\eqref{eq:odd-rank-lower} supplies a
	quadratic twist in that class with the required rank
	lower bound.
\end{proof}

Consequently, a long odd-degree cycle forces both
rank growth and variation in moduli. The first
conclusion comes from uniform Mordell--Lang, while the
second comes from the explicit degree of the $j$-map.

The twists can also be untwisted over one fixed
extension of $K(\mu_d)$.
This gives a complementary form of the rank-growth
conclusion.

\begin{corollary}[Rank growth after a fixed extension]
\label{cor:odd-fixed-extension-rank}
	Let $K$ be a number field, and let $d \geq 3$
	be odd.
	There is a finite multiquadratic extension
	$M/K(\mu_d)$, depending only on $K$ and $d$,
	with the following property.
	If $c \in K$ and $f_{d, c}$ has a
	$K$-rational cycle $\sco$ of length $n \geq 3$,
	then, for every pair of distinct nonzero points
	$Q, R \in \sco$,
	\[
		\rank E_{Q, R}(M)
			\geq \frac{\log n - \log\paren{4(d + 1)}}
				{2 \log \kappa_d} - \frac{1}{2}.
	\]
\end{corollary}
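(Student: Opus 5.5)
The plan is to take $M := K(\mu_d)\paren{\sqrt{u} \Mid u \in \calu_{K, d}}$. This is a finite multiquadratic extension of $K(\mu_d)$, and since $\calu_{K, d}$ is finite and depends only on $K$ and $d$ (Definition~\ref{def:odd-square-classes}), so does $M$. Over $M$, every twist $E_{Q, R}^{(u)}$ with $u \in \calu_{K, d}$ becomes isomorphic to $E_{Q, R}$ via $(X, Y) \mapsto (X, \sqrt{u}\,Y)$, and this isomorphism commutes with the $X$-coordinate maps.

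First, fix $c \in K$, a cycle $\sco$ of length $n$, and distinct nonzero $Q, R \in \sco$. By Lemma~\ref{lem:four-branch-points}, $E_{Q, R}$ is an elliptic curve over $K(\mu_d)$. For each $P \in \sco$, Lemma~\ref{lem:odd-degree-lift} gives $A \in E_{Q, R}^{(u(P))}\paren{K(\mu_d)}$ with $x_{Q, R}^{(u(P))}(A) = P$. Transporting $A$ through the isomorphism above yields $A' \in E_{Q, R}(M)$ with $x_{Q, R}(A') = P$. So every point of $\sco$ is the $X$-coordinate of an $M$-point on the single curve $E_{Q, R}$. Applying this to $f_{d, c}(P) \in \sco$ as well, we get that $P \in \calp_g(\Gamma, \Gamma)$, where $g = f_{d, c}$ and $\Gamma := E_{Q, R}(M)$, working over the number field $L = M$. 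This is the step where untwisting pays off: there is no longer any sum over pairs of twists, so the factor $\paren{\#\calu_{K, d}}^2$ from Corollary~\ref{cor:conditional-odd-ubc} disappears.

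Next, apply Proposition~\ref{prop:elliptic-mordell-lang-bound} with $E_1 = E_2 = E_{Q, R}$ over $M$, $x_1 = x_2 = x_{Q, R}$, $\Gamma_1 = \Gamma_2 = \Gamma$, and $g = f_{d, c}$ of degree $e = d \geq 3$. Since $P \mapsto P$ is injective into $\calp_g(\Gamma, \Gamma)$, this gives
\[
	n \leq 4(d + 1)\kappa_d^{1 + 2\rank E_{Q, R}(M)}.
\]
Taking logarithms and using $\kappa_d \geq 2$ rearranges this to the claimed inequality. For $n \geq 3$ a cycle has at least two nonzero points, since at most one point of $\sco$ is zero, so pairs $Q, R$ exist. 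For small $n$ the right-hand side may be negative, and the bound then holds trivially.

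The main points to check are, first, that the curve and the map $x_{Q, R}$ used in Proposition~\ref{prop:elliptic-mordell-lang-bound} are defined over $M$ with degree-$2$ maps, which holds by base change. Second, that Lemma~\ref{lem:odd-degree-lift} also covers $P \in \{Q, R\}$, which it does explicitly. I expect no serious obstacle. The only subtlety is that the uniform Mordell--Lang constant $\kappa_d$ is independent of the base field, so enlarging $K(\mu_d)$ to $M$ does not affect it.
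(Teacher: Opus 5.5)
Your proposal is correct and follows the paper's proof essentially step for step. The paper uses the same field $M = K(\mu_d)\paren{\sqrt{u} \Mid u \in \calu_{K, d}}$, the same $X$-coordinate-preserving untwisting isomorphism, and Lemma~\ref{lem:odd-degree-lift} applied to both $P$ and $f_{d, c}(P)$. It then applies Proposition~\ref{prop:elliptic-mordell-lang-bound} with $E_{Q, R}$ in both factors and rearranges, and your additional checks (the case $P \in \set{Q, R}$, the existence of two nonzero points when $n \geq 3$, and $\kappa_d \geq 2$ justifying the rearrangement) are all sound.
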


\begin{proof}
	Take
	$M := K(\mu_d)\paren{
	\sqrt{u} \Mid u \in \calu_{K, d}}$.
	This is a fixed multiquadratic extension because
	$\calu_{K, d}$ is finite and depends only on
	$K$ and $d$.
	For every $u \in \calu_{K, d}$, the twist
	$E_{Q, R}^{(u)}$ becomes isomorphic to
	$E_{Q, R}$ over $M$ by an isomorphism preserving
	the $X$-coordinate.

	Lemma~\ref{lem:odd-degree-lift} therefore shows
	that every point $P \in \sco$, as well as
	$f_{d, c}(P)$, is the $X$-coordinate of an
	$M$-rational point of $E_{Q, R}$.
	Apply
	Proposition~\ref{prop:elliptic-mordell-lang-bound}
	with $E_{Q, R}$ in both factors and rearrange
	the resulting inequality.
\end{proof}

\begin{remark}
	For an elliptic curve $E/K(\mu_3)$, let
	$\operatorname{Sel}_2\paren{E/K(\mu_3)}$
	denote its $2$-Selmer group.
	When $d = 3$, every twist supplied by
	Theorem~\ref{thm:elliptic-curves-large-rank}
	has full $K(\mu_3)$-rational $2$-torsion.
	The Kummer exact sequence gives
	\[
		\dim_{\F_2}
		\operatorname{Sel}_2\paren{E/K(\mu_3)}
			\geq \rank E\paren{K(\mu_3)} + 2;
	\]
	see Silverman
	\cite[Section~X.4]{silverman-aec}.
	Hence a cycle of length $n$ produces at least
	$\lceil(n - 2)/12\rceil$ geometric isomorphism
	classes containing twists for which the
	dimensions of the $2$-Selmer groups grow at
	least logarithmically with $n$.
\end{remark}

\subsection{The remaining cubic case}
\label{sec:d3-with-roots}

The construction in
Section~\ref{sec:odd-degree-rank}
is defined over $K$ itself when
$\Q(\sqrt{-3}) \subset K$,
which is equivalent to
$K$ containing $\mu_3$.
Consequently, every curve and every
Kummer class in Section~\ref{sec:odd-degree-rank} is defined
over $K$.
We now identify the resulting family and state the precise
conditional bound.

The two nontrivial cube roots of unity satisfy
$(X - \zeta Q)(X - \xi Q) = X^2 + QX + Q^2$.
Hence, for distinct nonzero periodic points $Q$ and $R$,
the curves defined in
Definition~\ref{def:four-branch-elliptic-curves}
have the form
\begin{equation}
	E_{Q, R}^{(u)}:
	\quad
	uY^2
	= (X^2 + QX + Q^2)(X^2 + RX + R^2).
	\label{eq:cubic-double-cover-family}
\end{equation}
They have full $K$-rational $2$-torsion, and every periodic
point is the $X$-coordinate of a point on one of finitely
many such quadratic twists.

Let $S$ consist of the archimedean places of $K$ and the
finite places above $3$. Since $K(\mu_3) = K$,
Definition~\ref{def:classes-modulo-powers} gives
\[
	\calc_{K, 3}
	= \frac{
	\set{a \in K^\times \Mid
	v(a) \equiv 0 \pmod 2
	\text{ for every }v \notin S}}
	{(K^\times)^2}.
\]
Consequently, $\#\calu_{K, 3} = \#\calc_{K, 3}$.
If $S_f$ is the finite part of $S$ and
$r_S := r_1(K) + r_2(K) + \#S_f - 1$, then
Lemma~\ref{lem:finite-power-classes} and the $S$-unit
theorem give
\[
	\#\calu_{K, 3}
	= 2^{r_S}
	\#\frac{\mu(K)}{\mu(K)^2}
	\cdot
	\#\Cl\paren{\calo_{K, S}}[2].
\]

The above formula for $\#\calu_{K, 3}$
identifies the exact rank hypothesis
needed for the $d = 3$ case.

\begin{corollary}[Conditional uniform boundedness when
	$d = 3$]
\label{cor:conditional-d3}
	Let $K$ be a number field containing
	$\Q(\sqrt{-3})$.
	Suppose that there is an integer $r_K$
	such that $\rank E_{Q, R}^{(u)}(K) \leq r_K$ for every
	elliptic curve in the family
	\eqref{eq:cubic-double-cover-family}. There is a
	constant $B_{\Per}(K, 3)$ such that
	$\#\Per_K(z^3 + c) \leq B_{\Per}(K, 3)$ for every
	$c \in K$. More precisely, one may take
	\[
		B_{\Per}(K, 3)
		= \max\set{
		3^{[K : \Q]},
		16\paren{\#\calu_{K, 3}}^2
		\kappa_3^{1 + 2r_K}}.
	\]
\end{corollary}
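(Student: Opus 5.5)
This is the specialization of Corollary~\ref{cor:conditional-odd-ubc} to $d = 3$. I will check that its hypotheses hold with $r = r_K$ and that its bound matches the displayed constant.

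First, since $\Q(\sqrt{-3}) \subseteq K$, we have $\mu_3 \subset K$ and $K(\mu_3) = K$. The only choice of distinct $\zeta, \xi \in \mu_3 \sm \set{1}$ is the pair of primitive cube roots of unity. Then $(X - \zeta T)(X - \xi T) = X^2 + TX + T^2$, so for distinct nonzero $Q, R \in \Per_K(f_{3, c})$ the curves $E_{Q, R}^{(u)}$ of Definition~\ref{def:four-branch-elliptic-curves}, with $u \in \calu_{K, 3} \subset K^\times$, are exactly the curves \eqref{eq:cubic-double-cover-family}. They are defined over $K$.

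Next I verify the side condition $Q/R \notin \mu_3$, which the rank hypothesis implicitly requires. If $Q = \eta R$ with $\eta \in \mu_3$, then $Q^3 = R^3$, so $f_{3, c}(Q) = f_{3, c}(R)$. Injectivity of $f_{3, c}$ on periodic points (Lemma~\ref{lem:telescoping}) then gives $Q = R$, a contradiction. Lemma~\ref{lem:four-branch-points} records the same fact, and it also shows that the curves are genuine elliptic curves. Hence every curve arising in Corollary~\ref{cor:conditional-odd-ubc} belongs to the family covered by the hypothesis, and $\rank E_{Q, R}^{(u)}(K(\mu_3)) = \rank E_{Q, R}^{(u)}(K) \leq r_K$.

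Applying Corollary~\ref{cor:conditional-odd-ubc} with $d = 3$ and $r = r_K$ gives
\[
	\#\Per_K(z^3 + c)
	\leq \max\set{3^{[K:\Q]},
	4 \cdot 4\paren{\#\calu_{K, 3}}^2\kappa_3^{1 + 2r_K}},
\]
and $4(d + 1) = 16$ when $d = 3$, which is exactly the stated $B_{\Per}(K, 3)$.

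The step deserving the most care is the integral branch of that corollary, which uses \cite[Theorem~3]{rajagopal-zhang}. There one needs a place $\mfp \mid 3$ of $K$ with $\#k_\mfp \leq 3^{[K:\Q]}$, and this holds because $f_\mfp \leq [K : \Q]$. In the nonintegral branch, Lemma~\ref{lem:shell} guarantees that all periodic points are nonzero, so $Q$ and $R$ can be chosen. Proposition~\ref{prop:elliptic-mordell-lang-bound} applies with $g = f_{3, c}$ because $e = 3 \geq 3$ exceeds the pole multiplicity $2$ of the $X$-coordinate. No further obstacle is expected.
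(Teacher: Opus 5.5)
Your proposal is correct and follows the paper's argument. Since $K(\mu_3) = K$, every twist $E_{Q, R}^{(u)}$ arising in Corollary~\ref{cor:conditional-odd-ubc} lies in the family \eqref{eq:cubic-double-cover-family}, so that corollary applies with $d = 3$ and $r = r_K$, and $4(d + 1) = 16$ gives the stated constant; your explicit check that $Q/R \notin \mu_3$, via injectivity of $f_{3, c}$ on periodic points, is a detail the paper leaves implicit.
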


\begin{proof}[Proof of
	Corollary~\ref{cor:conditional-d3}]
	We specialize the odd-degree conditional bound to
	$d = 3$ and use that no cyclotomic extension is
	required. Every curve occurring in
	Corollary~\ref{cor:conditional-odd-ubc} belongs to
	the family \eqref{eq:cubic-double-cover-family}.
	Hence its Mordell--Weil rank is at most $r_K$.
	Apply Corollary~\ref{cor:conditional-odd-ubc}
	with $d = 3$ and $r = r_K$;
	since $4(d + 1) = 16$,
	the bound becomes
	\[
		\#\Per_K(z^3 + c)
			\leq \max\set{3^{[K : \Q]},
				16\paren{\#\calu_{K, 3}}^2
				\kappa_3^{1 + 2r_K}},
	\]
	which is the asserted bound.
\end{proof}

This completes the proof of the conditional assertion in
Theorem~\ref{thm:elliptic-curves-large-rank}.


\section{Elliptic curves of large rank over dynatomic fields}
\label{sec:fixed-elliptic-fields}

Theorem~\ref{thm:elliptic-curves-large-rank} varies the
elliptic curve with two points in a rational cycle while
keeping the field fixed. The complementary problem is to
fix both $f_{d, c}$ and one elliptic curve, while allowing the
field to vary with the period. Dynatomic fields provide the
periodic points, and an auxiliary multiquadratic extension
provides the square roots needed to lift them to the fixed
curve. This construction gives the following statement.
For $n \geq 1$, write
$\Per_{\overline{K}}(f_{d, c}, n)$ for the set of
points $P \in \overline{K}$ of exact period $n$
under $f_{d, c}$.

\begin{theorem}[Fixed elliptic curves of large rank over
	dynatomic fields]
\label{thm:fixed-elliptic-curve-rank}
	Let $K$ be a number field, let $d \geq 2$, and fix
	$c \in K$. There exist a finite extension $L/K$ and an
	elliptic curve $E/L$ with full $L$-rational $2$-torsion
	such that the following holds. For every $n \geq 1$ with
	$\#\Per_{\overline{K}}(f_{d, c}, n) > 0$, there is a finite
	extension $F_n/L$ containing
	$\Per_{\overline{K}}(f_{d, c}, n)$ such that
	\[
		\rank E(F_n)
		\geq \frac{
			\log\paren{\#\Per_{\overline{K}}(f_{d, c}, n)}
			- \log\paren{4(d + 1)}}
			{2 \log \kappa_d} - \frac{1}{2}.
	\]
\end{theorem}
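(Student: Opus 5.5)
We fix $f_{d, c}$ and one elliptic curve, and obtain rank from Proposition~\ref{prop:elliptic-mordell-lang-bound} with $g = f_{d, c}$. Since $g$ has degree $e = d$, that proposition requires $d \geq 3$. For $d = 2$ we use Proposition~\ref{prop:quadratic-elliptic-mordell-lang-bound} with a Weierstrass $X$-coordinate, whose constant is $12\kappa_2^{1+2\rank}$. Since $12 = 4(d+1)$ when $d = 2$, both cases produce the same displayed inequality.

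\emph{Construction of $L$ and $E$.} We choose the field and curve from data depending only on $c$. For $d \geq 3$, pick two distinct points $Q, R \in \overline{K}$ that are periodic for $f_{d, c}$ and nonzero. We may take them of periods $1$ or $2$, or of any small period with at least two nonzero points, since a fixed $c$ has infinitely many periodic points. Choose distinct $\zeta, \xi \in \mu_d \sm \set{1}$ and set $L := K(\mu_d, Q, R)$ and $E := E_{Q, R}$ with $u = 1$. By Lemmas~\ref{lem:four-branch-points} and \ref{lem:four-branch-elliptic-geometry}, $E$ is an elliptic curve over $L$ with full $L$-rational $2$-torsion. For $d = 2$ we cannot form two distinct nontrivial roots. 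Instead we take $E : Y^2 = h(X)$ with $h$ a cubic with three roots in $L$, say a finite extension of $K$ carrying a Weierstrass model. Here $x$ is the Weierstrass coordinate, so $(x)_\infty = 2[O]$; the choice of the cubic is otherwise free.

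\emph{Construction of $F_n$.} Let $F_n' := L(\Per_{\overline K}(f_{d, c}, n))$. For each $P \in \Per_{\overline K}(f_{d, c}, n)$, the value $F(P)$ of the right-hand side of the model is nonzero, so $P$ lifts to $E$ after adjoining $\sqrt{F(P)}$. Here $F$ denotes $F_{Q, R}$ or $h$. The value is nonzero for $F_{Q,R}$ by Lemma~\ref{lem:telescoping}, and for $h$ we choose the cubic avoiding the countably many periodic points. Set
\[
	F_n := F_n'\paren{\sqrt{F(P)}, \sqrt{F(f_{d, c}(P))} \Mid P \in \Per_{\overline K}(f_{d, c}, n)}.
\]
Because $f_{d, c}$ permutes points of exact period $n$, the second family of square roots is redundant, but including it is harmless. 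Each $P$ then equals $x(A)$ for some $A \in E(F_n)$, and $f_{d, c}(P) = x(B)$ for some $B \in E(F_n)$. Unlike Lemma~\ref{lem:odd-degree-lift}, we need no Kummer descent, because the square roots are simply adjoined.

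\emph{Counting and rearranging.} Take $\Gamma_1 = \Gamma_2 = E(F_n)$, which has finite rank by Mordell--Weil. Then $\Per_{\overline K}(f_{d, c}, n) \subseteq \calp_{f_{d,c}}(\Gamma_1, \Gamma_2)$. Proposition~\ref{prop:elliptic-mordell-lang-bound}, or its quadratic analogue when $d = 2$, gives
\[
	\#\Per_{\overline K}(f_{d, c}, n) \leq 4(d+1)\kappa_d^{1 + 2\rank E(F_n)}.
\]
Taking logarithms and using $\kappa_d \geq 2$ yields the stated inequality.

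The main obstacle is the case $d = 2$ and the choice of $E$ so that the no-translate hypothesis holds. Lemma~\ref{lem:no-elliptic-coset} fails at $e = 2$ for a general degree-$2$ map, so we genuinely need the Weierstrass form with a single double pole. We also need to check that the $\kappa_2$ constant, built from degree up to $4(2+1) = 12$, covers the component degrees. Both points follow from the setup of Proposition~\ref{prop:quadratic-elliptic-mordell-lang-bound}.
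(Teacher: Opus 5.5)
Your proposal is correct and follows essentially the paper's argument: fix an elliptic curve with a degree-$2$ map to $\P^1$, adjoin the square roots that make every point of exact period $n$ (and hence its image under $f_{d,c}$) lift, and apply Proposition~\ref{prop:elliptic-mordell-lang-bound}. When $d=2$ you instead apply Proposition~\ref{prop:quadratic-elliptic-mordell-lang-bound}, using $12 = 4(2+1)$.

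The differences from the paper are cosmetic. For $d \geq 3$ the paper takes $Q,R$ to be fixed points. For $d=2$ it uses $Y^2=(X+Q_1)(X+Q_2)(X+Q_3)$ with periodic $Q_i$, and it works with the subgroup generated by the lifts rather than all of $E(F_n)$. Your observation that the Weierstrass cubic for $d=2$ may be chosen freely is correct. Nonvanishing of the radicands is not even needed, since a zero value just gives a $2$-torsion lift.
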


The auxiliary fields can be chosen with a particularly
simple structure, and the bound is linear in $n$ whenever
the number of points of exact period grows exponentially.

\begin{remark}
	The field $F_n$ in
	Theorem~\ref{thm:fixed-elliptic-curve-rank} may be taken
	to be a multiquadratic extension of the field generated
	over $L$ by the points of exact period $n$. Along any
	sequence of periods $n$ for which
	$\#\Per_{\overline{K}}(f_{d, c}, n) \gg_d d^n$,
	Theorem~\ref{thm:fixed-elliptic-curve-rank}
	gives
	\[
		\rank E(F_n)
		\geq \frac{\log d}{2\log \kappa_d}n - O_d(1).
	\]
\end{remark}

The proof separates according to the degree.
When $d \geq 3$, two fixed points $Q$ and $R$
of $f_{d, c}$ and two distinct nontrivial roots
$\zeta, \xi \in \mu_d$ determine the elliptic
curve.
When $d = 2$, three periodic points
$Q_1, Q_2, Q_3$ determine the elliptic curve.
We begin with the case $d \geq 3$.

\subsection{Rank growth in degrees \texorpdfstring{$d \geq 3$}
	{d >= 3}}
\label{sec:fixed-elliptic-fields-dge3}

Fix a number field $K$, an integer $d \geq 3$, and a
parameter $c \in K$. We first verify that the required
fixed points exist. The polynomial $f_{d, c}(z) - z$ has
degree $d$ and at most one multiple root. Indeed, if
$\alpha$ is a multiple root, then
$d\alpha^{d - 1} = 1$ and
$\alpha^d + c = \alpha$, which give
$\alpha = dc/(d - 1)$. Thus $\alpha$ is uniquely
determined by $c$. Moreover, $d\alpha^{d - 1} = 1$
gives $\alpha \neq 0$, so
$d(d - 1)\alpha^{d - 2} \neq 0$. Hence any multiple
root has multiplicity
exactly two, and $f_{d, c}(z) - z$ has at least
$d - 1 \geq 2$ distinct roots.

If $c \neq 0$, none of these roots is zero. If $c = 0$,
the nonzero fixed points are the elements of
$\mu_{d - 1}$. In either case, $f_{d, c}$ has at least two
distinct nonzero fixed points.

\begin{definition}
\label{def:exact-period-elliptic-curve}
	Choose distinct nonzero fixed points
	$Q, R \in \overline{K}$ of $f_{d, c}$ and distinct roots
	$\zeta, \xi \in \mu_d\sm\set{1}$, and set
	$L_0 := K(\mu_d, Q, R)$.
\end{definition}

For convenience, recall from
Definition~\ref{def:four-branch-elliptic-curves} that
\[
	F_{Q, R}(X) := (X - \zeta Q)(X - \xi Q)
	(X - \zeta R)(X - \xi R)
\]
and that $E_{Q, R}^{(u)}$ is the smooth projective model of
$uY^2 = F_{Q, R}(X)$. When $u = 1$, we write
$E_{Q, R}$. The field $L_0$, the polynomial $F_{Q, R}$,
and the curve $E_{Q, R}$ are all independent of $n$.

It remains to verify that the four roots of
$F_{Q, R}$ are distinct.
The following lemma also records the geometry of the
resulting degree-$2$ map to $\P^1$.

\begin{lemma}
\label{lem:exact-period-elliptic-geometry}
	Let $K$ be a number field, let $d \geq 3$, and let
	$c \in K$. Set $f_{d, c}(z) := z^d + c$. Choose distinct
	nonzero fixed points $Q, R \in \overline{K}$ of $f_{d, c}$
	and distinct roots
	$\zeta, \xi \in \mu_d\sm\set{1}$, and set
	$L_0 := K(\mu_d, Q, R)$. For $u \in L_0^\times$,
	let $E_{Q, R}^{(u)}$ be the smooth projective model of
	\[
		uY^2 = (X - \zeta Q)(X - \xi Q)
		(X - \zeta R)(X - \xi R).
	\]
	Then the four branch points are pairwise distinct.
	Furthermore, choosing any branch point as the origin
	makes $E_{Q, R}^{(u)}/L_0$ an elliptic curve
	with full $L_0$-rational $2$-torsion.
\end{lemma}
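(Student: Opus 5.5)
The argument has two steps: first show that the four branch points are pairwise distinct, then invoke Lemma~\ref{lem:four-branch-elliptic-geometry} over $L = L_0$. The second step is immediate. By construction $\zeta, \xi \in \mu_d \subset L_0$, we have $\zeta \neq \xi$, and $Q, R, u \in L_0^\times$. So once distinctness is known, that lemma shows $E_{Q, R}^{(u)}$ has genus one, becomes an elliptic curve over $L_0$ after choosing any branch point as origin, and has its four $L_0$-rational branch points as its full $2$-torsion.

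For distinctness, I would follow the proof of Lemma~\ref{lem:four-branch-points}, with one change. That lemma assumed $Q, R$ lie in the base field, but its argument only uses that $Q, R$ are distinct nonzero periodic points. Fixed points are periodic of period $1$, and everything below takes place in $\overline{K}$. There are two kinds of collision to rule out.

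\textbf{Within one pair.} The points $\zeta Q$ and $\xi Q$ are distinct because $Q \neq 0$ and $\zeta \neq \xi$. The same argument separates $\zeta R$ and $\xi R$.

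\textbf{Across the two pairs.} Suppose $\eta Q = \eta' R$ with $\eta, \eta' \in \{\zeta, \xi\}$. Raising to the $d$-th power gives $Q^d = R^d$, hence $f_{d, c}(Q) = f_{d, c}(R)$. Since $Q$ and $R$ are fixed points, this says directly $Q = R$, a contradiction. Alternatively, one can apply Lemma~\ref{lem:telescoping} over the field $L_0$ with $n = 1$: it gives $Q - \zeta' R \neq 0$ for every $\zeta' \in \mu_d$, and $\eta Q = \eta' R$ would force $Q = (\eta'/\eta) R$ with $\eta'/\eta \in \mu_d$.

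I do not expect a genuine obstacle. The only point needing care is that Lemma~\ref{lem:four-branch-points} was stated for points rational over the field, so the step above should be phrased over $L_0 \supseteq K(\mu_d, Q, R)$, where $Q$ and $R$ are rational.
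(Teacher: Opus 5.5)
Your proposal is correct and matches the paper's proof: a cross-pair collision gives $Q^d = R^d$, the fixed-point equations then force $Q = R$, and the remaining assertions follow from Lemma~\ref{lem:four-branch-elliptic-geometry} applied over $L_0$. You also spell out the within-pair distinctness, which the paper leaves implicit.
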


\begin{proof}
	We first rule out collisions between the two pairs of
	branch points and then apply the standard geometry of
	a double cover branched at four points. If a point in
	$\set{\zeta Q, \xi Q}$ were equal to a point in
	$\set{\zeta R, \xi R}$, then $Q^d = R^d$. The two
	fixed-point equations would give $Q = R$, a
	contradiction. The remaining assertions follow from
	Lemma~\ref{lem:four-branch-elliptic-geometry}.
\end{proof}

For $n \geq 1$, set
$L_n := L_0\paren{\Per_{\overline{K}}(f_{d, c},n)}$,
the field generated over $L_0$ by all points
of exact period $n$;
the extension $L_n/L_0$ is commonly called the
\emph{$n$-th dynatomic field for $f_{d, c}$ over $L_0$}
(see \cite[Section 3.9]{silverman-dynamics}).

Two elements of $L_n^\times$ have the same square
class if their quotient belongs to $(L_n^\times)^2$.
For a point $P$ of exact period $n$, the
$X$-coordinate $P$ lifts to an $L_n$-rational point
of $E_{Q, R}^{(u)}$ precisely when
$F_{Q, R}(P)/u$ is a square in $L_n$.
Thus the square class of $F_{Q, R}(P)$ determines
which quadratic twist receives the point $P$.
Here the field $L_n$, and hence its group of
square classes, varies with $n$.
The largest rank among the represented twists
gives the other quantity in the bound below.

\begin{definition}
\label{def:exact-period-data}
	Suppose that
	$\#\Per_{\overline{K}}(f_{d, c}, n) > 0$.
	Define
	\[
		\sct_n^{Q, R} := \set{
			\sbrac{F_{Q, R}(P)} \in L_n^\times/(L_n^\times)^2
			\Mid P \in \Per_{\overline{K}}(f_{d, c}, n)}.
	\]
	For every $\tau \in \sct_n^{Q, R}$,
	choose a representative $u_\tau \in L_n^\times$.
\end{definition}

Changing $u_\tau$ by a square in $(L_n^\times)^2$
gives a new quadratic twist that is isomorphic
to the original $E_{Q, R}^{(u_\tau)}$ over $L_n$.
Consequently, the maximal rank of
$E_{Q, R}^{(u_\tau)}(L_n)$ as
$\tau$ varies in $\sct_n^{Q, R}$
is independent of the chosen representatives.
Moreover, $\#\sct_n^{Q, R} = 1$ precisely when
all the values $F_{Q, R}(P)$ represent the same
square class.
We continue to write $\kappa_d$ for the uniform
Mordell--Lang constant of
Definition~\ref{def:uniform-mordell-lang-constant}.

The next theorem bounds the number of points of exact
period $n$ in terms of the square classes represented
by the values $F_{Q, R}(P)$ and the Mordell--Weil
ranks of the corresponding quadratic twists.

\begin{theorem}
\label{thm:exact-period-rank-twists}
	Let $K$ be a number field, let $d \geq 3$, and let
	$c \in K$. Set $f_{d, c}(z) := z^d + c$. Choose
	distinct nonzero fixed points $Q, R \in \overline{K}$
	of $f_{d, c}$ and distinct roots
	$\zeta, \xi \in \mu_d\sm\set{1}$, and set
	$L_0 := K(\mu_d, Q, R)$.
	If $n \geq 1$ satisfies
	$\#\Per_{\overline{K}}(f_{d, c}, n) > 0$, then
	\[
		\#\Per_{\overline{K}}(f_{d, c}, n)
			\leq 4(d + 1)
			\paren{\#\sct_n^{Q, R}}^2
			\kappa_d^{
			1 + 2\max_{\tau \in \sct_n^{Q, R}}
			\rank E_{Q, R}^{(u_\tau)}(L_n)}.
	\]
	Equivalently,
	\[
		\max_{\tau \in \sct_n^{Q, R}}
			\rank E_{Q, R}^{(u_\tau)}(L_n)
			\geq \frac{\log\paren{
				\#\Per_{\overline{K}}(f_{d, c}, n)}
				- \log\paren{4(d + 1)
				\paren{\#\sct_n^{Q, R}}^2}}
				{2 \log \kappa_d} - \frac{1}{2}.
	\]
\end{theorem}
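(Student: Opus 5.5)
The plan is to mimic the proof of Corollary~\ref{cor:conditional-odd-ubc}, with the Kummer classes $\calc_{K, d}$ replaced by the square classes $\sct_n^{Q, R}$. Over $L_n$ these square classes come directly from the definition, so no local descent is needed.

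Write $\Per_n := \Per_{\overline{K}}(f_{d, c}, n)$. First I would check that every $P \in \Per_n$ lies in $L_n$ and that $F_{Q, R}(P) \neq 0$. The latter holds because $P \neq \zeta Q, \xi Q, \zeta R, \xi R$. Indeed, $P = \eta Q$ with $\eta \in \mu_d$ would give $f_{d, c}(P) = f_{d, c}(Q) = Q$. Then $Q$ would be both a fixed point and the image of a point of exact period $n$, which forces $n = 1$ and $P = Q$, contradicting $\eta \neq 1$. The same argument applies to $R$.

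With $\tau(P) := [F_{Q, R}(P)]$, the element $F_{Q, R}(P)/u_{\tau(P)}$ is a square in $L_n$. Hence $P = x_{Q, R}^{(u_{\tau(P)})}(A)$ for some $A \in E_{Q, R}^{(u_{\tau(P)})}(L_n)$. The curve is elliptic by Lemma~\ref{lem:exact-period-elliptic-geometry}, with $x$ of degree $2$.

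Next, $\Per_n$ is stable under $f_{d, c}$, so the image $f_{d, c}(P)$ also lies in $\Per_n$ and lifts to $E_{Q, R}^{(u_{\tau(f_{d, c}(P))})}(L_n)$. I would partition $\Per_n$ according to the pair $(\tau(P), \tau(f_{d, c}(P))) \in (\sct_n^{Q, R})^2$. For a fixed pair $(\tau_1, \tau_2)$, set $E_i := E_{Q, R}^{(u_{\tau_i})}$, $x_i := x_{Q, R}^{(u_{\tau_i})}$, and $\Gamma_i := E_i(L_n)$. The full Mordell--Weil group $\Gamma_i$ is finitely generated since $L_n$ is a number field, and I take $g := f_{d, c}$, which has degree $e = d \geq 3$. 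Each $P$ in the class lies in $\calp_g(\Gamma_1, \Gamma_2)$, since $P = x_1(A)$ and $g(P) = x_2(B)$. The map $P \mapsto P$ is injective, so Proposition~\ref{prop:elliptic-mordell-lang-bound} bounds the class size by
\[
	4(d + 1)\kappa_d^{1 + \rank \Gamma_1 + \rank \Gamma_2}
	\leq 4(d + 1)\kappa_d^{1 + 2\max_\tau \rank E_{Q, R}^{(u_\tau)}(L_n)},
\]
using $\kappa_d \geq 2$. Summing over the at most $(\#\sct_n^{Q, R})^2$ pairs gives the first inequality. Taking logarithms and dividing by $2\log\kappa_d > 0$ gives the equivalent form.

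The main obstacle is a bookkeeping point. Proposition~\ref{prop:elliptic-mordell-lang-bound} is stated for curves over a number field $L$ with $g \in L[X]$. Here $c \in K \subset L_n$ and the twists are defined over $L_n$, so it is applied with $L = L_n$. I also need the finite-rank groups to be the $L_n$-points, so that the rank appearing in the bound is exactly $\rank E(L_n)$. Beyond that, the only genuinely new check is the branch-point avoidance in the first step; the rest is identical to the odd-degree argument.
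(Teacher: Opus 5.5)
Your proposal is correct and follows the paper's proof essentially step for step. You show $F_{Q,R}(P)\neq 0$, lift each $P$ to the twist indexed by its square class, group points by the ordered pair $(\tau(P),\tau(f_{d,c}(P)))$, and apply Proposition~\ref{prop:elliptic-mordell-lang-bound} with $g=f_{d,c}$ and the full Mordell--Weil groups over $L_n$. Two small differences: your nonvanishing argument (via $Q$ being a fixed point) is a harmless variant of the paper's appeal to injectivity of $f_{d,c}$ on periodic points; and since $u_\tau\in L_n^\times$ rather than $L_0^\times$, the elliptic-curve structure should be cited from Lemma~\ref{lem:four-branch-elliptic-geometry} over $L_n$ rather than from Lemma~\ref{lem:exact-period-elliptic-geometry}.
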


\begin{proof}
	We group the points according to the square
	classes represented by $F_{Q, R}(P)$ and
	$F_{Q, R}(f_{d, c}(P))$, and apply
	Proposition~\ref{prop:elliptic-mordell-lang-bound}
	to each group.

	For every
	$P \in \Per_{\overline{K}}(f_{d, c}, n)$,
	one has $F_{Q, R}(P) \neq 0$.
	Indeed, a zero would place $P$ in
	$\set{\zeta Q, \xi Q, \zeta R, \xi R}$.
	Hence $f_{d, c}(P)$ would equal
	$f_{d, c}(Q)$ or $f_{d, c}(R)$, contradicting
	the injectivity of $f_{d, c}$ on its periodic
	points.

	If $\sbrac{F_{Q, R}(P)} = \tau$, then the
	choice of $u_\tau$ shows that $P$ is the
	$X$-coordinate of an $L_n$-point on
	$E_{Q, R}^{(u_\tau)}$.
	Group the points $P$ according to the ordered pair
	\[
		\big(\sbrac{F_{Q, R}(P)},
		\sbrac{F_{Q, R}(f_{d, c}(P))} \big)
		\in
		\sct_n^{Q, R}
		\times \sct_n^{Q, R}.
	\]
	There are at most
	$\paren{\#\sct_n^{Q, R}}^2$
	such pairs.

	For each fixed pair, apply
	Proposition~\ref{prop:elliptic-mordell-lang-bound}
	with $g = f_{d, c}$ and with the full
	Mordell--Weil groups of the two corresponding
	twists.
	Summing over the represented pairs gives
	the desired upper bound
	on $\#\Per_{\overline{K}}(f_{d, c}, n)$.
	Rearranging that inequality gives the
	desired lower bound on
	$\max_{\tau \in \sct_n^{Q, R}}
	\rank E_{Q, R}^{(u_\tau)}(L_n)$.
\end{proof}

Theorem~\ref{thm:exact-period-rank-twists}
identifies two possible reasons why there
can be many points of exact period $n$:
either the values $F_{Q, R}(P)$
represent many square classes
in $\sct_n^{Q, R}$,
or one of the corresponding twists has large
Mordell--Weil rank.
We now remove the first possibility
by adjoining all the relevant square roots.

\begin{definition}
\label{def:exact-period-lifts}
	Suppose that $n \geq 1$ and
	$\#\Per_{\overline{K}}(f_{d, c},n) > 0$.
	Define the field extension
	\[
		F_n := L_n\paren{
		\sqrt{F_{Q, R}(P)} \Mid
		P \in \Per_{\overline{K}}(f_{d, c}, n)}.
	\]
\end{definition}

By construction, $F_{Q, R}(P)$ is a square in $F_n$
for every point $P$ of exact period $n$.
Consequently, both $P$ and $f_{d, c}(P)$ are
$X$-coordinates of $F_n$-rational points on
$E_{Q, R}$.
We may therefore use $E_{Q, R}$ in both factors
of the Mordell--Lang argument.
This gives the following rank bound.

\begin{corollary}
\label{cor:fixed-elliptic-rank}
	Let $K$ be a number field, let $d \geq 3$, and let
	$c \in K$. Set $f_{d, c}(z) := z^d + c$. Choose
	distinct nonzero fixed points $Q, R \in \overline{K}$
	of $f_{d, c}$ and distinct roots
	$\zeta, \xi \in \mu_d\sm\set{1}$, and set
	$L_0 := K(\mu_d, Q, R)$.
	Let $n \geq 1$ satisfy
	$\#\Per_{\overline{K}}(f_{d, c}, n) > 0$,
	and let $F_n$ be the field in
	Definition~\ref{def:exact-period-lifts}.
	Then
	\[
		\rank E_{Q, R}(F_n)
		\geq
		\frac{
		\log\paren{\#\Per_{\overline{K}}(f_{d, c},n)}
		- \log\paren{4(d + 1)}}
		{2 \log \kappa_d} - \frac{1}{2}.
	\]
\end{corollary}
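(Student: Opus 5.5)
The plan is to apply Proposition~\ref{prop:elliptic-mordell-lang-bound} over the field $F_n$, with both curves taken to be $E_{Q, R}$ itself and with $g = f_{d, c}$. That is, we use Theorem~\ref{thm:exact-period-rank-twists} with the twist grouping collapsed to a single class.

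First, note that $F_n$ is a finite extension of $L_n$, which is itself finite over $L_0$. Hence $F_n$ is a number field and $E_{Q, R}(F_n)$ is a finitely generated group. Lemma~\ref{lem:exact-period-elliptic-geometry} shows that $E_{Q, R}$ is an elliptic curve over $L_0 \subseteq F_n$. We take the degree-$2$ map to be $x_{Q, R}$. For the groups we set $\Gamma_1 = \Gamma_2 = E_{Q, R}(F_n)$, which has finite rank equal to $\rank E_{Q, R}(F_n)$. The polynomial $g = f_{d, c}$ has degree $e = d \geq 3$ and coefficients in $K \subseteq F_n$, so the hypotheses of Proposition~\ref{prop:elliptic-mordell-lang-bound} hold, applied over $L = F_n$.

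Next we check the inclusion $\Per_{\overline{K}}(f_{d, c}, n) \subseteq \calp_{f_{d, c}}(\Gamma_1, \Gamma_2)$. Take $P$ of exact period $n$. The proof of Theorem~\ref{thm:exact-period-rank-twists} shows that $F_{Q, R}(P) \neq 0$, and by construction $F_{Q, R}(P)$ is a square in $F_n$. Hence $P = x_{Q, R}(A)$ for some $A \in E_{Q, R}(F_n)$.

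The only point needing care is that $f_{d, c}(P)$ must also lift. But $f_{d, c}(P)$ again has exact period $n$: its period divides $n$, and $P = f_{d, c}^{n-1}(f_{d, c}(P))$ forces equality. So $f_{d, c}(P) \in \Per_{\overline{K}}(f_{d, c}, n)$, and $F_{Q, R}(f_{d, c}(P))$ is one of the adjoined squares. This gives $B \in E_{Q, R}(F_n)$ with $x_{Q, R}(B) = g(P)$.

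Since distinct points $P$ are distinct elements of $\calp$, Proposition~\ref{prop:elliptic-mordell-lang-bound} yields
\[
	\#\Per_{\overline{K}}(f_{d, c}, n)
	\leq 4(d + 1)\kappa_d^{1 + 2\rank E_{Q, R}(F_n)}.
\]
Taking logarithms, which is valid since $\kappa_d \geq 2$, and rearranging gives the stated lower bound. No step is a real obstacle here. The main thing to verify is the lifting of $f_{d, c}(P)$, which reduces to the closure of exact-period-$n$ points under $f_{d, c}$.
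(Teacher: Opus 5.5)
Your proposal is correct and follows essentially the same route as the paper: apply Proposition~\ref{prop:elliptic-mordell-lang-bound} with $E_1 = E_2 = E_{Q, R}$ and $g = f_{d, c}$, using that $f_{d, c}$ permutes the points of exact period $n$. The only cosmetic difference is that the paper takes $\Gamma_1 = \Gamma_2$ to be the subgroup generated by the lifts $(P, y_P)$ and then bounds its rank by $\rank E_{Q, R}(F_n)$, whereas you use the full Mordell--Weil group $E_{Q, R}(F_n)$ directly.
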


\begin{proof}
	For every
	$P \in \Per_{\overline{K}}(f_{d, c}, n)$,
	choose $y_P \in F_n$ satisfying
	$y_P^2 = F_{Q, R}(P)$.
	Let $\Gamma_n$ be the subgroup of
	$E_{Q, R}(F_n)$ generated by
	$(P, y_P)$ as $P$ ranges over
	$\Per_{\overline{K}}(f_{d, c}, n)$.

	Since $f_{d, c}$ permutes
	$\Per_{\overline{K}}(f_{d, c}, n)$,
	the group $\Gamma_n$ contains points of
	$E_{Q, R}(F_n)$ whose $X$-coordinates are
	$P$ and $f_{d, c}(P)$ for every such $P$.
	Apply
	Proposition~\ref{prop:elliptic-mordell-lang-bound}
	with $E_1 = E_2 = E_{Q, R}$ and
	$\Gamma_1 = \Gamma_2 = \Gamma_n$.
	Since $\Gamma_n \subseteq E_{Q, R}(F_n)$,
	rearranging the resulting inequality gives
	the claimed lower bound.
\end{proof}

\begin{proof}[Proof of
	Theorem~\ref{thm:fixed-elliptic-curve-rank} when
	$d \geq 3$]
	We use the fixed points $Q$ and $R$ from
	Definition~\ref{def:exact-period-elliptic-curve}
	to specify the field and the elliptic curve.
	We then apply
	Corollary~\ref{cor:fixed-elliptic-rank}
	for each $n$.
	Let $L_0$ be as in
	Definition~\ref{def:exact-period-elliptic-curve}, and
	set $L := L_0$ and $E := E_{Q, R}$. By
	Lemma~\ref{lem:exact-period-elliptic-geometry}, the
	curve $E/L$ has full $L$-rational $2$-torsion.
	For each $n$ with
	$\#\Per_{\overline{K}}(f_{d, c},n) > 0$, let $F_n$ be
	the field in
	Definition~\ref{def:exact-period-lifts}. This field
	contains $\Per_{\overline{K}}(f_{d, c},n)$ and is a finite
	multiquadratic extension of $L_n$. The required lower
	bound is
	Corollary~\ref{cor:fixed-elliptic-rank}.
\end{proof}

\subsection{Rank growth in degree \texorpdfstring{$d = 2$}
	{d = 2}}
\label{sec:quadratic-dynatomic-rank}

Suppose now that $d = 2$.
There is only one nontrivial square root of unity, so
the four-point construction above is unavailable.
We replace it by a degree-$2$ cover of $\P^1$ branched
at three fixed periodic points and infinity.
A quadratic polynomial has infinitely many periodic
points over an algebraic closure; Silverman
\cite[Section~3.9]{silverman-dynamics} gives a standard
account. Consequently, we may make the following
choices independently of $n$.

\begin{definition}
\label{def:quadratic-fixed-elliptic-curve}
	Choose distinct nonzero periodic points
	$Q_1, Q_2, Q_3 \in \overline{K}$ of $f_{2, c}$,
	and set $L_0 := K(Q_1, Q_2, Q_3)$.
	Let $E/L_0$ be the smooth
	projective curve with affine model
	\[
		E:
		\quad
		Y^2 = (X + Q_1)(X + Q_2)(X + Q_3),
	\]
	and let $x:E \lra \P^1$
	be the morphism induced by
	projection to the $X$-coordinate.
\end{definition}

The three finite branch points are distinct and
$L_0$-rational. The point at infinity gives a fourth
branch point and a natural origin
for an elliptic curve structure.

\begin{lemma}
\label{lem:quadratic-fixed-elliptic-geometry}
	The curve $E/L_0$ is an elliptic curve with full
	$L_0$-rational $2$-torsion, and
	$(x)_\infty = 2[O]$, where $O$ is the point at
	infinity.
\end{lemma}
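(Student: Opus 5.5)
The plan is to show that the cubic $(X + Q_1)(X + Q_2)(X + Q_3)$ has three distinct roots in $L_0$, and then invoke the standard geometry of Weierstrass cubics.

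\emph{Distinct roots.} The roots are $-Q_1, -Q_2, -Q_3$. These lie in $L_0 = K(Q_1, Q_2, Q_3)$ by definition. They are pairwise distinct because the $Q_i$ were chosen to be distinct in Definition~\ref{def:quadratic-fixed-elliptic-curve}.

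\emph{Smoothness and genus.} The affine model $Y^2 = h(X)$ with $h$ a separable monic cubic is a Weierstrass equation with nonzero discriminant. Its discriminant is a nonzero multiple of $\prod_{i<j}(Q_i - Q_j)^2$. Hence the projective closure in $\P^2$ is a smooth plane cubic of genus one. Its unique point at infinity is $O = [0:1:0]$, which is $L_0$-rational.

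\emph{Group structure and $2$-torsion.} Taking $O$ as origin makes $E/L_0$ an elliptic curve; see Silverman \cite[Chapter~III]{silverman-aec}. With this origin, the points of order two are exactly the points with $Y = 0$, namely $(-Q_i, 0)$ for $i = 1, 2, 3$. All three are $L_0$-rational, so $E[2] \subseteq E(L_0)$.

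\emph{Pole divisor.} The function $x$ is regular on the affine chart, so every pole of $x$ lies at $O$. In the Weierstrass model, $x$ has a pole of order exactly $2$ at $O$, since $x$ and $y$ have valuations $-2$ and $-3$ there. Hence $(x)_\infty = 2[O]$, and in particular $x$ has degree $2$. Alternatively, the fibre of $x$ over $\infty$ consists of the single point $O$, and $\deg x = 2$ because $[L_0(E) : L_0(X)] = 2$.

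No step is a real obstacle. The only point needing care is that the $Q_i$ are distinct; their being nonzero or periodic plays no role here.
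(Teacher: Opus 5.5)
Your proposal is correct and follows essentially the same route as the paper. Both arguments use that the finite branch points $-Q_1, -Q_2, -Q_3$ are distinct and $L_0$-rational, take the point at infinity as origin so that the points $(-Q_i, 0)$ are the nonzero $2$-torsion, and note that $x$ has a double pole at $O$; the only difference is that the paper gets genus one from Riemann--Hurwitz for a double cover branched at four points, whereas you get it from the nonvanishing discriminant and smoothness of the plane cubic.
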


\begin{proof}
	We apply Riemann--Hurwitz and
	identify the ramification points.
	The curve $E$ is a double cover of $\P^1$
	branched at $-Q_1, -Q_2, -Q_3$, and infinity.
	It therefore has genus $1$.
	Taking the point at infinity as the origin makes
	the other three ramification points
	the nonzero points of order two.
	The function $x$ has its unique pole at $O$
	with order two.
\end{proof}

The next fields make every point of exact period $n$
lift to this elliptic curve $E$.

\begin{definition}
\label{def:quadratic-dynatomic-lifts}
	Suppose that $n \geq 1$ and
	$\#\Per_{\overline{K}}(f_{2, c}, n) > 0$.
	Set
	$L_n := L_0\paren{
	\Per_{\overline{K}}(f_{2, c}, n)}$,
	and define
	\[
		F_n
			:= L_n\paren{
			\sqrt{(P + Q_1)(P + Q_2)(P + Q_3)}
			\Mid
			P \in \Per_{\overline{K}}(f_{2, c}, n)}.
	\]
\end{definition}

The field $L_n$ is the $n$-th dynatomic field for
$f_{2, c}$ over the field
$L_0 = K(Q_1, Q_2, Q_3)$ fixed in
Definition~\ref{def:quadratic-fixed-elliptic-curve}.
The extension $F_n/L_n$ is multiquadratic.
Every radicand in the definition of $F_n$ is
nonzero.
Indeed, if $P = -Q_i$, then
$f_{2, c}(P) = f_{2, c}(Q_i)$.
Since both $P$ and $Q_i$ are periodic, applying
a sufficiently high iterate gives $P = Q_i$,
contrary to $Q_i \neq 0$.

\begin{proof}[Proof of
	Theorem~\ref{thm:fixed-elliptic-curve-rank} when
	$d = 2$]
	For every
	$P \in \Per_{\overline{K}}(f_{2, c}, n)$,
	choose $y_P \in F_n$ such that
	\[
		y_P^2
			= (P + Q_1)(P + Q_2)(P + Q_3).
	\]
	Let $\Gamma_n$ be the subgroup of $E(F_n)$
	generated by the points $(P, y_P)$.

	Since $f_{2, c}$ permutes
	$\Per_{\overline{K}}(f_{2, c}, n)$,
	the group $\Gamma_n$ contains points of $E(F_n)$
	whose $X$-coordinates are $P$ and
	$f_{2, c}(P)$ for every such $P$.
	Proposition
	\ref{prop:quadratic-elliptic-mordell-lang-bound},
	applied with $g = f_{2, c}$, gives
	\[
		\#\Per_{\overline{K}}(f_{2, c}, n)
			\leq
		12\kappa_2^{1 + 2\rank \Gamma_n}
			\leq
		12\kappa_2^{1 + 2\rank E(F_n)}.
	\]
	Rearranging gives the bound in
	Theorem~\ref{thm:fixed-elliptic-curve-rank} for
	$d = 2$.

	Finally, take $L := L_0$.
	Lemma~\ref{lem:quadratic-fixed-elliptic-geometry}
	shows that $E/L$ has full
	$L$-rational $2$-torsion.
	Definition~\ref{def:quadratic-dynatomic-lifts}
	shows that $F_n$ contains every point of exact
	period $n$.
\end{proof}

The two cases complete the proof of
Theorem~\ref{thm:fixed-elliptic-curve-rank}. In either
case, the curve $E/L$ depends on $K$, $d$, and $c$, but
not on $n$. The growth rate can
be made more explicit when the number of points of
exact period $n$ has the expected order of magnitude.
Let $\delta_d(n) :=
\sum_{m \mid n}\mu\paren{n/m}d^m$, where $\mu$ is the
M\"obius function. This is the degree of the $n$-th
dynatomic polynomial. Whenever all its roots are simple
and have exact period $n$, one has
$\#\Per_{\overline{K}}(f_{d, c},n) = \delta_d(n)$. Since
$\delta_d(n) \asymp_d d^n$, the estimates above give
\begin{equation}
	\rank E(F_n)
	\geq \frac{\log d}{2\log \kappa_d}n
	-
	O_d(1).
	\label{eq:linear-exact-period-rank}
\end{equation}
More generally, the same conclusion holds along any
sequence for which
$\#\Per_{\overline{K}}(f_{d, c},n) \gg_d d^n$.

The fields $F_n$ need not be nested. Let
$F_{\leq n}$ be the compositum of the fields $F_j$ for
$1 \leq j \leq n$ with
$\#\Per_{\overline{K}}(f_{d, c},j) > 0$. These fields form
an increasing tower. Since $F_n \subseteq F_{\leq n}$,
\eqref{eq:linear-exact-period-rank} remains valid with
$F_{\leq n}$ in place of $F_n$ whenever its hypothesis
on $\#\Per_{\overline{K}}(f_{d, c},n)$ holds.

We now return to the construction for $d \geq 3$.
The field $L_n$ is generated by periodic points, while
the larger field $F_n$ also trivializes the square
classes represented by the values $F_{Q, R}(P)$. The
next corollary measures this auxiliary multiquadratic
extension when the ranks of the represented twists are
bounded.

\begin{corollary}[Degree of the multiquadratic extension]
\label{cor:auxiliary-extension-degree}
	Suppose that $d \geq 3$, that $n \geq 1$, and
	$\#\Per_{\overline{K}}(f_{d, c},n) > 0$. Assume that
	every represented twist $E_{Q, R}^{(u)}$ satisfies
	$\rank E_{Q, R}^{(u)}(L_n) \leq r_0$. Then
	\[
		[F_n:L_n]
		\geq \sqrt{
		\frac{\#\Per_{\overline{K}}(f_{d, c},n)}
		{4(d + 1)\kappa_d^{1 + 2r_0}}}.
	\]
\end{corollary}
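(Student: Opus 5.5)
The plan is to compare the number of square classes in $\sct_n^{Q, R}$ with the degree $[F_n:L_n]$, and then feed that comparison into Theorem~\ref{thm:exact-period-rank-twists}.

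\emph{Step 1: square classes versus degree.} By Definition~\ref{def:exact-period-lifts}, $F_n$ is obtained from $L_n$ by adjoining $\sqrt{F_{Q, R}(P)}$ for all $P \in \Per_{\overline{K}}(f_{d, c}, n)$. Kummer theory for multiquadratic extensions in characteristic zero gives
\[
	[F_n:L_n] = \#\Delta_n,
\]
where $\Delta_n \leq L_n^\times/(L_n^\times)^2$ is the subgroup generated by the classes $\sbrac{F_{Q, R}(P)}$. By Definition~\ref{def:exact-period-data}, $\sct_n^{Q, R}$ is exactly this generating set of classes. Hence
\[
	\#\sct_n^{Q, R} \leq \#\Delta_n = [F_n:L_n].
\]

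\emph{Step 2: apply the rank hypothesis.} Each twist $E_{Q, R}^{(u_\tau)}$ with $\tau \in \sct_n^{Q, R}$ is a represented twist, so the hypothesis gives $\rank E_{Q, R}^{(u_\tau)}(L_n) \leq r_0$. This bound does not depend on the chosen representative, because changing $u_\tau$ by a square gives an isomorphic twist. Inserting it into the first inequality of Theorem~\ref{thm:exact-period-rank-twists}, and using that $\kappa_d \geq 2 > 1$ so the exponent may be increased monotonically, yields
\[
	\#\Per_{\overline{K}}(f_{d, c},n)
	\leq 4(d + 1)\paren{\#\sct_n^{Q, R}}^2\kappa_d^{1 + 2r_0}.
\]

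\emph{Step 3: combine.} Substituting Step~1 gives
\[
	\#\Per_{\overline{K}}(f_{d, c},n) \leq 4(d + 1)[F_n:L_n]^2\kappa_d^{1 + 2r_0}.
\]
Dividing by $4(d + 1)\kappa_d^{1 + 2r_0}$ and taking square roots gives the claimed lower bound on $[F_n:L_n]$.

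\emph{Main point to check.} The only step needing care is Step~1, namely the identification of $[F_n:L_n]$ with the order of the subgroup of $L_n^\times/(L_n^\times)^2$ generated by the radicands. This is standard Kummer theory for exponent~$2$, valid since $\mu_2 \subset L_n$. The needed inequality follows because a generating set of a group never has more elements than the group. Even without the exact equality, it suffices that distinct classes $\tau$ remain distinct in $\Delta_n$ and that $\#\Delta_n \leq [F_n:L_n]$, which also follows from Kummer theory.
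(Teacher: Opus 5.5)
Your proposal is correct and is essentially the paper's own proof. The paper likewise identifies $[F_n:L_n]$ with the order of the subgroup of $L_n^\times/(L_n^\times)^2$ generated by $\sct_n^{Q, R}$, bounds $\#\sct_n^{Q, R}$ by that order, and inserts the rank hypothesis into the upper bound of Theorem~\ref{thm:exact-period-rank-twists}; your remarks on monotonicity in the exponent (since $\kappa_d \geq 2$) and on independence of the representatives $u_\tau$ just make explicit what the paper leaves implicit.
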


\begin{proof}
	We compare the degree with the number of represented
	square classes and then use the rank hypothesis in
	Theorem~\ref{thm:exact-period-rank-twists}.
	The degree $[F_n : L_n]$ is the order of the
	subgroup of $L_n^\times/(L_n^\times)^2$
	generated by $\sct_n^{Q, R}$.
	This subgroup contains
	$\sct_n^{Q, R}$, so its order is at least
	$\#\sct_n^{Q, R}$.
	The result follows from
	the upper bound in
	Theorem~\ref{thm:exact-period-rank-twists}.
\end{proof}

Theorems~\ref{thm:elliptic-curves-large-rank}
and~\ref{thm:fixed-elliptic-curve-rank} now give two
complementary statements. A long rational cycle
produces many curves of large rank over one fixed
extension. Conversely, after $f_{d, c}$ and the elliptic curve $E$
are fixed, many points of exact period $n$ force
$E$ to have large rank over $F_n$.

\subsection{An explicit example of rank growth}
\label{sec:iterated-splitting-rank}

We conclude this section with 
an explicit example of rank growth
using the preimages of $0$ under
$f_{2, i}(z) = z^2 + i$.
Consider the elliptic curve
$E : Y^2 = (X^2 + 1)(X + 1 - i)$ over $\Q(i)$.
Observe that $E$ is non-CM since
$j(E) = (-10048 - 11136i)/25 \notin \Z[i]$.

For each integer $m \geq 0$,
let $K_m := \Q(i)\paren{f_{2, i}^{-m}(0)}$ be
the splitting field of $f_{2, i}^m$ over $\Q(i)$
inside $\overline{\Q}$.
Then we have a tower of number fields
with $K_0 = \Q(i)$ and
$K_m \subseteq K_{m + 1}$.
For $j \geq 1$, the value $f_{2, i}^j(0)$ belongs
to $\set{i, i - 1, -i}$ and is nonzero modulo every
nonzero prime ideal $\mfp \neq (1 + i)$ of $\Z[i]$.
The chain rule therefore shows that $f_{2, i}^m$
has separable reduction modulo $\mfp$,
so $K_m/\Q(i)$ is unramified outside $(1 + i)$.
The following bound gives linear rank growth over $K_m$.

\begin{proposition}
	\label{prop:iterated-splitting-rank}
	For every integer $m \geq 3$,
	\[
		\rank E(K_m)
		\geq \frac{m\log 2 - \log(256\cdot 10^{13})}
		{2\log\paren{1 + \frac{5}{4\sqrt{2}}}}.
	\]
\end{proposition}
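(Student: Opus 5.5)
The plan is to exhibit about $2^m$ explicit points of $E(K_m)$ with uniformly bounded canonical height, and then apply a lattice-packing count in $E(K_m)\otimes\R$. The shape of the constant, $\log(1+\tfrac{5}{4\sqrt2})$ in the denominator, suggests such a count rather than the uniform Mordell--Lang constant $\kappa_2$. I have not verified in advance that the constants land exactly on $256\cdot10^{13}$ and $1+\tfrac{5}{4\sqrt2}$.

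\emph{Step 1 (rational points).} The branch points of $E$ are $\pm i$ and $i-1$, and $\set{i,\,i-1,\,-i}$ is exactly the forward orbit of $0$ under $f_{2,i}$ with $0$ removed. This is the $d=2$ configuration of Definition~\ref{def:quadratic-fixed-elliptic-curve} with $Q_1,Q_2,Q_3$ taken in the postcritical orbit. For $z\in f_{2,i}^{-k}(0)$ with $k\ge 1$, we have $z^2+1=f_{2,i}(z)+1-i$, so the radicand is
\[
	(z^2+1)(z+1-i)=\paren{f_{2,i}(z)-(i-1)}\paren{z-(i-1)}.
\]
I would show, using $f_{2,i}(w)-f_{2,i}(w')=(w-w')(w+w')$ along the tower, that these radicands are squares up to a bounded set of square classes once one passes a bounded number of levels down. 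Equivalently, I would show that $z-e_j$ is a square in $K_m$ for each branch point $e_j$ when $z$ lies a few levels above. By the classical $2$-descent criterion, this places $(z,y_z)$, or its double, in $E(K_m)$. Losing a fixed number of levels and a bounded number of square classes costs a factor $2^{O(1)}$, which I expect accounts for the $256=2^8$. This yields a set $\calp_m\subset E(K_m)$ with $\#\calp_m\ge 2^{m}/256$ and pairwise distinct $X$-coordinates. Distinctness uses that $f_{2,i}^m$ is separable, as noted before the statement.

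\emph{Step 2 (height bounds).} Every $z\in f_{2,i}^{-k}(0)$ is an algebraic integer lying in the filled Julia set at every archimedean place, so $|z|\le 2$ there and the naive height $h(X(P))$ is uniformly bounded. Silverman's difference bound between naive and canonical height, made explicit for this curve, then gives $\hat h(P)\le H$ for all $P\in\calp_m$. Since $\hat h(P\pm Q)\le 2\hat h(P)+2\hat h(Q)$, all pairwise differences also have height at most $4H$.

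\emph{Step 3 (packing).} I would pass to the real vector space $E(K_m)\otimes\R$ of dimension $r=\rank E(K_m)$, with norm $\sqrt{\hat h}$. Images of points of $\calp_m$ that coincide differ by torsion, and $E$ is non-CM over a field unramified outside $(1+i)$. I would bound the fiber sizes by a uniform torsion bound, and this together with the remaining explicit constants should produce the factor $10^{13}$. I also need a lower bound $\lambda$ for $\hat h$ on non-torsion differences $P-Q$, $P,Q\in\calp_m$, and this is where I expect the main obstacle. I would first try to obtain it from the explicit structure of the points: $X(P-Q)$ is a rational expression in $z_P,z_Q,y_P,y_Q$, and $z_P-z_Q$ has controlled valuations by the dynamical-unit facts of Lemma~\ref{lem:telescoping} and Proposition~\ref{prop:tame-distance}. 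This should give a local height contribution at the place above $1+i$ bounded below uniformly. With a ball of radius $\sqrt{H}$ and separation $\sqrt{\lambda}$, the volume count gives
\[
	\#\calp_m \le T\paren{1+2\sqrt{H/\lambda}}^{r'},
\]
where $T$ absorbs the torsion and other fixed factors.

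\emph{Step 4 (conclusion).} The aim is for the explicit constants to give $2\sqrt{H/\lambda}=\tfrac{5}{4\sqrt2}$. The dimension $r'$ equals $2r$ because each point was paired with its image under $f_{2,i}$, as in Proposition~\ref{prop:quadratic-elliptic-mordell-lang-bound}; this matches the factor $2$ in the denominator. Taking logarithms in $2^m/256\le 10^{13}\paren{1+\tfrac{5}{4\sqrt2}}^{2r}$ gives the stated inequality for $m\ge3$.
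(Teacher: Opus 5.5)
Your Step 1 is fine. For $a\in f_{2,i}^{-(m-2)}(0)$, the three quantities $a-i$, $a+i$ and $a+1-i$ are all squares in $K_m$, so you lose no levels or square classes. Step 2 is also fine.

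\textbf{The gap is in Step 3.} A packing count in $E(K_m)\otimes\R$ needs a separation $\hat h(P-Q)\geq\lambda>0$ for all non-torsion differences, with $\lambda$ independent of $m$. Nothing supplies this. The degree $[K_m:\Q]$ tends to infinity, and lower bounds for canonical heights of non-torsion points on a fixed elliptic curve (Lehmer/Masser type) decay with the degree. Unlike points of a curve of genus $\geq 2$ in its Jacobian, points of an elliptic curve in its own Mordell--Weil lattice satisfy no gap principle: the minimal nonzero height is exactly this field-dependent quantity.

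\textbf{Your $(1+i)$-adic mechanism cannot give $\lambda$.} Write $P,Q$ for your points with $X$-coordinates $a,b$. Since $f_{2,i}^{m-2}$ is monic with simple roots,
\[
\prod_{b\neq a}(a-b)=(f_{2,i}^{m-2})'(a)=2^{m-2}\prod_{j=0}^{m-3}f_{2,i}^j(a),
\]
and each $f_{2,i}^j(a)$ divides $f_{2,i}^{m-2-j}(0)\in\{i,i-1,-i\}$. Hence $a-b$ is a unit away from $(1+i)$. Moreover, the degree-normalized $(1+i)$-adic size of $a-b$, summed over all $2^{m-2}-1$ choices of $b$, is at most $\tfrac32(m-2)\log 2$. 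So for at least half of the $b$ it is $O(m2^{-m})$. Because $y_P$ and $y_Q$ are integral, the entire non-archimedean part of $h(x(P-Q))$ is then $O(m2^{-m})$. It cannot beat the fixed constant relating $\hat h$ to the naive height. Whether the archimedean part does is precisely the Lehmer-type question you have no handle on. For the same reason, your matching of $256$, $10^{13}$ and $1+\frac{5}{4\sqrt2}$ to lost levels, torsion and $2\sqrt{H/\lambda}$ has no support. Also, a packing in $E(K_m)\otimes\R$ would have exponent $\rank E(K_m)$, not twice it; pairing with $f_{2,i}$ plays no role there.

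\textbf{The missing idea} is to leave $E$ and count points on a genus-$2$ curve, where repulsion is built into the available bounds. Your remark about pairing $P$ with $f_{2,i}(P)$ is the right germ. The paper takes
$C\colon Y^2=(T^2+1)(T^2+1-2i)(T^2+2-2i)$
with the two maps to $E$
\[
(T,Y)\mapsto(T^2+1-i,Y),\qquad (T,Y)\mapsto\paren{f_{2,i}(T^2+1-i),(T^2+1-i)TY}.
\]
Thus $C$ maps into the relation curve $x(B)=f_{2,i}(x(A))$ in $E\times E$. The pullbacks of $dX/Y$ are $2T\,dT/Y$ and $4\,dT/Y$, which span the regular differentials on $C$. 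Hence $J(C)$ is isogenous to $E^2$, and $\rank J(C)(K_m)=2\rank E(K_m)$.

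Your points lift one level further. With $v^2=-a-i$, one has $T^2=a+i-1=(v-i)(-v-i)$, which is a square in $K_m$. This gives four points of $C(K_m)$ for each $a$, so $\#C(K_m)\geq 2^m$.

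The Yu--Yuan--Zhou bound $\#C(F)\leq 10^{13}g^8\rho(g)^{\rank J(C)(F)}$ with $g=2$ then gives
\[
2^m\leq 256\cdot 10^{13}\paren{1+\tfrac{5}{4\sqrt2}}^{2\rank E(K_m)}.
\]
Here $256=g^8$, $1+\frac{5}{4\sqrt2}=\rho(2)$, and the factor $2$ comes from $J(C)\sim E^2$. No separation on $E$ is ever needed.
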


\begin{proof}
	We count points on a genus-two curve whose Jacobian
	is isogenous to $E^2$ and apply the
	explicit point bound of
	Yu--Yuan--Zhou \cite[Theorem~1.3]{yu-yuan-zhou}.
	Let $C/\Q(i)$ be the smooth projective curve with
	affine model
	\[
		C : \quad Y^2 =
		(T^2 + 1)(T^2 + 1 - 2i)(T^2 + 2 - 2i).
	\]
	The sextic is square-free, so $C$ has genus two.

	Fix $m \geq 3$.
	Since $f_{2, i}^{m - 2}$ is even,
	for each $a \in f_{2, i}^{-(m - 2)}(0)$ we may choose
	$u, v \in K_{m - 1}$ with $u^2 = a - i$ and
	$v^2 = -a - i$. The preimages of $\pm u$ and
	$\pm v$ under $f_{2, i}$ lie in $K_m$, so there are
	$t, y \in K_m$ satisfying
	\begin{align*}
		t^2 & = (v - i)(-v - i) = a + i - 1, \\
		y^2 & = u^2v^2(u - i)(-u - i)
			= (a^2 + 1)(a + 1 - i).
	\end{align*}
	Substitution shows that $(t, y) \in C(K_m)$.
	None of $i$, $-i$, and $\pm(1 - i)$ maps to $0$
	under an iterate of $f_{2, i}$, so $ty \neq 0$.
	Thus $(t, y)$, $(t, -y)$, $(-t, y)$, and $(-t, -y)$
	are four distinct points of $C(K_m)$ satisfying
	$T^2 + 1 - i = a$. The sets of points corresponding
	to distinct values of $a$ are therefore disjoint.
	Summing over the $2^{m - 2}$ distinct roots of
	$f_{2, i}^{m - 2}$ yields
	$\#C(K_m) \geq 4\cdot 2^{m - 2} = 2^m$.

	To relate $J(C)$ to $E$, consider the maps
	\begin{align*}
		(T, Y) &\longmapsto (T^2 + 1 - i, Y), \\
		(T, Y) &\longmapsto
			\paren{f_{2, i}(T^2 + 1 - i),
			(T^2 + 1 - i)TY}.
	\end{align*}
	Substituting each image into the equation of
	$E$ and using the equation of $C$ verifies that both
	maps take values in $E$. The maps extend to
	morphisms $C \lra E$.
	The pullbacks of $dX/Y$ are $2T\,dT/Y$ and
	$4\,dT/Y$, respectively, and form a basis of the
	regular differentials on $C$. Hence the induced
	homomorphism $J(C) \lra E^2$ is an isogeny over
	$\Q(i)$, so $\rank J(C)(K_m) = 2\rank E(K_m)$.
	Yu--Yuan--Zhou \cite[Theorem~1.3]{yu-yuan-zhou} give
	\[
		2^m \leq \#C(K_m)
		\leq 256\cdot 10^{13}
		\paren{1 + \frac{5}{4\sqrt{2}}}^{
		2\rank E(K_m)}.
	\]
	Taking logarithms of both sides yields
	the desired lower bound on $\rank E(K_m)$.
\end{proof}


\section{Jacobians of large Mordell--Weil rank}
\label{sec:higher-jacobians}

This section gives two
different constructions of Jacobians
using periodic points of $f_{d, c}$.
The first applies in every degree $d \geq 2$.
From a $K$-rational point $P$ of exact period $n$
under $f_{d, c}$,
we obtain a point
$x_P \in X_{1, d}(n)(K)$
and a different point
$s_P \in X_{1, d}(n)(K(\mu_d))$
on the smooth projective dynatomic curve;
under the map $X_{1, d}(n) \lra \P^1$ induced
by the $c$-coordinate, the points $x_P$ and $s_P$
lie over $c$ and $\infty$, respectively.
The divisor $x_P - s_P$ defines a class
$\delta_P := [x_P - s_P]$ in
$J_{1, d}(n)(K(\mu_d))$.
We show that the images of $\delta_P$ under the order-$n$
action induced by iteration of $f_{d, c}$
generate a torsion-free subgroup
$\Lambda_P$ of $J_{1, d}(n)(K(\mu_d))$.
The gonality of $X_{1, d}(n)$ and the cyclotomic
decomposition of $\Lambda_P \otimes_{\Z} \Q$
then give
Theorem~\ref{thm:unicritical-dynatomic-rank}.

The second construction applies when $d \geq 4$
and uses degree-$(d - 1)$ Galois covers of
$\P^1$ with cyclic deck group.
After one fixed extension of $K$,
a long rational cycle gives many rational points
on curves of genus $d - 2$.
Combining the geometry of these curves with the
quantitative rational point bound of Yu--Yuan--Zhou
\cite{yu-yuan-zhou} gives
Theorem~\ref{thm:jacobians-large-rank},
which can be viewed as an analogue of
Theorem~\ref{thm:elliptic-curves-large-rank}
for certain principally polarized Jacobians.

\subsection{Dynatomic curves}

We now recall the properties of dynatomic curves
needed to prove
Theorem~\ref{thm:unicritical-dynatomic-rank}.

Fix $d \geq 2$.
For $n \geq 1$, let
$Y_{1, d}(n) \subset \A^2$
be the affine curve defined by the $n$-th dynatomic
polynomial for the family
$f_{d, c}(z) = z^d + c$, as in Silverman
\cite[Section~4.1]{silverman-dynamics}.
A point $(c, P)$ of $Y_{1, d}(n)$ records
a point $P$ of formal period $n$ under $f_{d, c}$.
Let $X_{1, d}(n)$ be the normalization of the
projective closure of $Y_{1, d}(n)$,
and let $J_{1, d}(n)$ be its Jacobian.
The projection to the parameter $c$ extends to
a morphism
$\pi_n : X_{1, d}(n) \lra \P^1$.

Every point of exact period $n$ lies on the
affine dynatomic curve $Y_{1, d}(n)$.
The converse can fail
at special parameters: a point of exact period
$m < n$ can lie on this curve when $m \mid n$
and the multiplier of its cycle has order
$n/m$;
see \cite[Section 4.2.1]{silverman-dynamics}
and \cite[Theorem~2.1]{gao-ou-dynatomic}.
For example, $f_{2, -3/4}$ fixes $-1/2$
with multiplier $-1$, so
$(-3/4, -1/2)$ lies on the second dynatomic
curve although $-1/2$ has exact period $1$.

For $d = 2$, the smoothness of
the affine dynatomic curve $Y_{1, 2}(n)$
is due to Douady--Hubbard
\cite{douady-hubbard-dynamics-i, douady-hubbard-dynamics-ii},
while its irreducibility was proved independently
by Bousch~\cite{bousch-dynamics} and
Lau--Schleicher~\cite{lau-schleicher-dynatomic}.
Gao--Ou
\cite[Theorems~1.1 and~1.2]{gao-ou-dynatomic}
prove that $Y_{1, d}(n)$ is smooth and
geometrically irreducible for every $d \geq 2$;
in particular, the projective curve
$X_{1, d}(n)$ is geometrically irreducible.
Doyle--Krieger--Obus--Pries--Rubinstein-Salzedo--West
\cite[Section~3.2]{dkoprw}
record the cyclic automorphism
$\sigma_n(c, z) := (c, f_{d, c}(z))$
of $Y_{1, d}(n)$.
This automorphism extends uniquely to an
automorphism of order $n$ on $X_{1, d}(n)$.
Since $\sigma_n$ does not change $c$, one has
$\pi_n \circ \sigma_n = \pi_n$.
Doyle--Poonen \cite[Theorem~1.1]{doyle-poonen}
prove that the geometric gonality of
$X_{1, d}(n)$ tends to infinity with $n$.

The automorphism $\sigma_n$ also acts
on $J_{1, d}(n)$.
We use the group-algebra decomposition of an
abelian variety with a finite group action in
the form given by Lange--Recillas
\cite[Section~1]{lange-recillas-group-action}.

For $m \mid n$, let $e_{n, m}$ be the idempotent
corresponding to $\Q(\zeta_m)$ under
\[
	\Q[\abrac{\sigma_n}]
		\simeq \prod_{r \mid n}\Q(\zeta_r).
\]
Choose an integer $q_{n, m} \geq 1$ such that
$q_{n, m}e_{n, m}
\in \Z[\abrac{\sigma_n}]$, and let
$A_{n, m}$ be the image of the endomorphism
$q_{n, m}e_{n, m}$ of $J_{1, d}(n)$.
Lange--Recillas
\cite[Section~1]{lange-recillas-group-action}
show that $A_{n, m}$ is an abelian subvariety
independent of the choice of $q_{n, m}$.
Since $\sigma_n$ is defined over $\Q$,
the endomorphism $q_{n, m}e_{n, m}$ and the
subvariety $A_{n, m}$ are defined over $\Q$.

The following proposition identifies
a cyclotomic factor of
the dynatomic Jacobian $J_{1, d}(n)$
of large rank,
and is a stronger version
of Theorem~\ref{thm:unicritical-dynatomic-rank}.

\begin{proposition}
\label{prop:unicritical-cyclotomic-rank}
	Let $K$ be a number field. There is an integer
	$N_{K, d} \geq 1$ with the following property. If
	$c \in K$ and $f_{d, c}$ has a $K$-rational point of exact
	period $n > N_{K, d}$, then, for every exact prime-power
	divisor $\ell^a \parallel n$, there is a divisor $m \mid n$
	with $\ell^a \mid m$ such that
	\[
		\rank A_{n, m}\paren{K(\mu_d)}
		\geq \varphi\paren{m}
		\geq \varphi\paren{\ell^a}.
	\]

	Furthermore, if there is an integer $r_{K, d}$
	such that the rank of
	$A_{n, m}\paren{K(\mu_d)}$ is at most
	$r_{K, d}$ for every $n \geq 1$ and every
	$m \mid n$,
	then there is a constant $B_{\Per}(K, d)$
	that is effectively computable from $K$, $d$, and
	$r_{K, d}$ such that
	\[
		\#\Per_K(z^d + c) \leq B_{\Per}(K, d)
	\]
	for every $c \in K$.
\end{proposition}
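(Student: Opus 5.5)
The plan follows the outline at the start of this section. Given $c \in K$ and a $K$-rational point $P$ of exact period $n$, we get $x_P := (c, P) \in X_{1, d}(n)(K)$. We also need a point $s_P$ over $c = \infty$ that is rational over $K(\mu_d)$. Near $c = \infty$, the periodic points satisfy $z \sim (-c)^{1/d}$, as in Lemma~\ref{lem:tame-normalization}, and the cusps of $X_{1, d}(n)$ over $\infty$ are indexed by the itineraries of normalized orbits in $\mu_d$. We choose $s_P$ to be a cusp fixed by the Galois action over $K(\mu_d)$; a natural candidate is the constant itinerary, which is $\sigma_n$-stable. Set $\delta_P := [x_P - s_P] \in J_{1, d}(n)(K(\mu_d))$ and $\Lambda_P := \Z[\abrac{\sigma_n}]\delta_P$.

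\textbf{Torsion-freeness via gonality.} Suppose $\sum_j a_j \sigma_n^j \delta_P = 0$ with $a_j \in \Z$ not all zero. Then $\sum_j a_j(\sigma_n^j x_P - \sigma_n^j s_P)$ is principal. The points $\sigma_n^j x_P = (c, f^j_{d,c}(P))$ are $n$ distinct points over $c$, while the cusp terms lie over $\infty$. So the positive part of this divisor is the pole divisor of a nonconstant function, whose degree is at most $\sum_j |a_j|$. This is useless unless the $|a_j|$ are bounded. The fix is to work prime by prime. It suffices to show that, for a suitable $m \mid n$ with $\ell^a \mid m$, the component $q_{n,m}e_{n,m}\delta_P$ is nonzero. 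Then $\Q$-linear independence of its $\Q(\zeta_m)$-span gives $\rank A_{n,m}(K(\mu_d)) \ge \varphi(m)$, because $e_{n,m}\Lambda_P \otimes \Q$ is a nonzero $\Q(\zeta_m)$-vector space.

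To pick $m$, suppose $e_{n,m}\delta_P$ is torsion for every $m$ divisible by $\ell^a$. Then $\delta_P$ is killed, up to torsion, by an element of the form $N(1 + \tau + \dots + \tau^{\ell - 1})$ with $\tau = \sigma_n^{n/\ell}$, up to a bounded integer multiple $N$. Concretely, $\Phi_{\ell^a}$-type factors vanish, so $\delta_P$ lies in the subspace where $\sigma_n^{n/\ell}$ acts through its $\ell$-part quotient. This yields a relation $N\sum_{i<\ell}\tau^i \delta_P \in$ torsion, and hence, after multiplying by the torsion order $T$, a function with at most $NT\ell$ poles. Bounding $N$ and $T$ uniformly is the obstacle. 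I would use that the torsion of $J_{1,d}(n)(K(\mu_d))$ embeds into reductions at two small primes of $K(\mu_d)$. This does not immediately bound it independently of $n$, so I would instead use a divisor relation with coefficients in $\{0, \pm1\}$, via $1 - \tau$ acting on $\Lambda_P$. That relation gives a function of degree at most $2n/\ell$, or one pulled back from a quotient curve by $\langle \tau\rangle$. The cleanest route is to pass to the quotient curve $X_{1,d}(n)/\langle \sigma_n^{n/\ell^a}\rangle$, whose gonality also tends to infinity by Doyle--Poonen applied to $X_0$-type quotients, and to get a function of degree $\le 2$ there. We define $N_{K,d}$ so that gonality exceeds this bound. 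The key uniformity issue is that Doyle--Poonen's gonality growth controls all quotients simultaneously; I expect that to be the main obstacle.

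\textbf{The converse.} If every $\rank A_{n,m}(K(\mu_d)) \le r_{K,d}$, then any rational exact period $n > N_{K,d}$ has all prime powers satisfying $\varphi(\ell^a) \le r_{K,d}$. Hence $n$ divides an explicit integer $M(r_{K,d})$, and all periods are at most $\max(N_{K,d}, M)$. The number of periodic points is then at most $\sum_{j \le \max(N_{K,d}, M)} d^j$, since $f^j_{d,c}(z) - z$ has degree $d^j$. Effectivity follows from the effectivity of Doyle--Poonen's gonality bound.
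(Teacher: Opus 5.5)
There is a genuine gap: nothing in your argument controls torsion, and gonality alone cannot. Your reduction step is essentially right once corrected. If $e_{n,m}\delta_P$ is torsion for every $m$ with $\ell^a \mid m$, then $\tau := \sigma_n^{n/\ell}$ acts trivially on $\Lambda_P \otimes_{\Z}\Q$, so $(1-\tau)\delta_P$ is torsion. The correct annihilator is therefore $1-\tau$; the norm $1+\tau+\cdots+\tau^{\ell-1}$ acts there as $\ell$, not $0$.

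To reach a contradiction, you need $(1-\tau)\delta_P$ to be \emph{non-torsion}. A gonality argument can only show that it is \emph{nonzero}. A relation $T(1-\tau)\delta_P = 0$ yields a function whose pole divisor has degree up to $2T$, and you have no bound on $T$ that the gonality could beat; bounds on $J_{1,d}(n)(K(\mu_d))_{\mathrm{tors}}$ via reduction grow exponentially with the genus. The $\set{0,\pm 1}$-coefficient relation and the passage to quotients $X_{1,d}(n)/\abrac{\sigma_n^{n/\ell^a}}$ carry the same unknown multiplier $T$, so they do not help. Your choice of $s_P$ is also off. The constant itinerary is the branch of fixed points, so it does not give a point of $X_{1,d}(n)$ for $n \geq 2$. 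More importantly, the cusp must be chosen depending on $P$.

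The missing ingredient is a $p$-adic proof that $\Lambda_P = \Z[\sigma_n]\delta_P$ is torsion-free. Fix a prime $p \geq 5$ with $p \nmid d$ that splits completely in $K(\mu_d)$, and a place $v \mid p$, so that $K(\mu_d)_v = \Q_p$. Pezda's local bound $C_v$ \cite{pezda-local} shows that a rational exact period $n > C_v$ forces $v(c) < 0$. So $N_{K,d}$ must be $\max\set{C_v, N_0}$, not a gonality threshold alone.

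When $v(c) < 0$, the reductions of $f_{d,c}^j(P)/\beta$ from Lemma~\ref{lem:tame-normalization} form an itinerary $\boldsymbol{\epsilon} \in \mu_d^{\Z/n\Z}$, which has exact period $n$ by Proposition~\ref{prop:tame-distance}. Morton's Laurent series $z_{\boldsymbol{\epsilon}}(u)$ \cite{morton-curves} has $v$-integral coefficients and specializes at $u = \beta$ to $P$. Take $s_P := s_{\boldsymbol{\epsilon}}$. The associated formal section of the mixed-characteristic model of \cite{dkoprw} passes through $x_P$ at $t = \beta^{-1} \in p\Z_p$ and through $s_P$ at $t = 0$, where $s_P$ is a smooth point of the special fiber. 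Hence $\sigma_n^j x_P$ and $\sigma_n^j s_P$ have the same smooth reduction for every $j$. The N\'eron mapping property then puts every $\sigma_n^j\delta_P$ in the kernel of reduction $\calj_n^1(\Z_p)$, which is torsion-free because $K(\mu_d)_v = \Q_p$ and $p \geq 5$ \cite{clark-xarles-local-torsion}.

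With torsion-freeness in hand, your plan goes through without quotient curves. The relation $\sigma_n^h\delta_P = \delta_P$ gives a function of degree at most $2$, so $\operatorname{gon}(X_{1,d}(n)) > 2$ forces $\sigma_n^h x_P = x_P$, hence $h = n$. Thus $\sigma_n$ has exact order $n$ on $\Lambda_P \otimes_{\Z}\Q$, and the cyclotomic decomposition gives the rank bound. Your converse argument is the same as the paper's.
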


To prove
Proposition~\ref{prop:unicritical-cyclotomic-rank},
we compare the point
$x_P \in X_{1, d}(n)(K)$ corresponding to the
pair $(c, P)$, where $P$ has exact period $n$,
with a point
$s_P \in X_{1, d}(n)(K(\mu_d))$ satisfying
$\pi_n(s_P) = \infty$.
We briefly recall that
Morton \cite{morton-curves} constructs points in
$\pi_n^{-1}(\infty)$ by Laurent series and that
Doyle--Krieger--Obus--Pries--Rubinstein-Salzedo--West
\cite{dkoprw} provide a mixed-characteristic
model in which the reductions of $x_P$ and $s_P$
can be compared.

Set $F_u(z) := z^d - u^d$, so that
$F_u = f_{d, -u^d}$.
Let
$\boldsymbol{\zeta}
:= (\zeta_j)_{j \in \Z/n\Z}$
be a sequence in $\mu_d$ such that no integer
$r$ with $0 < r < n$ satisfies
$\zeta_{j + r} = \zeta_j$ for every $j$.
Morton \cite[Lemmas~1 and~2]{morton-curves}
constructs a unique Laurent series
$z_{\boldsymbol{\zeta}}(u)$ of exact period $n$
under $F_u$ whose $j$-th iterate has leading
term $\zeta_j u$.
Morton \cite[Proposition~10]{morton-curves}
then associates to $z_{\boldsymbol{\zeta}}(u)$
a point
$s_{\boldsymbol{\zeta}}
\in X_{1, d}(n)(K(\mu_d))$
satisfying
$\pi_n(s_{\boldsymbol{\zeta}}) = \infty$.
Doyle--Krieger--Obus--Pries--Rubinstein-Salzedo--West
\cite[Remark~3.9, Definition~3.10, and
	Proposition~6.4]{dkoprw}
realize $s_{\boldsymbol{\zeta}}$ on a
mixed-characteristic model and prove that
$s_{\boldsymbol{\zeta}}$ is smooth in every
special fiber whose characteristic does not
divide $d$.

The other ingredient that we use to
prove Proposition~\ref{prop:unicritical-cyclotomic-rank}
is the construction of a sequence
$\boldsymbol{\epsilon} := (\epsilon_j)_{j \in \Z/n\Z}$
from the orbit of $P$ and
a local comparison of $x_P$
and $s_{\boldsymbol{\epsilon}}$
after reduction modulo a prime.
Let $K$ be a number field.
Fix a rational prime $p \geq 5$ such that
$p \nmid d$ and $p$ splits completely in
$K(\mu_d)$, and fix a place
$v \mid p$ of $K(\mu_d)$.

\begin{lemma}
\label{lem:unicritical-dynatomic-disc}
	Suppose that $c \in K$ satisfies $v(c) < 0$,
	and let $P \in K$ have exact period $n$ under
	$f_{d, c}$.
	Let $x_P \in X_{1, d}(n)(K)$ be the point
	corresponding to the pair $(c, P)$.
	There is a point
	$s_P \in X_{1, d}(n)(K(\mu_d))$ satisfying
	$\pi_n(s_P) = \infty$ such that
	$\sigma_n^j x_P$ and $\sigma_n^j s_P$ reduce
	to the same smooth point modulo $v$ for every
	$j \in \Z$.
\end{lemma}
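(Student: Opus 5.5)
Since $v(c) < 0$ and $p \nmid d$, Lemma~\ref{lem:tame-normalization} (applied over the completion $K(\mu_d)_v$, which is $\Q_p$ because $p$ splits completely) supplies $\beta$ with $\beta^d = -c$. Every iterate $f_{d, c}^j(P)/\beta$ is a $v$-unit. The proof of Proposition~\ref{prop:tame-distance} shows that $T_\beta(y)=\beta^{d-1}(y^d-1)$ being a unit forces $y^d \equiv 1 \pmod v$. Hence the reduction of $f_{d,c}^j(P)/\beta$ is a $d$-th root of unity in the residue field. Because $\mu_d\subset K(\mu_d)$ and $p\nmid d$, this root lifts uniquely to some $\epsilon_j\in\mu_d$. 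Writing $u := -\beta$ up to the sign convention (so that $c = -u^d$ and $f_{d,c}=F_u$), we get $f^j_{d,c}(P)\equiv \epsilon_j u$ to leading order. This defines $\boldsymbol{\epsilon}=(\epsilon_j)_{j\in\Z/n\Z}$, which is periodic of period dividing $n$.

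The first step is to check that $\boldsymbol{\epsilon}$ has exact period $n$, as required in Morton's construction. Suppose $\epsilon_{j+r}=\epsilon_j$ for all $j$ with $0<r<n$. Then $X = P$ and $Y = f^r_{d,c}(P)$ are distinct bounded points whose normalized orbits have identical reductions for all time. This gives $\ell_v(X,Y)=\infty$, contradicting Proposition~\ref{prop:tame-distance}. We then set $s_P := s_{\boldsymbol{\epsilon}}\in X_{1,d}(n)(K(\mu_d))$, so $\pi_n(s_P)=\infty$. Since $\sigma_n$ shifts orbits, $\sigma_n^j s_{\boldsymbol{\epsilon}} = s_{\text{shift}^j\boldsymbol{\epsilon}}$ by uniqueness of Morton's series. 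Likewise $\sigma_n^j x_P = x_{f^j(P)}$, and the sequence attached to $f^j(P)$ is the $j$-fold shift of $\boldsymbol{\epsilon}$. It therefore suffices to treat $j=0$ for an arbitrary such $P$.

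The main step is the comparison of reductions. We work in the mixed-characteristic model of DKOPRW near the fiber over $\pi_n=\infty$, using the local coordinate $1/u$, or $u^{-1}$ after base change to $\Z_p[\beta]$, where $v(\beta)=-s<0$. The point $x_P$ has parameter $1/u = -1/\beta$ with positive valuation, so it reduces into the residue disc over $\infty$. The reduced points in that fiber are, by DKOPRW Proposition~6.4, smooth points indexed by the leading-coefficient sequences $\boldsymbol{\zeta}$. The strategy is to show that $x_P$ lies in the residue disc of $s_{\boldsymbol{\epsilon}}$. In the chart used by DKOPRW, the coordinates near $s_{\boldsymbol{\zeta}}$ are $(1/u, z/u)$, and the special fiber point of $s_{\boldsymbol\zeta}$ is $(0,\zeta_0)$. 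For $x_P$ these coordinates reduce to $(0,\overline{P/u})=(0,\epsilon_0)$. Distinct sequences give distinct points over $\infty$, and the full orbit data $(\overline{f^j(P)/u})_j=\boldsymbol{\epsilon}$ pins down which branch the point lies on. Hence $x_P$ and $s_{\boldsymbol\epsilon}$ have the same reduction, which is smooth because $p\nmid d$.

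I expect the main obstacle to be exactly this identification. One must verify that the branches at $\infty$ are separated, in the integral model, by the entire sequence of normalized residues and not only by $\zeta_0$. Equivalently, one must show that the reduction map near $\infty$ factors through the orbit coordinates $z_j/u$. My first attempt would be to use the model of $X_{1,d}(n)$ in DKOPRW cut out by all iterates $(u, z_0/u,\dots,z_{n-1}/u)$. Over this model, the reduction of $x_P$ is visibly $(0,\epsilon_0,\dots,\epsilon_{n-1})$, which by construction is the reduction of $s_{\boldsymbol\epsilon}$. Hensel-type uniqueness from Morton's Lemma~2 would then supply the smoothness of that common point.
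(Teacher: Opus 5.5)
Your proposal is correct. It matches the paper on the construction of $\boldsymbol{\epsilon}$, the exact-period check via Proposition~\ref{prop:tame-distance}, the choice $s_P := s_{\boldsymbol{\epsilon}}$, and the reduction to $j = 0$ by shifting. It differs in the comparison step.

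\textbf{The paper's route.} The paper evaluates Morton's series at $u = \beta$, which converges because its coefficients are $v$-integral. It then uses Proposition~\ref{prop:tame-distance} to show $z_{\boldsymbol{\epsilon}}(\beta) = P$. This realizes $x_P$ and $s_P$ as the specializations at $t = \beta^{-1}$ and $t = 0$ of the single formal section $B_{\boldsymbol{\epsilon}}(t)$. Equal reductions are then automatic, and the argument only uses that $B_{\boldsymbol{\epsilon}}$ is a $\Z_p[[t]]$-point of the DKOPRW model, however that model is embedded.

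\textbf{Your route.} You read the reduction of $x_P$ directly off its integral orbit coordinates $t = \beta^{-1}$ and $y_j = f_{d, c}^j(P)/\beta \equiv \epsilon_j$. You then match it with $(0, \epsilon_0, \ldots, \epsilon_{n-1})$.
\begin{itemize}
\item This skips the convergence-and-uniqueness step. The price, as you rightly flag, is that you must work in the full orbit chart $(t, y_0, \ldots, y_{n-1})$, with $y_j = t z_j$ and relations $t^{d-1}y_{j+1} = y_j^d - 1$. Only there is reduction computed coordinatewise; the plane chart $(t, y_0)$ would merge branches sharing $\epsilon_0$.
\item In that chart, the smoothness you need is just the Jacobian criterion. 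At $t = 0$ the Jacobian in the $y_j$ is diagonal with entries $d\epsilon_j^{d-1}$, which are units since $p \nmid d$.
\item Hensel's lemma for this \'{e}tale map to the $t$-line even recovers the paper's identity $z_{\boldsymbol{\epsilon}}(\beta) = P$.
\end{itemize}

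\textbf{One slip.} Take $u := \beta$, not $-\beta$; any $d$-th root of $-c$ in $\Q_p$ works. With $u = -\beta$ you get $F_{-\beta} = f_{d, -c}$ when $d$ is odd.
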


\begin{proof}
	The proof has three steps.
	First, we associate to $P$ a sequence
	$\boldsymbol{\epsilon} := (\epsilon_j)_{j \in \Z/n\Z}$
	in $\mu_d$ and prove that no nontrivial cyclic
	shift fixes this sequence.
	Second, we evaluate Morton's Laurent series
	$z_{\boldsymbol{\epsilon}}(u)$ at $u = \beta$ and prove that
	the resulting periodic point is $P$.
	Finally, the formal section determined by
	$z_{\boldsymbol{\epsilon}}(u)$ specializes to $x_P$ at
	$u = \beta$ and to $s_P$ at $u = \infty$.

	\emph{Step 1: the sequence $\boldsymbol{\epsilon}
		:= (\epsilon_j)_{j \in \Z/n\Z}$.}
	Write $P_j := f_{d, c}^j(P)$,
	with indices $j \in \Z/n\Z$.
	Lemma~\ref{lem:shell} gives an integer $s \geq 1$
	such that $v(c) = -ds$ and
	$v(P_j) = -s$ for every $j$.
	Since $K(\mu_d)_v = \Q_p$ and $p \nmid d$,
	Lemma~\ref{lem:tame-normalization} gives
	$\beta \in \Q_p$ such that $\beta^d = -c$.

	Set $t_0 := \beta^{-1}$ and
	$y_j := t_0P_j$.
	Thus $t_0 \in p\Z_p$, and every $y_j$ is a unit.
	Substituting $P_j = t_0^{-1}y_j$ and
	$c = -t_0^{-d}$ into
	$P_{j + 1} = P_j^d + c$ gives
	$t_0^{d - 1}y_{j + 1} = y_j^d - 1$
	for every $j \in \Z/n\Z$.
	Therefore $\overline{y_j}^d = 1$.
	Reduction gives a bijection
	$\mu_d \lra \mu_d(\F_p)$ because
	$\mu_d \subset \Q_p$ and $p \nmid d$.
	Let $\epsilon_j \in \mu_d$ be the unique element
	satisfying $y_j \equiv \epsilon_j \pmod p$.

	The sequence
	$(\epsilon_j)_{j \in \Z/n\Z}$
	has exact cyclic period $n$.
	Indeed, suppose that
	$\epsilon_{j + m} = \epsilon_j$ for every $j$
	and some $m$ with $0 < m < n$.
	The normalized iterates of $P$ and
	$f_{d, c}^m(P)$ would have the same reduction
	at every step.
	Proposition~\ref{prop:tame-distance} would then
	give $P = f_{d, c}^m(P)$, contradicting the
	exact period of $P$.

	\emph{Step 2: the Laurent series
		$z_{\boldsymbol{\epsilon}}(u)$.}
	Extend $(\epsilon_j)_{j \in \Z/n\Z}$
	periodically to all $j \geq 0$.
	Morton \cite[Lemmas~1 and~2]{morton-curves}
	gives a unique Laurent series
	$z_{\boldsymbol{\epsilon}}(u)
	\in K(\mu_d)((u^{-1}))$
	of exact period $n$ under $F_u$ such that
	$F_u^j(z_{\boldsymbol{\epsilon}}(u))
	= \epsilon_j u + O(1)$ for every $j \geq 0$,
	where $O(1)$ denotes a power series in $u^{-1}$.
	The coefficient calculation in
	Morton \cite[Lemma~1]{morton-curves}
	shows that the coefficients are $v$-integral
	because $p \nmid d$.
	Hence $z_{\boldsymbol{\epsilon}}(u)$ converges at
	$u = \beta$.

	Set $P^\ast := z_{\boldsymbol{\epsilon}}(\beta)$.
	Since $F_\beta = f_{d, c}$, the leading-term
	condition gives
	$\overline{F_\beta^j(P^\ast)/\beta}
	= \epsilon_j
	= \overline{P_j/\beta}$
	for every $j \geq 0$.
	Both $P^\ast$ and $P$ are periodic under
	$f_{d, c}$.
	Proposition~\ref{prop:tame-distance}
	therefore gives $P^\ast = P$.

	\emph{Step 3: the point $s_P$ and its reduction.}
	Recall that Morton \cite[Proposition~10]{morton-curves}
	and Doyle--Krieger--Obus--Pries--
	Rubinstein-Salzedo--West
	\cite[Remark~3.9]{dkoprw}
	associate to $z_{\boldsymbol{\epsilon}}(u)$
	the point
	$s_{\boldsymbol{\epsilon}}
	\in X_{1, d}(n)(K(\mu_d))$
	satisfying
	$\pi_n(s_{\boldsymbol{\epsilon}}) = \infty$.
	Set $s_P := s_{\boldsymbol{\epsilon}}$.

	Write $t := u^{-1}$.
	The tuple
	$\paren{tF_u^j(z_{\boldsymbol{\epsilon}}(u))}
	_{j \in \Z/n\Z}$
	has entries in $\Z_p[[t]]$ and has value
	$(\epsilon_j)_{j \in \Z/n\Z}$ at $t = 0$.
	By
	Doyle--Krieger--Obus--Pries--
	Rubinstein-Salzedo--West
	\cite[Definition~3.10]{dkoprw},
	this tuple defines a formal section
	$B_{\boldsymbol{\epsilon}}(t)$ of their
	mixed-characteristic model.
	The equalities $P^\ast = P$ and
	$t_0 = \beta^{-1}$ give
	$B_{\boldsymbol{\epsilon}}(t_0) = x_P$, while
	$B_{\boldsymbol{\epsilon}}(0) = s_P$.
	Proposition~6.4 of \cite{dkoprw} shows that
	$s_P$ is a smooth point of the special fiber.
	Hence $x_P$ and $s_P$ have the same smooth
	reduction modulo $v$.

	For $0 \leq k < n$, the sequence associated to
	$P_k$ is
	$(\epsilon_{j + k})_{j \in \Z/n\Z}$.
	The uniqueness in Morton
	\cite[Lemma~1]{morton-curves}
	identifies its Laurent series with
	$F_u^k(z_{\boldsymbol{\epsilon}}(u))$.
	The corresponding formal section specializes
	to $\sigma_n^k x_P$ at $t = t_0$ and to
	$\sigma_n^k s_P$ at $t = 0$.
	These two points therefore have the same smooth
	reduction.
	Since $\sigma_n^n = 1$, the same conclusion
	holds for every $k \in \Z$.
\end{proof}

Lemma~\ref{lem:unicritical-dynatomic-disc}
shows that $\sigma_n^j x_P$ and
$\sigma_n^j s_P$ have the same smooth reduction
for every $j$.
We now prove
Proposition~\ref{prop:unicritical-cyclotomic-rank}
by showing
that Abel--Jacobi morphisms therefore
place the divisor classes
$[\sigma_n^j x_P - \sigma_n^j s_P]$
in the kernel of reduction on $J_{1, d}(n)$
and that the cyclic action on these divisor classes
forces a cyclotomic contribution to the
Mordell--Weil rank.

\begin{proof}[Proof of
	Proposition~\ref{prop:unicritical-cyclotomic-rank}]
	The proof has five steps.
	We first choose a place $v$ and a period bound
	$N_{K, d}$.
	For a cycle of length $n > N_{K, d}$,
	Lemma~\ref{lem:unicritical-dynatomic-disc}
	provides the point $s_P$.
	We set $\delta_P := [x_P - s_P]$ and prove that
	the classes $\sigma_n^j\delta_P$ generate a
	torsion-free subgroup
	$\Lambda_P := \Z[\sigma_n]\delta_P$.
	Gonality shows that $\sigma_n$ has exact order
	$n$ on $\Lambda_P$.
	The cyclotomic components of
	$V_P := \Lambda_P \otimes_{\Z}\Q$
	give the rank lower bound.
	Finally, the rank lower bound converts a uniform
	bound for the relevant Mordell--Weil ranks into
	a uniform bound for rational periods.

	\emph{Step 1: the place $v$ and
		the period bound $N_{K, d}$.}
	Retain the prime $p$ and the place $v$
	fixed before
	Lemma~\ref{lem:unicritical-dynatomic-disc}.
	Thus $K(\mu_d)_v = \Q_p$.
	The prime $p$ and the place $v$ can be found
	effectively by enumerating rational primes and
	testing complete splitting in $K(\mu_d)$.
	Let $C_v$ be the local period bound supplied by
	Pezda \cite[Theorem~1]{pezda-local}.
	If $v(c) \geq 0$, every periodic point is
	$v$-integral and has period at most $C_v$.

	The characteristic-zero proof of
	Doyle--Poonen
	\cite[Theorem~1.1(b)]{doyle-poonen}
	reduces the gonality growth of $X_{1, d}(n)$
	to the asymptotically linear lower bound in
	\cite[Theorem~1.4]{doyle-poonen}.
	The degree and genus formulas used in that proof
	are effective; see
	\cite[Proposition~3.1]{doyle-poonen}
	and Morton \cite[Theorem~13(d)]{morton-curves}.
	Hence one can effectively choose an integer
	$N_0 = N_0(d)$ such that
	$\operatorname{gon}(X_{1, d}(n)) > 2$
	whenever $n > N_0$.
	Set $N_{K, d} := \max\set{C_v, N_0}$.
	Suppose that $c \in K$ and that $P \in K$ has
	exact period $n > N_{K, d}$ under $f_{d, c}$.
	Since $n > C_v$, one has $v(c) < 0$.

	\emph{Step 2: the torsion-free subgroup
		$\Lambda_P$ of the Jacobian.}
	Let $x_P \in X_{1, d}(n)(K)$ be the point
	corresponding to $P$.
	Lemma~\ref{lem:unicritical-dynatomic-disc}
	gives a point
	$s_P \in X_{1, d}(n)(K(\mu_d))$
	above $c = \infty$ such that
	$\sigma_n^j x_P$ and $\sigma_n^j s_P$
	have the same smooth reduction for every $j$.
	Set
	$\delta_P := [x_P - s_P]
	\in J_{1, d}(n)(K(\mu_d))$.

	Let $\calj_n$ be the N\'{e}ron model of
	$J_{1, d}(n)$ over $\Z_p$, and set
	\[
		\calj_n^1(\Z_p)
			:= \Ker\Big( \calj_n(\Z_p)
				\lra \calj_n(\F_p) \Big).
	\]
	For each $j$,
	apply the N\'{e}ron mapping property
	of Bosch--L\"utkebohmert--Raynaud
	\cite[Chapter~1, Section~2]
		{bosch-lutkebohmert-raynaud}
	to the Abel--Jacobi morphism based at
	$\sigma_n^j s_P$.
	This morphism sends $\sigma_n^j x_P$ to
	$\sigma_n^j\delta_P$ and sends
	$\sigma_n^j s_P$ to $0$.
	Since $\sigma_n^j x_P$ and $\sigma_n^j s_P$
	have the same smooth reduction,
	$\sigma_n^j\delta_P$ reduces to $0$.
	Hence
	$\sigma_n^j\delta_P \in \calj_n^1(\Z_p)$.

	Clark--Xarles
	\cite[Section~3.1, Proposition~9]
		{clark-xarles-local-torsion}
	show that $\calj_n^1(\Z_p)$ contains no
	nonzero torsion because
	$K(\mu_d)_v = \Q_p$ and $p \geq 5$.
	Therefore
	$\Lambda_P := \Z[\sigma_n] \delta_P$
	is a torsion-free subgroup of
	$J_{1, d}(n)(K(\mu_d))$.

	\emph{Step 3: the order of $\sigma_n$ on
		$\Lambda_P$.}
	Let $h$ be the least positive integer such that
	$\sigma_n^h\delta_P = \delta_P$.
	Since $\Lambda_P$ is generated by the classes
	$\sigma_n^j\delta_P$, the integer $h$ is the
	order of $\sigma_n$ on $\Lambda_P$.
	The relation $\sigma_n^n = 1$ gives $h \mid n$.

	Define the divisors
	$D_h^+ := \sigma_n^h x_P + s_P$ and
	$D_h^- := x_P + \sigma_n^h s_P$
	on $X_{1, d}(n)$.
	The equality
	$\sigma_n^h\delta_P = \delta_P$ says that
	$D_h^+$ and $D_h^-$ are linearly equivalent.
	If $D_h^+ \neq D_h^-$, their linear equivalence
	gives a nonconstant rational function on
	$X_{1, d}(n)$ whose pole divisor has degree
	at most two.
	This function defines a nonconstant morphism
	$X_{1, d}(n) \longrightarrow \P^1$
	of degree at most two, contradicting
	$\operatorname{gon}(X_{1, d}(n)) > 2$.

	Therefore $D_h^+ = D_h^-$.
	Since $\pi_n \circ \sigma_n = \pi_n$, the points
	$\sigma_n^h x_P$ and $x_P$ both map to $c$
	under $\pi_n$, while $s_P$ and
	$\sigma_n^h s_P$ both map to $\infty$.
	Comparing the points in $D_h^+$ and $D_h^-$
	that map to $c$ gives
	$\sigma_n^h x_P = x_P$.
	Since $P$ has exact period $n$, one has
	$n \mid h$.
	Consequently, $h = n$.

	\emph{Step 4: the cyclotomic components of $V_P$.}
	Since $\Lambda_P$ is torsion-free,
	$\sigma_n$ also has exact order $n$ on
	$V_P := \Lambda_P \otimes_{\Z}\Q$.
	The idempotents $e_{n, m}$ give
	\[
		V_P
			= \bigoplus_{m \mid n}e_{n, m}V_P.
	\]
	On every nonzero component $e_{n, m}V_P$,
	the automorphism $\sigma_n$ has order $m$.
	Hence the least common multiple of the divisors
	$m$ satisfying $e_{n, m}V_P \neq 0$ is $n$.

	Let $\ell^a \parallel n$.
	There is a divisor $m \mid n$ such that
	$\ell^a \mid m$ and $e_{n, m}V_P \neq 0$.
	The component $e_{n, m}V_P$ is a nonzero vector
	space over $\Q(\zeta_m)$, and therefore
	$\dim_{\Q}e_{n, m}V_P \geq \varphi(m)$.
	Moreover,
	$e_{n, m}V_P$ is contained in
	$A_{n, m}(K(\mu_d)) \otimes_{\Z}\Q$.
	Consequently,
	\[
		\rank A_{n, m}\paren{K(\mu_d)}
			\geq \varphi\paren{m}
			\geq \varphi\paren{\ell^a}.
	\]

	\emph{Step 5: bounded ranks and bounded periods.}
	Suppose that the rank of
	$A_{n, m}\paren{K(\mu_d)}$ is at most
	$r_{K, d}$ whenever $n \geq 1$ and $m \mid n$.
	If an exact period $n > N_{K, d}$ occurs, then
	$\varphi\paren{\ell^a} \leq r_{K, d}$ for every
	prime power $\ell^a \parallel n$.
	There are only finitely many such prime powers,
	since
	$\varphi\paren{\ell^a}
	= \ell^{a - 1}(\ell - 1)$.

	Choose an integer $M_{K, d} \geq 1$ such that
	every prime power $\ell^a$ satisfying
	$\varphi\paren{\ell^a} \leq r_{K, d}$ divides
	$M_{K, d}$.
	Every occurring period greater than $N_{K, d}$
	divides $M_{K, d}$.
	Consequently, every occurring period is at most
	$B_{K, d} := \max\set{N_{K, d}, M_{K, d}}$.

	For fixed $c \in K$, every point of exact period
	$j$ is a root of $f_{d, c}^j(z) - z$, which has
	degree $d^j$.
	Therefore
	$\#\Per_K(f_{d, c})
	\leq \sum_{j = 1}^{B_{K, d}}d^j$.
	This proves the conditional uniform boundedness
	assertion.
\end{proof}

Proposition~\ref{prop:unicritical-cyclotomic-rank}
identifies the cyclotomic factors on which a long
rational cycle produces rank.
We now pass from these factors to the full Jacobian.

\begin{proof}[Proof of
	Theorem~\ref{thm:unicritical-dynatomic-rank}]
	Suppose that $c \in K$ and that $f_{d, c}$ has a
	$K$-rational point of exact period
	$n > N_{K, d}$.
	For every exact prime-power divisor
	$\ell^a \parallel n$, Proposition
	\ref{prop:unicritical-cyclotomic-rank} gives
	a divisor $m \mid n$ and an abelian subvariety
	$A_{n, m} \subseteq J_{1, d}(n)$ such that
	$\rank A_{n, m}\paren{K(\mu_d)}
	\geq \varphi\paren{m}
	\geq \varphi\paren{\ell^a}$.
	Hence
	$\rank J_{1, d}(n)\paren{K(\mu_d)}
	\geq \max_{\ell^a \parallel n}
	\varphi\paren{\ell^a}$.

	A uniform bound for the ranks of
	$J_{1, d}(n)(K(\mu_d))$ also bounds the ranks
	of all $A_{n, m}(K(\mu_d))$.
	The second assertion therefore follows from
	Proposition~\ref{prop:unicritical-cyclotomic-rank}.
\end{proof}

\subsection{Cyclic covers of the projective line}
\label{sec:cyclic-covers}

For the rest of this section, let $K$ be a number field
and let $d \geq 4$. The elliptic curves in
Section~\ref{sec:odd-degree-rank} require $d$ to be odd
because they arise from square roots. In every degree
$d \geq 4$, one can instead adjoin a
$(d - 1)$-st root of the same type of quotient.
The product of the two quotients obtained
from Theorem~\ref{thm:power-classes}
leads to a curve whose function field is
obtained by adjoining such a root.
This motivates the following definition.

\begin{definition}
\label{def:four-branch-cyclic-curves}
	Let $L$ be a field of characteristic zero containing
	both $\mu_d$ and $\mu_{d - 1}$. Choose distinct
	$\zeta, \xi \in \mu_d\sm\set{1}$ and
	$Q, R \in L^\times$. Assume that
	$\zeta Q$, $\xi Q$, $\zeta R$, and $\xi R$ are
	pairwise distinct.
	Let $C_{Q, R}$ be the smooth projective curve
	with affine model
	\[
		C_{Q, R}:\quad
		Y^{d - 1}
			= \frac{(X - \zeta Q)(X - \zeta R)}
			{(X - \xi Q)(X - \xi R)}.
	\]
\end{definition}

The function field of $C_{Q, R}$ is obtained from
$L(X)$ by adjoining a $(d - 1)$-st root of the
displayed rational function.
The $X$-coordinate therefore induces a morphism
$C_{Q, R} \longrightarrow \P^1$.
The next lemma shows that this morphism is Galois
with cyclic deck group and computes the genus
of its source.

\begin{lemma}
\label{lem:four-branch-cyclic-geometry}
	Let $d \geq 4$, and let $L$ be a field of
	characteristic zero containing both $\mu_d$ and
	$\mu_{d - 1}$. Choose distinct
	$\zeta, \xi \in \mu_d\sm\set{1}$ and
	$Q, R \in L^\times$. Assume that
	$\zeta Q$, $\xi Q$, $\zeta R$, and $\xi R$ are
	pairwise distinct.
	Let $C_{Q, R}$ be the curve in
	Definition~\ref{def:four-branch-cyclic-curves}.
	The morphism $C_{Q, R} \longrightarrow \P^1$
	induced by the $X$-coordinate is a connected
	Galois cover of degree $d - 1$.
	Its deck group is $\mu_{d - 1}$, acting on
	the affine model by $Y \mapsto \eta Y$
	for $\eta \in \mu_{d - 1}$.
	The cover is totally ramified at the four
	branch points, and $C_{Q, R}$ has genus $d - 2$.
\end{lemma}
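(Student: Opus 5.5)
The plan is to treat $C_{Q, R}$ as a Kummer cover. Since $\mu_{d - 1} \subset L$, Kummer theory describes the extension of $L(X)$ obtained by adjoining a $(d - 1)$-st root of
\[
	h(X) := \frac{(X - \zeta Q)(X - \zeta R)}{(X - \xi Q)(X - \xi R)}.
\]
The steps, in order, are irreducibility, the Galois structure, the local ramification indices, and finally Riemann--Hurwitz.

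\emph{Irreducibility.} I will show that $h$ is not an $e$-th power in $\overline{L}(X)$ for any divisor $e > 1$ of $d - 1$. The divisor of $h$ on $\P^1$ is
\[
	[\zeta Q] + [\zeta R] - [\xi Q] - [\xi R],
\]
using that the four points are pairwise distinct. Every coefficient is $\pm 1$, and the degree-$2$ parts cancel at $\infty$, so $h$ is regular and nonzero there. A coefficient of $\pm 1$ is not divisible by any $e > 1$. Hence $Y^{d - 1} - h$ is irreducible over $\overline{L}(X)$. This gives geometric connectedness and shows the extension has degree $d - 1$.

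\emph{Galois structure.} Because $\mu_{d - 1} \subset L$, the extension is Galois. The automorphisms $Y \mapsto \eta Y$ for $\eta \in \mu_{d - 1}$ are $d - 1$ distinct automorphisms over $L(X)$, so they form the whole deck group, which is isomorphic to $\mu_{d - 1}$.

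\emph{Ramification.} For a Kummer extension $Y^{d - 1} = h$, the ramification index over a point $x$ equals $(d - 1)/\gcd(d - 1, \operatorname{ord}_x h)$. At the four branch points $\operatorname{ord}_x h = \pm 1$, so the index is $d - 1$ and the cover is totally ramified there. At every other point, including $\infty$, $\operatorname{ord}_x h = 0$, so the cover is unramified. The only point needing care is $\infty$, which is settled by the degree cancellation already noted. One can also verify this directly by rewriting the equation in the local parameter $1/X$.

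\emph{Genus.} Riemann--Hurwitz then gives
\[
	2g - 2 = (d - 1)(-2) + 4(d - 2) = 2d - 6,
\]
so $g = d - 2$. I do not expect a real obstacle. The only subtle step is irreducibility, since it is what forces connectedness and the exact degree $d - 1$, and it follows from the simple zeros and poles of $h$.
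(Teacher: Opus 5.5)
Your proposal is correct and follows the same route as the paper: the simple zeros and poles of $h$ force degree $d-1$ by Kummer theory, the maps $Y \mapsto \eta Y$ form the full deck group, the four points are totally ramified, and Riemann--Hurwitz gives genus $d-2$. Your version is slightly more careful than the paper's in two places: you prove irreducibility over $\overline{L}(X)$, which gives geometric connectedness, and you check explicitly that $\infty$ is unramified, a fact the paper's Riemann--Hurwitz count uses without stating it.
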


\begin{proof}
	The right-hand side of the equation
	defining $C_{Q, R}$ has two simple zeros
	and two simple poles.
	Since it has a simple zero, none of its powers
	with exponent between $1$ and $d - 2$ is a
	$(d - 1)$-st power in $L(X)$.
	Its class therefore has order $d - 1$ in
	$L(X)^\times/(L(X)^\times)^{d - 1}$.
	Hence adjoining a $(d - 1)$-st root of this
	function gives an extension of $L(X)$ of
	degree $d - 1$.
	Consequently, the covering curve is connected.
	Since $\mu_{d - 1} \subset L$, the transformations
	$Y \mapsto \eta Y$, for $\eta \in \mu_{d - 1}$,
	form the full deck group.
	Thus the cover is Galois with cyclic deck group.
	The simple zeros and poles give total
	ramification at the four branch points.
	By Riemann--Hurwitz,
	$2g(C_{Q, R}) - 2 = -2(d - 1) + 4(d - 2)$.
	Therefore $g(C_{Q, R}) = d - 2$.
\end{proof}

For $d = 4$, the covering morphism has degree $3$
and its source has genus $2$.
In general, both the degree and the genus grow
linearly with $d$.

We will also need that these covers genuinely vary in
moduli. Fix distinct
$\zeta, \xi \in \mu_d\sm\set{1}$. For every $t$ for
which $\zeta$, $\zeta t$, $\xi$, and $\xi t$ are
pairwise distinct, let $C_t$ be the smooth projective
model of
\[
	Y^{d - 1}
		= \frac{(X - \zeta)(X - \zeta t)}
		{(X - \xi)(X - \xi t)}.
\]

\begin{proposition}
\label{prop:cyclic-cover-moduli}
	For fixed $d$, there is an integer $b_d \geq 1$ such
	that each geometric isomorphism class contains $C_t$
	for at most $b_d$ values of $t$.
\end{proposition}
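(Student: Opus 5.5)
The plan is to show that an isomorphism $C_t \simeq C_{t'}$ over $\overline{K}$ forces the branch configurations $\{\zeta,\zeta t,\xi,\xi t\}$ and $\{\zeta,\zeta t',\xi,\xi t'\}$ to be related by a M\"obius transformation, up to a bounded amount of ambiguity. The number of $t'$ with this property is then bounded by a constant depending only on $d$, exactly as in the degree-$12$ count of Lemma~\ref{lem:four-branch-j-map}.

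\textbf{Step 1: rigidity of the cyclic quotient.} Let $C$ be any curve in the isomorphism class. It has genus $g = d - 2 \geq 2$, so $\Aut(C)$ is finite, and Hurwitz gives $\#\Aut(C) \leq 84(d-3)$. By Lemma~\ref{lem:four-branch-cyclic-geometry}, $C_t$ carries a cyclic subgroup $G_t \simeq \mu_{d-1}$ of $\Aut(C_t)$ with $C_t/G_t \simeq \P^1$. The set of such subgroups is finite and bounded in terms of $d$, because it injects into the subgroups of a group of order at most $84(d-3)$.

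\textbf{Step 2: bound $t$ for each choice of subgroup.} Fix an isomorphism $\psi : C_t \to C$ and a cyclic subgroup $G \leq \Aut(C)$ of order $d-1$ with quotient $\P^1$. I will show that only boundedly many $t$ admit an isomorphism carrying $G_t$ to $G$. Such an isomorphism descends to an isomorphism $C_t/G_t = \P^1 \to C/G$. This map sends the branch locus $\{\zeta,\zeta t,\xi,\xi t\}$ onto the branch locus of $C \to C/G$, which is a fixed set of four points. Hence the cross-ratios of the two four-point sets agree up to the $S_4$ action, meaning the $j$-invariant of the four points is fixed.

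Scaling by $\zeta$, as in Lemma~\ref{lem:four-branch-j-map}, normalizes the branch points to $1, r, t, rt$ with $r = \xi/\zeta$. The cross-ratio $\lambda_r(t)$ has degree $2$ in $t$, and the $\lambda$-to-$j$ map has degree $6$. Therefore at most $12$ values of $t$ give the same unordered four-point configuration up to M\"obius transformation.

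Summing over the choices of $G$ gives
\[
b_d := 12 \cdot \#\{\text{subgroups of a group of order} \leq 84(d-3)\},
\]
which can be replaced by any crude explicit bound. I do not need the finer data of which branch points carry $Y^{d-1}$-exponents $+1$ versus $-1$, because this only strengthens the constraint.

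\textbf{Main obstacle.} The delicate point is Step 1 in the small case $d = 4$, where $g = 2$. It must hold uniformly over the moduli, since special $t$ may have extra automorphisms. The Hurwitz bound still applies because $g \geq 2$, so a uniform bound on the number of subgroups follows.

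One could also avoid Hurwitz by noting that the family $t \mapsto [C_t]$ is a nonconstant algebraic map from a curve to $M_{d-2}$, so its fibres are finite. Bounding the fibres uniformly would then need a degree computation, which is why I prefer the automorphism argument.
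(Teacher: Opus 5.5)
Your proposal is correct and follows the same route as the paper. The Hurwitz bound $\#\Aut(C) \leq 84(g-1)$ limits the possible images of the cyclic deck group. For each such subgroup, the branch divisor on the quotient $\P^1$ is fixed up to M\"obius transformation, so the degree-$2$ cross-ratio in $t$, together with the at most six cross-ratios of an unordered four-point set, bounds the number of $t$; this is equivalent to your degree-$12$ map to the $j$-line, and your choice $b_d = 12 \cdot \#\{\text{subgroups}\}$ just makes the paper's ``boundedly many'' explicit.
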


\begin{proof}
	The proof tracks the cyclic deck group under an
	isomorphism and then compares the branch divisors on
	$\P^1$. Set $m := d - 1$ and $g := d - 2$. The
	distinguished deck group of $C_t$ is cyclic of order
	$m$. An isomorphism from $C_t$ to a fixed curve $C$
	carries this group to a cyclic subgroup
	$H \subseteq \Aut(C)$ of order $m$.

	Since $g \geq 2$, the Hurwitz bound gives
	$\#\Aut(C) \leq 84(g - 1)$. Hence the number of
	possible subgroups $H$ is bounded in terms of $d$.
	For a fixed $H$, the quotient $C/H$ and its unordered
	branch divisor are determined. One cross-ratio of
	$\zeta$, $\zeta t$, $\xi$, and $\xi t$ is
	\[
		\lambda(t)
			:= \frac{t(\zeta - \xi)^2}
			{(\zeta - \xi t)(\zeta t - \xi)}.
	\]
	The function $\lambda(t)$ is nonconstant and has
	degree at most two. An unordered set of four points
	determines at most six cross-ratios. Thus only
	boundedly many values of $t$ can occur for each
	possible subgroup $H$.
\end{proof}

The bound of the explicit Mordell theorem of Yu--Yuan--Zhou
\cite[Theorem~1.3]{yu-yuan-zhou} involves the
following genus-dependent constant:
for $g \geq 2$,
set $\rho(g) := \min\set{1 + 5/(4\sqrt g),
1 + 3\log g/g}$.

\begin{theorem}
\label{thm:jacobians-large-rank}
	Let $K$ be a number field and let $d \geq 4$. There is
	a finite extension $M/K$, depending only on $K$ and
	$d$, with the following property. If $c \in K$ and
	$f_{d, c}$ has a $K$-rational cycle of length $n \geq 3$,
	then, with $b_d$ as in
	Proposition~\ref{prop:cyclic-cover-moduli}, the cycle
	gives at least
	$\lceil(n - 2)/b_d\rceil$ geometrically
	nonisomorphic smooth projective curves $C/M$ of genus
	$d - 2$ such that $\#C(M) \geq n - 2$ and
	\[
		\rank J(C)(M)
		\geq \frac{
			\log\paren{n - 2}
			- \log\paren{10^{13}(d - 2)^8}}
			{\log \rho(d - 2)}.
	\]
	The principally polarized Jacobians
	$J(C)$ are pairwise
	geometrically nonisomorphic.
\end{theorem}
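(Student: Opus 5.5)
Fix distinct $\zeta, \xi \in \mu_d \sm \set{1}$ and take
\[
	M := K\paren{\mu_d, \mu_{d - 1},
	\set{\wt{\alpha}^{1/(d - 1)} \Mid \alpha \in \calc_{K, d}}},
\]
which is a finite extension of $K$ depending only on $K$ and $d$ by Lemma~\ref{lem:finite-power-classes}. Given a $K$-rational cycle $\sco$ of length $n \geq 3$, I first dispose of the integral case. If $c$ is integral at a place above a prime divisor of $d$, then \cite[Theorem~3]{rajagopal-zhang} bounds $n$ by $d^{[K:\Q]}$, so this case is absorbed into the constant, or handled trivially if the claimed bound is nonpositive. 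I will flag this as a point to check carefully. Otherwise all points of $\sco$ are nonzero by Lemma~\ref{lem:shell}; in any case at most one point of $\sco$ is zero. Fix a nonzero $Q \in \sco$. For each of the at least $n - 2$ nonzero $R \in \sco \sm \set{Q}$, consider $C_{Q, R}$ over $M$ from Definition~\ref{def:four-branch-cyclic-curves}. Its four branch points are distinct by Lemma~\ref{lem:four-branch-points}, so by Lemma~\ref{lem:four-branch-cyclic-geometry} it has genus $d - 2$.

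\emph{Rational points.} For $P \in \sco \sm \set{Q, R}$, Theorem~\ref{thm:power-classes} writes $r_Q(P) = (P - \zeta Q)/(P - \xi Q) = \wt{\alpha}Y_Q^{d - 1}$ and similarly for $R$. Since $\wt{\alpha}$ becomes a $(d - 1)$-st power in $M$, the product $r_Q(P)r_R(P)$ is a $(d - 1)$-st power in $M$. Hence $P$ is the $X$-coordinate of a point of $C_{Q, R}(M)$. The points $P = Q$ and $P = R$ lie over branch values only if $\zeta = 1$ or $\xi = 1$, which is excluded. Instead, $r_Q(Q) = (1 - \zeta)/(1 - \xi)$ is a unit away from $d$, so its class also lies in $\calc_{K, d}$ and it lifts as well. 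Distinct $P$ give distinct points, so $\#C_{Q, R}(M) \geq n - 2$ (in fact $\geq n - 1$).

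\emph{Rank bound.} I then apply Yu--Yuan--Zhou \cite[Theorem~1.3]{yu-yuan-zhou} to the genus-$g$ curve with $g = d - 2$. In the form I expect, this gives
\[
	\#C(M) \leq 10^{13} g^{8}\rho(g)^{\rank J(C)(M)},
\]
as in the $g = 2$ instance $256\cdot 10^{13} = 10^{13}\cdot 2^8$ used in Proposition~\ref{prop:iterated-splitting-rank}. Combined with $\#C(M) \geq n - 2$, taking logarithms gives exactly the displayed lower bound.

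\emph{Counting isomorphism classes.} Scaling $X$ by $Q$ identifies $C_{Q, R}$ with $C_t$ for $t = R/Q$, and distinct $R$ give distinct $t$. Proposition~\ref{prop:cyclic-cover-moduli} then yields at least $\lceil (n - 2)/b_d \rceil$ geometric isomorphism classes among these curves. Torelli's theorem converts pairwise nonisomorphic curves into pairwise nonisomorphic principally polarized Jacobians.

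The main obstacle I expect is matching the exact normalization of the Yu--Yuan--Zhou constant, namely the exponent $\rank J$ and the factor $10^{13}g^8$, uniformly for all $g \geq 2$. I also need to confirm that their bound holds over an arbitrary number field $M$ without any dependence on $[M:\Q]$, since the displayed estimate contains no such term. If it does depend on the field, I would absorb that dependence into $M$, which is fixed in terms of $K$ and $d$.
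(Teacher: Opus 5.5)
Your proposal is correct and follows the paper's proof essentially step for step: the same field $M$ obtained by adjoining $\mu_{d-1}$ and $(d-1)$-st roots of the representatives $\wt{\alpha}$, the same lifting of $P \in \sco \sm \set{Q, R}$ to $C_{Q, R}(M)$ via Theorem~\ref{thm:power-classes} applied with $Q$ and with $R$, the same use of \cite[Theorem~1.3]{yu-yuan-zhou} in the form $\#C(M) \leq 10^{13}g^8\rho(g)^{\rank J(C)(M)}$ (which is how the paper applies it, with no dependence on $[M:\Q]$), and the same moduli count via Proposition~\ref{prop:cyclic-cover-moduli} followed by Torelli. You should drop the preliminary split on whether $c$ is integral above $d$: Theorem~\ref{thm:power-classes} holds for every $c \in K$, so no case distinction is needed, and ``absorbing'' the integral case into a constant would not have been legitimate anyway, because the statement has no free constant and also asserts the curve count and $\#C(M) \geq n - 2$.
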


\begin{proof}
	We first construct the fixed field, next lift the
	cycle to the curves $C_{Q, R}$, and finally combine
	the rational point bound with the moduli calculation.
	Choose distinct $\zeta, \xi \in \mu_d\sm\set{1}$.
	Recall that $\wt{\alpha}$ is the representative fixed
	in Definition~\ref{def:classes-modulo-powers}
	for $\alpha \in \calc_{K, d}$.
	Let $M/K(\mu_d)$ be the extension obtained by
	adjoining $\mu_{d - 1}$ and a $(d - 1)$-st root
	of every $\wt{\alpha}$. This is a
	fixed finite extension depending only on $K$ and $d$,
	and every class in $\calc_{K, d}$ becomes a
	$(d - 1)$-st power over $M$.

	Let $\sco \subset K$ be a cycle of length $n$ and
	fix a nonzero point $Q \in \sco$. Apart from $Q$
	and a possible zero point, there are at least $n - 2$
	choices of a nonzero point $R \in \sco$. For each
	such $R$, the four points
	$\zeta Q$, $\xi Q$, $\zeta R$, and $\xi R$ are
	pairwise distinct by
	Lemma~\ref{lem:four-branch-points}, so the curve
	$C_{Q, R}/M$ is defined.

	For every $P \in \sco\sm\set{Q, R}$,
	Theorem~\ref{thm:power-classes}, applied with $Q$ and
	with $R$, shows that the quotient
	\[
		\frac{(P - \zeta Q)(P - \zeta R)}
		{(P - \xi Q)(P - \xi R)}
	\]
	is a $(d - 1)$-st power in $M$. Hence $P$ lifts to
	an $M$-rational point of $C_{Q, R}$. By
	Lemma~\ref{lem:four-branch-cyclic-geometry},
	$g(C_{Q, R}) = d - 2$ and
	$\#C_{Q, R}(M) \geq n - 2$.
	Applying Yu--Yuan--Zhou
	\cite[Theorem~1.3]{yu-yuan-zhou}
	gives the claimed lower bound.

	After scaling the $X$-coordinate by $Q$, the curve
	$C_{Q, R}$ is $C_{R/Q}$.
	Proposition~\ref{prop:cyclic-cover-moduli} shows that
	the choices of $R$ give at least
	$\lceil(n - 2)/b_d\rceil$ geometric isomorphism
	classes. The Torelli theorem then shows
	that their principally polarized Jacobians are
	pairwise geometrically nonisomorphic.
\end{proof}

Thus a long rational cycle produces both a logarithmic
rank lower bound and, since $b_d$ depends only on $d$,
linear variation in moduli in every degree $d \geq 4$.

\begin{remark}[A fixed hyperelliptic variant]
	There is also a fixed-curve version of the same idea.
	Recall that
	$G_d(X,A) := (X^d - A^d)/(X - A)$. Retain the fixed
	points $Q, R$ and the fields $L_n$
	from Section~\ref{sec:fixed-elliptic-fields}. The
	roots of $G_d(X, Q)G_d(X,R)$ are distinct by the same
	argument as in
	Lemma~\ref{lem:four-branch-points}. Hence the smooth
	projective curve
	\[
		Y^2 = G_d(X, Q)G_d(X,R)
	\]
	has genus $d - 2$. After adjoining to $L_n$ the square
	roots of $G_d(P, Q)G_d(P,R)$ for all
	$P \in \Per_{\overline{K}}(f_{d, c},n)$, every such point
	lifts to this fixed curve. The quantitative bound of
	Yu--Yuan--Zhou \cite[Theorem~1.3]{yu-yuan-zhou}
	therefore gives the analogous lower bound for the rank
	of its Jacobian. Since
	Theorem~\ref{thm:fixed-elliptic-curve-rank} already
	produces a fixed elliptic curve in every degree,
	we just remark that this gives a higher-dimensional variant
	of Theorem~\ref{thm:fixed-elliptic-curve-rank}.
\end{remark}

The extension $M/K$ constructed in the proof of
Theorem~\ref{thm:jacobians-large-rank}
makes every chosen representative of a class in
$\calc_{K, d}$ a $(d - 1)$-st power.
When $3 \mid d - 1$, these representatives are
therefore cubes in $M$.
The Kummer class construction can consequently be
realized on one fixed twisted Fermat cubic, which is
an elliptic curve with $j = 0$.
The resulting statement complements
Theorem~\ref{thm:elliptic-curves-large-rank}:
it also applies when $d \equiv 4 \pmod 6$,
but it does not give variation in moduli.

\begin{corollary}
\label{cor:fixed-fermat-cubic}
	Let $K$ be a number field, and let $d \geq 4$ satisfy
	$3 \mid d - 1$. There exist a finite extension $M/K$, an
	elliptic curve $E/M$ with $j(E) = 0$, and a constant
	$C_{K, d} \geq 2$, all depending only on $K$ and $d$,
	such that, for every $c \in K$, every $K$-rational
	cycle of $z^d + c$ of length $n \geq 3$ satisfies
	\[
		n - 2
		\leq C_{K, d}^{1 + 2\rank E(M)}.
	\]
	Equivalently,
	\[
		\rank E(M)
		\geq \frac{1}{2}
		\paren{\frac{\log\paren{n - 2}}
		{\log C_{K, d}} - 1}.
	\]
\end{corollary}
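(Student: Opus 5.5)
We will work over the field $M := K\paren{\mu_d, \mu_{d-1}, \sqrt[d-1]{\wt{\alpha}} \Mid \alpha \in \calc_{K, d}}$ from the proof of Theorem~\ref{thm:jacobians-large-rank}. Over $M$, every class in $\calc_{K, d}$ becomes a $(d-1)$-st power. Since $3 \mid d - 1$, every $(d-1)$-st power is in particular a cube. We then replace the twisted Fermat curves of Section~\ref{sec:twisted-fermat-family} (degree $d-1$) by cubic ones. Fix $\zeta_1, \zeta_2, \zeta_3 \in \mu_d \sm \set{1}$ and $\lambda := \lambda_{\zeta_1, \zeta_2, \zeta_3}$, and set $a_1 = a_2 = 1$ and $m = 3$ in Definition~\ref{def:twisted-fermat-curves}. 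This gives the single cubic $C : V^3 - \lambda U^3 - (1 - \lambda)W^3 = 0$ over $M$. After enlarging $M$ by a cube root of $\lambda$, the point $[1 : \lambda^{1/3} : 0]$ lies on $C$, and Lemma~\ref{lem:quartic-twisted-fermat-geometry} (whose proof uses only $m = 3$) makes $C$ an elliptic curve $E/M$ with $j(E) = 0$. The function $\phi := \phi_{1,1} \in M(E)$ is nonconstant by Lemma~\ref{lem:twisted-fermat-recovery}, and its pole divisor is reduced of degree $9$. All of these data depend only on $K$ and $d$.

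Next we lift the cycle. Let $\sco$ be a $K$-rational cycle of length $n \geq 3$, and fix a nonzero $Q \in \sco$. For each nonzero $P \in \sco \sm \set{Q}$ (at least $n - 2$ of them), Theorem~\ref{thm:power-classes} gives $(P/Q - \zeta_i)/(P/Q - \zeta_3) = \wt{\alpha_i} Y_i^{d-1}$. Over $M$, this equals $X_i^3$ with $X_i := \paren{\wt{\alpha_i}^{1/(d-1)}Y_i}^{(d-1)/3}$. Lemma~\ref{lem:twisted-fermat-recovery} then produces $A_P \in E(M)$ with $\phi(A_P) = P/Q$. The same construction applied to $f_{d,c}(P)$ gives $B_P \in E(M)$ with $\phi(B_P) = f_{d,c}(P)/Q$. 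The relation between these two points is polynomial: $f_{d,c}(P)/Q = Q^{d-1}(P/Q)^d + c/Q$, so $\phi(B_P) = g(\phi(A_P))$ with $g(X) := Q^{d-1}X^d + c/Q$, a polynomial of degree $d$.

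To count, consider $Z_g \subset E \times E$ from Definition~\ref{def:elliptic-relation-curve}. Every pole coefficient of $\phi$ is $1 < d$, so Lemma~\ref{lem:no-elliptic-coset} excludes positive-dimensional translates. The argument of Proposition~\ref{prop:elliptic-mordell-lang-bound} then applies with $\deg \phi = 9$ in place of $2$. Writing $H$ for the pullback class, one has $H_1H_2 = 81$, so $Z_g$ has class $dH_1 + H_2$ and degree $\deg_{\mathcal M} Z_g = 162(d+1)$. Uniform Mordell--Lang \cite{gao-ge-kuhne} applied to $\Gamma = E(M) \times E(M)$ bounds the number of first coordinates by $162(d+1)\, c'^{\,1 + 2\rank E(M)}$, where $c' := \max\set{2, c_{\mathrm{UML}}(2, j) \Mid j \leq 162(d+1)}$. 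The map $P \mapsto P/Q$ is injective, so
\[
n - 2 \leq 162(d+1)\, c'^{\,1 + 2\rank E(M)}.
\]
Setting $C_{K,d} := 162(d+1)c'$ absorbs the prefactor and gives the stated bound. Taking logarithms gives the equivalent rank form.

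The main obstacle is that $g$ depends on $c$ and $Q$, so the bound must be uniform in $g$. This is where uniform Mordell--Lang is needed: it depends only on $\deg Z_g$, which depends only on $d$, and not on the coefficients of $g$. The other point to check carefully is that $\phi(A_P)$ and $\phi(B_P)$ are finite and nonzero, so that the pair $(A_P, B_P)$ genuinely lies on $Z_g$. Lemma~\ref{lem:telescoping} gives $P/Q \notin \set{\zeta_i}$, which secures this.
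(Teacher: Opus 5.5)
Your proposal follows the paper's proof. You use the same field $M$, the single untwisted cubic with $a_1 = a_2 = 1$, and the same function $\phi_{1,1}$ (the paper's $\tau$). You lift $P/Q$ and $f_{d,c}(P)/Q$ via Theorem~\ref{thm:power-classes} once every class in $\calc_{K,d}$ becomes a $(d-1)$-st power, hence a cube. You then use the relation $\phi(B) = Q^{d-1}\phi(A)^d + c/Q$, exclude cosets by Lemma~\ref{lem:no-elliptic-coset} using the reduced pole divisor, and apply uniform Mordell--Lang on $E(M)\times E(M)$ with a degree depending only on $d$.

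One step needs a small repair. If $P$ is the predecessor of $Q$, then $f_{d,c}(P) = Q$. Theorem~\ref{thm:power-classes} requires distinct points, so ``the same construction applied to $f_{d,c}(P)$'' does not produce $B_P$ with $\phi(B_P) = 1$. There are two easy fixes.
\begin{itemize}
\item Observe, as in the proof of Lemma~\ref{lem:odd-degree-lift}, that $(1-\zeta_i)/(1-\zeta_3)$ is a unit away from the places above $d$. Its class therefore lies in $\calc_{K,d}$, and it becomes a cube in $M$.
\item Alternatively, do what the paper does: discard $Q$ and its predecessor, but keep $P = 0$ if it lies in the cycle. Then $0$ lifts, since $\zeta_i/\zeta_3$ is already a cube in $\mu_d$ because $3 \nmid d$.
\end{itemize}
The second fix is legitimate because a point of $E \times E$ lies on $Z_g$ as soon as the two $\phi$-values are finite and satisfy the relation. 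Nonvanishing mattered only for $\Sigma_{\phi,\psi}$ in the quartic argument. Your claim that $\phi(B_P) \neq 0$ is in fact not guaranteed, since $f_{d,c}(P)$ may be $0$ even when $P \neq 0$, but nothing depends on it. Either fix gives the $n-2$ points you need.

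Two cosmetic points. First, $[1:1:1]$ already lies on $V^3 = \lambda U^3 + (1-\lambda)W^3$, so adjoining $\lambda^{1/3}$ is unnecessary. Second, $H_1H_2 = 9\cdot 9 = 81$, so $\deg_{\mathcal M} Z_g = 81(d+1)$, not $162(d+1)$; your overestimate is harmless, since only an upper bound on the degree is used.
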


\begin{proof}
	Let $\zeta_1, \zeta_2, \zeta_3$ and
	$\lambda_{\zeta_1, \zeta_2, \zeta_3}$ be as in
	Section~\ref{sec:twisted-fermat-family}. Let $M$ be
	the fixed extension used in the proof of
	Theorem~\ref{thm:jacobians-large-rank}; over $M$, every
	class in $\calc_{K, d}$ becomes a $(d - 1)$-st power.
	The twisted Fermat cubic
	\[
		E:\quad
		V^3
		= \lambda_{\zeta_1, \zeta_2, \zeta_3}U^3
		+ \paren{1 - \lambda_{\zeta_1, \zeta_2, \zeta_3}}W^3
	\]
	has the $M$-rational point $[1 : 1 : 1]$ and hence is an
	elliptic curve. It is geometrically a Fermat cubic, so
	$j(E) = 0$. Set
	\[
		\tau(U,V,W)
		:= \frac{\zeta_3U^3 - \zeta_1W^3}
		{U^3 - W^3}.
	\]
	Its pole divisor consists of nine distinct points.

	Let $\sco$ be a cycle of length $n$. At most one point
	of $\sco$ is zero, so choose a nonzero $Q \in \sco$.
	Apart from $Q$ and its predecessor,
	every $P \in \sco$ gives points $A,B \in E(M)$ with
	$\tau(A) = P/Q$ and $\tau(B) = f_{d, c}(P)/Q$. They satisfy
	\[
		\tau(B)
		= Q^{d - 1}\tau(A)^d + c/Q.
	\]
	Distinct choices of $P$ give distinct $\tau(A)$-values,
	so the resulting curve in $E \times E$ has at least
	$n - 2$ points over $M$. Since the pole divisor of
	$\tau$ is reduced, Lemma~\ref{lem:no-elliptic-coset}
	excludes positive-dimensional translates. The degree of
	the relation curve is bounded in terms of $d$. Applying
	uniform Mordell--Lang \cite{gao-ge-kuhne}
	to $E(M) \times E(M)$, whose
	rank is $2\rank E(M)$, gives the asserted bound after
	absorbing the fixed geometric degree into $C_{K, d}$.
\end{proof}


\section{Effectivity and strong uniformity}
\label{sec:quantitative}

This section has two purposes.
First, we make the bounds
in Theorem~\ref{thm:main} as explicit
as the available Diophantine inputs permit.
Second, we identify the two obstacles
that prevent the present method
from proving strong uniform boundedness.

Throughout this section, $K$ is a number field and
$d \geq 3$.
Recall from
Definition~\ref{def:classes-modulo-powers} that
$\calc_{K, d}$ is a finite group of power classes in
$K(\mu_d)^\times$, taken modulo
$(K(\mu_d)^\times)^{(d - 1)}$, and that
$\wt{\alpha}$ is the fixed representative of
$\alpha \in \calc_{K, d}$.
When $d \geq 4$,
Definition~\ref{def:twisted-fermat-family} gives the family
$C_{\alpha_1, \alpha_2}$ and
$\phi_{\alpha_1, \alpha_2}$, indexed by
$(\alpha_1, \alpha_2)
\in \calc_{K, d}^2$. We recall these definitions because
the bounds below are expressed in terms of this family
of $\paren{\#\calc_{K, d}}^2$ curves.

\subsection{Explicit fixed-field bounds}

The proofs use several external results, of which only
some carry computable constants. In the table below,
$F$ denotes a number field.
In the two Mordell--Lang
rows, $Z$ is a closed subvariety of an abelian variety
$A/F$, and $\Gamma \subseteq A(F)$ is a subgroup of
finite rank, as in
Section~\ref{sec:d4}.
The symbol $C_v$ denotes a local period bound
at the relevant fixed place
as in Section~\ref{sec:d3}.
\begin{center}
\small
\begin{tabular}{@{}l p{0.36\textwidth} l@{}}
input & used for & explicit \\ \hline
	\cite[Theorem~3]{rajagopal-zhang}
		& the good-reduction branch
		& yes \\
	\cite[Theorem~1]{pezda-local}
		& the local period bound $C_v$
		& yes \\
	\cite{faltings-mordell} & finiteness of
		$C_{\alpha_1, \alpha_2}(F)$ when $d \geq 5$
		& no \\
	\cite[Theorem~1.6]{yu-yuan-zhou}
		& a bound for $\#C_{\alpha_1, \alpha_2}(F)$
		& yes \\
	\cite[Theorem~1.3]{yu-yuan-zhou}
		& the rank-dependent bound
			\eqref{eq:rank-explicit-bound}
		& except for $\rk_{K, d}$ \\
	\cite{faltings-mordell-lang,faltings-lang-general}
		& that $Z(F) \cap \Gamma$ is
			a finite union of cosets
		& no \\
	\begin{tabular}[t]{@{}l@{}}
		\cite[Theorem~2.1]{remond-count}, \\
			\cite[Theorem~1.4 and p.~643]
				{david-philippon-ii}
	\end{tabular}
		& a bound for the number of cosets in
			$Z(F) \cap \Gamma$
		& yes \\
	\cite[Theorem~1.1]{gao-ge-kuhne}
		& the constant $\kappa_d$
		& no \\
	\cite[Theorem~1.8]{doyle-poonen}
		& passage from periodic to preperiodic bounds
		& no
\end{tabular}
\end{center}
The two theorems of Faltings are used only for
qualitative finiteness; the explicit counts listed
directly beneath them are what make the bounds of
Section~\ref{sec:quantitative} numerical. The
constant of \cite{gao-ge-kuhne} is a different matter:
it is not evaluated there. Consequently, every bound
depending on $\kappa_d$ is nonnumerical
even though it is otherwise explicit.
The bounds in Theorem~\ref{thm:main} are nevertheless effective.
In the unconditional $d \geq 5$ and $d = 3$ cases,
explicit formulas are given below.
In the $d = 4$ case, we retain the
effective construction without evaluating its large
height constants.

By contrast, the rank bounds in
Theorems~\ref{thm:elliptic-curves-large-rank} and
\ref{thm:fixed-elliptic-curve-rank},
the conditional $d = 3$ periodic-point bound in
Corollary~\ref{cor:conditional-d3}, and the rank
bound in Corollary~\ref{cor:fixed-fermat-cubic}
depend on $\kappa_d$, for which no effective value
is presently known.

Given an explicit uniform bound for the ranks of
$A_{n, m}(K(\mu_d))$, the periodic-point bound in
Proposition~\ref{prop:unicritical-cyclotomic-rank}
is effectively computable.
The same conclusion holds in the second assertion of
Theorem~\ref{thm:unicritical-dynatomic-rank}
when an explicit uniform bound for the ranks of
$J_{1, d}(n)(K(\mu_d))$ is supplied.
However, the bounds in
Corollaries~\ref{cor:ubc-preperiodic-unconditional}
and~\ref{cor:preperiodic} remain non-effective
because they use Doyle--Poonen
\cite[Theorem~1.8]{doyle-poonen}.
The Jacobian-rank bound in
Theorem~\ref{thm:jacobians-large-rank} is explicit.

Let $S$ consist of the archimedean places of
$K(\mu_d)$ and the finite places above $d$, and let
$S_f$ be its finite part. Set
$r_S := r_1\paren{K(\mu_d)}
+ r_2\paren{K(\mu_d)} + \#S_f - 1$. The $S$-unit
group and the $S$-ideal class group determine
$\calc_{K, d}$ and the chosen representatives. More
precisely,
\[
	\#\calc_{K, d}
	= (d - 1)^{r_S}
	\#\frac{\mu\paren{K(\mu_d)}}
	{\mu\paren{K(\mu_d)}^{d - 1}}
	\cdot
	\#\Cl\paren{\calo_{K(\mu_d),S}}[d - 1].
\]
The two factors after $(d - 1)^{r_S}$ account for the
torsion in the $S$-unit group and the
$(d - 1)$-torsion in the $S$-ideal class group.
Consequently, $\calc_{K, d}^2$ and all the associated
curves are effectively determined.

Suppose first that $d \geq 5$. Recall that
$g_d = (d - 2)(d - 3)/2$, and set
\[
	\rho_d := \rho(g_d)
	= \min\set{
	1 + \frac{5}{4\sqrt{g_d}},
	1 + \frac{3\log g_d}{g_d}}.
\]
For $(\alpha_1, \alpha_2) \in \calc_{K, d}^2$, let
$J_{\alpha_1, \alpha_2}$ be the Jacobian of
$C_{\alpha_1, \alpha_2}$, and set
\[
	\rk_{K, d}
	:= \max_{(\alpha_1, \alpha_2) \in
	\calc_{K, d}^2}
	\rank J_{\alpha_1, \alpha_2}(K(\mu_d)).
\]
Yu--Yuan--Zhou \cite[Theorem~1.3]{yu-yuan-zhou}
prove that
\[
	\#C_{\alpha_1, \alpha_2}\paren{K(\mu_d)}
	\leq 10^{13}g_d^8\rho_d^{
	\rank J_{\alpha_1, \alpha_2}(K(\mu_d))}.
\]
The proof of Proposition~\ref{prop:dge5} therefore
shows that one may take
\begin{equation}
	B_{\Per}(K, d)
	= 1 + 10^{13}g_d^8
	\paren{\#\calc_{K, d}}^2
	\rho_d^{\rk_{K, d}}.
	\label{eq:rank-explicit-bound}
\end{equation}

There is also a formula that does not contain
Mordell--Weil ranks. For
$(\alpha_1, \alpha_2) \in \calc_{K, d}^2$, let
$N_{\alpha_1, \alpha_2}$ be the product of the
norms of the finite places of $K(\mu_d)$ at which
$C_{\alpha_1, \alpha_2}$ has bad reduction.
Yu--Yuan--Zhou \cite[Theorem~1.6]{yu-yuan-zhou}
give explicit functions $c_1,c_2,c_3$ such that
\[
\begin{aligned}
	\#C_{\alpha_1, \alpha_2}\paren{K(\mu_d)}
		&\leq c_1\paren{g_d,[K(\mu_d):\Q]}
		\paren{N_{\alpha_1, \alpha_2}}^{
		c_2\paren{g_d,[K(\mu_d):\Q]}}
		\verts{\Delta_{K(\mu_d)}}^{
		c_3\paren{g_d,[K(\mu_d):\Q]}}.
\end{aligned}
\]
Let $M_{\alpha_1, \alpha_2}$ denote the right-hand
side. Consequently, one may take
\begin{equation}
	B_{\Per}(K, d)
	= 1 + \sum_{(\alpha_1, \alpha_2) \in
	\calc_{K, d}^2}M_{\alpha_1, \alpha_2}.
	\label{eq:fully-explicit-bound}
\end{equation}
This is an explicit bound for the fixed field $K$.
For each auxiliary curve $C_{\alpha_1, \alpha_2}$,
any effectively computable finite set containing its
bad-reduction places may be used instead of the exact
bad-reduction set. These are places of bad reduction
of the auxiliary curve, not of the varying polynomial
$f_{d, c}$. For the model in \eqref{eq:twisted-fermat-curve}, it
is enough to include the places dividing $d - 1$ and
the places at which, after a common scaling, one of the
three coefficients is not a unit.

For $d = 4$, the curves
$C_{\alpha_1, \alpha_2}$ have genus $1$.
Hence the quantitative bound of
Yu--Yuan--Zhou
\cite[Theorem~1.6]{yu-yuan-zhou}
does not apply.
Instead we make effective the constant
in Theorem~\ref{thm:elliptic-quotients}.

For fixed data $F$, $E_1, \ldots, E_r$, and
$\phi_1, \ldots, \phi_r$ as
in Theorem~\ref{thm:elliptic-quotients},
the quotient loci $Z_{j,i}$
and their defining equations, degrees,
and heights are effectively determined.
R\'{e}mond \cite[Theorem~1.2]{remond-count}
bounds the number of translates in a Mordell--Lang
covering in terms of the degree of the subvariety, the
rank of the group, and a constant depending on the
ambient polarized abelian variety. David--Philippon
\cite[Theorem~1.4 and p.~643]{david-philippon-ii}
make this constant explicit in terms of the dimension
and height of the ambient polarized abelian variety and
the degree of a field of definition.
The rank estimate
quoted by Yu--Yuan--Zhou
\cite[after Theorem~1.6]{yu-yuan-zhou}
gives an explicit upper bound for the required ranks,
while torsion can be bounded by reduction at two
suitable places of good reduction. Thus every $q_j$
and $M_j$ in the proof of
Theorem~\ref{thm:elliptic-quotients} can be effectively
bounded.
Since Ramsey numbers are effectively
computable, the constant $B$ in
\eqref{eq:ramsey-bound} is effectively computable from
the data of Theorem~\ref{thm:elliptic-quotients}.

In the $d = 4$ application
of Theorem~\ref{thm:elliptic-quotients},
the field $L$, the curves
$E_i/L$, and the functions $\phi_i$ are all effectively
determined by $K$.
Hence Corollary~\ref{cor:quartic-pairwise-quotients}
has an effectively computable constant $B_4(K)$,
and Proposition~\ref{prop:d4} gives
the effectively computable value
$B_{\Per}(K, 4) = B + 1$.

For $d = 3$ and
$\Q(\sqrt{-3}) \not\subset K$, the proof of
Proposition~\ref{prop:d3} is explicit once a nonsplit
tame place $v$ is chosen. Such a place can be found by
enumerating finite places and testing their splitting
behavior in $K(\mu_3)$. If $C_v$ denotes Pezda's
effective local period bound for $K_v$, then one may
take
\[
	B_{\Per}(K, 3) = \frac{3^{C_v + 1} - 3}{2}.
\]

If $\Q(\sqrt{-3}) \subset K$ and the ranks of the
curves in \eqref{eq:cubic-double-cover-family} are bounded
by $r_K$, Corollary~\ref{cor:conditional-d3} gives the
conditional bound
$\max\set{3^{[K : \Q]},
16\paren{\#\calu_{K, 3}}^2\kappa_3^{1 + 2r_K}}$. The
constant $\kappa_3$ comes from the uniform Mordell--Lang
theorem of Gao--Ge--K\"{u}hne \cite{gao-ge-kuhne} and is
not presently explicit. Consequently, this bound is
quantitative in $r_K$ but not numerical.

Doyle--Poonen
\cite[Theorem~1.8]{doyle-poonen}
non-effectively prove that uniform boundedness
for periodic points of unicritical polynomials
implies uniform boundedness for preperiodic points.
Consequently, the explicit periodic bounds above do
not presently give effective bounds
for the number of preperiodic points.

\subsection{Why the argument is not degree-uniform}

The first obstruction to strong uniform boundedness is
the size of the finite group $\calc_{K, d}$. It receives
contributions from both the $S$-unit group and the
$(d - 1)$-torsion of the $S$-ideal class group. The
$S$-unit rank is controlled by $d$ and $[K:\Q]$, but no
corresponding degree-uniform bound is known for the
class-group torsion. Since the present argument uses the
full family indexed by $\calc_{K, d}^2$, this obstruction
appears through the factor $\paren{\#\calc_{K, d}}^2$.

Second, the quantitative bounds from
\cite[Theorem~1.3]{yu-yuan-zhou} depend on the
Mordell--Weil ranks of the resulting curves.
Their fully explicit bounds instead involve
the field discriminant and bad reduction.
The $d = 4$ Mordell--Lang argument
has the same rank dependence. These quantities are
likewise not bounded by the field degree alone.

Consequently, if $C_0$ and $r_0$ were uniform bounds
for $\#\calc_{K, d}$ and $\rk_{K, d}$ over all fields $K$
with $[K:\Q] \leq D$, then
Equation~\eqref{eq:rank-explicit-bound} would give, for
$d \geq 5$,
\[
	\#\Per_K(f_{d, c})
	\leq 1 + 10^{13}g_d^8C_0^2
	\rho_d^{r_0}.
\]
Thus uniform control of these two quantities would
upgrade the argument to strong uniform boundedness. If
one also assumes $\verts{\Delta_K} \leq \Delta_0$,
Hermite's theorem leaves only finitely many fields $K$
of bounded degree. Therefore,
Section~\ref{sec:quantitative} gives an effectively
computable bound depending on $(d, D, \Delta_0)$.

Both difficulties already appear in the Kummer descent.
The group $\calc_{K, d}$ controls the twists of the
Fermat cover, while the twisted Fermat curves correspond to
pairs of these twists. The unresolved inputs lead to the
following concrete question.

\begin{question}
\label{q:strong-uniformity}
	Can one bound the number of power classes that arise
	from periodic pairs independently of the class-group
	torsion, or otherwise avoid summing over all of
	$\calc_{K, d}^2$? Can such a bound be combined with a
	point count on the twisted Fermat curves that is independent
	of their Mordell--Weil ranks? An affirmative answer
	would prove strong uniform boundedness for the
	unicritical family in degree at least five.
\end{question}

The fixed-field argument is therefore effective once
$K$ and $d$ are fixed, but class-group torsion and
Mordell--Weil ranks obstruct degree-uniform control.
\hyperref[app:example-effective-constants]{Appendix 1}
illustrates both the computability and the
numerical scale of the resulting bounds.

\clearpage
\appendix

\renewcommand{\thesection}{\arabic{section}}
\renewcommand{\theHsection}{appendix.\arabic{section}}

\section{Examples of numerical bounds}
\label{app:example-effective-constants}

As discussed in
Section~\ref{sec:quantitative},
the bounds in Theorem~\ref{thm:main} are effective.
This appendix works out several
examples when $K = \Q$ and when $K/\Q$ is quadratic,
showing that the bounds are not
only theoretically computable,
but can also be made explicit.

The calculation has three parts. We first write the
rational point bound that is independent of Mordell--Weil ranks
in terms of the fixed finite family of auxiliary
twisted Fermat curves and their bad-reduction data. We next
evaluate these inputs for $K = \Q$ in degrees $5$
through $8$. Finally, we record the corresponding
estimates for several quadratic fields. The quartic
argument is effective, but we do not evaluate its
height constants here.
The SageMath files
used to verify the calculations
in this appendix are archived in
\cite{zhang-ubc-computations}.

For $3 \leq d \leq 8$, we calculate the following
bounds when $K = \Q$. In the cubic row, $C_v$ is the
effective local period bound used in
Proposition~\ref{prop:d3}.
For odd $d \geq 3$, Narkiewicz
\cite{narkiewicz-odd} observes that $x^d + c$ is an
increasing function on $\R$. Consequently, over a field
with a real embedding, every periodic point is fixed,
and there are at most three such points.
The $d = 4$ entry refers to the effectivity argument in
Section~\ref{sec:quantitative}. Since its explicit
Mordell--Lang and Ramsey theory inputs are not evaluated here,
we do not produce an explicit numerical value.
The entries for $d = 5$
through $8$ are rounded upward
to a power of $10$.
\begin{center}
\small
\begin{tabular}{c|c|c}
$d$ & bound from the general argument & elementary bound
	\\ \hline
$3$ & $(3^{C_v + 1} - 3)/2$ & $3$ \\
$4$ & effective but not evaluated &
	-- \\
$5$ & $10^{\,10^{25}}$ & $3$ \\
$6$ & $10^{\,2 \cdot 10^{90}}$ & -- \\
$7$ & $10^{\,2 \cdot 10^{246}}$ & $3$ \\
$8$ & $10^{\,2 \cdot 10^{547}}$ & --
\end{tabular}
\end{center}

Recall from Section~\ref{sec:twisted-fermat-family} that the
twisted Fermat curves are indexed by the ordered pairs
$(\alpha_1, \alpha_2)$ in
$\calc_{K, d}^2$. The fixed representatives
$\wt{\alpha_1}$ and
$\wt{\alpha_2}$ are the coefficients of the
curve $C_{\alpha_1, \alpha_2}$. Also recall from
Section~\ref{sec:quantitative} that
$g_d = (d - 2)(d - 3)/2$, that
$\rho_d = \rho(g_d)$, and that
$N_{\alpha_1, \alpha_2}$ is the product of the norms
of the finite places where
$C_{\alpha_1, \alpha_2}$ has bad reduction.
For convenience, we write out a table
of some of the relevant constants
that were defined in the paper.
\begin{center}
	\small
	\begin{tabular}{@{}lll@{}}
		symbol & meaning & defined in \\ \hline
		$g_d$ & genus of the diagonal curves
			& \S\ref{sec:twisted-fermat} \\
		$\#\calc_{K, d}$ & number of power classes
			& Lemma~\ref{lem:finite-power-classes} \\
		$\#\calu_{K, d}$ & square classes used for odd $d$
			& Def.~\ref{def:odd-square-classes} \\
		$\kappa_d$ & uniform Mordell--Lang constant
			& Def.~\ref{def:uniform-mordell-lang-constant} \\
		$\rho_d$ & constant in the count of \cite{yu-yuan-zhou}
			& \S\ref{sec:quantitative} \\
		$\rk_{K, d}$ & largest rank of the Jacobians
			& \S\ref{sec:quantitative}
	\end{tabular}
\end{center}

\subsection{A bound without Mordell--Weil ranks}

Suppose that $d \geq 5$, and set
$D := [K(\mu_d):\Q]$. Let $N$ be an upper bound
for $N_{\alpha_1, \alpha_2}$ over all
$(\alpha_1, \alpha_2) \in \calc_{K, d}^2$.
Yu--Yuan--Zhou
\cite[Theorem~1.6]{yu-yuan-zhou} give
\[
\begin{aligned}
	c_1(g_d,D)
		&= 10^{13}g_d^8
		\rho_d^{2g_d^3 D^3 2^{8g_d^2}}, \\
	c_2(g_d,D)
		&= 4g_d^3 D^2 2^{8g_d^2}
		\log_4\paren{\rho_d}, \\
	c_3(g_d,D)
		&= g_d D 2^{8g_d^2}\log_4\paren{\rho_d}.
\end{aligned}
\]
Equation~\eqref{eq:fully-explicit-bound} therefore gives
\begin{equation}
	\begin{aligned}
		B_{\Per}(K, d)
			&\leq 1 + \paren{\#\calc_{K, d}}^2
			c_1(g_d, D)N^{c_2(g_d, D)}
			\verts{\Delta_{K(\mu_d)}}^{c_3(g_d, D)}.
	\end{aligned}
	\label{eq:appendix-general-point-bound}
\end{equation}
A valid choice for $N$ is the
product of the norms of the places
in $S_{K, d}$.
Thus one only needs to compute $\#\calc_{K, d}$ and a
finite set $S_{K, d}$ of finite places of
$K(\mu_d)$ containing the places of bad reduction of
every auxiliary twisted Fermat curve
$C_{\alpha_1, \alpha_2}$, for
$(\alpha_1, \alpha_2) \in \calc_{K, d}^2$.
The set $S_{K, d}$ is independent of $c$,
only depending on $K$ and $d$.

In the cyclotomic examples below, the relevant
$S$-ideal class groups are trivial. The representatives
$\wt{\alpha}$ may therefore be chosen to be $S$-units.
It is therefore enough to take $S_{K, d}$ to be the
set of places of $K(\mu_d)$
above the rational prime factors of
$d(d - 1)$, although we point out that
this set may not be minimal for any
individual curve.

\subsection{The rational field in degrees
	\texorpdfstring{$5$ through $8$}{5 through 8}}

For $K = \Q$, the fields $\Q(\mu_d)$ have class
number one for $5 \leq d \leq 8$. The formula in
Section~\ref{sec:quantitative} gives
\[
\begin{array}{c|cccc}
	d & 5 & 6 & 7 & 8 \\ \hline
	\#\calc_{\Q,d} & 32 & 25 & 432 & 49.
\end{array}
\]
The corresponding numbers of pairs of power classes are
$32^2 = 1024$, $25^2 = 625$, $432^2 = 186624$, and
$49^2 = 2401$, respectively.

The remaining inputs are listed below. In the table,
$D = [\Q(\mu_d):\Q]$.
\begin{center}
\small
\begin{tabular}{c|c|c|c|c|c}
$d$ & $D$ & $\#\calc_{\Q,d}$ &
	$\paren{\#\calc_{\Q,d}}^2$ & $N$ &
	$\verts{\Delta_{\Q(\mu_d)}}$ \\ \hline
$5$ & $4$ & $32$ & $1024$ &
	$16 \cdot 5 = 80$ & $125$ \\
$6$ & $2$ & $25$ & $625$ &
	$4 \cdot 3 \cdot 25 = 300$ & $3$ \\
$7$ & $6$ & $432$ & $186624$ &
	$8^2 \cdot 3^6 \cdot 7 = 326592$ & $16807$ \\
$8$ & $4$ & $49$ & $2401$ &
	$2 \cdot 49^2 = 4802$ & $256$
\end{tabular}
\end{center}
For example, $2$ is inert in $\Q(\mu_5)$, while $5$
is totally ramified. In $\Q(\sqrt{-3})$, the primes
$2$ and $5$ are inert and $3$ is ramified. In
$\Q(\mu_7)$, the residue degrees of $2$ and $3$ are
$3$ and $6$. In $\Q(\mu_8)$, the prime $2$ is
totally ramified and $7$ has two primes of residue
degree $2$. These facts give the displayed values of
$N$.

Substitution in
\eqref{eq:appendix-general-point-bound} gives
\[
\begin{array}{c|c}
	d & \log_{10}\paren{B_{\Per}(\Q,d)} \\ \hline
	5 & < 9.984 \cdot 10^{24} \\
	6 & < 1.575 \cdot 10^{90} \\
	7 & < 1.690 \cdot 10^{246} \\
	8 & < 1.522 \cdot 10^{547}.
\end{array}
\]
In particular, the general argument gives
\begin{align*}
	B_{\Per}(\Q,5) &< 10^{\,10^{25}}, \\
	B_{\Per}(\Q,6) &< 10^{\,2 \cdot 10^{90}}, \\
	B_{\Per}(\Q,7) &< 10^{\,2 \cdot 10^{246}}, \\
	B_{\Per}(\Q,8) &< 10^{\,2 \cdot 10^{547}}.
\end{align*}
The factor $2^{8g_d^2}$ in
Yu--Yuan--Zhou
\cite[Theorem~1.6]{yu-yuan-zhou}
accounts for almost all of this growth.
These bounds should be very far from being sharp,
but they are remarkable for being both
uniform and explicit.

\subsection{Examples for quadratic \texorpdfstring{$K$}{K}}

Suppose that $K$ is a subfield of $\Q(\mu_d)$. Hence
$K(\mu_d) = \Q(\mu_d)$, and consequently
$\calc_{K, d}$ and the family indexed by
$\calc_{K, d}^2$ are unchanged. This gives the following
table.
\begin{center}
\small
\begin{tabular}{c|c|c|c}
$K$ & $d$ & $\paren{\#\calc_{K, d}}^2$ &
bound from the general argument \\ \hline
$\Q(\sqrt 5)$ & $5$ & $1024$ & $10^{\,10^{25}}$ \\
$\Q(\sqrt{-3})$ & $6$ & $625$ &
	$10^{\,2 \cdot 10^{90}}$ \\
$\Q(\sqrt{-7})$ & $7$ & $186624$ &
	$10^{\,2 \cdot 10^{246}}$ \\
$\Q(i)$ & $8$ & $2401$ &
	$10^{\,2 \cdot 10^{547}}$ \\
$\Q(\sqrt 2)$ & $8$ & $2401$ &
	$10^{\,2 \cdot 10^{547}}$
\end{tabular}
\end{center}
The rounded exponents do not change, since the finite
number of pairs of power classes is negligible on this scale.

These examples confirm that the inputs in
the main theorems are computable,
and also indicate that the numerical growth
primarily comes from the general rational point bounds
rather than from the much smaller $\calc_{K, d}$ calculations.


\bibliographystyle{alpha}
\bibliography{bibliography-UBC}

\end{document}